\newcommand{\R}{\mathbb{R}}
\newcommand{\C}{\mathbb{C}}
\newcommand\Z{\mathbb{Z}}
\newcommand\Prym{\textrm{Prym}}
\newcommand{\N}{\mathbb{N}}
\newcommand{\Q}{\mathbb{Q}}
\newcommand{\SL}{{\rm SL}}
\newcommand{\GL}{{\rm GL}}
\renewcommand{\S}{\mathbb{S}}
\newcommand{\lra}{\longrightarrow}
\newcommand{\ra}{\rightarrow}
\newcommand{\id}{\mathrm{id}}
\newcommand{\Ord}{\mathcal{O}}
\newcommand{\ety}{\emptyset}
\newcommand{\Mod}{\mathfrak{M}}
\newcommand{\Pcal}{\mathcal{P}}
\newcommand{\Qcal}{\mathcal{Q}}
\newcommand{\Id}{\mathrm{Id}}
\newcommand{\SD}{\mathcal{S}_D}
\renewcommand\mod{\text{ mod }}
\newcommand{\DS}{\displaystyle}
\newtheorem{Theorem}{Theorem}[section]
\newtheorem{Corollary}[Theorem]{Corollary}
\newtheorem{Lemma}[Theorem]{Lemma}
\newtheorem{Proposition}[Theorem]{Proposition}
\newtheorem{Remark}[Theorem]{Remark}
\newtheorem{Definition}[Theorem]{Definition}
\newtheorem{Example}[Theorem]{Example}
\newtheorem*{NoNumberTheorem}{Theorem}
\newtheorem*{Notation}{Notation}
\begin{document}
\title[Teichm\"uller  curves  and  Prym eigenforms]  {Teichm\"uller
curves generated by Weierstrass Prym eigenforms in genus three and genus four}

\author{Erwan Lanneau, Duc-Manh Nguyen}

\begin{abstract}
This paper is  devoted to the classification of  the infinite families
of Teichm\"uller curves  generated by Prym
eigenforms of genus $3$ having  a  single  zero.  These curves  were  discovered  by
McMullen~\cite{Mc7}.  The   main  invariants    of  our
classification is the discriminant  $D$ of the corresponding quadratic
order, and the generators of this order. It  turns out  that for $D$  sufficiently large, there  are two
Teichm\"uller curves  when $D\equiv 1 \mod 8$,  only one Teichm\"uller
curve when  $D \equiv  0,4 \mod 8$,  and no Teichm\"uller  curves when
$D\equiv 5 \mod 8$. For small values of $D$, where this classification
is not  necessarily true,  the number of  Teichm\"uller curves  can be
determined  directly.  The  ingredients  of our  proof  are  first,  a
description of  these curves  in terms of  prototypes and  models, and
then  a careful  analysis of  the combinatorial  connectedness  in the
spirit of McMullen~\cite{Mc4}. As a consequence, we obtain a description of cusps 
of Teichm\"uller curves given by Prym eigenforms.

We would like also to emphasis that even though we have the same statement 
compared to~\cite{Mc7}, when $D\equiv 1 \mod 8$, the reason for this disconnectedness is different.

The   classification  of  these Teichm\"uller  curves plays  a key  role in  our investigation  
of the dynamics of $\textrm{SL}(2,\R)$ on the intersection of  the   Prym   eigenform  
locus   with  the   stratum $\Omega\mathfrak{M}(2,2)$, which is the object of a forthcoming paper.
\end{abstract}

\address{
Unit\'e Mixte de Recherche (UMR 6207) du CNRS et de l'Universit\'e d'Aix-Marseille \newline
et de l'Universit\'e du Sud Toulon-Var.
Unit\'e affili\'ee \`a la FRUMAM F\'ed\'eration de Recherche 2291 du CNRS.\newline
Luminy, Case 907, F-13288 Marseille Cedex 9, France
}

\email{lanneau@cpt.univ-mrs.fr}

\address{
IMB Bordeaux-Universit\'e Bordeaux 1\newline
351, Cours de la Lib\'eration \newline 
33405 Talence Cedex, FRANCE}

\email{duc-manh.nguyen@math.u-bordeaux1.fr}

\date{\today}

\keywords{Real multiplication, Prym locus, Teichm\"uller curve, Translation surface}

\maketitle
\setcounter{tocdepth}{1}
\tableofcontents

\section{Introduction}

For a  long time,  it has  been known that  the ergodic  properties of
linear  flows on  a translation  surface are  strongly related  to the
behavior of  its $\textrm{SL}(2,\mathbb R)$-orbit in  the moduli space
$\Omega\mathfrak{M}_g$  (see  {\it e.g.}~\cite{Masur2002,Zorich:survey}  for
surveys    of    the     literature    on    this    subject).     The projection of the
$\textrm{SL}(2,\mathbb  R)$-orbit of a  translation surface  into the Teichm\"uller space is 
a  {\it  Teichm\"uller disc}.   It  has  been  known that  when  the
stabilizer of  a surface is  a lattice in  $\textrm{SL}(2,\mathbb R)$,
the   Teichm\"uller   disc   projects   onto  a   {\it   Teichm\"uller
  curve}~\cite{Veech1992,Smillie2006}    in     the    moduli    space
$\mathfrak{M}_g$. \medskip

Since the work  of Veech much effort has gone into  the study of these
Teichm\"uller discs  and their closures. Two decades  later, thanks to
the   seminal  work   of   McMullen~\cite{Mc1,Mc2,Mc4,Mc5,Mc6,Mc3},  a
complete  classification of  the closures  of Teichm\"uller  discs, as
well  as  a  complete  list  of the  Teichm\"uller  curves  has  been
established   in  genus  two   (see~\cite{Calta2004} for a partial 
classification involving different ideas). See~\cite{Moller2006,
Moller2008,Bainbridge:Moller,Eskin:preprint}        for        related
results in higher genera. \medskip

McMullen's analysis  relates Teichm\"uller curves to  the locus $\Omega
E_{D}(2)\subset  \Omega\mathfrak{M}_2$ (respectively, $\Omega  E_{D}(1,1)$) 
corresponding          to         pairs
$(X,\omega)\in\Omega\mathfrak{M}_2$  where $X$  is  a Riemann  surface
whose  the Jacobian  $\textrm{Jac}(X)$ admits  real  multiplication by
some order $\mathcal O_{D}$ and $\omega$  being then an eigenform for the
real  multiplication with a single zero (respectively, two simple zeroes) 
(see  Section~\ref{sec:background}  for  precise definitions). \medskip

Roughly  speaking, one  can  single out  two  key facts  of genus  two
surfaces, playing a crucial role in McMullen's approach:

\begin{enumerate}
\item The  existence of the so-called  hyperelliptic involution $\rho$
  on $X$, and
\item the complex dimension of $H^{1}(X,\C)$ is $2$.
\end{enumerate}

Later~\cite{Mc7} McMullen extended these results to the Prym eigenform
loci $\Omega E_{D}$  in higher genera, that can  be thought as natural
loci where these two above properties remains true. Then it was proven
in~\cite{Mc7} that these loci are closed $\textrm{SL}(2,\mathbb
R)$-invariant.  Moreover  by  a  dimension  count,  the  intersections
$\Omega  E_{D}(4) \subset  \Omega\mathfrak{M}_3$ and  $\Omega E_{D}(6)
\subset  \Omega\mathfrak{M}_4$  of  $\Omega  E_{D}$ with  the  minimal
strata, consist entirely of Teichm\"uller curves. \medskip

The  family  of Teichm\"uller  curves  in  $\Omega  E_D(2)$ has  been
classified by McMullen~\cite{Mc4}. In this paper we prove a classification type 
result for $\Omega E_D(4)$:

\begin{Theorem}
\label{theo:main:intro}
For $D \geq 17$,  $\Omega E_D(4)$ is non empty if and
only  if $D\equiv 0,1, \text{ or } 4 \mod  8$, and all the loci $\Omega E_D(4)$ are pairwise disjoint. 
Moreover, for those discriminants, the following dichotomy holds. Either
\begin{enumerate}
\item $D$ is odd and then $\Omega E_D(4)$ has exactly two connected components, or
\item $D$ is even and $\Omega E_D(4)$ is connected.
\end{enumerate}
In addition, each component of $\Omega E_D(4)$ corresponds to a closed $\GL^+(2,\R)$-orbit.
\end{Theorem}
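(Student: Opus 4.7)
The plan is to follow the strategy used by McMullen in the genus two case, adapted to the Prym setting. First, I would introduce a notion of \emph{Prym prototype} for $\Omega E_D(4)$: a tuple of integers $(a,b,c,e,\ldots)$ satisfying arithmetic constraints (essentially $D = e^2 + 8bc$ or a similar form, together with inequalities), that parametrizes a cusp of the candidate Teichmüller curve. Concretely, each prototype encodes a horizontally periodic Prym eigenform in $\Omega E_D(4)$ with a prescribed cylinder decomposition (three horizontal cylinders, say, respecting the Prym involution), together with the action of a generator of the quadratic order $\mathcal{O}_D$ on its relative homology. The first step would be to show that every surface $(X,\omega)\in\Omega E_D(4)$ admits a parabolic direction of this prototypical form, so that each $\GL^+(2,\R)$-orbit contains a prototypical representative.

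Second, I would analyze the arithmetic of prototypes. The existence of an admissible prototype with discriminant $D$ should impose a congruence condition on $D$ modulo $8$, reading off the residue from the relation $D = e^2 + 8bc$ (or its analogue): since $e^2 \equiv 0,1,4 \pmod 8$ and $8bc \equiv 0 \pmod 8$, one obtains $D \equiv 0,1,4 \pmod 8$, ruling out $D \equiv 5\pmod 8$ and showing that $\Omega E_D(4)=\emptyset$ in that case. Conversely, for $D \geq 17$ with $D \equiv 0,1,4 \pmod 8$, I would produce an explicit prototype, hence a nonempty locus. Pairwise disjointness of the $\Omega E_D(4)$ for distinct $D$ follows from the fact that the discriminant of the real multiplication order is an $\SL(2,\R)$-invariant of a Prym eigenform.

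Third, the heart of the proof is the component count. Following McMullen, define a set of elementary \emph{prototype moves} corresponding to changing the parabolic direction on a representative surface (passing from one horizontal decomposition to a different parabolic decomposition on the same surface, typically via a Dehn twist and a $\GL^+(2,\R)$-normalization). Two prototypes connected by such moves give surfaces in the same $\GL^+(2,\R)$-orbit, hence in the same component. I would then set up the combinatorial graph whose vertices are prototypes at discriminant $D$ and whose edges are these moves, and count its connected components. The expected outcome is one component when $D$ is even and exactly two when $D$ is odd; the distinguishing invariant in the odd case should be a spin-type $\Z/2$-valued quantity, computed in terms of the prototype data modulo $2$, that is preserved by all moves but distinguishes two nonempty classes. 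Finally, the $\GL^+(2,\R)$-invariance of $\Omega E_D$ proved in~\cite{Mc7} together with the dimension count showing each component is one-dimensional yields that each component is a closed $\GL^+(2,\R)$-orbit, i.e.\ a Teichm\"uller curve.

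The main obstacle is the combinatorial connectedness analysis: verifying that the elementary moves generate \emph{all} equivalences between prototypes at a given discriminant, and simultaneously producing an invariant that remains nontrivial for $D$ odd but is forced to be constant for $D$ even. This demands a careful case-by-case tracking of how each move transforms the prototype data modulo~$2$, and, in the odd case, the explicit exhibition of two representatives that cannot be connected, so as to match the lower bound with the upper bound on the number of components. A secondary difficulty is handling small discriminants close to the threshold $D=17$, where some prototypes may degenerate and must be discarded or treated separately.
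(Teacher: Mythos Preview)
Your overall strategy---prototypes encoding three-cylinder decompositions, the relation $D=e^2+8wh$ giving the congruence $D\equiv 0,1,4\pmod 8$, and elementary moves between prototypes---matches the paper closely. However, there is one genuine gap and one structural subtlety you are missing.

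\textbf{The invariant for odd $D$ is not spin.} You propose that the two components for $D$ odd are distinguished by a spin-type invariant, by analogy with $\Omega E_D(2)$. This fails: every surface in $\Omega E_D(4)$ has \emph{odd} spin (this is immediate from the Kontsevich--Zorich formula for a single zero of order $4$), so spin cannot separate anything. The paper stresses this point explicitly. The actual invariant comes from the fact that the intersection form on $H_1(X,\Z)^-$ is given by $\left(\begin{smallmatrix} J & 0 \\ 0 & 2J\end{smallmatrix}\right)$ rather than $\left(\begin{smallmatrix} J & 0 \\ 0 & J\end{smallmatrix}\right)$: one checks that the restriction of $\langle\,,\,\rangle$ to the image of the distinguished generator $T$ of $\mathcal O_D$ is nonzero mod $2$ for the Model~$A+$ prototype and zero mod $2$ for the Model~$A-$ prototype. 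Equivalently, the two components correspond to two \emph{different} complex lines in $\Omega(X,\rho)^-$, whereas in genus two the two components sit on the same line and are separated only by spin. Without identifying this invariant you cannot produce the lower bound of two components when $D$ is odd.

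\textbf{The $\varepsilon=\pm$ structure and the even-$D$ argument.} The prototype set is really $\mathcal Q_D=\mathcal P_D\times\{\pm\}$, with $\varepsilon$ recording whether the decomposition is of type $A+$ or $A-$; each Butterfly move flips $\varepsilon$. One first shows $\mathcal P_D$ is connected (this is the McMullen-style number-theoretic argument you allude to), which gives $|\mathcal Q_D/\!\sim|\le 2$. For $D$ odd the invariant above shows this bound is sharp. For $D$ even, however, you still need to collapse the two $\varepsilon$-sheets: the paper does this by exhibiting, for each even $D$, a reduced prototype connected to itself by an \emph{odd} number of Butterfly moves. Your outline does not mention this step. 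Finally, for a short list of exceptional discriminants ($D\in\{41,48,68,100\}$) the Butterfly moves alone do not suffice, and one must pass through decompositions of Model~$B$ to connect the remaining pieces; this is the ``small discriminant'' difficulty you anticipate, but it also occurs away from the threshold.
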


For $D<17$, $\Omega E_D(4)$ is non-empty if and only if $D\in\{8,12\}$ 
and in this case, it is connected (see Theorem~\ref{MainTh2}). As a direct consequence, we 
get a surprising fact: all surfaces in $\Omega E_8(4)$ have no simple 
cylinders. Note that translation surfaces with no simple cylinders are quite rare, as generic ones 
always have simple cylinders. All  examples of such surfaces (e.g. the
Wollmilchsau surface)  known to the authors  are square-tiled surfaces
(see~\cite{Herrlich:Schmithusen}). Since $8$  is not a perfect square,
surfaces of $\Omega E_8(4)$ are not square-tiled. \medskip

\begin{Remark}
It is not difficult to see that the parity of the spin structures determined by Abelian differentials in 
$\Omega E_D(4)$ are odd (see~\cite{Kontsevich2003}). Theorem~\ref{theo:main:intro} 
is thus a crucial step in our 
attempt  to obtain an accurate count of the number of  components of the intersection 
$\Omega E_D\cap \Omega \mathfrak{M}(2,2)^{\mathrm{odd}}$. \medskip

This count, together with other problems such as the characterization of surfaces in 
$\Omega E_D\cap \Omega \mathfrak{M}(2,2)^{\mathrm{odd}}$, and the classification 
of the $\GL^+(2,\R)$-orbits (Ratner type theorem) will be addressed in a forthcoming 
paper~\cite{Lanneau:Mahn:ratner}.
\end{Remark}

The strategy we develop can also be used to investigate the 
connectedness of the loci $\Omega E_D(6)$, namely:

\begin{Theorem}
\label{theo:H6}
For any $D\in \N$, $D\equiv 0,1  \mod 4$, and $D \not\in \{4,9\}$, the
locus   $\Omega  E_D(6)$   is  non   empty   and  has   at  most   two
components.  Moreover if $D$ is odd then $\Omega E_D(6)$ is connected.
\end{Theorem}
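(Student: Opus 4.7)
The plan is to follow the same strategy used to prove Theorem~\ref{theo:main:intro}, adapted to the minimal stratum $\Omega\mathfrak{M}(6)$ in genus four. First I would introduce a notion of prototype for surfaces in $\Omega E_D(6)$. A Prym eigenform $(X,\omega)$ in this stratum admits a Prym involution $\rho$; after normalizing by $\GL^+(2,\R)$ and decomposing $X$ into a $\rho$-invariant collection of horizontal cylinders, one expects $(X,\omega)$ to be encoded by a tuple of integers subject to arithmetic constraints whose solvability is equivalent to $D \equiv 0,1 \mod 4$. The excluded values $D \in \{4,9\}$ should correspond to prototypes that degenerate (the would-be $\omega$ acquires zeroes of lower order, or the Prym involution loses its required properties). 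Exhibiting an explicit prototype for every other admissible $D$ proves non-emptiness.

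Next, as in McMullen~\cite{Mc4} and in the proof of Theorem~\ref{theo:main:intro}, I would organize the prototypes into a graph $\mathcal{G}_D$ whose edges come from butterfly-type moves induced by $\GL^+(2,\R)$-deformations that change the cylinder decomposition. The components of $\Omega E_D(6)$ are in bijection with those of $\mathcal{G}_D$. To show that $\mathcal{G}_D$ has at most two components, I would introduce a $\Z/2$-valued invariant of prototypes, of the same flavor as the one separating the two components of $\Omega E_D(4)$ when $D \equiv 1 \mod 8$. One first checks that this invariant is preserved by every prototype move, and then proves that two prototypes sharing the same value of the invariant can be connected by a sequence of moves. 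This second verification is the combinatorial heart of the argument and proceeds by reducing an arbitrary prototype to a canonical one within each value of the invariant.

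For the last assertion, one must show that when $D$ is odd, the above invariant takes only one value on the set of valid prototypes, or equivalently that a single path of moves connects the two would-be components. This should reduce to a parity observation: one of the defining integers in the prototype enjoys an extra freedom precisely when $D$ is odd, producing a bridge between prototypes of opposite invariants that does not exist in the even case.

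The main obstacle will be the combinatorial connectedness analysis, i.e. showing that prototypes with a fixed invariant all lie in the same component of $\mathcal{G}_D$. Since $\Omega\mathfrak{M}(6)$ carries a richer cylinder diagrammatic than $\Omega\mathfrak{M}(4)$, the catalogue of prototype moves is larger and the reduction to a canonical form requires a more delicate case analysis than in the proof of Theorem~\ref{theo:main:intro}. A secondary difficulty is to treat the small exceptional discriminants uniformly with the generic regime, which may force direct ad hoc arguments for the boundary cases near $D=4,9$.
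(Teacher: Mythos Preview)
Your broad strategy—prototypes plus butterfly moves plus a parity invariant—is the paper's approach. But you have the mechanism of the invariant backwards, and this reverses the roles of odd and even $D$.

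The key structural fact in genus four is that the intersection form on $H_1(X,\Z)^-$ is $\left(\begin{smallmatrix} 2J & 0 \\ 0 & 2J \end{smallmatrix}\right)$, with both blocks equal, unlike the asymmetric $\left(\begin{smallmatrix} J & 0 \\ 0 & 2J \end{smallmatrix}\right)$ in genus three. Consequently there is no $A+/A-$ dichotomy, no set $\Qcal_D$ doubling the prototype set, and no invariant of the type in Lemma~\ref{lm:OddDisc:Generator}. One works directly with a single prototype set $\tilde{\Pcal}_D$ (with $D=e^2+4wh$), and components of $\Omega E_D(6)$ are bounded above by components of $\tilde{\Pcal}_D$ itself.

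The actual $\Z/2$-invariant is the residue $e \bmod 4$. A butterfly move sends $e\mapsto e'=-e-4qh$, so $e'\equiv -e\pmod 4$. When $D$ is \emph{even}, $e$ is even and $-e\equiv e\pmod 4$, so $e\bmod 4\in\{0,2\}$ is genuinely preserved, splitting $\tilde{\Pcal}_D$ into at most two pieces; a McMullen-style analysis on reduced prototypes then shows each piece is connected. When $D$ is \emph{odd}, $e$ is odd and $-e\not\equiv e\pmod 4$: every butterfly move already flips the residue class, the would-be invariant is not invariant at all, and the same combinatorial analysis gives $\tilde{\Pcal}_D$ connected. So the parity roles are the opposite of what you wrote, and the ``extra freedom when $D$ is odd'' is exactly that a single move bridges the two mod-$4$ classes rather than preserving them.

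Finally, against your expectation, the genus four case is structurally \emph{simpler}: only one model with simple cylinders is needed, butterfly moves preserve it, and there is no $\varepsilon=\pm$ layer. The exceptional small discriminants are handled, as in genus three, by direct inspection and ad hoc paths through non-reduced prototypes or Model~$B$ decompositions.
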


%
%

\begin{Remark}
Unfortunately, we do not succeed to obtain an accurate count of $\GL^+(2,\R)$-orbits in $\Omega E_D(6)$ for 
$D$ even. There are reasons to believe that the locus $\Omega E_D(6)$ is always connected 
({\it  i.e} there  is  only {\it  one}  $\GL^+(2,\R)$-orbit). This  is
strongly supported by the fact that $\Omega E_{d^{2}}(6)$ is connected
for small values of even $d$  (we have checked for $d\leq 20$) and for
small values of $D$ (e.g. $D < 53$) that are not a square.

Since it seems to us that such a result would require other tools and ideas than what has been 
introduced  in  the present  paper,  we  will  adress the  topological
classification of $\Omega E_D(6)$ in a forthcoming paper.
\end{Remark}

\subsection*{Euler characteristics of Prym loci}

In the recent paper~\cite{Moller2011}, M\"oller provide a way to
calculate  the  Euler  characteristics  of  the  Teichm\"uller  curves
obtained by the Prym construction.

From  this  point  of  view  this  paper is,  jointly  with  the  work
of~\cite{Moller2011}, a continuation of~\cite{Mc7}.

\subsection*{Cusps of Teichm\"uller curves}

The projection of $\Omega E_{D}(2g-2)$ to the moduli space $\mathcal M_{g}$ leads 
to Teichm\"uller curves. Let $\SL(X,\omega)$ denote the Veech group of $(X,\omega)$ which 
is a lattice of $\SL(2,\R)$. A Teichm\"uller curve can never be compact, since any periodic direction 
of $(X,\omega)$ gives rise to a cusp, each cusps corresponds to the $\SL(X,\omega)$-orbits of a 
periodic direction of $(X,\omega)$. \medskip

For $g=2,3,4$, let $W_D(2g-2)$ be the projection of $\Omega E_D(2g-2)$ into $\mathfrak{M}_g$. By~\cite{Mc4}, Theorem~\ref{theo:main:intro}
and Theorem~\ref{theo:H6} we know that  $W_D(2g-2)$ is the union of at
most two Teichm\"uller curves.  \medskip

A corollary of our result is a description of the number of cusps of theses curves for $g=3$ and $g=4$. 
For        a        more        detailed       description,        see
Appendix~\ref{appendix:cusps}.\\
Table~\ref{tab:cusps} represents the number of cusps for the first discriminants. 
Observe  that the  first data  ({\it  i.e.} $g=2$)  have been  already
established by McMullen (see~\cite{Mc4}, Table~$3$).

\begin{Remark}
For a combinatorial description of the cusps of $W_{D}(4)$, see Propositions~\ref{NormA1Prop},~\ref{NormA2Prop} 
\& \ref{NormBProp} for the $3-$cylinder decompositions and Theorem~\ref{theo:12cyl:prototype} for 
the $1$ and $2$-cylinder decompositions.

When $D$ is not a square, the number of cusps of
$W_{D}(2)$ and $W_{D}(6)$ is the same (see Proposition~\ref{prop:cusps:egalite}).
\end{Remark}

\begin{table}[htbp]
$$
\begin{array}{ccc}
\textrm{Genus } $2$ & \textrm{Genus } $3$ & \textrm{Genus } $4$ \\
\begin{array}{|c|c|c|}
\hline 
D & |\mathcal P_{D}| & |C(W_{D}(2))| \\
\hline
5 & 1 & 1 \\
8 & 2 & 2 \\
{\bf 9} & {\bf 1} & {\bf 1+1} \\
12 & 3 & 3 \\
13 & 3 & 3 \\
{\bf 16} & {\bf 2} & {\bf 2+1} \\
17 & 6 & 6 \\
20 & 5 & 5 \\
21 & 4 & 4 \\
24 & 6 & 6 \\
{\bf 25} & {\bf 6} & {\bf 6+2} \\
28 & 7 & 7 \\
29 & 5 & 5 \\
32 & 7 & 7 \\
33 & 12 & 12  \\
{\bf 36} & {\bf 5} & {\bf 5+3} \\
37 & 9 & 9 \\
40 & 12 & 12 \\
41 & 14 & 14 \\
44 & 9 & 9 \\
45 & 8 & 8 \\
48 & 11 & 11 \\
{\bf 49} & {\bf 13} & {\bf 13+5} \\
52 & 15  & 15 \\
\hline
\end{array}
&
\begin{array}{|c|c|c|}
\hline 
|\mathcal P_{D}| & |\mathcal P'_{D}| & |C(W_{D}(4))| \\
\hline
0 & 0 & 0\\
0 & 1 & 1\\
{\bf 0} & {\bf 0} & {\bf 0}\\
1 & 0 & 2\\
0 & 0 & 0\\
{\bf 0} & {\bf 0 } & {\bf 0}\\
2 & 2& 6\\
1 & 2& 4\\
0 & 0& 0\\
2 & 0& 4\\
{\bf 2} & {\bf 0}& {\bf 4+2}\\
1 & 2& 4\\
0 & 0& 0\\
3 & 2& 8\\
4 & 6& 14\\
{\bf 1} & {\bf 0}& {\bf 2+2}\\
0 & 0& 0\\
2 & 2& 6\\
7 & 2& 16\\
3 & 0& 6\\
0 & 0& 0\\
3 & 4& 10\\
{\bf 4} & {\bf 2}& {\bf 10+6}\\
5 & 2& 12\\
\hline
\end{array}
&
\begin{array}{|c|c|c|}
\hline 
|\mathcal P_{D}| & |\mathcal P'_{D}| & |C(W_{D}(6))| \\
\hline
0 & 1 & 1\\
1 & 1 & 2\\
{\bf 0} & {\bf 0} & {\bf 0}\\
1 & 2 & 3\\
2 & 1 & 3\\
{\bf 1} & {\bf 0}& {\bf 1+1}\\
2 & 4& 6\\
3 & 2& 5\\
2 & 2& 4\\
4 & 2& 6\\
{\bf 2} & {\bf 1}& {\bf 3+3}\\
3 & 4& 7\\
4 & 1& 5\\
4 & 3& 7\\
6 & 6& 12\\
{\bf 3} &{\bf 0}& {\bf 3+5}\\
4 & 5& 9\\
6 & 6& 12\\
8 & 6& 14\\
7 & 2& 9\\
4 & 4& 8\\
7 & 4& 11\\
{\bf 6} & {\bf 3}& {\bf 9+9}\\
7 & 8& 15\\
\hline
\end{array}
\end{array}
$$
\caption{
\label{tab:cusps}
The number of cusps $C(W_{D}(2g-2))$ of the Weierstrass curve in genus $2$, $3$ and $4$. Lines in bold
correspond to square tilde surfaces ($\sqrt{D}\in \N$); In this case the number of cusps is broken down as 
the sum of the number of cusps for decomposition in models $A\pm,B$ and cusps for others decompositions.
When $D$ is not a square, the number of cusps is given, respectively for $g=2,3,4$, by $|\mathcal P_{D}|$,
$2|\mathcal P_{D}|+|\mathcal P'_{D}|$, and $|\mathcal P_{D}|+|\mathcal P'_{D}|$.
}
\end{table}

\newpage
\subsection*{Outline}

We  briefly  sketch  the  proof  of  Theorem~\ref{theo:main:intro}.  It
involves  decompositions  of  surfaces  into  cylinders,  and  then  a
combinatorial analysis of the  space of such decompositions. This last
step is tackled using number theory arguments.

\begin{enumerate}

\item Associated to any  Abelian differential $(X,\omega)\in \Omega
  E_D(4)$   is    a   flat   metric   structure    (with   cone   type
  singularities).  Since $(X,\omega)$  is a  Veech surface~\cite{Mc7},  
  the Veech dichotomy ensures that there are
  infinitely many {\em completely periodic} directions {\it i.e.}  each trajectory is either a
  saddle  connection or  a closed  geodesic. The  surface is  then the
  union of finitely many open cylinders and saddle connections in this
  direction.  The  parameters of a  cylinder are denoted  by $(w,h,t)$
  for,  respectively,   the  {\it   width},  {\it  height}   and  {\it
  twist}.  It turns out that a surface in $\Omega E_D(4)$ always admits a decomposition 
  into three cylinders following one of three topological models (named $A+$, $A-$, and $B$) 
  presented in Figure~\ref{fig:3cyl:models}. 
  This corresponds to Proposition~\ref{prop:decomp:cylinders} and Corollary~\ref{cor:decomp:3cyl}.
  \medskip

\item The  first main ingredient  is a combinatoric  representation of
  every surface  in $\Omega E_D(4)$ that is  decomposed into three cylinders
  in the  horizontal direction. We  show (Proposition~\ref{NormA1Prop}
  and   Proposition~\ref{NormA2Prop})  that up   to  the   action  of
  $\textrm{GL}^+(2,\R)$ and appropriate Dehn twists, any cylinder decomposition of type 
  $A+$ or $A-$ can be encoded by the  parameters    $(w,h,t,e,\varepsilon)$,   where   $(w,h,t,e)\in \Z^4$ satisfies
$$ (\mathcal{P})\left\{
\begin{array}{l}
w>0, \ h>0,\ e+2h< w,\ 0\leq t<\gcd(w,h), \\ \gcd(w,h,t,e)=1, \textrm{
  and } D=e^2+8hw,
\end{array}
\right.
$$
and   $\varepsilon   \in   \{\pm\}$ (compare with~\cite{Mc4}).   The   set   of   all
$p=(w,h,t,e)\in\Z^4$ satisfying $(\Pcal)$ is denoted by $\Pcal_D$, and
the set of $(p,\varepsilon)$  with $p\in \Pcal_D$ and $\varepsilon \in
\{\pm\}$ is  denoted by  $\mathcal{Q}_D$. The set  $\Qcal_D$ naturally
parametrizes the  set of all decompositions of type $A\pm$ for a fixed $D$.  For $D\neq 8$, 
since any surface in $\Omega E_D(4)$ always admits a decompositions of type $A\pm$ 
(see Proposition~\ref{prop:D8:topo}), this  provides a (huge)  upper bound  for the  number of
components of $\Omega E_D(4)$ by the cardinal of  $\mathcal Q_{D}$. \medskip

\item In general, neither the Prym involution nor the quadratic order 
  is uniquely determined by  the Prym eigenform. However, the analysis
  on the prototypes allows us to show that, for
Prym eigenform in $\Omega \mathfrak{M}(4)$, the Prym involution and the quadratic order are 
unique. It follows immediately that $\Omega E_{D_1}(4)$ and $\Omega E_{D_2}(4)$ are disjoint 
if $D_1\neq D_2$ (see Theorem~\ref{UqeThm} and Corollary~\ref{cor:disjoint}). \medskip 

\item  In  Section~\ref{sec:butterfly}  we  introduce  an  equivalence
  relation  $\sim$  on $\mathcal  Q_{D}$ which are generated by  changes  of periodic  directions
   called  {\em Butterfly moves}.
  A Butterfly move is the operation of switching from a cylinder decomposition of type $A+$ or $A-$ 
  to another one such that the simple cylinders in the two decompositions are disjoint.  If we have a 
  decomposition of type $A+$, then a Butterfly move yields a decomposition of type $A-$, and  vice 
  versa. Two elements of $\Qcal_D$ are equivalent if they parametrize two cylinder decompositions 
  on the same surface which can be connected by a sequence of Butterfly moves.     Thus
$$    \#\   \{\textrm{Components   of    }   \Omega    E_D(4)\}   \leq
  \#\ \left(\mathcal Q_{D}/\sim \right).
$$ 
Given the parameters of the initial cylinder decomposition and the parameter of the Butterfly 
move, one can write down  the parameters of the new decomposition rather explicitly
 (see Proposition~\ref{BM1Prop} and Proposition~\ref{BM2Prop}). It turns out that the changing 
 rules are the same for decompositions of type $A+$ and $A-$, therefore the equivalence relation 
 $\sim$ descends  to an equivalence relation (still  denoted   by $\sim$) in $\mathcal P_{D}$:  
 for any $p, p' \in \Pcal_D$, $p   \sim  p'$  if there exist $\varepsilon, \varepsilon'\in\{\pm\}$, such
  that $(p,\varepsilon) \sim (p',\varepsilon')$ in $\Qcal_D$. Obviously
$$
 \#\left(\mathcal Q_{D}/\sim\right) \leq 2\cdot \#\left(\mathcal
  P_{D}/\sim\right).
$$
\bigskip

{\it The   remaining   parts    consist   in    showing   that $\#\left(\mathcal P_{D}/\sim\right) = 1$.} \bigskip

\item To compute the number of equivalence classes in $\Pcal_D$, we use McMullen's 
approach \cite{Mc4}. We first show that every prototype $(w,h,t,e)\in \Pcal_D$ can be  connected
 to a reduced one, {\it i.e.} a prototype $(w,h,t,e)$ where
  $h=1$. For fixed  $D$ such reduced prototypes are uniquely
  determined by its $e$ and thus they are parametrized by
$$
\mathcal S_{D}  =  \{ e\in  \Z,\  e^{2} \equiv  D  \mod 8,\  e^{2}
  \textrm{ and } (e+4)^{2} < D \}.
$$
The prototype in $\Pcal_D$ associated to an element $e\in\mathcal{S}_D$ will be denoted by $[e]$. 
We define an equivalence relation in $\mathcal S_D$ which is generated by the following condition 
$e \sim -e - 4q$ whenever $-e-4q\in \mathcal S_{D}$ and $\gcd(w,q)=1$, where $q>0$ and 
$w =  (D-e^{2})/8$. By definition, we have $e\sim e'$ in $\mathcal S_D$ implies that $[e]\sim [e']$ in
 $\Pcal_D$ but the converse is not necessarily true. Thus, the number of equivalence classes in 
 $\mathcal S_D$ gives us an upper bound for the number of equivalence classes in $\Pcal_D$. \medskip

\item  In Section~\ref{sec:connectedness:PD}  we  show that, for $D$ sufficiently large ({\it i.e.} 
$D\geq 83^2$)  $\mathcal   S_{D} /  \sim$ consists of  a single component  if 
$D \not \equiv 4 \mod 16$,  otherwise two.  It follows immediately that for $D\not\equiv 4 \mod 16$,
 there is  only one equivalence class in $\Pcal_D$. For the  remaining  cases, we  show that  it is
  possible   to   connect  the   two   components of $\mathcal{S}_D$  through   $\mathcal
  P_{D}$, hence   there is actually only one equivalence class in $\Pcal_D$,  which implies
  that $\Qcal_D$ (and thus $\Omega E_D(4)$) has at most two components.  \medskip

\item  For $D\equiv 0,4\mod 8$, two components of $\Qcal_D$ can be connected by an 
explicit path in $\Omega E_{D}$ (see Theorem~\ref{theo:connect:odd:steps}),
therefore $\Omega E_D(4)$ consists of only one $\GL^+(2,\R)$-orbit and our 
statement is proven in this case. When $D \equiv 1\mod 8$, since $\Omega E_D(4)$ contains at most 
two components, it remains to show that $\Omega E_D(4)$ is non connected. \medskip

\item When $D \equiv 1\mod 8$ (i.e. $D$ is odd since $D\not \equiv 5 \mod 8$) 
it turns out that the locus $\Omega E_D(4)$ contains at least two distinct 
 $\GL^+(2,\R)$-orbits (Theorem~\ref{theo:disconnect:odd}). Roughly
speaking, the two components correspond to two distinct complex lines in the space 
$\Omega(X,\rho)^- \simeq H^1(X,\R)^-$ (see Section~\ref{sec:OddDisc}).   \medskip

\item Note  that the  number theory arguments  that we use  only apply
  when $D$ is sufficiently large. For small
  values of $D$, Theorem~\ref{theo:main:intro} is proven with computer
  assistance (see Table~\ref{tab:exceptional:cases} page~\pageref{tab:exceptional:cases}, and
  Table~\ref{table:exceptionnal:link} page~\pageref{table:exceptionnal:link}).  Actually, the number of
  components of $\Omega  E_D(4)$ is not always equal  to the number of
  components of  $\Qcal_D$, there are several  exceptions, namely when
  $D\in \{41,\ 48,\ 68,\ 100\}$. We discuss and prove Theorem~\ref{theo:main:intro} for those exceptional cases in
  Section~\ref{sec:diagrams}.

\end{enumerate}

\subsection*{Reader's guide}

In  Section~\ref{sec:background} we review  basic definitions  on real
multiplication         and         state         precisely         the
classification.    Section~\ref{sec:topology}   is   devoted    to   a
classification of cylinder decompositions.   This allows us to obtain
a  combinatorial description  of  Prym eigenforms,  which  is achieved in
Section~\ref{sec:prototypes}.    In  Section~\ref{sec:uniqueness},    and
Section~\ref{sec:OddDisc} the combinatorial description of cylinder decompositions is used to prove that
for different  values of $D$,  the loci $\Omega E_D(4)$  are disjoint,
and when  $D$ is odd, $\Omega E_D(4)$ contains at least  two distinct $\GL^+(2,\R)$-
orbits.\\
In Section~\ref{sec:butterfly},  we introduce  the  spaces of
prototypes $\Qcal_D$ and $\Pcal _D$.  We define the ``Butterfly  Move'' operations, and compute 
the induced transformations on the sets $\Qcal_D$ and $\Pcal_D$.  Section~\ref{sec:connectedness:PD} 
can be  read independently from the
others.   We prove  the combinatorial  connectedness of  the  space of
prototypes $\Pcal_D$. Then in the last part (Section~\ref{sec:diagrams}) we give
the proof of our main result.  In the Appendix we treat the particular
cases separately, and give a quick r\'esum\'e of the classification problem for Prym eigenforms in 
$\Omega\mathfrak{M}(6)$. We also derive the number of cusps of Teichm\"uller curves given by Prym 
eigenforms (Appendix~\ref{appendix:cusps}).

\subsection*{Acknowledgments}

The authors  are grateful  to the Hausdorff  Institute in  Bonn, where
this work  began, for its hospitality, excellent  ambiance and working
conditions. The  authors thank the  Centre de Physique  Th\'eorique in
Marseille and the Universit\'e Bordeaux 1.

We thank  Vincent Delecroix, Pascal Hubert,  Samuel Leli\`evre, Carlos
Matheus, Curt McMullen and  Martin M\"oller for useful discussions and
remarks on this paper.

We also thank~\cite{sage} for computational help.

\section{Background and tools}
\label{sec:background}

We review basic notions  and results concerning Abelian differentials,
translation surfaces  and real multiplication.  For general references
see {\it e.g.}~\cite{Masur2002,Zorich:survey,Mc1,Mc7}.

\subsection{Prym varieties}

If $X$ is  a Riemann surface, and  $\rho : X \lra X$  is a holomorphic
involution then  the {\em Prym  variety} of $(X,\rho)$ is  the abelian
variety defined by
$$ \Prym(X,\rho)=(\Omega(X)^-)^*/H_1(X,\Z)^-,
$$          where          $\Omega(X)^-=\{\omega\in         \Omega(X),
\  \rho^*(\omega)=-\omega\}$,   and  $H_1(X,\Z)^-  =   \{  \gamma  \in
H_1(X,\Z),\ \rho(\gamma)=-\gamma \}$. \\
In  the rest of  this paper, we  will assume that  $ \dim_\C
\Prym(X,\rho)   =  2$.  This   assumption  can   be  easily   read  as
follows.     Since     $\Omega(X/\rho)$     is     identified     with
$\Omega(X)^+=\ker(\rho-\id)\subset \Omega(X)$ one has
$$        \dim_\C        \Prym(X,\rho)=\dim_\C       \Omega(X)-\dim_\C
\Omega(X/\rho)=\textrm{genus}(X)-\textrm{genus}(X/\rho) =2
$$

\begin{Remark}
This assumption can be thought as the natural condition coming for the
genus two case discussed in the introduction (compare with~\cite{Mc7},
Section~$3$).
\end{Remark}

In  the   present  article  we  will  concentrate   on  the  following
construction of Prym varieties.

\begin{Example}
\label{ex:main}
Let $q$ be a quadratic differential on the 2-torus having three simple
poles and a single zero (of order 3).  Let $\pi: X \rightarrow \mathbb
T^2$  be the double  orientating cover.  Then the  deck transformation
$\rho$ on  $X$ provides a  natural Prym variety  $\Prym(X,\rho)$ where
$\sqrt{\pi^\ast   q}   \in  \Omega(X)^-$.   Observe   that  the   form
$\sqrt{\pi^\ast q}$ has a unique zero of order $4$.
\end{Example}

\subsection{Real multiplication on abelian variety}

Let  $D>0$  be a  positive  integer congruent  to  $0$  or $1$  modulo
$4$. Let
$$ \Ord_D \cong \Z[X]/(X^2+bX+c)
$$ be the real quadratic  order of discriminant $D$, where $b,c\in \Z$
and $D=b^2-4c$. \medskip

If $P$ is  a polarized abelian variety, we can  then identify $P$ with
the quotient $\C^g/L$  where $L$ is a lattice  isomorphic to $\Z^{2g}$
equipped   with   a   symplectic   pairing   $\langle,\rangle$.    The
endomorphism ring $\mathrm{End}(P)$ of $P$ is then identified with the
set of complex  linear maps $T :\C^g\lra \C^g$  such that $T(L)\subset
L$. Recall  that an endomorphism is  said to be  {\em self-adjoint} if
for all  $x,y \in  L$ the relation  $\langle Tx,y \rangle=  \langle x,
Ty\rangle$ holds. \medskip

We say  that the variety $P$  admits {\em real  multiplication} by the
order $\Ord_D$, if $\dim_\C P=2$, and if there exists a representation
$\mathfrak{i} : \Ord_D \lra \mathrm{End}(P)$ which satisfies
\begin{enumerate}
\item \label{cond:1}  $\mathfrak{i}(\lambda)$ is self-adjoint  for any
  $\lambda \in \Ord_D$,
\item \label{cond:2} $\mathfrak{i}(\Ord_D)$  is a {\em proper subring}
  of $\mathrm{End}(P)$ {\it i.e.}  if $T\in \mathrm{End}(P)$ and $nT\in
  \mathfrak{i}(\Ord_D)$   where   $n   \in   \Z,\  n>0$   then   $T\in
  \mathfrak{i}(\Ord_D)$.
\end{enumerate}

\subsection{Prym eigenforms}

Let $P=\Prym(X,\rho)$ be a Prym variety.
We say that $P$ has real multiplication by the order $\Ord_D$ if there
exists a  representation $\mathfrak{i} :  \Ord_D \lra \mathrm{End}(P)$
which  satisfies  above conditions~(\ref{cond:1})  and~(\ref{cond:2}),
where the lattice $H_1(X,\Z)^{-}$  is equipped with the restriction of
the   intersection form  on $H_1(X,\Z)$. \medskip

Since $\rho$ acts on $\Omega(X)$,  it follows that we have a splitting
into a direct sum of  two eigenspaces: $\Omega(X) = \Omega(X)^+ \oplus
\Omega(X)^-$.  If  $P$  has  real multiplication  then  $\Ord_D$  acts
naturally on $\Omega(P)\cong\Omega(X)^-$. We  say that a non zero form
$\omega   \in   \Omega(X)^-$   is    a   {\em   Prym   eigenform}   if
$\Ord_D\cdot\omega \subset\C\omega$.

\subsection{Pseudo-Anosov homeomorphisms}
\label{ex:pA}

Real multiplication arises naturally with pseudo-Anosov homeomorphisms
commuting  with   $\rho$.   Let  $\phi   :  X  \rightarrow  X$   be  a
pseudo-Anosov affine with respect to  the flat metric given by $\omega
\in \Omega(X,\rho)^-$. Since $\phi$  commutes with $\rho$ it induces a
homomorphism
$$ T = \phi_{\ast}  + \phi_{\ast}^{-1} : H_1(X,\Z)^{-} \longrightarrow
H_1(X,\Z)^{-},
$$ that is self-adjoint.  Observe  that $T$ preserves the complex line
$S$   in    $(\Omega(X,\rho)^{-})^*$   spanned   by    the   dual   of
$\rm{Re}(\omega)$ and $\rm{Im}(\omega)$, and the restriction of $T$ to
this vector space is $\rm{Tr}(D\phi)\cdot \textrm{id}_{S}$.

Now the crucial assumption on the dimension comes into play.  Since $\dim_\C
\Omega(X,\rho)^{-} = 2$ one has $\dim_\C S^{\perp} = 1$.  Since $T$ is
self-adjoint,  it preserves  the  splitting $(\Omega(X,\rho)^{-})^*  =
S\oplus S^{\perp}$, acting by real scalar multiplication on each line,
hence  $T$ is  $\C$-linear,  {\it i.e.}  $T\in \mathrm{End}(P)$.  This
equips $\Prym(X,\rho)$  with the real multiplication  by $\Z[T] \simeq
\mathcal O_{D}$ for a convenient discriminant $D$. Since $T^\ast\omega
= \rm{Tr}(D\phi)\omega$, the form $\omega$ becomes an eigenform for this
real multiplication. \bigskip

We now summarize results on the moduli space of all forms, its stratification and the
action of $\GL^+(2,\R)$ upon it.

\subsection{Stratification of the space of Prym eigenforms}

As usual we denote  by $\Omega\mathfrak{M}_g$ the Abelian differential
bundle over the moduli space of Riemann surfaces of genus $g$, that is
the moduli space of pairs $(X,\omega)$, where $X$ is a Riemann surface
of  genus $g$, and  $\omega$ is  an Abelian  differential on  $X$. For
$g>1$ the natural stratification given by the orders of the zeroes of $\omega$
is denoted by:
$$   \Omega\mathfrak{M}_g=\bigsqcup_{\begin{array}{c}0<k_1\leq\dots\leq
    k_n,               \\               k_1+\dots+k_n=2g-2\end{array}}
\Omega\mathfrak{M}(k_1,\dots,k_n),
$$     where    $\Omega\mathfrak{M}(k_1,\dots,k_n)=\{(X,\omega)    \in
\Omega\mathfrak{M}_g,\  \hbox{the  zeroes   of  $\omega$  have  orders
  $(k_1,\dots,k_n)$}             \}$.              We            refer
to~\cite{Hubbard:Masur,Masur1982,Kontsevich1997,Veech1990}   for  more
details.  These   strata  are  not  necessarily   connected,  but  the
classification    has     been    obtained    by     Kontsevich    and
Zorich~\cite{Kontsevich2003}. \medskip

Let $\Omega  E_D \subset \Omega\mathfrak{M}_g$  (respectively, $\Omega
E_D(k_1,\dots,k_n) \subset  \Omega\mathfrak{M}(k_1,\dots,k_n)$) be the
space  of  Prym eigenforms  (respectively,  with  marked zeroes)  with
multiplication  by  $\Ord_D$.  Note   that  in  general,  neither  the
involution  $\rho$, nor  the  representation of  $\Ord_D$ is  uniquely
determined by  the eigenform $\omega$. We discuss  this uniqueness for
the case $\Omega E_D(4)$ in Section~\ref{sec:uniqueness}.

\begin{Example}
Coming back  to Example~\ref{ex:main}  one has $\sqrt{\pi^\ast  q} \in
\Omega\mathfrak{M}(4)$,  the underlying  Riemann surface  having genus
$3$. Thus, combining with Section~\ref{ex:pA}, this furnishes examples
in $\Omega E_{D}(4)$.
\end{Example}

\begin{Remark}
\label{rk:fixed:pts}
Applying Riemann-Hurwitz formula to the condition
$$ \dim_\C \Prym(X,\rho) = g(X)-g(X/\rho) = 2
$$ we  get that  $\Omega E_D=\emptyset$ unless  $ 2\leq g(X)  \leq 5$.
Moreover if $g(X)=5$ then $\rho$ has no fixed points {\it i.e.}  the
projection $X\lra Y=X/\rho$ is unramified.
\end{Remark}

\subsection{Dynamics on moduli spaces}

The group $\textrm{GL}^+(2,\R)$ acts on the set of translation surfaces
by postcomposition in the charts of the translation structure. 
The subgroup $\textrm{SL}(2,\R)$ preserves the area of a translation surface. The dynamics of 
the one-parameter diagonal subgroup of $\textrm{SL}(2,\R)$ has been studied by Masur~\cite{Masur1982}
 and Veech~\cite{Veech1982}. One important conjecture in Teichm\"uller dynamics is that the closure of
  any $\textrm{GL}^+(2,\R)$-orbit is an algebraic orbifold. This conjecture has been  proven for genus $g=2$ 
by McMullen~\cite{Mc3}. More recently, Eskin-Mirzakhani~\cite{Eskin:preprint} announced some important
 breakthrough toward the establishment   of this conjecture in the general case. \medskip

The strata $\Omega\mathfrak{M}(k_1,\dots,k_n)$ are obvious $\GL^+(2,\R)$-orbit closures, 
and the $\GL^+(2,\R)$-orbits of Veech surfaces are also closed~\cite{Smillie2006}

The  main important  fact about  the  Prym eigenforms  is that  they
furnishes new examples of closed $\textrm{GL}^+(2,\R)$-invariant subsets
of $\Omega\mathfrak{M}_{g}$. Namely one has:

\begin{Theorem}[McMullen~\cite{Mc7}]
\label{ThMcM1}
The locus $\Omega  E_D$ of Prym eigenforms for  real multiplication by
$\Ord_D$    is    a    closed,   $\GL^+(2,\R)$-invariant    subset    of
$\Omega\mathfrak{M}_g$.
\end{Theorem}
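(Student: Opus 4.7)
The plan is to unpack the defining data of $\Omega E_D$ into three ingredients, a Prym involution $\rho$, a faithful proper self-adjoint representation $\mathfrak{i}:\Ord_D\to \mathrm{End}(\Prym(X,\rho))$, and the eigenform relation $\mathfrak{i}(\Ord_D)\cdot\omega\subset\C\omega$, and then verify $\GL^+(2,\R)$-invariance by direct inspection, and closedness by a discreteness argument on the auxiliary data.

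For invariance, fix $(X,\omega)\in \Omega E_D$ together with such data $(\rho,\mathfrak{i})$, let $A\in \GL^+(2,\R)$, and form $(X',\omega')=A\cdot(X,\omega)$. The translation charts of $(X',\omega')$ are obtained from those of $(X,\omega)$ by post-composition with $A$. Since $\rho$ acts in each chart by a $\C$-linear map that commutes with $A$ (it is $\pm\id$ between charts up to translation), the diffeomorphism $\rho$ remains holomorphic for the new complex structure and still satisfies $\rho^\ast\omega'=-\omega'$. The singular homology $H_1(X,\Z)^-$ and its symplectic form are unchanged, so the same $\Z$-linear self-adjoint endomorphisms generating $\mathfrak{i}(\Ord_D)$ define a representation into $\mathrm{End}(\Prym(X',\rho))$. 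Finally, writing the eigenform condition as $T^\ast\omega=\lambda\omega$ with $T=\mathfrak{i}(\lambda_0)$ and $\lambda\in\R$ (self-adjointness forces real eigenvalues), the $\R$-linear action of $A$ on periods gives $T^\ast\omega'=\lambda\omega'$. Hence $(X',\omega')\in \Omega E_D$.

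For closedness, take a sequence $(X_n,\omega_n)\in \Omega E_D$ converging to $(X,\omega)\in \Omega\mathfrak{M}_g$, with associated data $(\rho_n,\mathfrak{i}_n)$. In a period-coordinate chart around $(X,\omega)$ the lattices $H_1(X_n,\Sigma_n;\Z)$ are canonically identified, and the action of $\rho_n$ on this fixed lattice is an involutive integer matrix satisfying $\rho_n^\ast[\omega_n]=-[\omega_n]$. These constraints leave only finitely many possible matrices, so after extracting a subsequence $\rho_n$ is constant; the limit is a topological involution $\rho$ of $X$ which is holomorphic for the limiting complex structure (again because it is $\pm\id$ in translation charts) and satisfies $\rho^\ast\omega=-\omega$ with $\dim_\C\Prym(X,\rho)=2$ by continuity. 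A generator $T_n$ of $\mathfrak{i}_n(\Ord_D)$ is then a self-adjoint $\Z$-endomorphism of the rank-$4$ lattice $H_1(X_n,\Z)^-=H_1(X,\Z)^-$ satisfying a fixed quadratic relation with discriminant $D$, and such endomorphisms form a discrete set, so after a further subsequence $T_n=T$ is constant. The eigenform equation $T^\ast\omega_n=\lambda_n\omega_n$ is a closed linear condition on periods with $\lambda_n$ bounded (the eigenvalues of $T$ are fixed reals), hence it passes to the limit, giving real multiplication on $\Prym(X,\rho)$ by $\Ord_D$ with $\omega$ as eigenform.

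The main obstacle is the simultaneous stabilization of $\rho_n$ and $T_n$ along a subsequence, which relies on two discreteness facts: that involutions acting on a fixed integer homology lattice and satisfying the cohomological constraint $\rho^\ast[\omega]=-[\omega]$ form a finite set locally in period coordinates, and that self-adjoint endomorphisms of a fixed rank-$4$ symplectic lattice satisfying a prescribed integral quadratic polynomial form a discrete set. The non-uniqueness of $(\rho,\mathfrak{i})$ noted in the paper means we cannot select these data canonically, but any compatible sequence of choices suffices.
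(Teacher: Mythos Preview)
The paper does not prove this theorem at all: it is stated as a result of McMullen with a citation to~\cite{Mc7}, and is used throughout as a black box. There is therefore no proof in the paper against which to compare your attempt.

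That said, a few remarks on your sketch are in order. The $\GL^+(2,\R)$-invariance argument is essentially correct: the action only post-composes translation charts by a real-linear map, so the Prym involution, the integral homology $H_1(X,\Z)^-$ with its symplectic form, and the $\Z$-linear self-adjoint endomorphisms are all literally unchanged, and the eigenform relation $T^*\omega=\lambda\omega$ with $\lambda\in\R$ survives because the action of $A$ on periods is $\R$-linear.

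The closedness argument has the right architecture but two of the justifications are thin. First, your claim that the constraints on $\rho_n$ (integer involution with $\rho_n^*[\omega_n]=-[\omega_n]$) leave only finitely many matrices is not obvious as stated; the clean reason is Hurwitz's theorem (a genus $g\ge 2$ Riemann surface has at most $84(g-1)$ holomorphic automorphisms, acting faithfully on $H_1$), which gives a uniform finite bound independent of $n$ and lets you extract a constant subsequence. Second, for the generators $T_n$, the phrase ``form a discrete set'' is not enough: there are infinitely many self-adjoint integer matrices on a rank-$4$ symplectic lattice satisfying a fixed integral quadratic. The point is rather that once you normalise the generator (so the eigenvalues $\lambda,\lambda'$ are fixed reals), $T_n$ is completely determined by the splitting $S_n\oplus S_n^\perp$ of $H^1(X,\R)^-$, where $S_n=\mathrm{span}\{\mathrm{Re}(\omega_n),\mathrm{Im}(\omega_n)\}$; since $S_n\to S$, the real matrices $T_n$ converge, and integrality then forces them to be eventually constant. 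With these two fixes your outline becomes a genuine proof, and is in fact close in spirit to McMullen's original argument.
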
 

\subsection{Weierstrass curves}

Following  McMullen, we  call the  locus $\Omega  E_D(2g-2)$  the {\em
  Weierstrass locus}.   Remark that the unique zero  of the eigenforms
in $\Omega  E_D(2g-2)$ must  be a fixed  point of $\rho$.   Since when
$g(X)=5$    the    involution    $\rho$    has   no    fixed    points
(Remark~\ref{rk:fixed:pts}),  it follows  that  $\Omega E_D(2g-2)$  is
non-empty only if $g=2,3,4$.  As a corollary of Theorem~\ref{ThMcM1}, a
dimension count gives

\begin{Corollary}\label{cor:McM1}
For $g=2,3,4$, the projection of  the Weierstrass locus to $\Mod_g$ is
a  finite union  of  Teichm\"uller  curves.  Each  of  such curves  is
primitive unless $D$ is a square.
\end{Corollary}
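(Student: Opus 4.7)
The plan is to deduce the corollary from Theorem~\ref{ThMcM1} together with a dimension count and the quasi-projective nature of the Prym eigenform locus. First I would observe that the stratum $\Omega\mathfrak{M}(2g-2)$ is itself $\GL^+(2,\R)$-invariant, so the intersection
$$
\Omega E_D(2g-2) = \Omega E_D \cap \Omega\mathfrak{M}(2g-2)
$$
is a closed $\GL^+(2,\R)$-invariant subset of $\Omega\mathfrak{M}_g$. A dimension count, using $\dim_\C\Prym(X,\rho)=2$ for the Prym part, the fact that the eigenform condition cuts $\Omega(X)^-$ down to a complex line, and the observation that the unique zero is a fixed point of $\rho$ (so no additional relative periods enter when one passes to the minimal stratum), yields $\dim_\C\Omega E_D(2g-2)=2$.

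Next, since any $\GL^+(2,\R)$-orbit of a non-zero Abelian differential in $\Omega\mathfrak{M}_g$ is already $2$-complex-dimensional, every such orbit contained in $\Omega E_D(2g-2)$ exhausts an entire irreducible component. Hence the orbit is closed, and by Smillie's theorem~\cite{Smillie2006} the generating surface is a Veech surface whose image in $\mathfrak{M}_g$ is a Teichm\"uller curve. Because $\Omega E_D$ is cut out by the algebraic conditions of Prym involution and real multiplication by $\Ord_D$, the locus $\Omega E_D(2g-2)$ is a quasi-projective variety with only finitely many irreducible components; each component descends to one Teichm\"uller curve, so the projection to $\mathfrak{M}_g$ is a finite union of Teichm\"uller curves.

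For the primitivity statement, I would invoke the trace field of the Veech group: by the construction recalled in Section~\ref{ex:pA}, pseudo-Anosov elements of $\SL(X,\omega)$ give rise to self-adjoint endomorphisms generating $\Ord_D$, so when $D$ is not a square the trace field contains $\Q(\sqrt{D})$. A non-primitive Teichm\"uller curve is affinely covered by a Teichm\"uller curve generated by a form of strictly lower genus, and the trace field is a birational invariant of such covers. In genus $1$ the trace field is $\Q$, and in genus $2$ a Teichm\"uller curve with non-trivial real multiplication must itself be of Weierstrass type; comparing discriminants and ramification forces equality of trace fields, which together with the genus inequality rules out any covering relation, so the curve must be primitive.

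The main obstacle in this outline is the dimension count together with the precise invocation of the trace field under coverings. The former is delicate because the Prym, eigenform, and minimal-stratum conditions must be imposed simultaneously and consistently with the fact that the single zero is $\rho$-fixed; the latter requires a careful comparison with the (few) known primitive Teichm\"uller curves in genus $\leq 2$ to exclude the non-primitive scenario. Once these two points are established, the statement reduces to the closedness and $\GL^+(2,\R)$-invariance furnished by Theorem~\ref{ThMcM1}.
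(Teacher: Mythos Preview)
Your proposal is essentially correct and follows the same route the paper indicates: the paper does not give a detailed proof of this corollary at all, merely writing ``As a corollary of Theorem~\ref{ThMcM1}, a dimension count gives'' before stating the result, which is really McMullen's from~\cite{Mc7}. Your sketch of closedness plus $\GL^+(2,\R)$-invariance, the dimension match forcing each orbit to be open and hence (by the boundary-dimension argument) closed, and the appeal to Smillie's theorem is exactly the intended mechanism.

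One point worth tightening is your primitivity argument. The phrasing ``affinely covered by a Teichm\"uller curve generated by a form of strictly lower genus'' is slightly garbled (the covering goes the other way at the level of surfaces), and the attempt to rule out a genus~$2$ source by comparing Weierstrass types is more elaborate than needed. The cleaner route, which is what McMullen actually uses, is to observe that when $D$ is not a square the absolute periods of $\omega$ span a $\Z$-module of rank greater than~$2$ in $\C$, so $(X,\omega)$ cannot arise as a branched cover of a flat torus; combined with the fact that the trace field $\Q(\sqrt{D})$ has degree~$2$ and is invariant under translation coverings, this already forces primitivity without a case analysis in genus~$2$. Since the paper defers all of this to~\cite{Mc7} anyway, your level of detail is already beyond what is required here.
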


It turns out  that for surfaces $(X,\omega)\in \Omega\Mod(2g-2)$, if there exists a Prym 
involution $\rho$ such that $\dim_{\C}\Omega(X,\rho)^-=2$, and $\rho(\omega)=-\omega$, then the following 
are equivalent (see \cite{Mc3}):
\begin{enumerate}
\item $(X,\omega)\in \Omega E_D(2g-2)$.\medskip
\item There is a hyperbolic element in $\SL(X,\omega)$.\medskip
\item The group $\SL(X,\omega)$ is a lattice.\medskip
\end{enumerate}

Teichm\"uller  curves  in   $\Mod_2$  have  been  intensively  studied
(\cite{Mc1,Mc4,Mc7,Calta2004}).  The  situation  is  now  rather  well
understood.  The question  whether  or not  the  Weierstrass locus  is
connected has been raised in~\cite{Mc4} and solved for $g=2$:

\begin{Theorem}[McMullen~\cite{Mc4}]
\label{theo:mc:connexe}
For any integer $D\geq 5$ with $D \equiv 0 \textrm{ or } 1 \mod 4$
\begin{enumerate}
\item Either the Weierstrass locus $\Omega E_{D}(2)$ is connected, or
\item  $D  \equiv  1  \mod  8$  and  $D  \not  =  9$,  in  which  case
  $\Omega E_{D}(2)$ has exactly two components.
\end{enumerate}
For $D<5$, $\Omega_{D}(2) = \emptyset$.
\end{Theorem}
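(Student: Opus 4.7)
The plan is to follow a strategy parallel to (indeed, the model for) the approach outlined in this paper for the genus three case, built on prototypes and connectivity moves between cylinder decompositions. I would proceed as follows.

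First, I would show that every $(X,\omega)\in\Omega E_D(2)$ admits a decomposition into two horizontal cylinders in some periodic direction. Since $(X,\omega)$ is a Veech surface, the Veech dichotomy applied to parabolic fixed directions of $\SL(X,\omega)$ produces infinitely many completely periodic directions; for a surface in $\Omega\mathfrak{M}(2)$ the only non-trivial cylinder decomposition involves either one cylinder (degenerate) or two cylinders exchanged by the hyperelliptic involution. After normalizing with $\GL^+(2,\R)$ and an appropriate Dehn twist, such a two-cylinder decomposition can be encoded by a prototype $(a,b,c,e)\in\Z^4$ with $D=e^2+4bc$, $0<b$, $0<c$, $0\le a<\gcd(b,c)$, $\gcd(a,b,c,e)=1$, together with an inequality coming from the real multiplication condition. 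Let $\mathcal{P}_D$ denote the resulting finite set of prototypes. This yields a surjection $\mathcal{P}_D\to\pi_0(\Omega E_D(2))$.

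Second, I would introduce ``splittings'' of a prototype---analogs of the butterfly moves developed in Section~\ref{sec:butterfly}---by choosing another periodic direction in which the surface again decomposes into two cylinders, and writing down explicitly how $(a,b,c,e)$ changes. The uniqueness of the Prym involution and the quadratic order (automatic in genus two since $\Omega(X)^{-}=\Omega(X)$) ensures that $D$ is an invariant, and the width/height/twist bookkeeping gives an explicit equivalence relation $\sim$ on $\mathcal{P}_D$. Then $\#\pi_0(\Omega E_D(2))\le \#(\mathcal{P}_D/\!\sim)$. Next, one reduces to a simpler parameter space: every prototype is connected by a sequence of splittings to a \emph{reduced} prototype with $c=1$, and reduced prototypes are classified by an integer $e$ with $e^2\equiv D\pmod{4}$, $e^2<D$, and $(e+2)^2<D$, forming a set $\mathcal{S}_D$. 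The induced relation on $\mathcal{S}_D$ is generated by a rule of the form $e\sim -e-2q$ whenever $-e-2q\in\mathcal{S}_D$ and $\gcd((D-e^2)/4,q)=1$.

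Third, a number-theoretic analysis (in the same spirit as Section~\ref{sec:connectedness:PD}, combining Dirichlet's theorem on primes in arithmetic progressions with explicit handling of small moduli) would give: for $D$ sufficiently large, $\mathcal{S}_D/\!\sim$ has a single class when $D\equiv 0\pmod{4}$ or $D\equiv 5\pmod 8$, and at most two classes when $D\equiv 1\pmod 8$. Together with the surjection above, this already yields the upper bound ``at most two components''. For small $D$ the dichotomy should be checked by direct enumeration, and the $D=9$ exception should emerge from a coincidence collapsing the two candidate classes in $\mathcal{S}_9$.

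Finally---and this is where I expect the main obstacle---I would construct an invariant of the $\GL^+(2,\R)$-orbit, computable directly from prototype data, that distinguishes the two candidate components when $D\equiv 1\pmod 8$ and $D>9$. The natural candidate is a spin (Arf-type) invariant: one writes it explicitly in terms of $(a,b,c,e)$ modulo $2$ and checks both that it is well-defined on the surface (independent of the chosen two-cylinder decomposition) and that it is preserved by splittings. Showing that it takes both values on $\mathcal{P}_D$ when $D\equiv 1\pmod 8$, $D>9$, then furnishes the matching lower bound of two components, while constancy in all other cases, combined with the count from the previous step, forces connectedness. The delicate point is simultaneously guessing the correct mod $2$ expression and verifying splitting-invariance, since the combinatorial transformations act non-trivially on the parity data; this is precisely the step where the argument must depart from the pure combinatorics of $\mathcal{P}_D/\!\sim$ and invoke genuine topology of the underlying surface.
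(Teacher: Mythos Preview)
This theorem is not proved in the present paper: it is quoted from McMullen~\cite{Mc4} as background, and the paper offers no argument for it. So there is no ``paper's own proof'' to compare against here. That said, your proposal is a faithful sketch of McMullen's original argument in~\cite{Mc4}, which is precisely the template this paper adapts to genus three (see Sections~\ref{sec:prototypes}--\ref{sec:connectedness:PD}).

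Two small corrections to your sketch. First, in $\Omega\mathfrak{M}(2)$ the two cylinders in a two-cylinder decomposition are \emph{not} exchanged by the hyperelliptic involution; each is individually invariant (the picture is an L-shaped polygon with a large square and a smaller rectangle glued along an edge), and the prototype records the shape of that L. Second, the number-theoretic step in~\cite{Mc4} does not invoke Dirichlet on primes in progressions; it uses elementary bounds on gaps between integers coprime to a given modulus (the Jacobsthal function, via Kanold's estimate), exactly as reproduced in Section~\ref{sec:connectedness:PD} of this paper. Your identification of the spin invariant as the separating invariant when $D\equiv 1\pmod 8$ is correct and is indeed the point where genuine topology enters; cf.\ Remark~\ref{Rmk:MainTh1}(2), which contrasts this with the different mechanism responsible for disconnectedness in genus three.
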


We are finally in a position to state precisely our results.
\subsection{Statements of the results}
\label{sec:statements}

\begin{Theorem}[Generic case]
\label{MainTh1}
Let $D\geq  17$ be  a discriminant. The  locus $\Omega E_D(4)$  is non
empty if and only if $D\not\equiv 5 \mod 8$. In this case, one has the
two possibilities:
\begin{enumerate}
\item Either $D$ is even, then the locus $\Omega E_D(4)$ consists of a
  single $\GL^+(2,\R)$-orbit,
\item or  $D$ is odd  then the locus  $\Omega E_D(4)$ consists  of two
  $\GL^+(2,\R)$-orbits.
\end{enumerate}
Moreover,  if  $D_1\neq   D_2$,  then  $\Omega  E_{D_1}(4)\cap  \Omega
E_{D_2}(4)=\ety$.
\end{Theorem}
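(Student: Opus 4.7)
The plan is to follow the outline of the Introduction: reduce the locus $\Omega E_D(4)$ to a finite combinatorial set of prototypes, bound the number of components via a number-theoretic analysis of ``Butterfly moves'', and then sharpen the bound to an exact count using an analytic obstruction in the odd case.

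First I would set up the cylinder-decomposition framework. Since every $(X,\omega) \in \Omega E_D(4)$ is a Veech surface by Theorem~\ref{ThMcM1}, the Veech dichotomy supplies many completely periodic directions. Using that $\omega$ has a single zero of order $4$ and is anti-invariant under the Prym involution $\rho$, I would classify three-cylinder decompositions into the three topological models $A+$, $A-$, $B$ (Corollary~\ref{cor:decomp:3cyl}), and show that for $D \neq 8$ every surface admits a decomposition of type $A\pm$. Such decompositions are encoded by tuples $(w,h,t,e,\varepsilon)$ satisfying $(\mathcal{P})$ with $D = e^2 + 8hw$, so $D \equiv e^2 \mod 8$ is forced; since the squares modulo $8$ are $\{0,1,4\}$, the condition $D \not\equiv 5 \mod 8$ is necessary, and I would check by explicit construction that it is also sufficient for $D \geq 17$. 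The disjointness of the $\Omega E_{D_i}(4)$ follows separately: for an eigenform with a single zero of order $4$, the prototype analysis pins down both $\rho$ and the representation of $\Ord_D$ uniquely (Theorem~\ref{UqeThm}), so $D$ is an invariant of $(X,\omega)$.

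For the upper bound on the number of components, I would introduce the Butterfly-move equivalence $\sim$ on $\mathcal{Q}_D$ generated by switching the simple cylinder between two intersecting decompositions of type $A\pm$, yielding
\[
\#\{\text{components of } \Omega E_D(4)\} \;\leq\; \#(\mathcal{Q}_D/\sim) \;\leq\; 2\cdot \#(\mathcal{P}_D/\sim).
\]
Using the explicit Butterfly-move formulas (Propositions~\ref{BM1Prop} and~\ref{BM2Prop}), the equivalence descends to $\mathcal{P}_D$ and, after reduction to prototypes with $h=1$, to the set $\mathcal{S}_D$ equipped with the relation $e \sim -e-4q$ for $\gcd(w,q)=1$, $w=(D-e^2)/8$. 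The number-theoretic analysis of Section~\ref{sec:connectedness:PD} would then show that, for $D$ sufficiently large, $\mathcal{S}_D/\sim$ is a single class when $D \not\equiv 4 \mod 16$ and two classes when $D \equiv 4 \mod 16$; in the latter case the two $\mathcal{S}_D$-classes become identified after passing to $\mathcal{P}_D$, so $\#(\mathcal{P}_D/\sim) = 1$ in every case and $\Omega E_D(4)$ has at most two components.

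To upgrade this upper bound to the exact count I would separate the parities of $D$. For $D$ even I would exhibit an explicit continuous path inside $\Omega E_D$ joining two prototypes with opposite signs $\varepsilon$ (Theorem~\ref{theo:connect:odd:steps}), yielding a single $\GL^+(2,\R)$-orbit. For $D$ odd I would construct a separating invariant: the restriction of the real multiplication to the complex line $\C\omega \subset \Omega(X,\rho)^-$ selects one of the two embeddings $\Ord_D \hookrightarrow \R$, and this choice is locally constant on $\Omega E_D(4)$ but takes different values on the two sign classes (Theorem~\ref{theo:disconnect:odd}). The hard part will be the number-theoretic step itself: the relation $e \sim -e-4q$ under the coprimality constraint $\gcd(w,q)=1$, with $w$ varying with $e$, is delicate because each move alters the auxiliary modulus, so joining arbitrary elements of $\mathcal{S}_D$ requires careful control of the density of integers coprime to $w$ in short intervals. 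This is why the argument only closes for $D$ large (roughly $D \geq 83^2$), and the small exceptional discriminants $\{41,48,68,100\}$ must be handled by explicit case analysis.
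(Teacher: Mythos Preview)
Your overall architecture matches the paper closely, and the non-emptiness, disjointness, and upper-bound portions are correctly outlined. There is, however, a genuine gap in your treatment of the odd case.

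The invariant you propose---that the action of $\Ord_D$ on $\C\omega$ selects one of the two real embeddings $\Ord_D\hookrightarrow\R$---does not separate the two components. For a fixed incomplete prototype $p=(w,h,t,e)$, the surfaces built from $(p,+)$ and $(p,-)$ carry normalized generators $T^+$, $T^-$ satisfying the \emph{same} quadratic equation $T^2-eT-2wh=0$, and both act on $\omega$ by the \emph{same} positive eigenvalue $\lambda=(e+\sqrt D)/2$ (compare Propositions~\ref{NormA1Prop} and~\ref{NormA2Prop}). Hence the induced embedding $\Ord_D\to\R$ is identical for the two signs, and your invariant is constant across the putative components.

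What actually separates them in the paper is a mod-$2$ symplectic invariant (Lemma~\ref{lm:OddDisc:Generator}): one computes the intersection form $\langle\,,\,\rangle$ restricted to the image lattice $\mathrm{Im}(T^\pm)\subset H_1(X,\Z)^-$ and finds it nonzero mod~$2$ for $T^+$ but zero mod~$2$ for $T^-$ whenever $e$ is odd. The mechanism is that although $T^+$ and $T^-$ share eigenvalues, they are different endomorphisms of $H_1(X,\Z)^-$, because the factor of $2$ in the symplectic form $\left(\begin{smallmatrix}J&0\\0&2J\end{smallmatrix}\right)$ sits on opposite blocks in models $A+$ and $A-$. Uniqueness of the generator (Theorem~\ref{UqeThm}) then forces the two surfaces onto distinct $\GL^+(2,\R)$-orbits. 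The correct geometric reading (see Remark~\ref{Rmk:MainTh1}) is that the two components correspond to two distinct complex lines in $\Omega(X,\rho)^-$, not to two choices of real place.

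A minor point on the even case: your phrase ``continuous path inside $\Omega E_D$'' is faithful to the paper's informal description, but the actual content of Theorem~\ref{theo:connect:odd:steps} is purely combinatorial---one exhibits, for each congruence class of $D$, a specific $e\in\mathcal{S}_D$ admitting a Butterfly-move loop of odd length (one or three moves), which flips the sign $\varepsilon$ and collapses $\Qcal_D$ to a single class. The exceptional discriminants $D\in\{48,68,100\}$ are not handled by Butterfly moves at all but by locating, on a surface built from a Model~$B$ prototype, two periodic directions whose $A\pm$ decompositions land in the two separate $\Qcal_D$-components.
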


\begin{Remark}\label{Rmk:MainTh1}\hfill
\begin{enumerate}
\item An important difference between $\Omega E_D(4)$ and $\Omega E_D(2)$ is the symplectic
 form of the $\Prym$ varieties. In the case $\Omega E_D(2)$, the $\Prym$ variety is the Jacobian 
 variety of a Riemann surface, therefore the symplectic form is given by 
 $\left(
\begin{smallmatrix}
  J & 0 \\
  0 & J 
\end{smallmatrix} \right)$, 
where $J=\left(%
\begin{smallmatrix}
  0 & 1 \\
  -1 & 0
\end{smallmatrix}\right)$, 
but in the case $\Omega E_D(4)$, the symplectic form of the $\Prym$ variety is given by the matrix 
$\left(
\begin{smallmatrix}
  J & 0 \\
  0 & 2J 
\end{smallmatrix}\right)$.
This difference is responsible for the non-existence of $\Omega E_D(4)$, when $D \equiv 5 \mod 8$.

\item We would like also to emphasis on the fact that, even though we have the same statement 
compared to Theorem~\ref{theo:mc:connexe} when $D\equiv 1 \mod 8$, {\it i.e.} $\Omega E_D(4)$
 has two components, the reason for this disconnectedness is different in the two cases. Roughly
  speaking, in our case, the two components correspond to two distinct complex lines in the space 
  $\Omega(X,\rho)^- \simeq H^1(X,\R)^-$ (see Section~\ref{sec:OddDisc}), but in the case $\Omega E_D(2)$,
   the two components correspond to the same complex line, they can only be distinguished by the spin 
   invariant (see \cite[Section 5]{Mc4}).

\item Using similar ideas, we obtain a partial classification of Teichm\"uller curves in $\Omega E_D(6)$. 
See Appendix, Section~\ref{appendix:other:strata} for more precise detail.

\end{enumerate}

\end{Remark}

There  are only $4$  admissible values for $D$  smaller than
$17$, namely $D \in \{8,9,12,16\}$.  For these small values of $D$ one
has:

\begin{Theorem}[Small discriminants]
\label{MainTh2}
\hfill
\begin{enumerate}
\item $\Omega E_9(4)=\Omega E_{16}(4)=\ety$.

\item     $\Omega     E_{12}(4)$      consists     of     a     single
  $\mathrm{GL}^+(2,\R)$-orbit,   the   associated  Teichm\"uller   curve
  having~$2$ cusps.

\item $\Omega E_8(4)$  consists of a single $\mathrm{GL}^+(2,\R)$-orbit,
  the  associated Teichm\"uller curve  having only one cusp.   Moreover, if
  $(X,\omega)\in\Omega  E_8(4)$   then  $(X,\omega)$  has   no  simple
  cylinders.
\end{enumerate}

\end{Theorem}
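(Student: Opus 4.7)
The plan is to reduce each of the four small discriminants to a finite enumeration of prototypes. The defining inequalities of $\Pcal_D$, namely $w,h>0$, $e+2h<w$, and $D=e^2+8hw$, leave only finitely many candidates once $D$ is fixed, and for $D\in\{8,9,12,16\}$ this enumeration is a direct computation. One verifies that $\Pcal_9=\Pcal_{16}=\Pcal_8=\ety$, while $\Pcal_{12}=\{(1,1,0,-2)\}$ is a singleton.

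For $D\in\{9,16\}$, Proposition~\ref{prop:D8:topo} guarantees that any surface in $\Omega E_D(4)$ with $D\neq 8$ admits a cylinder decomposition of type $A\pm$, hence a corresponding prototype in $\Pcal_D$; emptiness of $\Pcal_D$ therefore forces $\Omega E_9(4)=\Omega E_{16}(4)=\ety$. For $D=12$, the singleton $\Pcal_{12}$ together with the same proposition shows that $\Omega E_{12}(4)$ contains at most one $\GL^+(2,\R)$-orbit; to prove it is non-empty, one exhibits an explicit representative corresponding to the prototype $(1,1,0,-2)$, for instance via the pseudo-Anosov construction of Section~\ref{ex:pA}. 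The cusp count then follows from enumerating the Veech-group orbits of completely periodic directions on this surface, using the normalised models of Propositions~\ref{NormA1Prop}, \ref{NormA2Prop}, \ref{NormBProp} together with Theorem~\ref{theo:12cyl:prototype}.

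The genuinely exceptional case is $D=8$. Since $\Pcal_8=\ety$, no surface in $\Omega E_8(4)$ admits an $A\pm$ decomposition, and the topological classification of cylinder decompositions in Proposition~\ref{prop:decomp:cylinders} and Corollary~\ref{cor:decomp:3cyl} shows that any completely periodic direction containing a simple cylinder would yield precisely such a decomposition; this immediately gives the no-simple-cylinder statement. Non-emptiness of $\Omega E_8(4)$ must then be established by exhibiting a surface whose only completely periodic decompositions are of type $B$. Once such a surface is produced, the combinatorial connectedness of the (finite) set of $B$-prototypes at $D=8$ under Butterfly-type equivalence, together with the uniqueness result of Theorem~\ref{UqeThm}, yields a single $\GL^+(2,\R)$-orbit with a unique cusp. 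The main obstacle is precisely this: the absence of $A\pm$ decompositions forces the combinatorial analysis of Section~\ref{sec:butterfly} to be rerun in the restricted $B$-setting, where the standard machinery used for Theorem~\ref{MainTh1} no longer applies directly.
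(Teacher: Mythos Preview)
Your overall strategy is right, but there are genuine gaps in the $D=12$ and $D=8$ cases.

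For $D=12$: a singleton $\Pcal_{12}$ gives $|\Qcal_{12}|=2$, so Theorem~\ref{theo:onto:map} only bounds the number of orbits by $2$, not $1$. The missing step is that any surface with an $A+$ decomposition also admits an $A-$ decomposition in another direction (this is how Butterfly moves are defined in Section~\ref{sec:butterfly}); since $\Pcal_{12}$ is a singleton, the resulting $A-$ prototype must again be $(1,1,0,-2)$, so the two complete prototypes $(1,1,0,-2,\pm)$ lie on the same orbit. For the cusp count you invoke Theorem~\ref{theo:12cyl:prototype}, but that theorem is stated and proved only when $D=d^2$ is a perfect square; it says nothing for $D=12$. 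The paper instead observes that $12$ is not a square, hence surfaces in $\Omega E_{12}(4)$ are not square-tiled, and a classical theorem of Thurston then excludes one- and two-cylinder decompositions. You must also check $\Pcal'_{12}=\ety$ to rule out model~$B$, leaving exactly the two cusps coming from the $A+$ and $A-$ models. (Non-emptiness, incidentally, follows directly from the converse direction of Proposition~\ref{NormA1Prop}; there is no need to go through the pseudo-Anosov construction.)

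For $D=8$: you describe the single-orbit and single-cusp conclusions as requiring a rerun of the Butterfly machinery in a restricted $B$-setting, and call this ``the main obstacle''. It is not an obstacle at all. A direct enumeration gives $\Pcal'_8=\{(1,1,0,0)\}$, a singleton; the converse part of Proposition~\ref{NormBProp} then produces an explicit surface (the one in Figure~\ref{fig:D8}), establishing non-emptiness. Since you have already shown there are no simple cylinders, models $A\pm$ are excluded, and models $C$ and $D$ are excluded as well because any such decomposition contains a simple cylinder in another direction (this is exactly the argument in the proof of Corollary~\ref{cor:decomp:3cyl}). Hence every periodic direction yields a model-$B$ decomposition with the unique prototype $(1,1,0,0)$, and there is one orbit and one cusp with no further combinatorial analysis.
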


Theorem~\ref{MainTh2} is a direct consequence of the classification of cylinder decompositions in 
$\Omega E_D(4)$, its proof is given in Section~\ref{sec:proof:MainTh2}.

\begin{Remark}
In the appendix we prove a similar result  (Theorem~\ref{theo:H6}) for the Prym locus of eigenforms in genus $4$ 
with a single zero, namely $\Omega E_{D}(6)$.
\end{Remark}


\section{Cylinder decompositions of Prym eigenforms}
\label{sec:topology}

In this  section we give  a complete topological description of  the cylinder decompositions  of
 Prym eigenforms. A  good introduction to  the geometry of translation  surfaces
    is~\cite{Troyanov1986}; See also~\cite{Masur2002,Zorich:survey}.

\subsection{Cylinder decompositions}

Associated to any Abelian differential is a flat metric structure with
cone  type   singularities  whose  transition   maps  are  translation
$z\mapsto z+c$. The singularities of the flat metric structure are the
zeroes of the  holomorphic $1$-form.  On such surfaces,  a {\em saddle
  connection} is a geodesic  segment whose endpoints are singularities
(the endpoints  might coincide),  a {\em cylinder}  is an  open subset
isometric to $\R\times]0,h[/\Z$, where the action of $\Z$ is generated
    by $(x,y) \mapsto  (x+w,y), \; w >0,$ and  maximal with respect to
    this property,  $h$ and  $w$ are called  the {\em height}  and the
    {\em  width} of  the cylinder  (see~\cite{Hubbard:Masur}  for more
    details).   A  cylinder is  bounded  by  concatenations of  saddle
    connections  freely homotopic to  the waist  curve. Note  that, in
    general,   the  two  boundary   components  are   not  necessarily
    disjoint. If  each boundary  component of a  cylinder is  a single
    saddle connection, we say that the cylinder is {\em simple}.\medskip

For any direction $\theta\in \S^1$,  we have a flow on the translation
surface  whose trajectories are  geodesics in  this direction.  We say
that the flow  in direction $\theta$ is {\em  completely periodic } if
each trajectory  is either a  saddle connection or a  closed geodesic.
The  surface is then  the union  of finitely  many open  cylinder and
saddle connections in this direction. \medskip

We  say  that  the  flow  in  direction  $\theta$  is  {\em  uniformly
  distributed } if each  trajectory is dense and uniformly distributed
with respect to the natural Lebesgue measure on $\Sigma$.

\medskip

Observe that  surfaces that are completely periodic  in some direction
are very  rare in  a stratum. But  in the Prym  locus this is  the typical
case. Indeed, the surfaces in $\Omega E_D(2)$ and $\Omega E_D(1,1)$ are completely
 periodic, that is, if there is a closed geodesic in some direction $\theta$, then the surface is completely
  periodic in this direction (see \cite{Calta2004}, and \cite{Mc3}). Following~\cite{Mc7}, surfaces 
  $(X,\omega)\in   \Omega  E_D(4)$  are Veech surfaces {\em i.e.} the  Veech group 
   $\SL(X,\omega):=\textrm{Stab}_{\textrm{SL}(2,\R)}(X,\omega)$   
is  a lattice. Thus the central result from~\cite{Veech1989} applies:

\begin{Theorem}[Veech~\cite{Veech1989}]
Let $(X, \omega)$ be a Veech surface. Then for
any $\theta$:
\begin{enumerate}
\item Either the flow in direction $\theta$ is completely periodic, or
\item the flow in direction $\theta$ is uniformly distributed.
\end{enumerate}
\end{Theorem}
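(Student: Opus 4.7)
The plan is to exploit the lattice property of the Veech group to reduce the problem to a question about geodesic behavior on a finite-volume hyperbolic orbifold. Without loss of generality I rotate $(X,\omega)$ so that $\theta$ becomes the horizontal direction. The Teichm\"uller disc $\SL(2,\R)\cdot(X,\omega)$ projects to a finite-area hyperbolic orbifold $\mathcal{C}=\mathbb{H}^2/\SL(X,\omega)$, and the Teichm\"uller geodesic $g_t\cdot(X,\omega)$, with $g_t=\mathrm{diag}(e^t,e^{-t})$, descends to a geodesic $\gamma$ on $\mathcal{C}$. Since $\mathcal{C}$ has finite hyperbolic area and only finitely many cusps, every half-geodesic on it satisfies the dichotomy: either $\gamma$ is eventually trapped in a cusp neighborhood (is divergent), or it returns infinitely often to a fixed compact subset of $\mathcal{C}$.

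In the divergent case, the forward endpoint of $\gamma$ at $\partial\mathbb{H}^2$ is a parabolic fixed point of $\SL(X,\omega)$, so $\theta$ is stabilised by a non-trivial parabolic element $P\in\SL(X,\omega)$. By the Thurston--Veech characterisation, every such parabolic element of an affine group is realised by an affine multi-twist along a collection of cylinders whose core curves all lie in the direction $\theta$; this forces the horizontal foliation to decompose $X$ into finitely many cylinders and boundary saddle connections, so the flow in direction $\theta$ is completely periodic.

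In the recurrent case, Masur's criterion for unique ergodicity applies: recurrence of $g_t\cdot(X,\omega)$ in the stratum implies that the horizontal measured foliation is uniquely ergodic. I then observe that no horizontal saddle connection can exist, since a horizontal saddle connection on a Veech surface would produce a cylinder decomposition in the horizontal direction, hence a parabolic stabilising $\theta$, which contradicts recurrence of $\gamma$. Minimality and unique ergodicity together with compactness of $X$ upgrade to uniform distribution of every orbit with respect to Lebesgue measure on $X$.

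The main obstacle is really invoking the two classical inputs correctly: the Thurston--Veech theorem identifying parabolic elements of $\SL(X,\omega)$ with multi-twists along horizontal cylinders, and Masur's criterion relating recurrence of the Teichm\"uller geodesic to unique ergodicity of the vertical flow. A secondary subtlety is handling the intermediate case of a horizontal saddle connection without a full cylinder decomposition: one must argue that on a Veech surface a horizontal saddle connection already forces the direction to be periodic, so the apparent third possibility collapses into the completely periodic case.
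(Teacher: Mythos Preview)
The paper does not prove this theorem; it is quoted as Veech's 1989 result and used as a black box, so there is no proof in the paper to compare against. Your outline is the standard modern argument for the Veech dichotomy (hyperbolic geodesic dichotomy on a finite-area orbifold, Thurston--Veech for the divergent branch, Masur's criterion for the recurrent branch), and it is essentially correct.

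One step deserves tightening. In the recurrent case you exclude a horizontal saddle connection by saying it ``would produce a cylinder decomposition in the horizontal direction, hence a parabolic stabilising $\theta$''. That is circular as stated: the implication ``saddle connection $\Rightarrow$ full cylinder decomposition'' is exactly the periodic half of the dichotomy you are trying to prove. The clean way to rule out a horizontal saddle connection in the recurrent case is direct: if $\sigma$ is a horizontal saddle connection of length $\ell$, then $g_t\cdot(X,\omega)$ carries a saddle connection of length $e^{-t}\ell\to 0$, so by Mumford compactness the Teichm\"uller geodesic leaves every compact subset of the stratum, contradicting recurrence. With this fix, minimality in the recurrent case follows (no saddle connections in direction $\theta$ forces every leaf to be dense), and together with unique ergodicity from Masur's criterion you get uniform distribution of every orbit.
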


For the rest of this section let $(X,\omega) \in  \Omega E_{D}(4)$  be a  Prym eigenform  for some
discriminant $D$. Recall that $\rho: X\rightarrow X$ is a holomorphic involution of the 
genus $3$ Riemann surface $X$ and  $\omega$ is anti-invariant {\it i.e.} 
$\rho^*(\omega)=-\omega$. Let  $\Sigma$ denote the flat surface  associated to the pair 
$(X,\omega)$, then $\rho$ is an isometry  of $\Sigma$ whose  differential is $-\id$. Note  that the
unique singular  point of $\Sigma$  (which corresponds to the  zero of
$\omega$)  is   obviously  a  fixed   point  of  $\rho$   (compare  to Example~\ref{ex:main}).


%
%

\subsection{Topological classification of cylinder decompositions}

The next proposition furnishes a classification of topological configurations of cylinder decompositions
of Prym eigenforms.

\begin{Proposition}
\label{prop:decomp:cylinders}
Let  $(X,\omega) \in  \Omega E_{D}(4)$  be a  Prym eigenform  for some
discriminant $D$.  If the horizontal direction is completely
periodic then the  horizontal flow on $X$ decomposes  the surface into
cylinders following one of the following five models (models A+, A-, B, C, D):

\begin{itemize}

\item three  cylinders: one is fixed, two  are exchanged by
  the involution (see Figure~\ref{fig:3cyl:models}).

\item  two  cylinders  exchanged  by  the  involution  (see
  Figure~\ref{fig:2cyl:model}, left).

\item   one   cylinder  fixed   by   the  involution   (see
  Figure~\ref{fig:2cyl:model}, right).

\end{itemize}

\end{Proposition}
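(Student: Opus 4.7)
The argument rests on three inputs: Riemann--Hurwitz applied to the Prym involution, the local cone data at the unique zero of $\omega$, and a case analysis on the quotient $Y := X/\rho$.

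First, since $\dim_{\C}\Prym(X,\rho) = g(X) - g(Y) = 2$ and $g(X) = 3$, we have $g(Y) = 1$, and Riemann--Hurwitz forces $\rho$ to have exactly four fixed points on $X$. Because $\rho^{*}\omega = -\omega$, the unique zero of $\omega$ is one of them, leaving three regular fixed points to be accounted for within the cylinder decomposition.

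Second, I would locate these three fixed points. A $\rho$-invariant horizontal cylinder $C \cong (\R/w\Z) \times (0,h)$ carries the involution $\rho|_{C}\colon (x,y)\mapsto (a-x,h-y)$, a $180^{\circ}$ rotation which swaps the top and bottom boundary components of $C$ and fixes exactly two points lying on the midline $y=h/2$; a $\rho$-invariant horizontal saddle connection contributes one fixed point at its midpoint. Writing $k$ for the number of fixed cylinders and $m$ for the number of fixed saddle connections, we get $2k+m=3$, so $(k,m) \in \{(0,3),(1,1)\}$. The cone angle $10\pi$ at the order-$4$ zero produces five east-going and five west-going horizontal half-rays which pair into exactly five horizontal saddle connections; writing $m + 2p = 5$ for fixed plus swapped pairs, one obtains $(k,m,p) \in \{(0,3,1),(1,1,2)\}$.

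Third, to enumerate the models and bound the number of cylinders $n$, I would pass to the quotient. As in Example~\ref{ex:main}, $\omega^{2}$ descends to a quadratic differential $q$ on the torus $Y$ with one zero of order $3$ and three simple poles. Counting horizontal half-rays on $Y$ -- five at the zero and one at each pole -- the horizontal saddle connections of $q$ split into $a$ zero-to-zero, $b$ zero-to-pole and $c$ pole-to-pole, satisfying the balance equations $2a+b=5$ and $b+2c=3$. The only non-negative integer solutions are $(a,b,c) = (1,3,0)$ and $(a,b,c) = (2,1,1)$, matching $k=0$ and $k=1$ respectively through the lifting rules: a zero-to-pole s.c.\ lifts to a $\rho$-fixed s.c.\ on $X$; a zero-to-zero s.c.\ lifts to a $\rho$-swapped pair on $X$; and a pole-to-pole s.c.\ (whose endpoints are regular on $X$) lifts to the core curve of a $\rho$-fixed cylinder on $X$. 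For each of these two configurations on $Y$, the forced ``U-turn'' at each pole (a single prong in a cone of angle $\pi$ compels the two sides of the incident s.c.\ to lie on the same cylinder boundary) together with the cyclic ordering of the five prongs at the zero of $q$ restricts the horizontal cylinder decompositions of $Y$ to a small finite list. Lifting each $Y$-cylinder via the monodromy of the cover around its core curve -- trivial monodromy gives a swapped pair on $X$, non-trivial gives a $\rho$-fixed cylinder on $X$ -- then yields exactly the five announced models: the single fixed cylinder, the pair of swapped cylinders, and the three three-cylinder models $A^{+}$, $A^{-}$, $B$.

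The principal technical obstacle is this last enumeration: one must rigorously carry out the finite combinatorial check to rule out spurious decompositions with $n \geq 4$ cylinders on $X$ and to verify that precisely three distinct topological models arise in the three-cylinder case. This bookkeeping is substantially simplified by working on the torus $Y$, where only four marked points and at most four saddle connections are involved.
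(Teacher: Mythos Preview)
Your proposal is correct and takes a genuinely different route from the paper. The paper works entirely on $X$: after noting there are five saddle connections and that $\rho$ permutes the cylinders with at most one fixed (since a fixed cylinder carries two of the three regular fixed points), it uses the elementary observation that a saddle connection cannot simultaneously be the entire lower boundary of one cylinder and the entire upper boundary of another (else it would be a regular closed geodesic). This single remark kills $k=4,5$ outright and forces the partition $\{2,3\}$ when $k=2$; the three subcases for $k=3$ are then read off from the number $n_0\in\{1,2,3\}$ of saddle connections on one boundary of the fixed cylinder. The whole argument fits in a page with no residual bookkeeping.

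Your approach via the quotient torus $(Y,q)$ is more structural: the dictionary $a=p$, $b=m$, $c=k$ between saddle-connection types downstairs and $\rho$-behaviour upstairs is clean, and the balance equations $2a+b=5$, $b+2c=3$ recover the dichotomy $(k,m,p)\in\{(0,3,1),(1,1,2)\}$ without ever invoking the ``no two-sided saddle connection'' trick. This viewpoint generalises better (e.g.\ to $\Omega E_D(6)$ or higher strata) and makes the link with Example~\ref{ex:main} explicit. The cost is that your final enumeration on $Y$---cyclic orders at the order-$3$ zero, U-turns at the poles, and monodromy of cylinder cores---still has to be carried out to separate $A^+$, $A^-$, $B$, $D$ within the $(2,1,1)$ case and to pin down $C$ in the $(1,3,0)$ case, whereas the paper's direct argument on $X$ dispatches each model in one line. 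Both proofs are complete once that check is done; the paper's is shorter for this specific statement, yours is more portable.
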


\begin{figure}[htbp]
\begin{minipage}[t]{0.3\linewidth}
\begin{tikzpicture}[scale=0.45]
\fill[fill=yellow!80!black!20,even  odd rule]  (0,0)  rectangle (2,2);
\draw[thick] (0,0) -- (0,2) --  (-3,2) -- (-2.5,3) -- (2.5,3) -- (2,2)
-- (2,0)  -- (5,0)  -- (4.5,-1)  -- (-0.5,-1)  --  cycle; \draw[thick]
(0,0) -- (2,0); \draw[thick] (0,2) -- (2,2); 
\draw[very thick, dashed] (0,2) -- (0.5,3) (1.5,-1) -- (2,0);

\draw  (-1,3) +(0,-0.1)  -- +(0,0.1)  (-1.5,2) +(0,-0.1)  -- +(0,0.1);
\draw   (3.5,0)   +(-0.05,-0.1)   --  +(-0.05,0.1)   +(0.05,-0.1)   -- +(0.05,0.1); 
\draw  (3,-1) +(-0.05,-0.1) --  +(-0.05,0.1) +(0.05,-0.1) -- +(0.05,0.1); 
 \draw (1.5,3) +(0,-0.1)  -- +(0,0.1)  +(-0.1,-0.1) -- +(-0.1,0.1)  +(0.1,-0.1) --  +(0.1,0.1); \draw  (0.5,-1)  +(0,-0.1) --
+(0,0.1) +(-0.1,-0.1) -- +(-0.1,0.1) +(0.1,-0.1) -- +(0.1,0.1);

\filldraw[fill= white, draw=black, inner sep=0pt] (0,0) circle (1.2mm)
(2,0)  circle (1.2mm)  (5,0) circle  (1.2mm) (-0.5,-1)  circle (1.2mm)
(1.5,-1) circle  (1.2mm) (4.5,-1) circle (1.2mm)  (0,2) circle (1.2mm)
(2,2)  circle (1.2mm)  (2.5,3) circle  (1.2mm) (0.5,3)  circle (1.2mm)
(-2.5,3)   circle  (1.2mm)   (-3,2)  circle   (1.2mm);   \draw  (1,-2)
node[below] {Model $A+$};
\end{tikzpicture}
\end{minipage}
%
\begin{minipage}[t]{0.3\linewidth}
\centering
\begin{tikzpicture}[scale=0.45]
\fill[fill=yellow!80!black!20,even odd  rule] (0,0) --  (5,0) -- (6,2)
-- (1,2) -- cycle;

\draw[thick] (0,0) --  (4,0) -- (4,-1) -- (5,-1) --  (5,0) -- (6,2) --
(2,2) -- (2,3) -- (1,3) -- (1,2) -- cycle; \draw[thick] (1,2) -- (2,2)
(4,0) -- (5,0);

\draw  (1.5,3)  +(0,-0.1)  --  +(0,0.1); \draw  (3.5,0)  +(0,-0.1)  --
+(0,0.1);

\draw   (2.5,2)   +(-0.05,-0.1)   --  +(-0.05,0.1)   +(0.05,-0.1)   --
+(0.05,0.1); \draw (4.5,-1) +(-0.05,-0.1) -- +(-0.05,0.1) +(0.05,-0.1)
-- +(0.05,0.1);

\draw  (4.5,2)  +(0,-0.1)  --  +(0,0.1)  +(-0.1,-0.1)  --  +(-0.1,0.1)
+(0.1,-0.1)  --  +(0.1,0.1);   \draw  (1.5,0)  +(0,-0.1)  --  +(0,0.1)
+(-0.1,-0.1) -- +(-0.1,0.1) +(0.1,-0.1) -- +(0.1,0.1);

\filldraw[fill=white, draw=black, inner  sep=0pt] (0,0) circle (1.2mm)
(3,0) circle (1.2mm) (4,0) circle (1.2mm) (4,-1) circle (1.2mm) (5,-1)
circle (1.2mm) (5,0) circle  (1.2mm) (6,2) circle (1.2mm) (3,2) circle
(1.2mm) (2,2) circle (1.2mm) (2,3) circle (1.2mm) (1,3) circle (1.2mm)
(1,2) circle (1.2mm); \draw (3,-2) node[below] {Model $A-$};
\end{tikzpicture}
\end{minipage}
%
\begin{minipage}[t]{0.3\linewidth}
\centering
\begin{tikzpicture}[scale=0.45]
\fill[fill=yellow!80!black!20,even odd  rule] (0,0) --  (4,0) -- (4,2)
-- (0,2) -- cycle; \draw[thick] (0,0) -- (1,0) -- (0.5,-1) -- (3.5,-1)
-- (4,0) --  (4,2) -- (3,2) --  (3.5,3) -- (0.5,3) --  (0,2) -- cycle;
\draw[thick] (0,2) -- (3,2) (1,0) -- (4,0);

\draw  (1,3)  +(0,-0.1)  --   +(0,0.1);  \draw  (0.5,0)  +(0,-0.1)  --
+(0,0.1);

\draw   (3.5,2)   +(-0.05,-0.1)   --  +(-0.05,0.1)   +(0.05,-0.1)   --
+(0.05,0.1); \draw  (3,-1) +(-0.05,-0.1) --  +(-0.05,0.1) +(0.05,-0.1)
-- +(0.05,0.1);

\draw  (2.5,3)  +(0,-0.1)  --  +(0,0.1)  +(-0.1,-0.1)  --  +(-0.1,0.1)
+(0.1,-0.1)  --  +(0.1,0.1);  \draw  (1.5,-1)  +(0,-0.1)  --  +(0,0.1)
+(-0.1,-0.1) -- +(-0.1,0.1) +(0.1,-0.1) -- +(0.1,0.1);

\filldraw[fill=white, draw=black, inner  sep=0pt] (0,0) circle (1.2mm)
(1,0) circle  (1.2mm) (0.5,-1) circle (1.2mm)  (2.5,-1) circle (1.2mm)
(3.5,-1)  circle (1.2mm)  (4,0)  circle (1.2mm)  (4,2) circle  (1.2mm)
(3,2)  circle (1.2mm)  (3.5,3) circle  (1.2mm) (1.5,3)  circle (1.2mm)
(0.5,3) circle (1.2mm) (0,2)  circle (1.2mm); \draw (2,-2) node[below]
         {Model $B$};

\end{tikzpicture}
\end{minipage}
\caption{
\label{fig:3cyl:models}
Three-cylinder decompositions for  periodic directions  on Prym
eigenforms (the cylinder fixed by $\rho$ is colored in gray).  }
\end{figure}
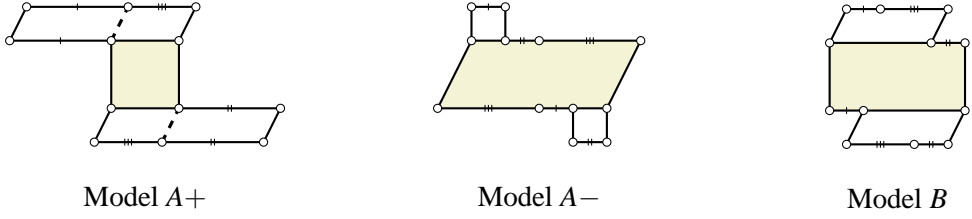

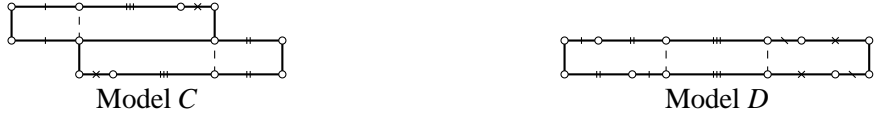
\begin{figure}[htbp]
\begin{minipage}[t]{0.4\linewidth}
\begin{tikzpicture}[scale=0.45]

\draw[thick] (0,0)  -- (6,0) -- (6,1)  -- (4,1) -- (4,2)  -- (-2,2) --
(-2,1)  -- (0,1) --  cycle; \draw[thick]  (0,1) --  (4,1); \draw[thin,
  dashed] (0,1) -- (0,2) (4,0) -- (4,1);

\filldraw[fill=white,  draw=black]  (0,0)  circle (3pt)  (1,0)  circle
(3pt) (4,0) circle  (3pt) (6,0) circle (3pt) (6,1)  circle (3pt) (4,1)
circle (3pt) (4,2) circle (3pt)  (3,2) circle (3pt) (0,2) circle (3pt)
(-2,2) circle (3pt) (-2,1) circle (3pt) (0,1) circle (3pt);

\draw  (-1,2)  +(0,-0.1)  --   +(0,0.1);  \draw  (-1,1)  +(0,-0.1)  --
+(0,0.1);

\draw (5,1) +(-0.05,-0.1) -- +(-0.05,0.1) +(0.05,-0.1) -- +(0.05,0.1);
\draw (5,0) +(-0.05,-0.1) -- +(-0.05,0.1) +(0.05,-0.1) -- +(0.05,0.1);

\draw (3.5,2)  +(-0.1,-0.1) -- +(0.1,0.1)  +(0.1,-0.1) -- +(-0.1,0.1);
\draw (0.5,0) +(-0.1,-0.1) -- +(0.1,0.1) +(0.1,-0.1) -- +(-0.1,0.1);

\draw  (1.5,2)  +(0,-0.1)  --  +(0,0.1)  +(-0.1,-0.1)  --  +(-0.1,0.1)
+(0.1,-0.1)  --  +(0.1,0.1);   \draw  (2.5,0)  +(0,-0.1)  --  +(0,0.1)
+(-0.1,-0.1) -- +(-0.1,0.1) +(0.1,-0.1) -- +(0.1,0.1);

\draw (2,-1.3) node[above] {Model $C$};

\end{tikzpicture}
\end{minipage}
%
\begin{minipage}[t]{0.4\linewidth}
\centering
\begin{tikzpicture}[scale=0.45]
\draw[thick] (0,0)  -- (9,0) --  (9,1) -- (0,1) --  cycle; \draw[thin,
  dashed] (3,0) -- (3,1) (6,0) -- (6,1);

\filldraw[fill=white,  draw=black]  (0,0)  circle (3pt)  (2,0)  circle
(3pt) (3,0) circle  (3pt) (6,0) circle (3pt) (8,0)  circle (3pt) (9,0)
circle (3pt) (9,1) circle (3pt)  (7,1) circle (3pt) (6,1) circle (3pt)
(3,1) circle (3pt) (1,1) circle (3pt) (0,1) circle (3pt);

\draw  (0.5,1)  +(0,-0.1)  --  +(0,0.1); \draw  (2.5,0)  +(0,-0.1)  --
+(0,0.1);

\draw (2,1) +(-0.05,-0.1) -- +(-0.05,0.1) +(0.05,-0.1) -- +(0.05,0.1);
\draw (1,0) +(-0.05,-0.1) -- +(-0.05,0.1) +(0.05,-0.1) -- +(0.05,0.1);

\draw  (8,1) +(-0.1,-0.1)  -- +(0.1,0.1)  +(0.1,-0.1)  -- +(-0.1,0.1);
\draw (7,0) +(-0.1,-0.1) -- +(0.1,0.1) +(0.1,-0.1) -- +(-0.1,0.1);

\draw (6.5,1) +(-0.1,0.1) -- +(0.1,-0.1); \draw (8.5,0) +(-0.1,0.1) --
+(0.1,-0.1);

\draw  (4.5,1)  +(0,-0.1)  --  +(0,0.1)  +(-0.1,-0.1)  --  +(-0.1,0.1)
+(0.1,-0.1)  --  +(0.1,0.1);   \draw  (4.5,0)  +(0,-0.1)  --  +(0,0.1)
+(-0.1,-0.1) -- +(-0.1,0.1) +(0.1,-0.1) -- +(0.1,0.1);

\draw (4.5,-1.3) node[above] {Model $D$};
\end{tikzpicture}
\end{minipage}
\caption{
\label{fig:2cyl:model}
Two-cylinder decomposition (the cylinders  must  be exchanged by $\rho$) on the 
left, and one-cylinder decomposition on the right.
}
\end{figure}

The proof of the proposition will use the following lemma, easily derived
from the Riemann-Hurwitz's formula

\begin{Lemma}\label{Lm1}
Let $X$ be a  Riemann surface of genus $3$, and $\rho:  X \lra X$ be a
holomorphic  involution. Suppose  that  $\dim_\C\Prym(X,\rho)=2$, then
$\rho$ has exactly $4$ fixed points.
\end{Lemma}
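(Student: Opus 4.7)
The plan is to directly apply the Riemann--Hurwitz formula to the degree-two cover $\pi : X \to Y := X/\rho$. First, I would identify $g(Y)$ using the assumption on the dimension of the Prym variety. Since $\Omega(X)^+ \cong \Omega(Y)$, the hypothesis $\dim_\C \Prym(X,\rho) = g(X) - g(Y) = 2$ combined with $g(X) = 3$ forces $g(Y) = 1$, i.e.\ the quotient is an elliptic curve.

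Next, I would apply Riemann--Hurwitz to the holomorphic double cover $\pi$. The ramification points of $\pi$ are exactly the fixed points of $\rho$ (each contributes $1$ to the ramification divisor, since at a fixed point a local model of $\rho$ is $z \mapsto -z$). Writing $N$ for the number of fixed points of $\rho$, Riemann--Hurwitz gives
\[
2g(X) - 2 \;=\; 2\bigl(2g(Y) - 2\bigr) + N,
\]
so that $4 = 0 + N$, yielding $N = 4$.

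There is essentially no obstacle here — the argument is a one-line computation once $g(Y) = 1$ is extracted from the Prym-dimension hypothesis. The only subtlety worth flagging is making sure to record why each fixed point contributes exactly $1$ to the ramification term (the involution acts by $-1$ on the tangent space, so the local degree is $2$ and the local contribution to $R$ is $2 - 1 = 1$), which justifies identifying $N$ with the total ramification.
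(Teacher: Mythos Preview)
Your proof is correct and follows exactly the approach the paper indicates: the lemma is stated as ``easily derived from the Riemann--Hurwitz formula,'' and your argument---extracting $g(X/\rho)=1$ from the Prym-dimension hypothesis and then applying Riemann--Hurwitz to the double cover---is precisely that derivation.
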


\begin{proof}[Proof of Proposition~\ref{prop:decomp:cylinders}]
Since   the   cone  angle   at   the   singularity   of  $\Sigma$   is
$(4+1)2\pi=10\pi$,   there   are   exactly  $5$   horizontal   saddle
connections.  Each  of these saddle  connections appears in  the lower
boundary of a unique cylinder, thus  we have a partition of the set of
horizontal saddle  saddle connections into  $k$ subsets, where  $k$ is
the number of cylinders. Clearly we have $k\leq 5$. Note that a saddle
connection can not be the upper  boundary of a cylinder, and the lower
boundary of another cylinder, since  this would imply that this saddle
connection  is  actually  a   simple  closed  geodesic  containing  no
singularities.\medskip

Since $\rho$  is an isometry, it  sends a cylinder  isometrically to a
cylinder,  therefore  $\rho$  induces  a  permutation on  the  set  of
cylinders.  As $D\rho=-\id$,  $\rho$  sends the  lower  boundary of  a
cylinder to the  upper boundary of another cylinder,  hence a cylinder
which is invariant by $\rho$  contains exactly two fixed points in its
interior. Recall that the singularity of $\Sigma$ is already one fixed
point, thus there are a most one cylinder invariant by $\rho$. 

\begin{enumerate}

\item If $k=5$ or $k=4$,  then there always exists a saddle connection
  which is the lower boundary  of one cylinder, and the upper boundary
  of another one, therefore these cases are excluded.\medskip

\item If $k=3$, then $\rho$  preserves one cylinder, and exchanges the
  other  two.  Let  $C_0$ be  the  cylinder invariant  by $\rho$,  and
  $C_1,C_2$  the two  permuted  cylinders.  Let  $n_0$  be the  number
  saddle connections  contained in the lower boundary  of $C_0$, since
  the  upper boundary  and lower  boundary of  $C_0$ are  exchanged by
  $\rho$,  there  are  also  $n_0$  saddle connections  in  the  upper
  boundary of  $C_0$. Note  also that the  lower boundary of  $C_1$ is
  mapped onto the upper boundary of $C_2$ and vice versa.

\begin{enumerate}
\item Case $n_0=1$:  in this case $C_0$ is a  simple cylinder, and
  the lower boundaries of both $C_1$ and $C_2$ must contain two saddle
  connections. The corresponding configuration is given by Model $A+$.
\item Case  $n_0=2$: in this case,  none of the  cylinders are simple,
  and there is also only  one possible configuration which is given by
  Model $B$.
\item  Case $n_0=3$: in  this case,  both $C_1,  C_2$ are  simple, and
  the unique possible configuration is given by Model $A-$.
\end{enumerate}

\item If $k=2$,  then the two cylinders are  permuted by $\rho$. Since
  the  number of  saddle  connections  in the  lower  boundary of  one
  cylinder is the same as the number of those in the upper boundary of
  the other  one, it follows that  the partition of the  set of saddle
  connections must  be $\{2,3\}$ (otherwise,  there would be  a saddle
  connection which is a lower  boundary of one cylinder, and the upper
  boundary  of the  other  one).  Hence, there  is  only one  possible
  configuration which is given by Model $C$. \medskip

\item If $k=1$,  both of the lower and upper  boundaries of the unique
  cylinder contain $5$ saddle connections. Observe that $\rho$ induces
  a permutation  on the  set of saddle  connections.  Since  there are
  already two fixed points of  $\rho$ in the interior of the cylinder,
  there  is  only  one fixed  point  in  the  interior of  the  saddle
  connections,  which  means  that   only  one  saddle  connection  is
  invariant  by $\rho$.  Therefore,  $\rho$ must  preserve one  saddle
  connection,  and  exchange the  other two  pairs.  Again,  there is  one
  possible configuration, which is given by Model $D$.

\end{enumerate}

Proposition~\ref{prop:decomp:cylinders} is now proved.
\end{proof}

An immediate consequence of Proposition~\ref{prop:decomp:cylinders} is

\begin{Corollary}
\label{cor:decomp:3cyl}
For  any  Abelian  differential  in  the locus  $\Omega  E_D(4)$,  the
associated flat surface admits a three-cylinder decomposition.
\end{Corollary}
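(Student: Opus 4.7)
The plan is to upgrade Proposition~\ref{prop:decomp:cylinders}, which only enumerates five possible cylinder decomposition models in a given completely periodic direction, to the assertion that every $(X,\omega)\in\Omega E_D(4)$ admits at least one \emph{three}-cylinder decomposition (of type $A+$, $A-$ or $B$) in some completely periodic direction. Since $(X,\omega)$ is a Veech surface by the theorem of McMullen cited above, the Veech dichotomy guarantees that the direction of every saddle connection is completely periodic; in particular there are infinitely many completely periodic directions on $(X,\omega)$. It then suffices to exhibit one direction which yields a three-cylinder model.

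First I would dispose of the trivial case: if some completely periodic direction is of type $A+$, $A-$ or $B$, there is nothing to prove. So assume toward a contradiction that every completely periodic direction on $(X,\omega)$ is of type $C$ or $D$. The goal is to produce a new periodic direction whose decomposition is forced to have three cylinders, contradicting the assumption.

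Fix a decomposition of type $D$ in the horizontal direction (the case of type $C$ being handled analogously). The surface is then a single cylinder invariant under $\rho$, having five horizontal saddle connections on each boundary component, and containing in its interior three of the four $\rho$-fixed points predicted by Lemma~\ref{Lm1}. I would pick a short saddle connection $\gamma$ crossing the cylinder from bottom to top (its direction is completely periodic by Veech). The trajectories in the direction of $\gamma$ slice the original cylinder into at least two sub-cylinders, so the new decomposition has $k\geq 2$ cylinders. If $k=1$ (model $D$ again), a contradiction is immediate. To exclude $k=2$ (model $C$), one uses that in model $C$ no cylinder is $\rho$-invariant, and so the three non-singular fixed points of $\rho$ must all lie on boundary saddle connections; a direct count of fixed points on the image of $\gamma$ under $\rho$ and on the other transverse saddle connections yields a parity contradiction. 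Hence $k=3$, producing the required three-cylinder decomposition. The analogous argument in type $C$ uses a saddle connection joining the two $\rho$-exchanged cylinders and symmetric to itself under $\rho$, which forces the appearance of a $\rho$-invariant cylinder in the new direction, hence a three-cylinder model.

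The main obstacle is precisely this parity/fixed-point bookkeeping: one must verify, using the combinatorics of Figures~\ref{fig:3cyl:models} and~\ref{fig:2cyl:model} together with Lemma~\ref{Lm1}, that the transverse saddle connection can always be chosen so that the resulting decomposition cannot again fall into models $C$ or $D$. Once this is established, the corollary follows immediately from Proposition~\ref{prop:decomp:cylinders}.
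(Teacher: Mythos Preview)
Your proposal has a genuine gap, and you are overcomplicating the argument. The paper's proof is a single observation that you are missing: among the five models, only $A+$ and $A-$ contain a \emph{simple} cylinder (a cylinder each of whose boundary components is a single saddle connection). Models $B$, $C$, and $D$ have none. So the whole proof reduces to exhibiting, starting from a model-$C$ or model-$D$ picture, a simple cylinder in some other direction $\theta$; by the Veech dichotomy $\theta$ is completely periodic, and the presence of a simple cylinder forces the decomposition in direction $\theta$ to be $A+$ or $A-$. Finding such a simple cylinder by inspection in the explicit Figures~\ref{fig:2cyl:model} is easy.

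Your route instead tries to rule out models $C$ and $D$ in the new direction by fixed-point bookkeeping, and you correctly identify that this step is not done. There are two concrete problems. First, your sentence ``the trajectories in the direction of $\gamma$ slice the original cylinder into at least two sub-cylinders, so the new decomposition has $k\geq 2$ cylinders'' conflates geometric pieces of the old cylinder with cylinders of the new decomposition; the new cylinders are determined globally by the flow in direction $\gamma$, not by how $\gamma$ cuts the old cylinder, so this does not give $k\geq 2$. Second, the parity argument you sketch for excluding $k=2$ (counting $\rho$-fixed points on saddle connections) is not carried out and is not obviously correct: model $C$ has two exchanged cylinders, and all three non-singular fixed points of $\rho$ sit on the common boundary curve, so there is no immediate parity obstruction from fixed points alone. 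Replace all of this with the simple-cylinder observation and the corollary is immediate.
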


\begin{proof}
Let $\Sigma$ be the flat surface associated to an Abelian differential
in $\Omega E_D(4)$. By Corollary~\ref{cor:McM1}, we know that $\Sigma$
is  a Veech surface,  therefore it  admits infinitely  many completely
periodic directions.  Without loss of  generality, we can  assume that
the  horizontal direction  is completely  periodic for  $\Sigma$. From
Proposition~\ref{prop:decomp:cylinders}, we only  have to consider the
cases  $C$  and $D$  where  $\Sigma$ is  decomposed  into  one or  two
cylinders. But in  those cases, one can easily  find a simple cylinder
in another  direction $\theta\neq (1,0)\in \S^1$. Since  $\Sigma$ is a
Veech surface, it  is also decomposed into cylinders  in the direction
$\theta$. But since there is at  least one simple cylinder in that direction, 
the new decomposition must  belong to the cases $A+$ or $A-$.
\end{proof}

\begin{Remark}
It turns out (see Proposition~\ref{prop:D8:topo}) that for  all but  one value  of $D$,  the  surfaces in
$\Omega E_D(4)$ always admit a cylinder decomposition in Model $A\pm$.
\end{Remark}

\section{Prototypes}
\label{sec:prototypes}

The main goal of this section is to provide a canonical representation
of any three-cylinder decomposition of a surface in $\Omega  E_{D}(4)$ in
terms    of    {\it    prototype}     (up    to    the    action    of
$\textrm{GL}^+(2,\R)$).   More  precisely,   for  each such decomposition (see 
Proposition~\ref{prop:decomp:cylinders}) we  will attach parameters satisfying 
some specific conditions, which provide a necessary and sufficient condition  to be
a  surface in  $\Omega  E_{D}(4)$. As a  consequence, we derive the following finiteness result.

\begin{Theorem}
\label{theo:onto:map}
Let  $D$ be  a fixed  positive integer.  Let $\mathcal  Q_{D}$  be the
(finite) set of tuples $(w,h,t,e,\varepsilon)\in \Z^5$ satisfying
$$   \left\{\begin{array}{l}   w>0,   \   h>0,\  \varepsilon   =   \pm
1,\\  e+2h<w,\ 0\leq  t<\gcd(w,h),\\ \gcd(w,h,t,e)=1,  \textrm{  and }
D=e^2+8w h.\\
\end{array}
\right.
$$ If $D\not =  8$ then there is an onto map  from $\mathcal Q_{D}$ on
the components of $\Omega E_{D}(4)$.
\end{Theorem}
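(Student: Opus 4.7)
The plan is to exhibit, for every connected component $\mathcal{C}$ of $\Omega E_D(4)$, at least one prototype $(w,h,t,e,\varepsilon)\in\mathcal{Q}_D$ whose associated flat surface lies in $\mathcal{C}$. Since the Diophantine constraint $D=e^2+8hw$ with $w,h\geq 1$ forces $|e|\leq \sqrt{D}$ and $hw\leq D/8$, there are only finitely many admissible quadruples $(w,h,t,e)$, and hence $\mathcal{Q}_D$ is finite. So producing the surjection is the entire content of the theorem; the finiteness of the number of components will be an automatic consequence.

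Starting from any $(X,\omega)\in\mathcal{C}$, the first step is to produce an $A\pm$ cylinder decomposition. Corollary~\ref{cor:decomp:3cyl} already gives a three-cylinder decomposition in some periodic direction, and Proposition~\ref{prop:decomp:cylinders} classifies such decompositions into models $A+$, $A-$, or $B$. For $D\neq 8$, Proposition~\ref{prop:D8:topo} asserts the existence of at least one periodic direction realizing model $A+$ or $A-$; rotate this direction to the horizontal by an element of $SO(2)\subset GL^+(2,\R)$. Set $\varepsilon=+$ in the $A+$ case and $\varepsilon=-$ in the $A-$ case.

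The second step is to normalize the geometric parameters of the decomposition to an integer prototype. Following Propositions~\ref{NormA1Prop} and~\ref{NormA2Prop}, a further action of $GL^+(2,\R)$ rescales so that the widths and heights are integers, and Dehn twists in the fixed cylinder and in the exchanged pair of cylinders bring the twist $t$ into the fundamental interval $0\leq t<\gcd(w,h)$. The geometric inequalities $w>0$, $h>0$, $e+2h<w$ express that the width of the simple cylinder (or the exchanged ones) is positive and that the configuration fits together with the correct topology of model $A\pm$. The heart of the step is the identification of the Diophantine relation $D=e^2+8hw$ and of the coprimality $\gcd(w,h,t,e)=1$: the former is obtained by writing down the matrix of the self-adjoint generator of $\mathfrak{i}(\mathcal{O}_D)$ acting on the lattice $H_1(X,\Z)^-$ in a symplectic basis adapted to the $A\pm$-decomposition (core curves of the exchanged cylinders, together with saddle connections joining them to the fixed cylinder) and computing its discriminant; the latter encodes the properness condition~(\ref{cond:2}) in the definition of real multiplication.

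The main obstacle is precisely this last computation. One must choose the adapted symplectic basis of $H^1(X,\Z)^-$ carefully (exploiting that $\rho$ exchanges the two non-invariant cylinders, so that the antisymmetric periods are supported on only two $\Z$-independent relative cycles), compute the $\Prym$ polarization on this basis (which in the $\Omega E_D(4)$ setting has Gram matrix $\left(\begin{smallmatrix}J&0\\0&2J\end{smallmatrix}\right)$, cf.\ Remark~\ref{Rmk:MainTh1}), and check that the eigenform condition $\mathfrak{i}(\lambda)^*\omega\in\C\omega$ forces $\lambda$ to act on the basis by an integer matrix $T$ whose characteristic polynomial has discriminant exactly $e^2+8hw$. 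The factor $8$, rather than the factor $4$ appearing in the genus two case of \cite{Mc4}, is a direct consequence of the $2$ in the Gram matrix, and in turn is what rules out the residues $D\equiv 5\pmod 8$ in Theorem~\ref{MainTh1}. Once this computation is in hand, surjectivity is immediate: by construction the prototype $(w,h,t,e,\varepsilon)$ obtained from $(X,\omega)$ lies in $\mathcal{Q}_D$ and maps to $\mathcal{C}$.
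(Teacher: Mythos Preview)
Your proposal is correct and follows essentially the same approach as the paper: invoke Proposition~\ref{prop:D8:topo} to guarantee an $A\pm$ decomposition when $D\neq 8$, then apply Propositions~\ref{NormA1Prop} and~\ref{NormA2Prop} to extract a prototype in $\mathcal{Q}_D$. The paper's own proof is literally two sentences citing these same three results; your version simply unpacks their content (the symplectic basis, the Gram matrix, the generator $T$, and the origin of the factor $8$), which is fine but not additional mathematical input.
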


\subsection{Normalizing  cylinder decompositions}

Recall  that Corollary~\ref{cor:decomp:3cyl} tells  us that  a surface
$(X,\omega) \in \Omega E_D(4)$ always admits a cylinder decomposition
into Model $A+$, $A-$ or $B$. We will examine separately each of the three cases.

\begin{Notation}
For all $\gamma \in H_1(X,\Z)$ we set $\omega(\gamma):=\int_\gamma\omega$.
\end{Notation}

\subsubsection{Cylinder decompositions of type $A+$} \hfill \medskip

\begin{Proposition}
\label{NormA1Prop}
Let $(X,\omega) \in \Omega E_D(4)$  be a Prym eigenform which admits a
cylinder       decomposition      in       Model       $A+$.      Let
$\alpha_1,\beta_1,\alpha_{2,1},\beta_{2,1},\alpha_{2,2},    \beta_{2,2}
\in   H_1(X,\Z)$    be   a   symplectic   basis    as   presented   in
Figure~\ref{fig:modelA+:basis}.                 We                 set
$\alpha_2:=\alpha_{2,1}+\alpha_{2,2}$                               and
$\beta_2:=\beta_{2,1}+\beta_{2,2}$. Then
\begin{itemize}

\item[(i)]
There exists  a unique generator $T$ of $\Ord_D$ which is written  
in  the  basis   $\{\alpha_1,\beta_1,\alpha_2,\beta_2\}$  by a matrix of the from $\DS{\left(\begin{smallmatrix}
e\cdot\rm{id}_2 & 2B\\ B^* & 0\\
\end{smallmatrix}\right)}$, $B\in \mathbf{M}_{2\times 2}(\Z)$, such that 
$T^*(\omega)=\lambda(T)\omega$ with $\lambda(T)>0$.


\item[(ii)]
Up    to    the   $\mathrm{GL}^+(2,\R)$-action and Dehn twists 
$\beta_1\mapsto \beta_1+n\alpha_1$, $\beta_{2,i}\mapsto \beta_{2,i}+m\alpha_{2,i}$, $n,m\in\Z$, there exist
 $w,h,t \in \N$ such that the tuple $(w,h,t,e)$  satisfies

$$
(\mathcal{P})\left\{\begin{array}{l}        w>0,h>0,\;        0\leq
  t<\gcd(w,h),\\    \gcd   (w,h,t,e)    =1,\\    D=e^2+8w   h,\\    0<
  \lambda:=\frac{e+\sqrt{D}}{2}<w\\
\end{array}
\right.,
$$
and the matrix of $T$ is  given by $\left(%
\begin{smallmatrix}
  e & 0 & 2w & 2t \\ 0 & e & 0 & 2h \\ h & -t & 0 & 0 \\ 0 & w & 0 & 0  \\
\end{smallmatrix}%
\right)$. Moreover, in these coordinates we have

\begin{equation}\label{normalize:A+}
\left\{ \begin{array}{l}
  \omega(\Z\alpha_1+\Z\beta_1)=\lambda\cdot\Z^2
  \\ \omega(\Z\alpha_{2,1}+\Z\beta_{2,2})=\omega(\Z\alpha_{2,2}+\Z\beta_{2,2})=\Z(w,0)+\Z(t,h)
\end{array}
\right. 
\end{equation}
\end{itemize}
Conversely,  let $(X,\omega)$ be an  Abelian differential in
$\Omega\mathfrak{M}(4)$ having a decomposition into three cylinders in
model $A+$.  Suppose that there exists $(w,h,t,e)  \in \Z^4$ verifying
$(\mathcal{P})$  such  that,  after  normalizing by  $\GL^+(2,\R)$,  the
conditions in $(\ref{normalize:A+})$  are satisfied, then $(X,\omega)$
belongs to $\Omega E_D(4)$.
\end{Proposition}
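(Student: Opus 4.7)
The plan is to adapt McMullen's prototype construction for the Weierstrass locus in genus two (\cite{Mc4}) to the genus-three Prym setting; the main new feature is the non-standard polarization on $\Prym(X,\rho)$, which is responsible for the factor $2$ in the block $2B$ of the matrix of $T$.

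First, I would verify that $\{\alpha_1,\beta_1,\alpha_2,\beta_2\}$ is a $\Z$-basis of $H_1(X,\Z)^-$ and compute the restricted intersection form. The involution $\rho$ preserves the simple cylinder containing $\alpha_1$, so $\rho_*\alpha_1=-\alpha_1,\ \rho_*\beta_1=-\beta_1$ (because $D\rho=-\id$), and it exchanges the two non-fixed cylinders, so $\rho_*\alpha_{2,1}=-\alpha_{2,2}$ and $\rho_*\beta_{2,1}=-\beta_{2,2}$; hence $\alpha_2,\beta_2$ are also anti-invariant. A direct intersection-number computation yields the restricted form $\bigl(\begin{smallmatrix} J & 0 \\ 0 & 2J \end{smallmatrix}\bigr)$, the factor $2$ coming from $\alpha_2\cdot\beta_2=\alpha_{2,1}\cdot\beta_{2,1}+\alpha_{2,2}\cdot\beta_{2,2}=2$.

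For part (i), I would analyze the self-adjoint lattice endomorphisms of $H_1(X,\Z)^-$ with respect to $\bigl(\begin{smallmatrix} J & 0 \\ 0 & 2J \end{smallmatrix}\bigr)$. Writing such an endomorphism in block form $M=\bigl(\begin{smallmatrix} P & Q \\ R & S \end{smallmatrix}\bigr)$, the equation $M^\top \bigl(\begin{smallmatrix} J & 0 \\ 0 & 2J \end{smallmatrix}\bigr) = \bigl(\begin{smallmatrix} J & 0 \\ 0 & 2J \end{smallmatrix}\bigr) M$ forces $P,S$ to be scalar multiples of $\id_2$ and $R=\tfrac{1}{2}\mathrm{adj}(Q)$. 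Integrality of $R$ together with lattice preservation then forces $Q$ to have even entries, i.e.\ $Q=2B$ with $B\in M_{2\times 2}(\Z)$, and automatically $R=B^*$ where $B^*$ is the adjugate. Starting from an arbitrary generator $T_0$ of $\Ord_D$, the shift $T:=T_0-s\cdot\id_4$ kills the lower-right block without changing the order, and the minimal polynomial becomes $T^2=eT+2\det(B)$, so its discriminant is $e^2+8\det(B)=D$. The two generators of $\Ord_D$ of this block form are $T$ and its Galois conjugate, distinguished by the sign of their eigenvalue on $\C\omega$; the condition $\lambda(T)>0$ selects the unique one, establishing (i).

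For part (ii), I would use $\GL^+(2,\R)$ to normalize $\omega(\alpha_1)=\lambda$ and $\omega(\beta_1)=i\lambda$ with $\lambda=(e+\sqrt{D})/2>0$. The eigenform equation $T^*\omega=\lambda\omega$ evaluated on $\alpha_2,\beta_2$ reads $\omega(T\alpha_2)=\lambda\omega(\alpha_2)$ and $\omega(T\beta_2)=\lambda\omega(\beta_2)$; substituting the block form of $T$ and using the normalization of the periods on $C_0$, one obtains $\omega(\alpha_2)=2(w,0)$ and $\omega(\beta_2)=2(t,h)$ for integers $w,h,t$ with $w,h>0$ (after possibly swapping $\alpha_{2,1}$ and $\alpha_{2,2}$). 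This gives $B=\bigl(\begin{smallmatrix} w & t \\ 0 & h \end{smallmatrix}\bigr)$ and the advertised matrix of $T$, as well as the period identities \eqref{normalize:A+}. The remaining conditions in $(\mathcal{P})$ follow: the Prym-equivariant Dehn twists $\beta_{2,i}\mapsto \beta_{2,i}+m\alpha_{2,i}$ (applied simultaneously to $i=1,2$) shift $t$ by a multiple of $\gcd(w,h)$, so $t$ can be reduced into $[0,\gcd(w,h))$; primitivity $\gcd(w,h,t,e)=1$ is equivalent to $\Z[T]=\Ord_D$ being the full order rather than a proper subring; and $0<\lambda<w$ (equivalently $e+2h<w$) is precisely the geometric constraint that $C_0$ fits strictly above and below the two exchanged cylinders in Model $A+$. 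The converse is a direct check: given $(w,h,t,e)$ satisfying $(\mathcal{P})$ and periods normalized as in \eqref{normalize:A+}, the displayed matrix is self-adjoint with respect to $\bigl(\begin{smallmatrix} J & 0 \\ 0 & 2J \end{smallmatrix}\bigr)$, has characteristic polynomial of discriminant $D$, and satisfies $T^*\omega=\lambda\omega$, placing $(X,\omega)$ in $\Omega E_D(4)$.

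The main obstacle will be the careful derivation of the block shape of $T$, in particular the vanishing of the lower-right block and the emergence of the factor $2$ in $Q=2B$, which is the algebraic footprint of the non-standard polarization on the Prym variety in the stratum $\Omega\mathfrak{M}(4)$ (cf.\ Remark~\ref{Rmk:MainTh1}(1)). A secondary subtlety is the Prym-equivariance constraint on the Dehn-twist normalization: the twists on $\beta_{2,1}$ and $\beta_{2,2}$ cannot be chosen independently, since only the symmetric combination is compatible with $\rho$, and this is precisely why $t$ is reduced modulo $\gcd(w,h)$ rather than modulo $h$ alone.
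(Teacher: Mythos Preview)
Your approach coincides with the paper's: derive the block form of $T$ from self-adjointness with respect to $\left(\begin{smallmatrix} J & 0 \\ 0 & 2J\end{smallmatrix}\right)$ (this is Lemma~\ref{Lm2}), shift to kill the lower-right block, normalize the periods of $C_0$ by $\GL^+(2,\R)$, and read off $(w,h,t,e)$ from the eigenform equation. Two points, however, need correction.

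First, your Dehn-twist reduction of $t$ is wrong. The twist $\beta_{2,i}\mapsto\beta_{2,i}+m\alpha_{2,i}$ sends $\omega(\beta_{2,i})=(t,h)$ to $(t+mw,h)$, so it shifts $t$ by multiples of $w$, not of $\gcd(w,h)$. What you are missing is the twist $\beta_1\mapsto\beta_1+n\alpha_1$ listed in the statement: conjugating the matrix of $T$ by this change of basis sends the $(1,4)$ entry $2t$ to $2(t-nh)$ (and consistently the $(3,2)$ entry to $-(t-nh)$). Combining both twists gives $t\mapsto t+mw-nh$, and \emph{this} is why $t$ reduces modulo $\gcd(w,h)$; your symmetric-constraint explanation is not the operative mechanism.

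Second, your converse is incomplete. Producing a self-adjoint integer matrix $T$ with $T^*\omega=\lambda\omega$ and discriminant $D$ is not yet real multiplication: you must check that $T$ is $\C$-linear for the Hodge complex structure on $H_1(X,\R)^-$, i.e.\ that $T\in\mathrm{End}(\Prym(X,\rho))$. The paper does this by noting that $T$ acts as $\lambda\cdot\id$ on $S=\C\omega$, hence (by self-adjointness) preserves $S^\perp$, and since $T$ satisfies $T^2=eT+2wh$ it acts as $\lambda'\cdot\id$ on $S^\perp$; acting by real scalars on each complex line makes $T$ complex-linear. A minor slip: the second generator of $\Ord_D$ with vanishing lower-right block is $-T$, not the Galois conjugate $e-T$ (whose lower-right block is $e\cdot\id_2$).
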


\begin{figure}[htbp]
\centering \subfloat{%
\begin{tikzpicture}[scale=0.8]
\fill[fill=yellow!80!black!20,even  odd rule]  (0,0)  rectangle (2,2);
\draw (0,0)  -- (0,2)  -- (-3,2)  -- (-2.5,3) --  (2.5,3) --  (2,2) --
(2,0) -- (5,0) -- (4.5,-1) -- (-0.5,-1) -- cycle; \draw (0,0) -- (2,0)
(0,2) -- (2,2);

\draw[->,>= angle 45, thick,  dashed] (0,0.5) -- (1,0.5); \draw[thick,
  dashed]  (1,0.5) --  (2,0.5); \draw[thick,  dashed, ->,  >=angle 45]
(2,0)  .. controls (1.5,0.5)  and (1.5,  1.5) ..   (2,2); \draw[thick,
  dashed,  ->,  >=  angle  45] (-2.75,2.5)  --  (1,2.5);  \draw[thick,
  dashed] (1,2.5) -- (2.25,2.5); \draw[thick, dashed, ->, >= angle 45]
(-1.5,2) -- (-1,3); \draw[thick,  dashed, ->, >= angle 45](-0.25,-0.5)
-- (2,-0.5);    \draw[thick,   dashed]   (2,-0.5)    --   (4.75,-0.5);
\draw[thick, dashed, ->, >= angle 45] (3,-1) -- (3.5,0);

\filldraw[fill=white,  draw=black]  (0,0)  circle (2pt)  (2,0)  circle
(2pt) (5,0) circle (2pt)  (-0.5,-1) circle (2pt) (1.5,-1) circle (2pt)
(4.5,-1) circle  (2pt) (0,2) circle  (2pt) (2,2) circle  (2pt) (2.5,3)
circle (2pt) (0.5,3) circle  (2pt) (-2.5,3) circle (2pt) (-3,2) circle
(2pt);

\draw  (0,0.5) node[left]  {$\scriptstyle \alpha_1$}  (1.25,1.25) node
      {$\scriptstyle \beta_1$}  (2.25, 2.5) node[right] {$\scriptstyle
        \alpha_{1,1}$} (-1,3) node[above] {$\scriptstyle \beta_{1,1}$}
      (-0.25,-0.5)  node[left]  {$\scriptstyle \alpha_{2,1}$}  (3.5,0)
      node[above] {$\scriptstyle \beta_{2,1}$};

\end{tikzpicture}
}
\caption{Basis
  $\{\alpha_1,\beta_1,\alpha_{2,1},\beta_{2,1},\alpha_{2,2},
  \beta_{2,2}\}$   of   $H_1(X,\Z)$    associated   to   a   cylinder
  decomposition  in Model  $A+$  (the fixed  cylinder  is colored  in
  grey).       If       $\alpha_2:=\alpha_{2,1}+\alpha_{2,2}$      and
  $\beta_2:=\beta_{2,1}+\beta_{2,2}$,                            then
  $\{\alpha_1,\beta_1,\alpha_2,\beta_2\}$     is     a  symplectic  basis     of
  $H_{1}(X,\Z)^{-}$.  }
\label {fig:modelA+:basis}
\end{figure}
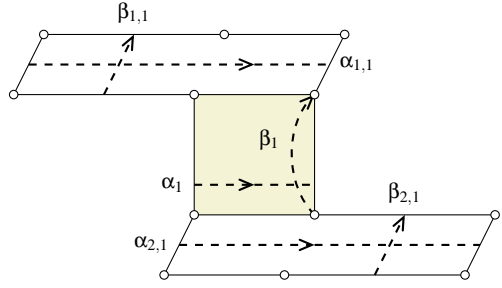

\begin{Remark}\label{Rk:condition:P}
Since $\lambda$  is the positive root of  the polynomial $X^2-eX-2wh$,
the   condition  $\lambda   <  w$  can be read $w^2-ew-2wh>0$, or equivalently 
$e+2h<w$.
\end{Remark}

For the proof of Proposition~\ref{NormA1Prop}, we need the following straightforward lemma

\begin{Lemma}\label{Lm2}
Let  $P\cong  \C^2/L$ be  a  polarized  Abelian  variety of  dimension
$2$.  Suppose  that  $L=L_1\oplus  L_2$, where  $L_i\cong  \Z^2$,  and
$L_1^\perp=L_2$ with respect to the symplectic form $\langle, \rangle$
on  $L$.  Let $(a_i,b_i),\;  i=1,2,$  be a  basis  of  $L_i$, and  set
$\langle    a_i,b_i\rangle=\mu_i\in   \N\setminus\{0\}$.    If   $T\in
\mathrm{End}(P)$ is self-adjoint, then the  matrix of $T$ in the basis
$(a_1,b_1,a_2,b_2)$ is given by

$$T=\left(%
\begin{array}{cc}
  e\cdot\id_2 & B \\ \frac{\mu_1}{\mu_2}B^* & f\cdot \id_2 
\end{array}%
\right),
$$
with    $e,f   \in   \Z,    B,  \;  \frac{\mu_1}{\mu_2}B^*\in
\mathbf{M}_{2\times 2}(\Z)$, where $\displaystyle{\left(%
\begin{smallmatrix}
  a & b \\ c & d \\
\end{smallmatrix}%
\right)^* =\left(%
\begin{smallmatrix}
  d & -b \\ -c & a \\
\end{smallmatrix}%
\right)}$.
\end{Lemma}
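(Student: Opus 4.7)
The plan is to translate self-adjointness with respect to $\langle , \rangle$ into a block-matrix identity and unpack it. In the basis $(a_1,b_1,a_2,b_2)$ of $L=L_1\oplus L_2$, the symplectic form is represented by the block-diagonal matrix
$$J = \begin{pmatrix} \mu_1 J_0 & 0 \\ 0 & \mu_2 J_0 \end{pmatrix}, \qquad J_0 = \begin{pmatrix} 0 & 1 \\ -1 & 0 \end{pmatrix},$$
because $L_1\perp L_2$ and $\langle a_i,b_i\rangle=\mu_i$. Since $T\in\mathrm{End}(P)$ preserves the lattice $L$, its matrix in this basis has integer entries; I will write it in block form $T=\left(\begin{smallmatrix}A&B\\ C&D\end{smallmatrix}\right)$ with $A,B,C,D\in\mathbf{M}_{2\times 2}(\Z)$. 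The condition $\langle Tx,y\rangle=\langle x,Ty\rangle$ for all $x,y\in L$ is equivalent to the single identity $T^tJ=JT$.

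Next, I would expand $T^tJ=JT$ blockwise to obtain the four equations
$$A^tJ_0=J_0A,\qquad D^tJ_0=J_0D,\qquad \mu_1 B^tJ_0=\mu_2 J_0 C,$$
together with the transpose of the third (which is automatically equivalent). The core computation is the elementary fact that a $2\times 2$ matrix $M$ satisfies $M^tJ_0=J_0M$ if and only if $M=\lambda\cdot\id_2$ for some $\lambda$: writing $M=\left(\begin{smallmatrix}a&b\\c&d\end{smallmatrix}\right)$ and comparing entries yields $b=c=0$ and $a=d$. Applying this to $A$ and $D$ gives $A=e\cdot\id_2$ and $D=f\cdot\id_2$ for some integers $e,f$ (integrality coming from $T(L)\subset L$).

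For the off-diagonal block, I would verify by direct $2\times 2$ computation the identity $-J_0B^tJ_0=B^*$ where $*$ is the operation defined in the statement; equivalently $J_0^{-1}B^tJ_0=B^*$. Then the relation $\mu_1B^tJ_0=\mu_2 J_0 C$ rearranges to $C=\frac{\mu_1}{\mu_2}B^*$. Integrality of $B$ is automatic from $T\in\mathrm{End}(P)$, and integrality of $\frac{\mu_1}{\mu_2}B^*=C$ follows for the same reason, yielding the claimed form of $T$.

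There is essentially no obstacle: the argument is purely linear-algebraic and reduces to two short $2\times 2$ identities (the characterization of $J_0$-symmetric matrices and the identity $-J_0B^tJ_0=B^*$). The only subtlety is to keep track of the factor $\mu_1/\mu_2$ in the off-diagonal constraint, which arises from the asymmetry between the two blocks of $J$ and is exactly what forces $\frac{\mu_1}{\mu_2}B^*$ (rather than $B^*$) to lie in $\mathbf{M}_{2\times 2}(\Z)$.
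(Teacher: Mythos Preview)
Your proof is correct and is precisely the natural computation the paper has in mind; the paper in fact omits the proof entirely, calling the lemma ``straightforward'', and your argument is the standard way to make that adjective honest. The blockwise expansion of $T^tJ=JT$, the characterization $M^tJ_0=J_0M\iff M\in\Z\cdot\id_2$, and the identity $J_0^{-1}B^tJ_0=B^*$ are exactly the ingredients needed, and your bookkeeping of the factor $\mu_1/\mu_2$ is accurate.
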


%
%
%

\begin{proof}[Proof of Proposition~\ref{NormA1Prop}]
Since $\rho$ permutes  $\alpha_{2,1}$ and $-\alpha_{2,2}$ and permutes
$\beta_{2,1}$  and  $-\beta_{2,2}$,  we have  $\alpha_2,\beta_2\in
H_1(X,\Z)^-$.  Thus we have  a splitting  $H_1(X,\Z)^-=L_1\oplus L_2,$
where       $L_i=\Z\alpha_i+\Z\beta_i$.       In       the       basis
$(\alpha_1,\beta_1,\alpha_2,\beta_2)$,    the   restriction    of   the
intersection form is given by the matrix $\displaystyle{\left(%
\begin{smallmatrix}
  J & 0 \\ 0 & 2J \\
\end{smallmatrix}%
\right)}$.


Let  $T$  be a  generator  of  $\Ord_D$,  since $T$  is  self-adjoint,
Lemma~\ref{Lm2}       implies      that,       in       the      basis
$(\alpha_1,\beta_1,\alpha_2,\beta_2)$, the matrix of $T$ has the form
$$ T=\left(%
\begin{array}{cccc}
  e & 0 & 2w & 2t \\ 0 & e & 2c & 2h \\ h & -t & f & 0 \\ -c & w & 0 &
  f\\
\end{array}%
\right)_{(\alpha_1,\beta_1,\alpha_2,\beta_2)}
$$ for some $w,h,t,c,e,f \in \Z$.  By replacing $T$ by $T-f$, which is
still  a  generator of  $\Ord_D$, we can assume that $f=0$. Since $\omega$ is an eigenform,
 we have $T^*(\omega)=\lambda(T)\omega$. 
Using the fact that $(\omega(\alpha_1),\omega(\beta_1))$ is a basis of $\R^2$, it is straightforward to 
verify that $\lambda(T)\neq 0$. Thus, by changing  the  sign  of $T$  if necessary, we can  assume that
 $\lambda(T)>0$.   The uniqueness of $T$ follows immediately from the fact that any generator of $\Ord_D$ can 
  be written as $a\cdot T+b\cdot\rm{id}_4, \; a,b \in \Z$.

Using  $\mathrm{GL}^+(2,\R)$,  we can  assume  that $\omega(\alpha_1)=(\lambda,0),  \omega(\beta_1)=(0,\lambda)$.  In  the
basis       $(\alpha_1,\beta_1,\alpha_2,\beta_2)$,       we       have
$\mathrm{Re}(\omega)=(\lambda,0,x,y),
\mathrm{Im}(\omega)=(0,\lambda,0,z)$,    with   $x>0,    z>0$.   Since
$T^*(\omega)=\lambda\omega$, it follows

\begin{equation}\label{eq1}
(\lambda,0,x,y)\cdot T=\lambda(\lambda,0,x,y)
\end{equation}
and
\begin{equation}\label{eq2}
(0,\lambda,0,z)\cdot T=\lambda(0,\lambda,0,z)
\end{equation}

From~(\ref{eq1})   we  draw   $x=2w$,   and  $y=2t$,   and
from~(\ref{eq2}), we  draw $c=0$, and $z=2h$. We  deduce in particular
that  $w>0,  h>0$.   We  can  renormalize further  using  Dehn  twists
$\beta_1\mapsto       n\alpha_1+\beta_1$      and      $\beta_2\mapsto
m\alpha_2+\beta_2$ so that $0\leq t<\gcd(w,h)$. Properness of $\Ord_D$
implies $\gcd(w,h,t,e)=1$.\medskip

Remark that $T$ satisfies
\begin{equation}
\label{GenEq}
T^2=eT+2w h\textrm{Id}_{\R^{4}}
\end{equation} 
Therefore   $\lambda$  satisfies  the   quadratic  equation
$\lambda^2-e\lambda-2w h=0$.  Moreover, since $T$  generates $\Ord_D$,
Equation~(\ref{GenEq}) implies that  $D=e^2+8wh$. Since $\lambda$ is a
positive       algebraic        number,       we       must       have
$\displaystyle{\lambda=\frac{e+\sqrt{D}}{2}}$. By construction we have
$0<\lambda<w$.   All  the  conditions   of  $(\mathcal{P})$   are  now
fulfilled.\medskip

Conversely, if $(w,h,t,e)$ satisfies $(\Pcal)$, and all the conditions
in  $(\ref{normalize:A+})$ hold  then the  construction using  model $A+$  gives  us an
Abelian  differential $(X,\omega)$  in  $\Omega\mathfrak{M}(4)$, which
admits    an     involution    $\rho    :X     \ra    X$    satisfying
$\dim_\C\Omega(X,\rho)^-=2$   (since   $H_1(X,\Z)^-\cong  \Z^4$)   and
$\omega  \in \Omega(X,\rho)^-$. The  endomorphism $T:  H_1(X,\Z)^- \longrightarrow
H_1(X,\Z)^-$  constructed as  above is  clearly self-adjoint,  and its
restriction      to     complex     line      $S=\C\cdot\omega$     is
$\lambda\cdot \rm{Id}_S$. Let  $S'=S^\perp$ be the  orthogonal complement of
$S$ in $\Omega(X,\rho)^-$ with  respect to the intersection form, then
$S'$ is also a complex line in $\Omega(X,\rho)^-$. Since $T$ satisfies
Equation~(\ref{GenEq}),   the   restriction   of   $T$  to   $S'$   is
$\lambda'\cdot\rm{Id}_{S'}$,  where $\lambda'$  is  the other  root of  the
polynomial $X^2-eX-2wh$ (note that $\lambda'<0$). Consequently, $T$ is
a $\C$-linear endomorphism of  $\Omega(X,\rho)^-$, that is $T$ belongs
to     $\rm{End}(\Prym(X,\rho))$.      Since     the    subring     of
$\rm{End}(\Prym(X,\rho))$ generated by  $T$ is isomorphic to $\Ord_D$,
this completes the proof of the proposition.
\end{proof}

\subsubsection{Cylinder decompositions of type $A-$}

The next result parallels Proposition~\ref{NormA1Prop}.

\begin{Proposition}\label{NormA2Prop}
Let $(X,\omega)$  be an Abelian differential in  $\Omega E_D(4)$ which
admits  a decomposition  into  cylinders in  the horizontal  direction
in              Model                $A-$.                Let
$\alpha_{1,1},\beta_{1,1},\alpha_{1,2},\beta_{1,2},\alpha_2,\beta_2
\in H_1(X,\Z)$  be as in Figure~\ref{fig:modelA-:basis}  below. We set
$\alpha_1=\alpha_{1,1}+\alpha_{1,2}, \beta_1=\beta_{1,1}+\beta_{1,2}$.
Then  
\begin{itemize}
\item[(i)] There exists a unique generator $T$ of $\Ord_D$ which is written 
in  the  basis  $\{\alpha_1,\beta_1,\alpha_2,\beta_2\}$ by the matrix
  $\left(%
\begin{smallmatrix}
  e\cdot\rm{id}_2 & B \\ 2B^* & 0\\
\end{smallmatrix}%
\right)$ such that $T^*(\omega)=\lambda(T)\omega$ with $\lambda(T)>0$.

\item[(ii)]  Up   to  the action $\mathrm{GL}^+(2,\R)$  and  Dehn   twists,  there  exist
$w,h,t \in \N$ such that the tuple  $(w,h,t,e)$ satisfies  condition  $(\mathcal{P})$ of
Proposition~\ref{NormA1Prop}, and the matrix of $T$ is given by $\DS{\left(
\begin{smallmatrix}
e & 0 & w & t\\ 0 & e & 0 & h \\ 2h & -2t & 0 & 0 \\ 0 & 2w & 0 & 0\\
\end{smallmatrix}
\right)}$. Moreover, in these coordinates we have

\begin{equation}\label{normalize:A-} 
\left\{ \begin{array}{l}
  \omega(\Z\alpha_2+\Z\beta_2)=\Z(w,0)+\Z(t,h)
  \\ \omega(\Z\alpha_{1,1}+\Z\beta_{1,1})=\omega(\Z\alpha_{1,2}+\Z\beta_{1,2})=\frac{\lambda}{2}\cdot
  \Z^{2}
\end{array}
\right.
\end{equation}
\end{itemize}
Conversely,  let $(X,\omega)$ be an  Abelian differential in
$\Omega\mathfrak{M}(4)$ having a decomposition into three cylinders in
model $A-$.  Suppose that there exists $(w,h,t,e)  \in \Z^4$ verifying
$(\Pcal)$,  such  that  after  normalizing  by  $\GL^+(2,\R)$,  all  the
conditions in $(\ref{normalize:A-})$  are satisfied, then $(X,\omega)$
belongs to $\Omega E_D(4)$.
\end{Proposition}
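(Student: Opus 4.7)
The plan is to mirror the proof of Proposition~\ref{NormA1Prop} verbatim, with the roles of the $\rho$-fixed cylinder and the pair of $\rho$-exchanged cylinders interchanged. The only substantive change is that this swap redistributes a factor of $2$ between the two blocks of the matrix of $T$, and between the period lattices of the two kinds of cylinders. No new technical tool is required.

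First I would verify that $\{\alpha_1,\beta_1,\alpha_2,\beta_2\}$ is a symplectic basis of $H_1(X,\Z)^-$. Here $\rho$ fixes the non-simple middle cylinder, acting by $-\id$ on its core classes $\alpha_2,\beta_2$; and $\rho$ swaps the two simple cylinders with $\rho(\alpha_{1,1})=-\alpha_{1,2}$ and $\rho(\beta_{1,1})=-\beta_{1,2}$, so $\alpha_1=\alpha_{1,1}+\alpha_{1,2}$ and $\beta_1=\beta_{1,1}+\beta_{1,2}$ are anti-invariant. In contrast to Proposition~\ref{NormA1Prop}, the restriction of the intersection form to the basis $(\alpha_1,\beta_1,\alpha_2,\beta_2)$ is now $\left(\begin{smallmatrix} 2J & 0 \\ 0 & J \end{smallmatrix}\right)$: the block $2J$ sits on the $L_1=\Z\alpha_1+\Z\beta_1$ summand precisely because $\alpha_1,\beta_1$ arise as sums of cycles in two separate cylinders. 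Lemma~\ref{Lm2} with $\mu_1=2$, $\mu_2=1$ then forces any self-adjoint endomorphism of $\Prym(X,\rho)$ to have the shape $\left(\begin{smallmatrix} e\id_2 & B \\ 2B^* & f\id_2 \end{smallmatrix}\right)$ with $B\in\mathbf{M}_{2\times 2}(\Z)$. Replacing $T$ by $T-f$ and adjusting its sign make $f=0$ and $\lambda(T)>0$, uniquely determining $T$ among generators of $\Ord_D$.

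Next I would act by $\GL^+(2,\R)$ to normalize $\omega(\alpha_2)=(w,0)$, $\omega(\beta_2)=(t,h)$, with $w,h>0$; this is the period lattice of the single invariant cylinder, which carries no extra factor of $2$. Writing $T^*\omega=\lambda\omega$ componentwise and using that $(\omega(\alpha_2),\omega(\beta_2))$ is a basis of $\R^2$ forces $B=\left(\begin{smallmatrix} w & t \\ 0 & h \end{smallmatrix}\right)$ together with $\omega(\alpha_1)=(\lambda,0)$, $\omega(\beta_1)=(0,\lambda)$. Since $\rho$ exchanges the two simple cylinders isometrically, each contributes exactly half of these periods, giving $\omega(\Z\alpha_{1,i}+\Z\beta_{1,i})=\tfrac{\lambda}{2}\Z^2$, which is condition $(\ref{normalize:A-})$. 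Dehn twists along the core curves of the three cylinders then put $t$ in the range $0\leq t<\gcd(w,h)$, and properness of $\Ord_D$ gives $\gcd(w,h,t,e)=1$. A direct computation yields $BB^*=B^*B=wh\cdot\id_2$, hence $T^2=eT+2wh\cdot\id_4$, so $D=e^2+8wh$ and $\lambda=(e+\sqrt{D})/2$; by Remark~\ref{Rk:condition:P} the inequality $\lambda<w$ is equivalent to $e+2h<w$, completing $(\Pcal)$.

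For the converse, starting from $(w,h,t,e)$ satisfying $(\Pcal)$ together with $(\ref{normalize:A-})$, the Model $A-$ construction produces an Abelian differential in $\Omega\Mod(4)$ carrying an involution $\rho$ for which $\dim_\C\Omega(X,\rho)^-=2$ and $\rho^*\omega=-\omega$. The explicit $T$ defined by the matrix above is self-adjoint with respect to $\left(\begin{smallmatrix} 2J & 0 \\ 0 & J \end{smallmatrix}\right)$, satisfies $T^2=eT+2wh\cdot\id$, and acts by $\lambda\cdot\id$ on $\C\omega$ and by the Galois conjugate $\lambda'\cdot\id$ on its symplectic orthogonal in $\Omega(X,\rho)^-$; hence $T$ is $\C$-linear and generates a subring of $\mathrm{End}(\Prym(X,\rho))$ isomorphic to $\Ord_D$, so $(X,\omega)\in\Omega E_D(4)$. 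The argument involves no real obstacle beyond careful bookkeeping of the interchanged factor of $2$ through the intersection form, the matrix of $T$, and the two types of period lattices.
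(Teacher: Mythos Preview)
Your proposal is correct and follows essentially the same approach as the paper. The paper's own proof is in fact even terser: it records that in the basis $(\alpha_1,\beta_1,\alpha_2,\beta_2)$ the intersection form is $\left(\begin{smallmatrix} 2J & 0 \\ 0 & J\end{smallmatrix}\right)$, invokes Lemma~\ref{Lm2} with $\mu_1=2,\ \mu_2=1$ to obtain the block shape $\left(\begin{smallmatrix} e\cdot\id_2 & B \\ 2B^* & f\cdot\id_2\end{smallmatrix}\right)$, and then simply says the remainder ``follows the same lines as Proposition~\ref{NormA1Prop}.'' Your write-up supplies exactly those omitted lines, with the only (cosmetic) difference that you normalize the periods of the $\rho$-fixed cylinder first and recover $\omega(\alpha_1),\omega(\beta_1)$ from the eigenvalue equation, whereas the paper in the $A+$ case normalized the $\alpha_1,\beta_1$ periods first; either order yields the same outcome.
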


\begin{figure}[htbp]
\centering \subfloat{%
\begin{tikzpicture}[scale=0.8]
\fill[fill=yellow!80!black!20,even odd  rule] (0,0) --  (8,0) -- (9,1)
-- (1,1); \draw (0,0)  -- (2,0) -- (2,-2) -- (4,-2)  -- (4,0) -- (8,0)
-- (9,1) -- (3,1) -- (3,3) --  (1,3) -- (1,1) -- cycle; \draw (1,1) --
(3,1) (2,0) -- (4,0);

\draw[thick, dashed, ->, >= angle 45] (1,1.5) -- (2,1.5); \draw[thick,
  dashed] (2,1.5)  -- (3,1.5); \draw[thick,  dashed, ->, >=  angle 45]
(3,1) .. controls (2.5,1.5) and (2.5,2.5) .. (3,3);

\draw[thick,  dashed,   ->,  >=   angle  45]  (0.5,0.5)   --  (5,0.5);
\draw[thick, dashed] (5,0.5) -- (8.5,0.5); \draw[thick, dashed, ->, >=
  angle 45] (6,0) -- (7,1);

\draw[thick,  dashed,   ->,  >=   angle  45]  (2,-1.5)   --  (3,-1.5);
\draw[thick, dashed] (3,-1.5) -- (4,-1.5); \draw[thick, dashed, ->, >=
  angle 45] (4,-2) .. controls (3.5,-1.5) and (3.5,-0.5) .. (4,0);

\filldraw[fill=white,draw=black] (0,0) circle (2pt) (2,0) circle (2pt)
(2,-2)  circle (2pt)  (4,-2)  circle (2pt)  (4,0)  circle (2pt)  (8,0)
circle (2pt) (9,1) circle (2pt)  (5,1) circle (2pt) (3,1) circle (2pt)
(3,3) circle (2pt) (1,3) circle (2pt) (1,1) circle (2pt);

\draw   (1,1.5)  node[left]  {$\scriptstyle   \alpha_{1,1}$}  (2.25,2)
node[above]    {$\scriptstyle   \beta_{1,1}$}    (2,-1.5)   node[left]
{$\scriptstyle  \alpha_{1,2}$}  (3.25,-1)  node[above]  {$\scriptstyle
  \beta_{1,2}$} (8.5,0.5)  node[right] {$\scriptstyle \alpha_2$} (7,1)
node[above] {$\scriptstyle \beta_2$};
\end{tikzpicture}
}
\caption{Basis
  $\{\alpha_{1,1},\beta_{1,1},\alpha_{1,2},\beta_{1,2},\alpha_{2},
  \beta_{2}\}$ of $H_1(X,\Z)$ associated to a cylinder decomposition
  in  Model  $A-$  (the  fixed   cylinder  is  colored  in  grey).  If
  $\alpha_1:=\alpha_{1,1}+\alpha_{1,2}$                             and
  $\beta_1:=\beta_{1,1}+\beta_{1,2}$ ,                              then
  $\{\alpha_1,\beta_1,\alpha_2,\beta_2\}$     is     a    symplectic basis     of
  $H_{1}(X,\Z)^{-}$.  }
\label {fig:modelA-:basis}
\end{figure}

\begin{proof}[Proof of Proposition~\ref{NormA2Prop}]
We        have        $H_1(X,\Z)^-=L_1\oplus        L_2$,        where
$L_i=\Z\alpha_i+\Z\beta_i$.           In           the           basis
$(\alpha_1,\beta_1,\alpha_2,\beta_2)$, the  intersection form is given
by the matrix $\displaystyle{\left(%
\begin{array}{cc}
  2J & 0 \\ 0 & J\\
\end{array}%
\right)}$.  From Lemma~\ref{Lm2}, we know that the matrix of any element of $\Ord_D$ has the form $\DS{\left(%
\begin{array}{cc}
  e\cdot \id_2 & B \\ 2B^* & f\cdot \id_2 \\
\end{array}%
\right)}$, with $B$ in  $\mathbf{M}_{2\times 2}(\Z)$. The
remainder  of  the  proof   follows  the  same  lines  as  Proposition
\ref{NormA1Prop}.
\end{proof}

\subsubsection{Cylinder decompositions of type $B$}

\begin{Proposition}\label{NormBProp}
Suppose that $(X,\omega)\in\Omega  E_D(4)$ admits a cylinder decomposition in
 Model   $B$ and let   $\alpha_{1,1},\beta_{1,1}, \alpha_{1,2}, \beta_{1,2},  \alpha_2, \beta_2 \in H_1(X,\Z)$  be as in
Figure~\ref{fig:modelB:basis}                below.                Set
$\alpha_1=\alpha_{1,1}+\alpha_{1,2},
\beta_1=\beta_{1,1}+\beta_{1,2}$. Then
\begin{itemize}
\item[(i)] There exists a unique generator $T$ of $\Ord_D$ which is written in the basis 
$(\alpha_1,\beta_1,\alpha_2,\beta_2)$ by a matrix of the form $\DS{\left(
\begin{smallmatrix}
e\cdot\rm{id}_2 & B \\2B^* & 0\\
\end{smallmatrix}
\right)}$ such that $T^*(\omega)=\lambda(T)\omega$ with $\lambda(T)>0$.

\medskip

\item[(ii)]    Up  to   the   action   of $\GL^+(2,\R)$  and  Dehn  twists,  there exist  $w,h,t  \in \N$
 such that the   tuple         $(w,h,t,e)$         satisfies

$$(\mathcal{P'})\left\{\begin{array}{l}        w>0,h>0,\;       0\leq
  t<\gcd(w,h),\\       \gcd       (w,h,t,e)       =1,\\       D=e^2+8w
  h,\\ 0<\frac{e+\sqrt{D}}{4}< w < \frac{e+\sqrt{D}}{2}=:\lambda,
\end{array}
\right.,
$$ 
and the matrix of $T$ is given by  $ \DS{\left(%
\begin{smallmatrix}
  e & 0 & w & t \\ 0 & e & 0 & h \\ 2h & -2t & 0 & 0 \\ 0 & 2w & 0 & 0
  \\
\end{smallmatrix}%
\right)}$. Moreover, in these coordinates we have

\begin{equation}\label{normalize:B}
\left\{                                              \begin{array}{l}
  \omega(\Z\alpha_{1,1}+\Z\beta_{1,1})=\omega(\Z\alpha_{1,2}+\Z\beta_{1,2})=
  \frac{\lambda}{2}\cdot\Z^2,\\ \omega(\Z\alpha_2+\Z\beta_2)=\Z(w,0)+\Z(t,h).
\end{array}
\right.
\end{equation}
\end{itemize}
Conversely,  let $(X,\omega)$ be an  Abelian differential in
$\Omega\mathfrak{M}(4)$ having a cylinder decomposition in Model
$B$.  Suppose  that  there   exists  $(w,h,t,e)  \in  \Z^4$  verifying
$(\Pcal')$  such  that,  after  normalizing by  $\GL^+(2,\R)$,  all  the
conditions  in (\ref{normalize:B})  are  satisfied, then  $(X,\omega)$
belongs to $\Omega E_D(4)$.
\end{Proposition}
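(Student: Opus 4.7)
The plan is to imitate the proofs of Propositions~\ref{NormA1Prop} and~\ref{NormA2Prop} essentially verbatim; the algebraic skeleton is identical to the Model $A-$ case, and only the bounds on $w$ in $(\mathcal{P}')$ are genuinely new. First I would verify that $\alpha_1,\beta_1,\alpha_2,\beta_2$ all lie in $H_1(X,\Z)^-$ and that $H_1(X,\Z)^-=L_1\oplus L_2$ with $L_i=\Z\alpha_i+\Z\beta_i$. Since $\alpha_1,\beta_1$ arise as sums of classes from two cylinders exchanged by $\rho$, a direct count gives $\langle \alpha_1,\beta_1\rangle=2$, while $\langle \alpha_2,\beta_2\rangle=1$ on the fixed cylinder; thus the restriction of the intersection form in the basis $(\alpha_1,\beta_1,\alpha_2,\beta_2)$ is $\DS{\left(\begin{smallmatrix}2J & 0\\ 0 & J\end{smallmatrix}\right)}$, exactly as in Model $A-$. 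Lemma~\ref{Lm2} then guarantees that every self-adjoint $T\in\mathrm{End}(\Prym(X,\rho))$ has matrix of the form $\DS{\left(\begin{smallmatrix} e\cdot\id_2 & B\\ 2B^* & f\cdot\id_2\end{smallmatrix}\right)}$.

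For item (i), I would pick a generator $T$ of $\Ord_D$, replace it by $T-f\cdot\id_4$ to force $f=0$, and flip the sign if necessary so that $\lambda(T)>0$ in $T^*\omega=\lambda(T)\omega$. Uniqueness follows as in Proposition~\ref{NormA1Prop} since any generator of $\Ord_D$ is of the form $\pm T+b\cdot\id_4$ with $b\in\Z$, and both the sign and $b$ are now pinned down. For item (ii), I would normalize with $\GL^+(2,\R)$ so that each exchanged cylinder satisfies $\omega(\Z\alpha_{1,i}+\Z\beta_{1,i})=\frac{\lambda}{2}\cdot\Z^2$ (using that the two halves glue together to give a sublattice of index $2$ in $L_1$, which is consistent with the polarization $2J$ on $L_1$). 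Writing $T^*\omega=\lambda\omega$ coordinate-wise then forces $B=\left(\begin{smallmatrix} w & t\\ 0 & h\end{smallmatrix}\right)$ with $w,h>0$, yielding the displayed matrix of $T$. Dehn twists in $\beta_1,\beta_2$ normalize $0\leq t<\gcd(w,h)$, and properness of $\Ord_D$ gives $\gcd(w,h,t,e)=1$. Squaring $T$ produces the identity $T^2=eT+2wh\cdot\id$, hence $D=e^2+8wh$ and $\lambda=\frac{e+\sqrt{D}}{2}$.

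The one genuinely new step, which I expect to be the main obstacle, is the derivation of the two-sided bound $\frac{\lambda}{2}<w<\lambda$ from the topology of Model $B$, as opposed to the one-sided bound $\lambda<w$ of Models $A\pm$. The lower bound $w>\lambda/2$ arises because one of the two saddle connections on the top of the fixed cylinder \emph{is} the entire bottom boundary of one exchanged cylinder, hence has length $\lambda/2$; in order for the other saddle connection on that same boundary to have positive length, the fixed cylinder's width $w$ must strictly exceed $\lambda/2$ (otherwise we would collapse to a Model $A$ configuration). The upper bound $w<\lambda$ is forced because this complementary saddle connection, of length $w-\lambda/2$, must be identified with a strict sub-arc of the opposite boundary of the other exchanged cylinder (of total length $\lambda/2$), giving $w-\lambda/2<\lambda/2$.

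Finally, for the converse, given any $(w,h,t,e)\in\Z^4$ satisfying $(\mathcal{P}')$ together with the normalizations~(\ref{normalize:B}), the Model $B$ construction yields $(X,\omega)\in\Omega\Mod(4)$ equipped with an involution $\rho$ such that $\dim_\C\Omega(X,\rho)^-=2$. Defining $T$ from the displayed matrix and invoking the identity $T^2=eT+2wh\cdot\id$ together with self-adjointness shows that $T$ preserves both $S=\C\cdot\omega$ and its orthogonal complement $S^\perp\subset\Omega(X,\rho)^-$, acting on each by one of the two real roots of $X^2-eX-2wh$; hence $T$ is $\C$-linear on $\Omega(X,\rho)^-$ and so lies in $\mathrm{End}(\Prym(X,\rho))$. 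The condition $\gcd(w,h,t,e)=1$ then guarantees that the subring $\Z[T]\subset\mathrm{End}(\Prym(X,\rho))$ generated by $T$ is a proper subring isomorphic to $\Ord_D$, which finishes the proof.
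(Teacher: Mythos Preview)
Your proposal is correct and follows essentially the same approach as the paper, which explicitly says the proof ``follows the same lines as Proposition~\ref{NormA2Prop}'' and singles out only the inequality $0<\lambda/2<w<\lambda$ as new. The one detail the paper highlights that you gloss over is the verification that $(\alpha_{1,1},\beta_{1,1},\alpha_{1,2},\beta_{1,2},\alpha_2,\beta_2)$ is genuinely a symplectic basis of $H_1(X,\Z)$: in Model~$B$ the transverse cycles $\beta_{1,1},\beta_{1,2},\beta_2$ are not obviously disjoint, and the paper checks $\langle\beta_{1,i},\beta_2\rangle=\langle\beta_{1,1},\beta_{1,2}\rangle=0$ by observing that $\beta_{1,1}+\beta_2,\ \beta_{1,2}+\beta_2,\ \beta_2$ can be represented by three disjoint simple closed curves.
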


\begin{figure}[htbp]
\centering \subfloat{%
\begin{tikzpicture}[scale=0.6]
\fill[fill=yellow!80!black!20,even odd  rule] (0,0) --  (5,0) -- (6,2)
-- (1,2); \draw (0,0)  -- (2,0) -- (2,-3) -- (5,-3)  -- (5,0) -- (6,2)
-- (4,2) --  (4,5) -- (1,5)  -- (1,2) --  cycle; \draw (1,2)  -- (4,2)
(2,0) -- (5,0);

\draw[thick,  dashed,  ->,  >=  angle  45]  (0.25,0.5)  --  (2.5,0.5);
\draw[thick, dashed] (2.5,0.5) -- (5.25,0.5); \draw[thick, dashed, ->,
  >= angle 45] (5,0) .. controls (5,1) and (5,1.5) .. (6,2);

\draw[thick, dashed,  ->, >= angle 45] (1,3)  -- (2.5,3); \draw[thick,
  dashed]  (2.5,3) --  (4,3); \draw[thick,  dashed, ->,  >=  angle 45]
(4,2) .. controls (3.5,2.5) and (3.5,4.5) .. (4,5);

\draw[thick,  dashed,   ->,  >=  angle   45]  (2,  -2)   --  (3.5,-2);
\draw[thick, dashed]  (3.5,-2) -- (5,-2); \draw[thick,  dashed, ->, >=
  angle 45] (5,-3) .. controls (4.5,-2.5) and (4.5,-0.5) .. (5,0);

\filldraw[fill=white,draw=black] (0,0) circle (3pt) (2,0) circle (3pt)
(2,-3)  circle (3pt)  (3,-3) circle  (3pt) (5,-3)  circle  (3pt) (5,0)
circle (3pt) (6,2) circle (3pt)  (4,2) circle (3pt) (4,5) circle (3pt)
(3,5) circle (3pt) (1,5) circle (3pt) (1,2) circle (3pt);

\draw (1.5,0.5) node[above] {$\scriptstyle \alpha_2$} (4.75,1.25) node
      {$\scriptstyle  \beta_2$}  (1.75,3)  node[above]  {$\scriptstyle
        \alpha_{1,1}$}   (3,4)   node   {$\scriptstyle   \beta_{1,1}$}
      (2.75,-2) node[above]  {$\scriptstyle \alpha_{1,2}$} (4,-1) node
      {$\scriptstyle \beta_{1,2}$};
\end{tikzpicture}
}
\caption{Basis
  $\{\alpha_{1,1},\beta_{1,1},\alpha_{1,2},\beta_{1,2},\alpha_{2},
  \beta_{2}\}$ of $H_1(X,\Z)$ associated to a cylinder decomposition
  in  Model  $B$   (the  fixed  cylinder  is  colored   in  grey).  If
  $\alpha_1:=\alpha_{1,1}+\alpha_{1,2}$                             and
  $\beta_1:=\beta_{1,1}+\beta_{1,2}$,                               then
  $\{\alpha_1,\beta_1,\alpha_2,\beta_2\}$     is     a symplectic   basis     of
  $H_{1}(X,\Z)^{-}$.  }
\label {fig:modelB:basis}
\end{figure}

\begin{proof}[Proof of Proposition~\ref{NormBProp}]
We                  first                 observe                 that
$(\alpha_{1,1},\beta_{1,1},\alpha_{1,2},\beta_{1,2},\alpha_2,\beta_2)$
is a  canonical basis of  $H_1(X,\Z)$.  To see  this, we only  have to
check  that  $\langle   \beta_{1,1},  \beta_{1,2}  \rangle  =  \langle
\beta_{1,1},\beta_2\rangle            =\langle            \beta_{1,2},
\beta_2\rangle=0$. But this follows immediately from the fact that the
cycles  $\beta_{1,1}+\beta_2, \beta_{1,2}+  \beta_2,  \beta_2$ can  be
represented by three  disjoint simple closed curves. The  proof of the
proposition   then    follows   the   same    lines   as   Proposition
\ref{NormA2Prop}, with the exception that by construction we must have
$$ 0<\frac{\lambda}{2}< w < \lambda.
$$ We leave the details to the reader.
\end{proof}

\subsection{Surfaces having no cylinder decompositions in model $A\pm$}

\begin{Proposition}
\label{prop:D8:topo}
Let $(X,\omega)  \in \Omega E_D(4)$  be an eigenform.  If $(X,\omega)$
admits no  cylinder decompositions in  Model $A+$ or Model  $A-$ then, 
up to the action of $\GL^+(2,\R)$, the surface $(X,\omega)$ is the one 
presented in Figure~\ref{fig:D8}. In  particular, we have
$D=8$.
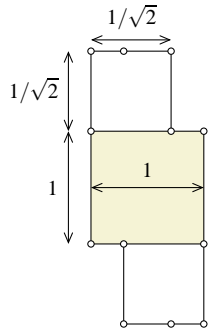
\begin{figure}[htbp]
\centering \subfloat{%
\begin{tikzpicture}[scale=1.5]
\fill[fill=yellow!80!black!20,even odd  rule] (0,0) --  (1,0) -- (1,1)
-- (0,1);  \draw (0,0)  -- (0.29,0)  -- (0.29,-0.71)  --  (1,-0.71) --
(1,1) -- (0.71,1) -- (0.71,1.71)  -- (0,1.71) -- cycle; \draw (0.29,0)
-- (1,0)  (0,1)  --  (0.71,1); \filldraw[fill=white,draw=black]  (0,0)
circle  (0.8pt) (0.29,0)  circle (0.8pt)  (0.29,-0.71)  circle (0.8pt)
(1,-0.71)  circle (0.8pt)  (1,0) circle  (0.8pt) (1,1)  circle (0.8pt)
(0.71,1)  circle (0.8pt)  (0.71,1.71) circle  (0.8pt)  (0,1.71) circle
(0.8pt) (0,1)  circle (0.8pt) (0.29,1.71)  circle (0.8pt) (0.71,-0.71)
circle (0.8pt);

\draw[thin,  <->,   >=angle  45]  (-0.2,1)  --   (-0.2,1.71)  ;  \draw
(-0.2,1.36)  node[left] {$\scriptstyle 1/\sqrt{2}$};  \draw[thin, <->,
  >=angle  45]  (-0.2,0) --  (-0.2,1)  ;  \draw (-0.2,0.5)  node[left]
     {$\scriptstyle  1$};  \draw[thin, <->,  >=angle  45] (0,1.81)  --
     (0.71,1.81)   ;  \draw  (0.36,1.81)   node[above]  {$\scriptstyle
       1/\sqrt{2}$}; \draw[thin, <->,  >=angle 45] (0,0.5) -- (1,0.5);
     \draw (0.5,0.5) node[above] {$\scriptstyle 1$};
\end{tikzpicture}
}
\caption{
\label {fig:D8}
A surface in $\Omega E_{D}(4)$ (decomposed into cylinders in Model $B$) that does not
admit a cylinder decomposition in Model $A\pm$ in any direction.  }
\end{figure}
\end{Proposition}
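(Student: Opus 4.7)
The plan is to combine Proposition~\ref{prop:decomp:cylinders}, Corollary~\ref{cor:decomp:3cyl}, and Proposition~\ref{NormBProp}, and then use the existence of a second completely periodic direction to force the parameters down to a single possibility.

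First I would argue that, under the hypothesis, every completely periodic direction on $(X,\omega)$ must yield a cylinder decomposition of type $B$. By Proposition~\ref{prop:decomp:cylinders}, such a direction yields one of the five models $A+, A-, B, C, D$. Types $A\pm$ are excluded by assumption. Moreover, the argument used in the proof of Corollary~\ref{cor:decomp:3cyl} shows that if $(X,\omega)$ admitted a decomposition of type $C$ or $D$, one could easily produce a simple cylinder in another direction $\theta$, and by the Veech dichotomy that direction would then be completely periodic with a decomposition of type $A+$ or $A-$, contradicting the hypothesis. Since $(X,\omega)$ is a Veech surface by Corollary~\ref{cor:McM1} and therefore admits infinitely many completely periodic directions, all of them must be of type $B$.

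Next, I would normalize the horizontal direction via Proposition~\ref{NormBProp}: up to the $\GL^+(2,\R)$-action and Dehn twists, the surface is encoded by a tuple $(w,h,t,e)\in\Z^4$ satisfying $(\mathcal{P}')$. In these coordinates the two small cylinders $C_{1,1}$ and $C_{1,2}$ are Euclidean squares of side $\lambda/2 = (e+\sqrt{D})/4$, while the middle cylinder $C_2$ has width $w$, height $h$, twist $t$, with the sharp inequalities $0 < \lambda/2 < w < \lambda$.

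The main step is to exhibit a second completely periodic direction and exploit the fact that, by the first step, its decomposition must again be of type $B$, and in particular must contain no simple cylinder. A natural choice is the slope-$(\pm 1)$ direction: the diagonal of each small square $C_{1,i}$ is already a saddle connection of length $\lambda\sqrt{2}/2$, and by Veech dichotomy this direction is completely periodic on $(X,\omega)$. Tracking how the slope-$1$ trajectory traverses the middle cylinder $\R^2/(\Z(w,0)+\Z(t,h))$ yields an explicit new cylinder decomposition whose parameters are rational expressions in $(w,h,t,e)$. The requirement that this new decomposition again has no simple cylinder translates into Diophantine constraints on $(w,h,t,e)$.

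The hard part, which I expect to be the main obstacle, is to show that the parameter region $(\mathcal{P}')$ together with these additional Diophantine constraints admits only the solution $(w,h,t,e)=(1,1,0,0)$. The narrow window $\lambda/2 < w < \lambda$ combined with $D = e^2+8wh$ and $\gcd(w,h,t,e)=1$ already drastically restricts the possibilities; the slope-$(\pm 1)$ analysis (possibly supplemented by a second auxiliary direction, such as one through the "step" of width $w-\lambda/2$ in the middle cylinder) should rule out every remaining candidate except $(1,1,0,0)$. For this solution, $D = 0^2+8\cdot 1\cdot 1 = 8$, $\lambda=\sqrt{2}$, and the normalization (\ref{normalize:B}) reproduces exactly the surface pictured in Figure~\ref{fig:D8}, completing the proof.
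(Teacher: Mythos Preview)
Your overall strategy—normalize a type-$B$ decomposition and then use auxiliary completely periodic directions together with the constraint that no simple cylinder may appear—is the paper's idea as well. The execution, however, differs from the paper's in two significant ways, and the part you label ``the hard part'' is precisely where the argument is missing.

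First, the paper does \emph{not} use the integer prototype normalization of Proposition~\ref{NormBProp}. Instead it normalizes so that the two small cylinders are $x\times x$ squares and the middle cylinder has height $1$, width $x+y$, and twist $t$, with real parameters $0<y<x$ and $0\le t<x+y$. The integer/prototype structure only enters at the very end, once the surface has been pinned down geometrically, to read off $D=8$. Working with real parameters keeps the geometry of simple cylinders transparent and avoids any Diophantine analysis altogether.

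Second, the paper does not use the slope-$\pm 1$ direction, and one auxiliary direction is not enough: it uses \emph{four} carefully chosen directions to successively kill the three parameters. Directions $\theta_1$ of slope $\frac{x+1}{t-x-y}$ and $\theta_2$ of slope $\frac{2x+1}{t}$ together force $t=0$; direction $\theta_3$ of slope $\frac{x+1}{x}$ forces $y=\frac{x^2}{x+1}$; and direction $\theta_4$ of slope $-\frac{2x+1}{2x}$ gives $x\le 1/\sqrt{2}$, while the inequality recorded in the first step gives $x\ge 1/\sqrt{2}$. Each step is a short explicit picture showing that a simple cylinder is visible in the given direction unless the stated equality (or inequality) holds.

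So the genuine gap in your proposal is exactly what you identify as the main obstacle: you have not specified which directions to use nor carried out the computation, and your single slope-$1$ direction will not by itself produce three independent constraints on three real parameters. The ``Diophantine constraints on $(w,h,t,e)$'' you anticipate are never needed; the content of the proof is the choice of the four directions $\theta_1,\dots,\theta_4$ and the elementary area/length inequalities they yield.
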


\begin{proof}[Proof of Proposition~\ref{prop:D8:topo}]
Since Model  $A+$ and  Model $A-$ are  characterized by the  fact that
there exists  a simple cylinder (see Proposition~\ref{prop:decomp:cylinders}), we  will show that in  all cases, but
one, we can find a direction having a simple cylinder. Thus let $(X,\omega)  \in \Omega
E_D(4)$ be an  eigenform and let us assume  $(X,\omega)$ is decomposed
into        cylinders       following        Model        $B$       (see
Figure~\ref{fig:modelB:basis}).     Using
$\GL^+(2,\R)$, we can normalize so that
$$       \left\{      \begin{array}{l}      \omega(\alpha_{1,1})=(x,0)
  \\   \omega(\beta_{1,1})=(0,x)  \\   \omega(\alpha_{2})   =  (x+y,0)
  \\ \omega(\beta_{2}) = (t,1)
\end{array} \right.
$$ where the  parameters $x,y,t\in \R$ satisfy $0 < y  < x$ and $0\leq
t<x+y$.  We   will  show   that  unless  $t=0$,   $y=x^{2}/(x+1)$  and
$x=1/\sqrt{2}$ there always exists a direction having a simple cylinder. \medskip
\begin{itemize}

\item[$\bullet$] {\bf Step 1:} $t=0$. Let us  consider the direction $\theta_{1}$ of slope  $\frac{x+1}{t-x-y}$. Clearly, if
$\frac{x+y-t}{x+1}x <  x-y$ then there  exists a simple  cylinder in direction $\theta_{1}$ (see
Figure~\ref{fig:D8:simple},     left).    Thus    let     us    assume
$\frac{x+y-t}{x+1}x \geq x-y$, or equivalently
\begin{equation}
\label{eq:simple:1}
tx \leq 2xy-x+y.
\end{equation} 
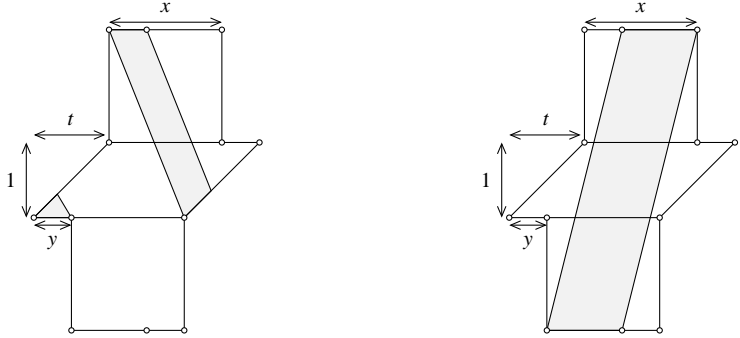
\begin{figure}[htbp]
\begin{minipage}[t]{0.4\linewidth}
\centering
\begin{tikzpicture}[scale=0.5]

\filldraw[fill=gray!10] (2,5) -- (4,0)  -- (intersection of 4,0 -- 6,2
and 5,0  -- 3,5) --  (3,5) -- cycle; \filldraw[fill=gray!10]  (0,0) --
(1,0) -- (intersection of 0,0 -- 2,2 and 1,0 -- -2,5) -- cycle;

\draw (0,0) -- (1,0) -- (1,-3) -- (4,-3) -- (4,0) -- (6,2) -- (5,2) --
(5,5) -- (2,5) -- (2,2) -- cycle; \draw (1,0) -- (4,0) (2,2) -- (5,2);

\filldraw[draw=black,  fill=white]  (0,0)  circle (2pt)  (1,0)  circle
(2pt) (4,0) circle (2pt) (1,-3) circle (2pt) (4,-3) circle (2pt) (6,2)
circle (2pt) (5,2) circle (2pt)  (5,5) circle (2pt) (2,5) circle (2pt)
(2,2) circle (2pt) (3,5) circle (2pt) (3,-3) circle (2pt) ;

\draw[thin,  <->,  >=angle 45]  (2,5.2)  --  (5,5.2); \draw  (3.5,5.2)
node[above] {$\scriptstyle x$};  \draw[thin, <->, >=angle 45] (-0.2,0)
-- (-0.2,2)   ;   \draw   (-0.2,1)  node[left]   {$\scriptstyle   1$};
\draw[thin, <->,  >=angle 45]  (0,-0.2) -- (1,-0.2);  \draw (0.5,-0.2)
node[below] {$\scriptstyle  y$}; \draw[thin, <->,  >=angle 45] (0,2.2)
-- (1.9,2.2); \draw (1,2.2) node[above] {$\scriptstyle t$};
\end{tikzpicture}
\end{minipage} 
\begin{minipage}[t]{0.4\linewidth}
\centering
\begin{tikzpicture}[scale=0.5]
\filldraw[fill=gray!10] (1,-3)  -- (3,-3)--  (5,5) -- (3,5)  -- cycle;
\draw (0,0) -- (1,0) -- (1,-3) -- (4,-3) -- (4,0) -- (6,2) -- (5,2) --
(5,5) -- (2,5) -- (2,2) -- cycle; \draw (1,0) -- (4,0) (2,2) -- (5,2);

\filldraw[draw=black,  fill=white]  (0,0)  circle (2pt)  (1,0)  circle
(2pt) (4,0) circle (2pt) (1,-3) circle (2pt) (4,-3) circle (2pt) (6,2)
circle (2pt) (5,2) circle (2pt)  (5,5) circle (2pt) (2,5) circle (2pt)
(2,2) circle (2pt) (3,5) circle (2pt) (3,-3) circle (2pt) ;

\draw[thin,  <->,  >=angle 45]  (2,5.2)  --  (5,5.2); \draw  (3.5,5.2)
node[above] {$\scriptstyle x$};  \draw[thin, <->, >=angle 45] (-0.2,0)
-- (-0.2,2)   ;   \draw   (-0.2,1)  node[left]   {$\scriptstyle   1$};
\draw[thin, <->,  >=angle 45]  (0,-0.2) -- (1,-0.2);  \draw (0.5,-0.2)
node[below] {$\scriptstyle  y$}; \draw[thin, <->,  >=angle 45] (0,2.2)
-- (1.9,2.2); \draw (1,2.2) node[above] {$\scriptstyle t$};

\end{tikzpicture}
\end{minipage}
\caption{
\label{fig:D8:simple}
Cylinders in  directions $\theta_{1}$ and $\theta_{2}$. }
\end{figure}

Let us consider a  second direction $\theta_{2}$ of slope $\frac{2x+1}{t}$. Clearly
if  $t>0$  and $\frac{t  x}{2x+1}  < y$  then  there  exists a  simple
cylinder in direction $\theta_{2}$ (see  Figure~\ref{fig:D8:simple},  right). Observe  that  if
$t>0$      then     this      case      actually     occurs      since
inequality~(\ref{eq:simple:1})    implies   $tx    \leq    2xy-x+y   <
y(2x+1)$.   Thus  the proposition   is   proven  unless
$t=0$. Hence from now on, we assume $t=0$. \medskip

\item[$\bullet$] {\bf Step 2:} $y=\frac{x^2}{x+1}$. We apply the  previous idea to the direction 
$\theta_{3}$  of slope $\frac{x+1}{x}$
(see Figure~\ref{fig:D8:simple2} for details).

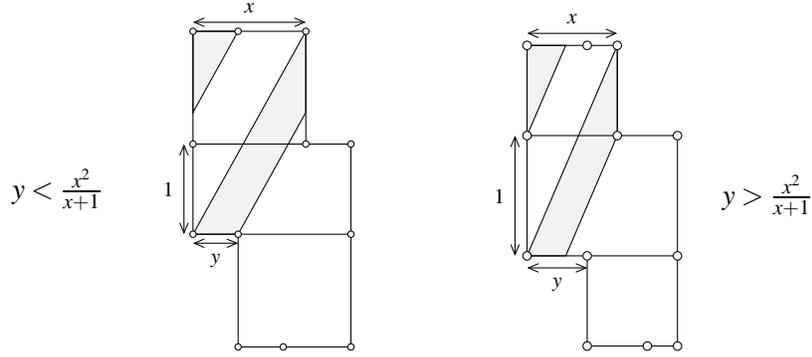
\begin{figure}[htbp]
\begin{minipage}[t]{0.4\linewidth}
\centering
\begin{tikzpicture}[scale=0.6]

\filldraw[fill=gray!10] (0,4.5)  -- (1,4.5) --  (intersection of 1,4.5
-- -1.5,0 and 0,0 -- 0,1  ) -- cycle; \filldraw[fill=gray!10] (0,0) --
(2.5,4.5) -- (intersection  of 1,0 -- 3.5,4.5 and 2.5,4  -- 2.5,2 ) --
(1,0) -- cycle;

\draw (0,0) --  (1,0) -- (1,-2.5) -- (3.5,-2.5)  -- (3.5,2) -- (2.5,2)
-- (2.5,4.5)  -- (0,4.5)  -- cycle;  \draw (1,0)  -- (3.5,0)  (0,2) --
(2.5,2);

\filldraw[draw=black,  fill=white]  (0,0)  circle (2pt)  (1,0)  circle
(2pt)  (1,-2.5) circle  (2pt) (3.5,-2.5)  circle (2pt)  (3.5,0) circle
(2pt) (3.5,2) circle (2pt) (2.5,2) circle (2pt) (2.5,4.5) circle (2pt)
(0,4.5) circle (2pt) (0,2)  circle (2pt) (1,4.5) circle (2pt) (2,-2.5)
circle (2pt) ;

\draw[thin, <->,  >=angle 45]  (0,4.7) -- (2.5,4.7);  \draw (1.25,4.7)
node[above] {$\scriptstyle x$};  \draw[thin, <->, >=angle 45] (-0.2,0)
-- (-0.2,2)   ;   \draw   (-0.2,1)  node[left]   {$\scriptstyle   1$};
\draw[thin, <->,  >=angle 45]  (0,-0.2) -- (1,-0.2);  \draw (0.5,-0.2)
node[below] {$\scriptstyle y$};

\draw (-3,1) node {$y<\frac{x^{2}}{x+1}$};

\end{tikzpicture}
\end{minipage} 
\begin{minipage}[t]{0.4\linewidth}
\centering
\begin{tikzpicture}[scale=0.8]

\filldraw[fill=gray!10] (0,0) -- (1.5,3.5) -- (1.5,2) -- (intersection
of 1.5,2 --  0,-1.5 and 0,0 -- 1,0)  -- cycle; \filldraw[fill=gray!10]
(0,2)  -- (0,3.5)  -- (intersection  of 0,2  -- 1.5,5.5  and  0,3.5 --
1,3.5) -- cycle;

\draw (0,0) --  (1,0) -- (1,-1.5) -- (2.5,-1.5)  -- (2.5,2) -- (1.5,2)
-- (1.5,3.5)  -- (0,3.5)  -- cycle;  \draw (1,0)  -- (2.5,0)  (0,2) --
(1.5,2);

\filldraw[draw=black,  fill=white]  (0,0)  circle (2pt)  (1,0)  circle
(2pt)  (1,-1.5) circle  (2pt) (2.5,-1.5)  circle (2pt)  (2.5,0) circle
(2pt) (2.5,2) circle (2pt) (1.5,2) circle (2pt) (1.5,3.5) circle (2pt)
(0,3.5) circle (2pt) (0,2)  circle (2pt) (1,3.5) circle (2pt) (2,-1.5)
circle (2pt) ;

\draw[thin, <->,  >=angle 45]  (0,3.7) -- (1.5,3.7);  \draw (0.75,3.7)
node[above] {$\scriptstyle x$};  \draw[thin, <->, >=angle 45] (-0.2,0)
-- (-0.2,2)   ;   \draw   (-0.2,1)  node[left]   {$\scriptstyle   1$};
\draw[thin, <->,  >=angle 45]  (0,-0.2) -- (1,-0.2);  \draw (0.5,-0.2)
node[below] {$\scriptstyle y$};

\draw (4,1) node {$y>\frac{x^{2}}{x+1}$};

\end{tikzpicture}
\end{minipage}
\caption{
\label{fig:D8:simple2}
Cylinders in direction  $\theta_{3}$  of slope $\frac{x+1}{x}$ when     $t=0$.    If
$y \neq \frac{x^{2}}{x+1}$ then there always
exists a  simple cylinder in direction $\theta_3$.  }
\end{figure}

\item[$\bullet$] {\bf Step 3:} $x=\frac{1}{\sqrt{2}}$.  Since  $y  =   \frac{x^{2}}{x+1}$, the
inequality~(\ref{eq:simple:1})  becomes $x\geq  \frac1{\sqrt{2}}$.  To
complete the first part of the proposition,  it remains to show that if there
is no simple  cylinders then $x \leq \frac1{\sqrt{2}}$. This is achieved 
by considering the direction $\theta_{4}$ of slope $-\frac{2x+1}{2x}$
as shown in Figure~\ref{fig:D8:simple3}.
\begin{figure}[htbp]
\begin{tikzpicture}[scale=0.6]
\filldraw[fill=gray!10] (1,-2)  -- (2,-2) -- (intersection  of 2,-2 --
-2,4  and 1,0  --  1,1) --  cycle;  \filldraw[fill=gray!10] (3,-2)  --
(intersection  of  -1,4   --  3,-2  and  0,0  --   0,1)  --  (0,4)  --
(intersection   of  0,4   --  4,-2   and   3,0  --   3,1)  --   cycle;
\filldraw[fill=gray!10] (1,4) -- (2,4) -- (intersection of 1,4 -- 5,-2
and 2,0 -- 2,1) -- cycle; \draw  (0,0) -- (1,0) -- (1,-2) -- (3,-2) --
(3,2) -- (2,2) -- (2,4) --  (0,4) -- cycle; \draw (1,0) -- (3,0) (0,2)
-- (2,2); \filldraw[draw=black,  fill=white] (0,0) circle  (2pt) (1,0)
circle  (2pt) (1,-2)  circle (2pt)  (3,-2) circle  (2pt)  (3,0) circle
(2pt) (3,2) circle  (2pt) (2,2) circle (2pt) (2,4)  circle (2pt) (0,4)
circle (2pt) (0,2) circle (2pt) (1,4) circle (2pt) (2,-2) circle (2pt)
;
\draw[thin,  <->,  >=angle  45]  (2.2,4)  --  (2.2,2);  \draw  (2.2,3)
node[right] {$\scriptstyle x$};  \draw[thin, <->, >=angle 45] (-0.2,0)
-- (-0.2,2)   ;   \draw   (-0.2,1)  node[left]   {$\scriptstyle   1$};
\draw[thin, <->,  >=angle 45]  (0,-0.2) -- (1,-0.2);  \draw (0.5,-0.2)
node[below] {$\scriptstyle y$};
\end{tikzpicture}
\caption{
\label{fig:D8:simple3}
Cylinder in direction $\theta_{4}$ of slope $-\frac{2x+1}{2x}$.  }
\end{figure}
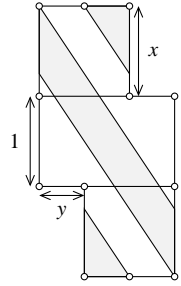
Clearly, if $\frac{2x}{2x+1}(x+1) <  x+y$ then  there exists  a simple
cylinder  in direction $\theta_{4}$ (see   the  figure  for   details).  Thus  one   can  assume
$\frac{2x}{2x+1}(x+1)    \geq    x+y$.     Substituting     $y$    by
$\frac{x^{2}}{x+1}$, we see that this  inequality  is   equivalent  to   $x  \leq
\frac1{\sqrt{2}}$, that is the desired inequality. The proof  of the first  part of the  proposition is now  complete. 

\end{itemize}

\medskip

In order to compute the discriminant,  one needs to put $(X,\omega)$ into
the form of Proposition~\ref{NormBProp}.  Since $x+y=1$ we have $w=1$,
$h=1$ and $\lambda/2 = x$. Thus $\lambda = \frac{e+\sqrt{D}}{2} = 2x =
\sqrt{2}$  and $e=0$.  Since the  tuple $(w,h,t,e)  = (1,1,0,0)$  is a
solution  to  $(\Pcal')$ the  discriminant  is $D=e^{2}+8wh=8$.   This
completes the proof of Proposition~\ref{prop:D8:topo}.
\end{proof}

\subsection{Two Consequences}
\medskip
\begin{proof}[Proof of Theorem~\ref{theo:onto:map}]
From Proposition~\ref{prop:D8:topo}, we know that, when $D\neq 8$, every surface in 
$\Omega E_D(4)$ admits a cylinder decomposition in models $A+$, or $A-$. Theorem~\ref{theo:onto:map} 
 is  then  a    direct    consequence    of 
Propositions \ref{NormA1Prop} and \ref{NormA2Prop}.
\end{proof}

From                    Propositions~\ref{NormA1Prop},~\ref{NormA2Prop}
and~\ref{NormBProp}, we also draw
\begin{Corollary}
\label{cor:admis:disc}
Let $D$ be a discriminant.
\begin{enumerate}
\item If $D \equiv 5 \mod 8$, then $\Omega E_D(4)=\ety$.
\item If $D \equiv 0, 1, 4 \mod 8$ and $D \geq 17$ then $\Omega E_D(4)
  \neq \ety$.
\end{enumerate}
\end{Corollary}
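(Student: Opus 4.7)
The plan is to use the three canonical forms of cylinder decomposition (Propositions~\ref{NormA1Prop}, \ref{NormA2Prop}, \ref{NormBProp}) together with the fact, already established in Corollary~\ref{cor:decomp:3cyl}, that every surface in $\Omega E_D(4)$ admits a three-cylinder decomposition.

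For part (1), I would argue by an elementary congruence. By Corollary~\ref{cor:decomp:3cyl}, a hypothetical surface in $\Omega E_D(4)$ admits a decomposition into three cylinders, which by Proposition~\ref{prop:decomp:cylinders} falls into one of the models $A+$, $A-$, or $B$. In all three cases, the normalizing propositions give integer parameters $(w,h,t,e)$ satisfying $D = e^2 + 8wh$, so $D \equiv e^2 \pmod{8}$. Since the squares modulo $8$ lie in $\{0,1,4\}$, this forces $D \not\equiv 5 \pmod{8}$, contradicting the assumption. Hence $\Omega E_D(4)=\emptyset$ when $D\equiv 5\pmod 8$.

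For part (2), the strategy is to write down, for each residue class $D \bmod 8 \in \{0,1,4\}$ and each $D \geq 17$, an explicit tuple $(w,h,t,e) \in \Z^4$ satisfying condition $(\mathcal{P})$ of Proposition~\ref{NormA1Prop}. Once such a prototype is produced, the converse part of Proposition~\ref{NormA1Prop} immediately yields a surface in $\Omega E_D(4)$, so the locus is non-empty. The natural ansatz is to take $h=1$ and $t=0$ (which makes $\gcd(w,h,t,e)=\gcd(w,1)=1$ automatic) and to choose $e$ of the correct parity: $e=0$ when $D\equiv 0$, $e=-2$ when $D\equiv 4$, and $e=-1$ when $D\equiv 1 \pmod 8$. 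With these choices one has $w = (D-e^2)/8 \in \Z_{>0}$, and the only nontrivial inequality to check is $e + 2h < w$.

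The main (mild) obstacle is precisely this last inequality for small $D$: a naive positive choice of $e$ fails in the range $17\leq D\leq 36$, which is exactly why I propose to use $e\in\{0,-1,-2\}$. For instance $D=17$ gives $(w,h,t,e)=(2,1,0,-1)$ with $e+2h=1<2$; $D=20$ gives $(2,1,0,-2)$ with $e+2h=0<2$; $D=24$ gives $(3,1,0,0)$; and all larger admissible $D$ work with the same three templates since $w=(D-e^2)/8$ grows linearly in $D$ while $e+2h$ stays bounded. A short case check over the three residues and the finitely many small values finishes the argument.
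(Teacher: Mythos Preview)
Your proposal is correct and follows essentially the same approach as the paper: the emptiness for $D\equiv 5\pmod 8$ comes from the congruence $D=e^2+8wh$ forced by the three normalizing propositions, and non-emptiness for $D\geq 17$ is obtained by exhibiting the very same prototypes $(w,1,0,e)$ with $e\in\{0,-1,-2\}$ according to $D\bmod 8$, invoking the converse of Proposition~\ref{NormA1Prop}. The paper's proof is identical in substance, only slightly terser in that it does not spell out the verification of $e+2h<w$ for the smallest admissible discriminants.
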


\begin{proof}
The first assertion  is immediate: if $(X, \omega)  \in \Omega E_D(4)$
then Corollary~\ref{cor:decomp:3cyl}  implies that $(X,\omega)$ admits
a  decomposition into  three cylinders  in some  direction  in model
$A+$,  $A-$   or     $B$.    Following   respectively
Proposition~\ref{NormA1Prop},   \ref{NormA2Prop}   or~\ref{NormBProp},
there  exists $(w,h,t,e) \in  \Z^{4}$ such  that $D=e^2+8wh$.  Thus $D
\equiv 0,1,4 \mod 8$. \medskip

Conversely, given any  $D \geq 17$ such that $D  \equiv 0,1,4 \mod 8$,
it is  straightforward to construct a solution  $(w,h,t,e) \in \Z^{4}$
satisfying $(\mathcal{P})$. Indeed:
\begin{itemize}
\item  if  $D\equiv  0  \mod  8$,  the tuple  $(  D/8,1,0,0  )$  is  a
  solution.\medskip

\item  if $D\equiv  1 \mod  8$, the  tuple $(  (D-1)/8,1,0,-1 )$  is a
  solution.\medskip

\item  if $D\equiv  4 \mod  8$, the  tuple $(  (D-4)/8,1,0,-2 )$  is a
  solution.
\end{itemize}
By  Propositions~\ref{NormA1Prop}, \ref{NormA2Prop}, we know
that any  solution to  $(\Pcal)$ gives rise  to a surface  in $\Omega
E_D(4)$. 
\end{proof}

\subsection{Small discriminants}\label{sec:proof:MainTh2}
We can now prove Theorem~\ref{MainTh2}, which deals with discriminants
smaller than $17$.

\begin{proof}[Proof of Theorem~\ref{MainTh2}]\hfill

There are only  $4$ admissible values for $D$  smaller than $17$, {\it
  i.e.} $D \in \{8,9,12,16\}.$

\begin{enumerate}
\item  For  $D=16$, or  $D=9$,  there  are  no $(w,h,t,e)$  satisfying
  $(\mathcal{P})$ nor $(\mathcal{P}')$.\medskip

\item  For  $D=12$, $(w,h,t,e)=(1,1,0,-2)$  is  the  only solution  to
  $(\mathcal{P})$,     and    there     are     no    solutions     to
  $(\mathcal{P}')$.  {\em  A  priori},  we  get two  surfaces  from  a
  solution to $(\mathcal{P})$, one for the Model $A+$, and one for the
  Model $A-$. But a surface admitting cylinder decomposition in Model
  $A+$  also admits cylinder  decompositions in  Model $A-$  and vice
  versa  (see also  Section~\ref{sec:butterfly}).  Therefore,  the two
  surfaces belong to the same $\GL^+(2,\R)$-orbit. Since $D=12$ is not a
  square, the surfaces in  $\Omega E_{12}(4)$ can not be square-tiled. 
  A classical theorem due to Thurston \cite{Thurston1988} then implies that they do not admit any
   decomposition into one cylinder, or two cylinders  exchanged   by the Prym involution.  Thus, the 
    corresponding Teichm\"uller curve
has  exactly two  cusps corresponding to two models of decomposition into three cylinders.\medskip

\item  For  $D=8$,  $(w,h,t,e)=(1,1,0,0)$  is  the  only  solution  to
 $(\mathcal{P}')$, and there are no solutions to $(\mathcal{P})$.  A direct consequence of this fact is that the
  surfaces in $\Omega E_8(4)$ do not have any simple cylinder, since otherwise they would have a decomposition 
  in model $A+$ or $A-$, and  there would be a solution to $(\Pcal)$.
Another consequence is that the surfaces in $\Omega E_8(4)$ do not admit any decomposition into one 
or two cylinders, since otherwise they would have a simple cylinder. Therefore,   the   corresponding 
 Teichm\"uller  curve   has  only  one  cusp corresponding to the unique model of cylinder decomposition.

\end{enumerate}

\end{proof}

\section{Uniqueness}
\label{sec:uniqueness}

As remarked previously, in general, neither the representation
of $\Ord_D$  nor the involution  $\rho$ is uniquely determined  by the
eigenform $(X,\omega)$. However, if $(X,\omega)\in \Omega E_D(4)$, 
then it does uniquely determine the pair $(\rho,\mathfrak{i})$, up to
isomorphisms of the order $\Ord_D$.

\begin{Theorem}
\label{UqeThm}
Let     $(X,\omega)$     be     an     Abelian     differential     in
$\Omega\mathfrak{M}(4)$. Suppose that there exist
\begin{itemize}
\item two involutions $\rho, \rho' : X \ra X$ such that
$$     \dim_\C\Omega(X)_\rho^-=\dim_\C\Omega(X)_{\rho'}^-=2     \qquad
  \textrm{and} \qquad \rho^*(\omega) = {\rho'}^*(\omega)= -\omega,
$$

\item two injective ring homomorphisms
$$      \mathfrak{i}:\Ord_D\ra\mathrm{End}(H_1(X,Z)_\rho^-)     \qquad
  \textrm{and}                  \qquad                  \mathfrak{i}':
  \Ord_{D'}\ra\mathrm{End}(H_1(X,\Z)_{\rho'}^-)
$$ such that their images are self-adjoint, proper subrings.
\end{itemize}
If  $\omega$  is  an  eigenform for  both  $\mathfrak{i}(\Ord_D)$  and
$\mathfrak{i}'(\Ord_{D'})$ then $\rho=\rho'$, $D=D'$, and there exists
a  ring  isomorphism  $\mathfrak{j}:  \Ord_D  \ra  \Ord_D$  such  that
$\mathfrak{i}'=\mathfrak{i}\circ\mathfrak{j}$.
\end{Theorem}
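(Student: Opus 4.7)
The argument splits into two independent steps: first I show that the two involutions coincide ($\rho = \rho'$), and then I deduce that the two real multiplications have the same image and differ only by a ring automorphism of the order (which in particular forces $D = D'$).

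For the first step, set $\sigma := \rho \circ \rho'$. Since $g(X) = 3 \geq 2$, $\Aut(X)$ is finite by Hurwitz's theorem, so $\sigma$ has finite order. Moreover $\sigma^*\omega = (\rho')^*\rho^*\omega = (\rho')^*(-\omega) = \omega$, so $\sigma$ preserves $\omega$. Both $\rho$ and $\rho'$ permute the zeros of $\omega$; since $\omega$ has a unique zero $p$, both fix it, and hence so does $\sigma$. A non-trivial holomorphic involution of a Riemann surface has isolated fixed points at which it acts by $-\id$ on the tangent line, so $d\rho_p = d\rho'_p = -\id$, whence $d\sigma_p = \id$. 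Cartan's linearization theorem for finite-order holomorphic automorphisms then gives that $\sigma$ is the identity in a neighborhood of $p$, and hence globally on $X$ by connectedness; thus $\rho = \rho'$.

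For the second step, let $T \in \mathfrak{i}(\Ord_D)$ and $T' \in \mathfrak{i}'(\Ord_{D'})$ be generators of the two orders (for instance the canonical generators furnished by the uniqueness part of Propositions~\ref{NormA1Prop}, \ref{NormA2Prop}, or \ref{NormBProp}, applied to any three-cylinder decomposition of $(X,\omega)$ whose existence is guaranteed by Corollary~\ref{cor:decomp:3cyl}). Both $T$ and $T'$ act as $\C$-linear self-adjoint operators on the $2$-dimensional complex space $\Omega(X,\rho)^-$, and both have $\omega$ as an eigenvector. Since self-adjoint operators admit orthogonal eigenspaces with respect to the polarization-induced Hermitian form, and since the Hermitian orthogonal of $\C\omega$ in $\Omega(X,\rho)^-$ is one-dimensional, $T$ and $T'$ share the same two eigenlines, hence commute on $\Omega(X,\rho)^-$, and therefore commute as integer endomorphisms of $H_1(X,\Z)^-$ as well.

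Being simultaneously diagonalisable with the same eigenspace decomposition, $T'$ lies in the real span of $T$ and $\id$; comparing integer matrix entries of $T, T', \id$ then gives $T' = aT + b\,\id$ with $a, b \in \Q$. Clearing denominators, $nT' \in \mathfrak{i}(\Ord_D)$ for some $n \in \N$, and properness of $\mathfrak{i}(\Ord_D)$ forces $T' \in \mathfrak{i}(\Ord_D)$. Hence $\mathfrak{i}'(\Ord_{D'}) = \Z\,\id + \Z T' \subseteq \mathfrak{i}(\Ord_D)$, and the symmetric argument yields equality. Comparing discriminants of this common subring gives $D = D'$, and $\mathfrak{j} := \mathfrak{i}^{-1} \circ \mathfrak{i}' : \Ord_D \to \Ord_D$ is the desired ring automorphism satisfying $\mathfrak{i}' = \mathfrak{i} \circ \mathfrak{j}$. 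The main obstacle is the first step: it relies essentially on $\omega$ having a \emph{single} zero (so that both involutions must fix it) and on the rigidity of finite-order holomorphic automorphisms at a fixed point with trivial differential.
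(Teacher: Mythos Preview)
Your proof is correct and takes a genuinely different route from the paper's.

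For the equality $\rho=\rho'$, the paper fixes a three-cylinder decomposition (say of type $A+$) and observes that each Prym involution must leave the \emph{unique simple cylinder} invariant; since an orientation-reversing involution of a flat cylinder with $D\rho=-\id$ is the rotation by $\pi$ about its center, $\rho$ and $\rho'$ agree on an open set, hence everywhere. Your argument instead uses the \emph{unique zero} $p$ of $\omega$: both involutions fix $p$, both have derivative $-1$ there (since an involution with derivative $+1$ at a fixed point is the identity by local linearization), so their composite has derivative $+1$ at $p$ and finite order, forcing it to be the identity. Your route is more intrinsic---it needs neither the Veech property nor the classification of cylinder models---and makes transparent that the key hypothesis is really ``$\omega$ has a single zero''.

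For $D=D'$ and the compatibility of $\mathfrak{i},\mathfrak{i}'$, the paper works in the explicit prototype coordinates $(w,h,t,e)$ and $(w',h',t',e')$ attached to the chosen decomposition: after normalizing by $\GL^+(2,\R)$ the two tuples are proportional, and the two $\gcd=1$ conditions force the proportionality factor to be $1$. Your argument is coordinate-free: $T$ and $T'$ are $\C$-linear, self-adjoint for the (definite) Hermitian form on $\Omega(X,\rho)^-$, and share the eigenvector $\omega$, hence share both eigenlines; this puts $T'$ in the real span of $\{\id,T\}$, and integrality of matrix entries plus properness of $\mathfrak{i}(\Ord_D)$ then yields $T'\in\mathfrak{i}(\Ord_D)$ and, by symmetry, equality of images. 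The paper's computation is tailored to the prototype machinery used throughout; your version isolates exactly the abstract ingredients (single eigenline, self-adjointness, properness) and would transfer unchanged to other minimal strata. One minor remark: your reference to the ``canonical generators furnished by Propositions~\ref{NormA1Prop}--\ref{NormBProp}'' is unnecessary---any generator of each order works, and invoking those propositions slightly obscures that your second step is independent of the cylinder-decomposition framework.
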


\begin{proof}
Choose a direction  for which the flat surface  $\Sigma$ associated to
$(X,\omega)$  admits  a decomposition  into  three  cylinders, such  a
direction  always exists  by  Corollary~\ref{cor:decomp:3cyl}. Let  us
assume that  the decomposition  has type $A+$.  The arguments  that we
will present also work for the  two other models $A-$ and $B$. The key
ingredient  is to  show that  the restrictions of $\rho$ and  $\rho'$ to some cylinder are the same. \medskip

The proof of Proposition~\ref{prop:decomp:cylinders} shows that there  is   exactly  one
invariant  cylinder for  each  involution. But since  there  is only  one
simple cylinder  in decomposition  $A+$ it must  be invariant  by {\it
both} $\rho$ and $\rho'$.  Hence $\rho=\rho'$. \medskip


Let
$\alpha_1,\beta_1,\alpha_{2,1},\beta_{2,1},\alpha_{2,2},\beta_{2,2}$
be as in Proposition  \ref{NormA1Prop}, so that there exists $\lambda >0$, and
a      generator       $T$      of      $\Ord_D$       such      that,
$\mathfrak{i}(T)^*(\omega)=\lambda\cdot \omega$,    and    in   the    basis
$(\alpha_1,\beta_1,\alpha_2,\beta_2)$,
$$ \mathfrak{i}(T)=\left(%
\begin{array}{cccc}
  e & 0 & 2w & 2t \\ 0 & e & 0 & 2h \\ h & -t & 0 & 0 \\ 0 & w & 0 & 0
  \\
\end{array}%
\right),
$$  where $(w,h,t,e)\in \Z^{4}$ $w>0$, $h>0$, $e\in\Z$, $\gcd(w,h,t,e)=1$.
Similarly,  there  exists  $\lambda'>0$,   and  a  generator  $T'$  of
$\Ord_{D'}$ satisfying the same conditions with adequate parameters 
$(w',h',t',e')\in    \Z^{4}$. There also exist $g, g' \in\GL(2,\R)$ such that
$$         
\left\{
\begin{array}{lll}
\mathrm{Re}(g\cdot\omega)=(\lambda,0,2w,2t),   && \mathrm{Im}(g\cdot\omega)=(0,\lambda,0,2h), \\
\mathrm{Re}(g'\cdot\omega)=(\lambda',0,2w',2t'), && \mathrm{Im}(g'\cdot\omega)=(0,\lambda',0,2h').
\end{array}
\right.
$$
It follows that $g'=s\cdot g$ for some $s\in \R_+^*$ satisfying
$$
s=\frac{\lambda}{\lambda'}=\frac{w}{w'}=\frac{t}{t'}=\frac{h}{h'}=\frac{p}{q}
\hbox{ with } p,q>0, \gcd(p,q)=1.
$$ In  particular the tuple $(w,h,t)$  (respectively, $(w',h',t')$) is
divisible by $p$ (respectively, $q$).

Recall      that      $\lambda^2=e\lambda+2wh$      and
${\lambda'}^2=e'\lambda+2w'h'$. Hence
$$   e=\frac{\lambda^2-2wh}{\lambda},   \qquad   \textrm{and}   \qquad
e'=\frac{{\lambda'}^2-2w'h'}{\lambda'},
$$ Thus we have
$$ e=\frac{p}{q}e'.
$$  Therefore  $e$  is divisible  by  $p$  and  $e'$ is  divisible  by
$q$. Putting this together with $\gcd(w,h,t,e)=\gcd(w',h',t',e')=1$ we draw
$p=q=1$.    In   conclusion    $(w,h,t,e)    =   (w',h',t',e')$    and
$\lambda=\lambda'$.  Thus $D=D'$ and one can define a ring isomorphism
$\mathfrak{j}      :\Ord_D       \ra      \Ord_D$      by      setting
$\mathfrak{j}(T')=T$.   Clearly,    the   isomorphism   $\mathfrak{j}$
satisfies               the              desired              relation
$\mathfrak{i}'=\mathfrak{i}\circ\mathfrak{j}$. This  ends the proof of
the theorem.
\end{proof}

As an immediate consequence we draw:

\begin{Corollary}\label{cor:disjoint}
If $D_1\neq D_2$ then $\Omega E_{D_1}(4)\cap \Omega E_{D_2}(4)=\ety$.
\end{Corollary}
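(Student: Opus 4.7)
The plan is to derive the corollary as a direct consequence of Theorem~\ref{UqeThm}, essentially by unraveling the definitions of the loci $\Omega E_{D_1}(4)$ and $\Omega E_{D_2}(4)$ and then invoking uniqueness.

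First, I would argue by contradiction: suppose $(X,\omega)$ belongs to the intersection $\Omega E_{D_1}(4)\cap\Omega E_{D_2}(4)$. By definition of each Prym eigenform locus, there exist holomorphic involutions $\rho_1,\rho_2: X \to X$ with $\dim_\C \Omega(X)_{\rho_i}^- = 2$ and $\rho_i^*(\omega) = -\omega$, together with injective ring homomorphisms $\mathfrak{i}_i : \Ord_{D_i} \to \mathrm{End}(H_1(X,\Z)_{\rho_i}^-)$ whose images are self-adjoint proper subrings, and such that $\omega$ is an eigenform for each $\mathfrak{i}_i(\Ord_{D_i})$.

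Next, I would simply apply Theorem~\ref{UqeThm} to the data $(\rho_1,\mathfrak{i}_1)$ and $(\rho_2,\mathfrak{i}_2)$ on $(X,\omega)$. That theorem asserts $\rho_1 = \rho_2$ and $D_1 = D_2$ (and moreover that $\mathfrak{i}_1$ and $\mathfrak{i}_2$ differ by an automorphism of the order). The equality $D_1 = D_2$ contradicts the hypothesis $D_1 \neq D_2$, so the intersection must be empty.

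Since all the real work has been done in establishing Theorem~\ref{UqeThm} (where the cylinder decomposition in Model $A+$, $A-$ or $B$ guaranteed by Corollary~\ref{cor:decomp:3cyl} is used to pin down the prototype $(w,h,t,e)$ and hence the discriminant uniquely), the only mild subtlety here is a bookkeeping one: I must verify that belonging to $\Omega E_{D_i}(4)$ really does supply all the hypotheses of Theorem~\ref{UqeThm}, namely that the image $\mathfrak{i}_i(\Ord_{D_i})$ is a \emph{proper} self-adjoint subring (this is part of the definition of real multiplication by $\Ord_{D_i}$ recalled in Section~\ref{sec:background}) and that $\omega$ is an eigenform. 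Both are built into the definition of $\Omega E_{D_i}(4)$, so there is no genuine obstacle; the corollary follows immediately.
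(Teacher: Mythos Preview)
Your proposal is correct and is exactly the approach the paper takes: the corollary is stated there as an immediate consequence of Theorem~\ref{UqeThm}, with no further argument given. Your unpacking of the definitions to verify the hypotheses of Theorem~\ref{UqeThm} is the only content needed, and you have it right.
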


\section{Case $D$ odd}
\label{sec:OddDisc}

In  this section,  we show  that when  $D$ is  odd, the  locus $\Omega
E_D(4)$ consists of at least two distinct $\GL^+(2,\R)$-orbits.

\begin{Theorem}
\label{theo:disconnect:odd}
Suppose that $D \equiv  1 \mod 8$, and let $p=(w,h,t,e)\in \Pcal_D$ be
an incomplete  prototype. Then the two translation surfaces constructed from  the complete prototypes
$(p,+)$ and $(p,-)$  do not belong to the same $\GL^+(2,\R)$-orbit.
\end{Theorem}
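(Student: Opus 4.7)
The plan is to construct a $\GL^+(2,\R)$-invariant defined on $\Omega E_D(4)$ that separates the surfaces arising from $(p,+)$ and $(p,-)$. The guiding principle, as emphasized in Remark~\ref{Rmk:MainTh1}, is that the two components are distinguished by a \emph{complex line} in $H^1(X,\R)^-$, not by a spin parity as in the genus $2$ case. So rather than chase a spin-type invariant, I would look for the invariant inside the $\mathcal O_D$-module structure on $H^1(X,\Z)^-$.

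First I would set up the canonical data. By Theorem~\ref{UqeThm}, attached to $(X,\omega)\in \Omega E_D(4)$ are a unique Prym involution $\rho$ and a unique (up to the Galois automorphism of $\Ord_D$) representation $\mathfrak{i}:\Ord_D\to \mathrm{End}(H_1(X,\Z)^-)$. The $\GL^+(2,\R)$-action fixes all of this data; it only rotates and scales $\omega$ within the $2$-dimensional real eigenspace $V:=\ker(T-\lambda)\subset H^1(X,\R)^-$. Hence $V$ itself, regarded as one of the two real eigenlines of $T$ acting on the real cohomology, is a $\GL^+(2,\R)$-invariant.

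Next I would build the invariant from the fact that $D\equiv 1\mod 8$ means $2$ splits in $\Ord_D$ as $(2)=\fracp\bar\fracp$, together with the fact (visible in the proofs of Propositions~\ref{NormA1Prop} and~\ref{NormA2Prop}) that the symplectic form on $H_1(X,\Z)^-$ has polarization type $(1,2)$, so its Pfaffian is $2$. This combination forces $H_1(X,\Z)^-$, viewed as an $\Ord_D$-module with its symplectic form, to carry a canonical reduction modulo one of the two primes above $2$: the kernel of the reduced intersection form on $H_1(X,\Z)^-/2H_1(X,\Z)^-$ is an $\mathbb{F}_2\oplus\mathbb{F}_2 = \Ord_D/2\Ord_D$-module of dimension one over $\Ord_D/2\Ord_D$, and hence picks out one of the two primes $\{\fracp,\bar\fracp\}$. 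The preferred eigenline $V$ (the one containing $\omega$) then selects which of the two primes is ``above'' $V$ via the reduction $\Ord_D\to \Ord_D/\fracp\cong \mathbb{F}_2$ induced by the chosen real embedding $T\mapsto \lambda$. This produces a well-defined element $\mathfrak{s}(X,\omega)\in\{\fracp,\bar\fracp\}$, and since everything in its definition is preserved by $\GL^+(2,\R)$, the element $\mathfrak{s}$ is a $\GL^+(2,\R)$-invariant.

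Finally, I would compute $\mathfrak{s}$ on the two explicit surfaces coming from $(p,+)$ and $(p,-)$ by plugging in the matrix of $T$ and the basis of $H_1(X,\Z)^-$ given in Proposition~\ref{NormA1Prop}. The expectation is that the sign $\varepsilon$ governs precisely which of the two lattice vectors $\alpha_1,\beta_1$ in the fixed cylinder is sent to which coset of $\fracp H_1(X,\Z)^-$ under the reduction mod $2$, so that the swap $\varepsilon\mapsto -\varepsilon$ exchanges $\fracp$ and $\bar\fracp$, thereby flipping $\mathfrak{s}$. The main obstacle will be making the invariant genuinely intrinsic (independent of the choice of generator $T$ versus $e-T$, of the sign of the basis, and of the changes of basis allowed in Proposition~\ref{NormA1Prop}) while still being non-trivially sensitive to $\varepsilon$; in other words, the invariance check and the non-triviality check pull in opposite directions, and balancing them is where the substance of the argument will lie.
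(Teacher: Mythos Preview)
Your instinct to locate the invariant in the $\Ord_D$-module structure of $H_1(X,\Z)^-$ together with the mod-$2$ intersection form is exactly right, and the object you single out (the radical of $\langle\,,\,\rangle$ on $H_1(X,\Z)^-/2H_1(X,\Z)^-$, which is an $\Ord_D$-submodule by self-adjointness) is the relevant one. One small correction: that radical is \emph{not} free of rank one over $\Ord_D/2\Ord_D\cong\mathbb F_2\times\mathbb F_2$; rather it is a $2$-dimensional $\mathbb F_2$-space annihilated by one of the two primes above $2$, i.e.\ isomorphic to $(\Ord_D/\fracp)^{\oplus 2}$. That is precisely why it picks out a prime.

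The paper takes a shorter route that bypasses the well-definedness issue you flag at the end. It does not build a global invariant. Instead it argues by contradiction: suppose the surfaces from $(p,+)$ and $(p,-)$ lie on the same $\GL^+(2,\R)$-orbit. Theorem~\ref{UqeThm} gives a unique Prym involution and a unique self-adjoint proper subring $\cong\Ord_D$ in $\mathrm{End}(\Prym(X,\rho))$, so the two canonically normalized generators $T^+,T^-$ (from Propositions~\ref{NormA1Prop} and~\ref{NormA2Prop}) live in the \emph{same} copy of $\Ord_D$. Both satisfy $X^2-eX-2wh=0$ and act by the same positive root $\lambda$ on the $\omega$-line, hence by $e-\lambda$ on its symplectic complement; thus $T^+=T^-$ as endomorphisms of $H_1(X,\Z)^-$. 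A one-line mod-$2$ computation (Lemma~\ref{lm:OddDisc:Generator}) then gives the contradiction: since $e$ is odd,
\[
\langle\,,\,\rangle\big|_{\mathrm{Im}(T^+)}\not\equiv 0\pmod 2,\qquad
\langle\,,\,\rangle\big|_{\mathrm{Im}(T^-)}\equiv 0\pmod 2.
\]
This is really your invariant in disguise (``does $T$ act as $0$ or as $1$ on the radical mod $2$?''), but packaging it as a contradiction avoids having to pin down the Galois ambiguity between $T$ and $e-T$ or to match primes with real embeddings. Your approach would work once those details are nailed down, but the paper's argument is cleaner and two paragraphs long.
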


Recall that we denote by $\langle  , \rangle  $  the  restriction of the
intersection     form  to      $H_1(X,\Z)^-$. Theorem~\ref{theo:disconnect:odd} follows from the next lemma.

\begin{Lemma}
\label{lm:OddDisc:Generator}
Let $T^+$ (respectively, $T^-$) be the generator of $\Ord_D$ associated to
the prototype $(p,+)$ (respectively, $(p,-)$). Then:
$$
\begin{array}{llll}
\langle  ,  \rangle_{|\rm{Im}(T^+)}  &  \neq  & 0  \mod  2 &\textrm{ and,} \\
\langle  , \rangle_{|\rm{Im}(T^-)} & = & 0 \mod 2,
\end{array}
$$
where $\rm{Im}(T^+)$ (respectively,  $\rm{Im}(T^-))$   is  the  image  of  $T^+$   (respectively,  $T^-$)  in
$H_1(X,\Z)^-$.
\end{Lemma}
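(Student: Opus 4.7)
The two generators $T^+$ and $T^-$ are given by the explicit $4\times 4$ matrices of Proposition~\ref{NormA1Prop} and Proposition~\ref{NormA2Prop} in the symplectic basis $(\alpha_1,\beta_1,\alpha_2,\beta_2)$ of $H_1(X,\Z)^-$. The intersection form $\langle,\rangle$ is represented in these bases by
\[
S^+ = \begin{pmatrix} J & 0 \\ 0 & 2J \end{pmatrix} \quad\text{and}\quad S^- = \begin{pmatrix} 2J & 0 \\ 0 & J \end{pmatrix}
\]
respectively. The sublattice $\mathrm{Im}(T^\pm)\subset H_1(X,\Z)^-$ is generated by the four columns of $T^\pm$, and the Gram matrix of these columns with respect to $\langle,\rangle$ is $M^\pm := (T^\pm)^{\mathsf{T}} S^\pm\, T^\pm$. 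The lemma is therefore equivalent to the assertion that $M^+\not\equiv 0$ and $M^-\equiv 0$ modulo $2$.

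The crucial simplification is to combine two identities that are already on hand: the quadratic equation $(T^\pm)^2 = e\, T^\pm + 2wh\cdot\mathrm{Id}$ from~(\ref{GenEq}), and the self-adjointness $(T^\pm)^{\mathsf{T}} S^\pm = S^\pm T^\pm$. Together they give
\[
M^\pm \;=\; S^\pm (T^\pm)^2 \;=\; e\, S^\pm T^\pm \;+\; 2wh\, S^\pm \;\equiv\; e\, S^\pm T^\pm \pmod 2.
\]
From $D = e^2 + 8wh \equiv 1\pmod 8$ it follows that $e^2\equiv 1\pmod 8$, so in particular $e$ is odd. Hence the parities of $M^+$ and $M^-$ coincide with those of $S^+ T^+$ and $S^- T^-$.

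The remaining task is an explicit mod-$2$ inspection of the two products, which is immediate from the block structures. For $A-$, every entry in the lower two rows of $T^-$ is even (they are $2h,-2t,2w$), while the non-zero entries of $S^-$ in the upper block are $\pm 2$; computing $S^- T^-$ row by row shows that every one of its entries is even, so $M^-\equiv 0\pmod 2$. For $A+$, the analogous product $S^+ T^+$ has exactly two odd entries, namely $\pm e$ in positions $(1,2)$ and $(2,1)$; multiplying by the odd integer $e$ preserves them, so $M^+\not\equiv 0\pmod 2$.

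I do not expect any real obstacle: once self-adjointness and the quadratic relation have been invoked, the statement collapses to the comparison of two $4\times 4$ integer matrices modulo $2$. What is conceptually interesting is that the dichotomy comes entirely from the asymmetry between the two symplectic blocks $J$ and $2J$ of the intersection form on $H_1(X,\Z)^-$, which embodies the non-standard polarisation of the Prym variety emphasised in Remark~\ref{Rmk:MainTh1}(1); this explains at a structural level why the phenomenon is specific to $\Omega E_D(4)$ and does not appear in the genus-two picture of~\cite{Mc4}.
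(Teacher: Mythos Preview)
Your proof is correct, and it takes a genuinely different route from the paper. The paper reduces the matrices $T^{\pm}$ modulo $2$ directly, reads off two explicit generators of $\mathrm{Im}(T^{\pm})\subset H_1(X,\Z)^-\otimes\Z/2\Z$, and then evaluates $\langle,\rangle$ on that pair of generators. Your argument instead packages the entire computation into the Gram matrix $M^{\pm}=(T^{\pm})^{\mathsf T}S^{\pm}T^{\pm}$, and the key step---combining self-adjointness $T^{\mathsf T}S=ST$ with the quadratic relation $T^2=eT+2wh\cdot\mathrm{Id}$ to get $M^{\pm}\equiv e\,S^{\pm}T^{\pm}\pmod 2$---is a nice structural shortcut that the paper does not use. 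The payoff is that the dichotomy becomes visibly a consequence of which block of the intersection form carries the factor $2$: in $S^+T^+$ the odd entry $e$ survives in the $J$-block, while in $S^-T^-$ it lands under the $2J$-block and is killed. The paper's approach has the (small) advantage of exhibiting explicit elements of the image whose pairing is odd, which makes the invariant slightly more tangible; your approach has the advantage of making the role of the polarisation, and hence the link to Remark~\ref{Rmk:MainTh1}(1), completely transparent.
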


\begin{proof}
Using    the    notations    in    Proposition~\ref{NormA1Prop}    and
Proposition~\ref{NormA2Prop}, we have
$$
T^+= \left(\begin{smallmatrix}
  e & 0 & 2w & 2t \\ 0 & e & 0 & 2h \\ h & -t & 0 & 0 \\ 0 & w & 0 & 0
  \\
\end{smallmatrix} \right)= \left(%
\begin{smallmatrix}
  e & 0 & 0 & 0 \\ 0 & e & 0 & 0 \\ h & -t & 0 & 0 \\ 0 & w & 0 & 0 \\
\end{smallmatrix}
\right) \mod 2,
$$
and
$$
T^-= \left( \begin{smallmatrix}
  e & 0 & w & t \\ 0 & e & 0 & h \\ 2h & -2t & 0 & 0 \\ 0 & 2w & 0 & 0
  \\
\end{smallmatrix}\right) = \left(%
\begin{smallmatrix}
  e & 0 & w & t \\ 0 & e & 0 & h \\ 0 & 0 & 0 & 0 \\ 0 & 0 & 0 & 0 \\
\end{smallmatrix}%
\right) \mod 2,
$$
in the  bases $(\alpha^+_1,\beta^+_1,\alpha^+_2,\beta^+_2)$
and           $(\alpha^-_1,           \beta^-_1,\alpha^-_2,\beta^-_2)$, 
respectively. Since by assumption $D$ is odd, $e$ is also odd, and 
$$
\begin{array}{l}
\rm{Im}(T^+)= <\alpha^+_1+h\alpha^+_2,\beta^+_1-t\alpha^+_2+w\beta^+_2> \mod 2,\\
\rm{Im}(T^-)=<\alpha^-_1,\beta^-_1>  \mod 2
\end{array}
$$
By construction, in the case of $(p,+)$, $\langle
\alpha^+_1,\beta^+_1 \rangle =1$ and $\langle \alpha^+_2, \beta^+_2 \rangle
= 0 \mod 2$, and in  the case of $(p,-)$, $\langle \alpha^-_1,
\beta^-_1  \rangle =  0$, and  $\langle  \alpha^-_2, \beta^-_2
\rangle =1\mod  2$. The lemma is now a straightforward computation.
\end{proof}

\begin{proof}[Proof of Theorem~\ref{theo:disconnect:odd}]
Suppose that the surfaces  constructed from $(p,+)$ and $(p,-)$ belong
to  the same $\GL^+(2,\R)$-orbit  of some  Prym eigenform  $(X,\omega)$. Then 
Theorem~\ref{UqeThm} implies that there is  a  unique Prym involution
$\rho:  X \ra  X$, and a unique proper subring of $\rm{End}(\Prym(X,\rho))$ isomorphic to
$\Ord_D$ consisting  of self-adjoint endomorphisms  for which $\omega$
is  an eigenform.  It follows,  in particular,  both $T^+$  and $T^-$
belong to that subring.\medskip

Let   $S$  denote   the  subspace  of   $H^1(X,\R)^-  \cong
\Omega(X,\rho)^-$                     generated                     by
$\{\mathrm{Re}(\omega),\mathrm{Im}(\omega)\}$,  and  $S'$  denote  the
orthogonal complement of $S$ with  respect to the intersection form on
$H^1(X,\R)^-$ (dual to  the symplectic form $\langle, \rangle$).  Recall that by
construction  $T^{+}$ and $T^{-}$ satisfy the same quadratic equation given by the 
polynomial $X^2    -eX-2wh$,  and
$T^+_{|S}  = T^-_{|S}=\lambda \cdot \Id_S$, where  $\lambda$ is  the unique
positive    root    of    this    polynomial. If $\lambda'=e-\lambda$ is the other negative root then
$\lambda'$  is also  an  eigenvalue  of both  $T^+$  and $T^-$.  Since
$T^\pm$       are       self-adjoint,       it      follows       that
$T^+_{|S'}=T^-_{|S'}=\lambda' \cdot\Id_{S'}$,  and therefore  $T^+=T^-$. But
this is a contradiction with Lemma~\ref{lm:OddDisc:Generator}, and 
Theorem~\ref{theo:disconnect:odd} is proved.
\end{proof}

\begin{Remark}
The proof of Theorem~\ref{theo:disconnect:odd} actually shows that the two-dimensional
 subspaces of  $H^1(X,\R)^-$ generated by the eigenforms constructed from $(p,+)$ and $(p,-)$ are distinct.
\end{Remark}

\section{Prototypes and butterfly moves}
\label{sec:butterfly}

Fix a discriminant $D$ such that $D\equiv 0,1,4 \mod 8$. Following the
previous sections,  we naturally define the two  sets $\mathcal P_{D}$
and $\mathcal Q_{D}$ as follows
$$ \mathcal{P}_D:=\left\{ (w,h,t,e) \in \Z^4,
\begin{array}{l}
w>0,  \ h>0,\  0\leq  t<\gcd(w,h),  \ 2h+e  <  w, \\  \gcd(w,h,t,e)=1,
\textrm{ and } D=e^2+8hw.\\
\end{array}
\right\},
$$ and
$$ \mathcal{Q}_D:=\left\{ (w,h,t,e,\varepsilon)  \in \Z^5, \ (w,h,t,e)
\in \mathcal{P}_D, \textrm{ and } \varepsilon \in \{\pm 1\} \right\}.
$$

\begin{Remark}
Observe  that the condition  $\lambda =  \frac{e+\sqrt{D}}{2} <  w$ is
equivalent to $2h+e < w$.
\end{Remark}

We call an  element of $\mathcal{P}_D$ (respectively, $\mathcal{Q}_D$)
an   {\em  incomplete  prototype}   (respectively,  a   {\em  complete
  prototype})       for      the       discriminant       $D$.      
By Propositions~\ref{NormA1Prop}  and~\ref{NormA2Prop},  we  know that  a
complete   prototype  $(w,h,t,e,\varepsilon)$   produces   a Prym eigenform 
in $\Omega  E_D(4)$, and from By Theorem~\ref{theo:onto:map}
the number of  components of $\Omega E_D(4)$ is  bounded from above by
$\#\mathcal  Q_{D}$. The  goal  of  this section  is  to introduce  an
equivalence  relation  $\sim$, called  {\em  Butterfly moves},  on
$\mathcal Q_{D}$ such that

$$    \#\   \{\textrm{Components   of    }   \Omega    E_D(4)\}   \leq
\#\ \left(\Qcal_{D}/\sim\right).
$$

\subsection{Splitting and switching}

We describe two moves, called {\em Butterfly Moves}, to pass from 
Model $A+$ to Model $A-$, and vice et versa.

\subsubsection{Passing from Model $A+$ to Model $A-$}

Let $\Sigma$ be the flat surface associated to some Prym eigenform 
$(X,\omega) \in \Omega  E_D(4)$. Let us assume that  $\Sigma$  admits  a
three-cylinder decomposition in Model $A+$, and let $(C_{i})_{i=1,2,3}$ the cylinders
($C_0$ is the cylinder fixed by  $\rho$ and  $C_1,C_2$  are exchanged by $\rho$).
Observe that there are $3$ saddle connections homologous to the core
curve  of   $C_0$: there are $I_1,I_2$, the boundaries of the cylinder $C_0$, and 
$J=\partial C_1 \cap \partial C_2$ the intersection of the two cylinders $C_{1}$ and $C_{2}$.
Cutting $\Sigma$ along $I_1,I_2$, and $J$, we get three
connected  components  corresponding  to the cylinders $C_0,C_1,C_2$.  The
component  corresponding to  $C_1$ is  a torus  minus two  discs whose
boundary  circles  meet  at   one  point,  the  two  boundary  circles
correspond  respectively to  $I_1$ and  $J$. We  can split  the common
point of  the two circles into  two points, and then glue  the two segments
arising from the former circles together. The resulting surface is a torus
$T_{1}$ with a simple  geodesic segment $I_{1}$ joining two distinct  points. \medskip

We can now describe the move that will switch to a decomposition into Model $A-$. 
Let  $\gamma_1$ be a  simple closed geodesic in  $T_1$ which
does not  meet the interior of $I_1$, then  $\gamma_1$ corresponds to a  simple closed
geodesic  on $\Sigma$ which is contained  in $\overline{C}_1$.  The
simple closed geodesics homotopic to $\gamma_1$ in $\Sigma$ fill out a
simple cylinder $C_{\gamma_1}$  which is included in $\overline{C}_1$.
Since $C_2=\rho(C_1)$,  we also have a  simple cylinder $C_{\gamma_2}$
in the  same direction included  in $\overline{C}_2$. It  follows that
$\Sigma$  admits  a  decomposition  of  type  $A-$  in  the direction
of $\gamma_1$.

\subsubsection{Passing from Model $A-$ to Model $A+$}

Conversely, if we have a  decomposition of $\Sigma$ of type $A-$, then
cutting $\Sigma$ along the boundaries  of $C_1$ and $C_2$, we also get
three connected components. The one  corresponding to $C_0$ is a torus
minus $4$ discs whose boundary circles meet at one point. We can split
this common  point into  $4$ points,  we then get  a once  holed torus
whose boundary consists of $4$ segments divided into two pairs. Gluing
two segments  in each  pair we  finally obtain a  flat torus  with two
marked geodesic segments  having a common endpoint. Note  that the two
geodesic segments are parallel, and  have the same length. We call the
(closed) flat  torus $T$ and  the union of  the two segments  $I$. Let
$\gamma$ be a  simple closed geodesic in $T$ which  does not meet  the interior of $I$,
then  $\gamma$ corresponds to  a simple  closed geodesic  on $\Sigma$,
which is  contained in  $\overline{C}_0$. The simple  closed geodesics
homotopic to $\gamma$ fill out  a simple cylinder in $\Sigma$ which is
invariant by the involution $\rho$,  it follows that $\Sigma$ admits a
decomposition of type $A+$ in the same direction than $\gamma$. \medskip

We  call the operations  of switching  between decompositions  of type
$A+$ and  $A-$ described above  {\em Butterfly moves}. Here  we borrow
the  terms   from~\cite{Mc4},  even   though  the  geometric
interpretation  is  less  clear  in our  situation.   \\
The  switching from a decomposition of  type $A+$ to
another one of type $A-$ will be called a {\em Butterfly move of first
kind}, the inverse switching will be called a {\em Butterfly move of
second kind}.

\subsection{Admissibility}
We first need to know when a Butterfly move can be carried out.  If we
are    to   make    a   Butterfly    move   of    first    kind,   let
$\alpha_1,\beta_2,\alpha_{2,i},  \beta_{2,i}$  be  as  in  Proposition
\ref{NormA1Prop}. Then  there exists a  unique pair $(p,q)  \in \Z^2$,
with               $\gcd(p,q)=1$,               such              that
$\gamma=p\alpha_{2,1}+q\beta_{2,1}$.  Similarly, if we  are to  make a
Butterfly move of second kind, then letting $\alpha_{1,i},\beta_{1,i},
\alpha_2,\beta_2$   as  in   Proposition  \ref{NormA2Prop},   one  has
$\gamma=p\alpha_2+q\beta_2$.  In  both cases,  we call $(p,q)$  the
parameter of the Butterfly move. 
The following lemma is an elementary observation.

\begin{Lemma}
\label{BMCondLm}
The Butterfly  move of both kinds  can be carried out   if    and    only     if    the    prototype
$(w,h,t,e,\varepsilon)$ and the parameter $(p,q)$ satisfy
$$
0<\lambda|q|<w,
$$
or equivalently $(e+4|q|h)^2 <D$. In this case, we say that the Butterfly move is admissible.
\end{Lemma}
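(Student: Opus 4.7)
The plan is to treat both kinds of Butterfly moves uniformly by reducing each to an elementary problem on a flat torus. In both Model $A+$ (non-simple cylinder $C_1$) and Model $A-$ (non-simple cylinder $C_0$), the splitting-gluing construction produces a flat torus $T$ with period lattice $\Lambda = \Z(w,0)+\Z(t,h)$ (by Propositions~\ref{NormA1Prop} and~\ref{NormA2Prop}) carrying horizontal marked segments whose total length is $\lambda$: a single segment of length $\lambda$ in the first kind, or two collinear segments of length $\lambda/2$ sharing an endpoint (hence forming a single effective segment of length $\lambda$) in the second kind. Admissibility amounts to the existence of a simple closed geodesic in $T$ of homology class $(p,q)$ disjoint from the interior of this effective segment.

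The main step is a projection argument. Any closed geodesic in $T$ in the rational direction $(pw+qt, qh)$ lies in the unique cylinder of perpendicular width $wh/L$, where $L = \sqrt{(pw+qt)^2+(qh)^2}$. The horizontal marked segment (of length $\lambda$), together with all its $\Lambda$-translates in the universal cover, projects onto the perpendicular circle (of length $wh/L$) to a single arc of length $\lambda|q|h/L$: this is because the projected lattice has spacing $hw/L$ (using $\gcd(p,q)=1$) and each lift contributes the same shadow up to translation by the projected lattice. Hence a simple closed geodesic avoiding the marked segment exists if and only if $\lambda|q|h/L < wh/L$, that is, $\lambda|q|<w$.

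The equivalence with $(e+4|q|h)^2 < D$ is pure algebra. Using $D=e^2+8wh$ and $\lambda = (e+\sqrt{D})/2$, the inequality $\lambda|q|<w$ rewrites as $|q|\sqrt{D} < 2w-|q|e$; squaring (legitimate since the right-hand side must be positive for the inequality to be consistent) and using $D-e^2 = 8wh$ reduces it to $|q|e + 2q^2h<w$, which after multiplication by $8h$ and addition of $e^2$ becomes $(e+4|q|h)^2 < e^2+8wh = D$. The main obstacle, in my view, is the careful geometric verification in the second-kind case that the two marked segments obtained from the splitting-gluing on $C_0$ are necessarily collinear (forced by being parallel and sharing a common endpoint at the singularity) and thus combine into a single obstacle of length $\lambda$, which reduces the second-kind analysis to that of the first kind.
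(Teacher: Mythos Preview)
Your proof is correct and follows essentially the same approach as the paper: both reduce the admissibility question to whether a closed geodesic of homology class $(p,q)$ on the flat torus $\C/(\Z(w,0)+\Z(t,h))$ avoids a horizontal segment of effective length $\lambda$. The paper expresses the criterion directly as a determinant inequality $|\det(\begin{smallmatrix}\lambda & pw+qt\\ 0 & qh\end{smallmatrix})| < |\det(\begin{smallmatrix} w & t\\ 0 & h\end{smallmatrix})|$, while you phrase it equivalently as a comparison of the segment's perpendicular shadow against the cylinder width; for the algebraic equivalence with $(e+4|q|h)^2<D$ the paper uses the factorization $8wh=2\lambda(\sqrt{D}-e)$ rather than squaring, but the content is the same. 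Your explicit remark that in the second-kind case the two $\lambda/2$-segments are collinear (so combine to one obstacle of length $\lambda$) makes precise a point the paper simply asserts by writing the vector of $I$ as $(\lambda,0)$.
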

\begin{proof}
Identifying  any flat  torus with  a quotient  $\C/L$,  where $L\simeq
\Z^2$ is  a lattice, we  can associate to  every oriented path  on the
torus a unique vector in $\R^2\simeq  \C$. For a Butterfly move of the
first kind (respectively,  second kind), the vector associated  to the segment
$I_1$  (respectively, $I$)  is  $(\lambda,0)$, and  the  vector associated  to
$\gamma_1$ (respectively, $\gamma$) is $(pw+qt,qh)$. Recall that the Butterfly
move is admissible if and only if $\gamma_1$ (respectively, $\gamma$) does not
meet $I_1$ (respectively, $I$). In both situations, this condition is equivalent to

$$0< \left|\det\left(%
\begin{array}{cc}
  \lambda & pw+qt \\ 0 & qh\\
\end{array}%
\right)\right| < \left|\det\left(%
\begin{array}{cc}
  w & t \\ 0 & h \\
\end{array}%
\right)\right| \Leftrightarrow 0< \lambda|q|<w.
$$
To see that this condition is equivalent to  $(e+4|q|h)^2 <D$, recall that $0<
  2\lambda=e+\sqrt{D}$. Thus $-\sqrt{D}<e<e+4|q|h$. To see that $e+4|q|h <
  \sqrt{D}$, we write $8wh = D-e^2=2\lambda(\sqrt{D}-e)$, therefore
$$
\begin{array}{rccl}
 & 8\lambda  |q|h & <  & 8wh = 2\lambda(\sqrt{D}-e)\\  \Leftrightarrow &
  e+4|q|h & < & \sqrt{D}.
\end{array}
$$
\end{proof}

\begin{Definition}
For $q\in \N \setminus\{0\}$ we define $B_q$ the Butterfly move with
parameter  $(1,q)$. We also define $B_\infty$ as the Butterfly  move with
parameter $(0,1)$. 
\end{Definition}

\begin{Remark}
The Butterfly moves $B_1$ and $B_\infty$ are always admissible.
\end{Remark}

\subsection{Coding Butterfly moves}

Having a Butterfly move $B_q$ admissible for some complete prototype
$(w,h,t,e,\varepsilon)$, we obtain  a   new complete  prototype   
$(w',h',t',e',-\varepsilon) $. The goal of this section is to give a formula 
to compute the new prototype from the former one and the parameter of the Butterfly move.

\begin{Proposition}
\label{BM1Prop}
Let $(w,h,t,e,+)\in\Qcal_D$ be a  complete prototype.  Suppose that
the  Butterfly move  $B_q$ is  admissible for  this prototype  and let
$(w',h',t',e',-)$  be the  complete  prototype associated  to the  new
decomposition.

\begin{enumerate}

\item If $q\neq \infty$ then
$$
\begin{cases}
e'=-e-4qh,\\ h'=\gcd(qh,w+qt).
\end{cases}
$$
\item If $ q=\infty$ then
$$
\begin{cases}
e'=-e-4h,\\ h'=\gcd(t,h).
\end{cases}
$$
\end{enumerate}
In    both    cases   $w'$    is    determined    by   the    relation
$D=e^2+8wh={e'}^2+8w'h'$.
\end{Proposition}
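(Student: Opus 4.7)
The plan is to identify, in the basis $(\alpha_1,\beta_1,\alpha_2,\beta_2)$ attached to the original $A+$ prototype, an explicit endomorphism $\tilde T$ of $H_1(X,\Z)^-$ coinciding with the generator $T'$ produced by Proposition~\ref{NormA2Prop} from the new $A-$ decomposition, and then to extract $e'$ from the minimal polynomial of $\tilde T$ and $h'$ from its action on the sum $\alpha'_1$ of the core curves of the two new simple cylinders. Write $(p,q)$ for the parameter of the Butterfly move, so that $(p,q)=(1,q)$ when $q\neq\infty$ and $(p,q)=(0,1)$ when $q=\infty$.

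Set $\tilde T := T-(e+2qh)\cdot\id$. Being an integer shift of $T$, the operator $\tilde T$ is self-adjoint, preserves $H_1(X,\Z)^-$ and generates the same copy of $\Ord_D$ as $T$. From $T^*\omega=\lambda\omega$ one gets $\tilde T^*\omega=\lambda'\omega$ with $\lambda'=(-e-4qh+\sqrt D)/2$, positive by the admissibility hypothesis $(e+4|q|h)^2<D$ of Lemma~\ref{BMCondLm}. Expanding $\tilde T^2$ via $T^2=eT+2wh\cdot\id$ yields $\tilde T^2=e'\tilde T+2w'h'\cdot\id$ with $e'=-e-4qh$ and $w'h'=h(w-qe-2q^2h)>0$, the positivity again being a direct consequence of admissibility. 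By the uniqueness assertions of Theorem~\ref{UqeThm} and Proposition~\ref{NormA2Prop}, $\tilde T$ is the generator of $\Ord_D$ attached to the new $A-$ decomposition, which establishes the formula for $e'$ in both cases of the statement.

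For $h'$, the geometric construction of the Butterfly move realises the core curves of the two new simple cylinders as $\gamma_1=p\alpha_{2,1}+q\beta_{2,1}$ and $\gamma_2=p\alpha_{2,2}+q\beta_{2,2}$; using $\rho(\alpha_{2,1})=-\alpha_{2,2}$ and $\rho(\beta_{2,1})=-\beta_{2,2}$, the cycle $\alpha'_1:=\gamma_1+\gamma_2=p\alpha_2+q\beta_2$ lies in $H_1(X,\Z)^-$ and plays the role of $\alpha'_1$ in Proposition~\ref{NormA2Prop}. Using the matrix of $T$ in the old basis, a direct computation gives
\[
\tilde T(\alpha'_1)-e'\alpha'_1 \;=\; 2\bigl[(pw+qt)\alpha_1+qh\beta_1+pqh\alpha_2+q^2h\beta_2\bigr],
\]
so the defining relation $\tilde T(\alpha'_1)=e'\alpha'_1+2h'\alpha'_2$ forces $h'\alpha'_2$ to equal the bracket. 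Since $L'_2$ is a direct summand of $H_1(X,\Z)^-$, the class $\alpha'_2$ is primitive there, whence $h'=\gcd(pw+qt,qh,pqh,q^2h)=\gcd(pw+qt,qh)$. Specialising gives $h'=\gcd(w+qt,qh)$ for $(p,q)=(1,q)$ and $h'=\gcd(t,h)$ for $(p,q)=(0,1)$. Finally $w'$ is determined by the discriminant identity $D=(e')^2+8w'h'$.

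The main obstacle is justifying that the algebraically defined $\tilde T$ coincides with the generator $T'$ produced by the new $A-$ decomposition: one must check that $\tilde T$ admits an $A-$-form matrix in a suitable new basis. Starting from $\alpha'_1$ and $\alpha'_2$ above, the identity $\tilde T(\alpha'_2)=w'\alpha'_1$ follows directly from $\tilde T^2=e'\tilde T+2w'h'\cdot\id$, and the remaining entries of the matrix are fixed by completing $(\alpha'_1,\alpha'_2)$ to a symplectic basis modulo Dehn twists; Proposition~\ref{NormA2Prop}(i) then pins $\tilde T$ down as the unique required generator.
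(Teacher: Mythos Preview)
Your argument is correct and lands on the same core identity as the paper, namely that the new generator is $T'=T-(e+2qh)\cdot\id$ with eigenvalue $\lambda-(e+2qh)>0$; from there both proofs read off $e'=-e-4qh$. Where you diverge is in how $h'$ is extracted. The paper constructs the full new symplectic basis $(\alpha'_1,\beta'_1,\tilde\alpha'_2,\tilde\beta'_2)$ explicitly (via $\alpha'_1=\tilde\alpha_2$, $\beta'_1=\tilde\beta_2+2\alpha_1$, $\tilde\alpha'_2=\alpha_1$, $\tilde\beta'_2=\beta_1+\tilde\alpha_2$), writes out the $4\times4$ change-of-basis, and then reads $h'$ off the explicit matrix of $T'$ in that basis. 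You instead work only with the single cycle $\alpha'_1=p\alpha_2+q\beta_2$, compute $\tilde T(\alpha'_1)-e'\alpha'_1=2h'\alpha'_2$, and recover $h'$ as the $\gcd$ of the coordinates, invoking primitivity of $\alpha'_2$.

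Your route is a genuine shortcut: it bypasses the full basis-change bookkeeping and uses only the quadratic relation $\tilde T^2=e'\tilde T+2w'h'\cdot\id$ together with the geometric fact that $\alpha'_2$ (the core curve of the $\rho$-fixed cylinder in the $A-$ picture) is primitive in $H_1(X,\Z)^-$. The paper's approach, while heavier, has the advantage of exhibiting the full new symplectic frame concretely, which makes the match with Proposition~\ref{NormA2Prop} immediate rather than relying on the uniqueness clause. Two small points worth tightening in your write-up: your justification that $\alpha'_2$ is primitive via ``$L'_2$ is a direct summand'' is slightly circular in presentation (cleaner is simply that $\alpha'_2$ is the class of a simple closed curve, or that $\langle\alpha'_2,\beta'_2\rangle=1$); and you are overloading the symbol $q$ for both the Butterfly-move index and the second parameter coordinate, which is harmless here but deserves a sentence of clarification.
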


\begin{proof}
Let  $\gamma_1=\alpha_{2,1}+q\beta_{2,1}$  ($\gamma_1=\beta_{2,1}$  if
$q=\infty$)  and $C_{\gamma_1}$  be the  cylinder  in $\overline{C}_1$
filled out  by simple closed geodesic freely  homotopic to $\gamma_1$.
Let  $I_{\gamma_1}$ be  a  saddle connection  such that  $I_{\gamma_1}
\subset  \overline{C}_{\gamma_1}$. Remark that  $I_1\cup I_{\gamma_1}$
is  freely  homotopic  to   a  simple  closed  curve  $\eta_1  \subset
\overline{C}_1$                        such                       that
$\Z\gamma_1+\Z\eta_1=\Z\alpha_{2,1}+\Z\beta_{2,1}$.   We   choose  the
orientation  for $\eta_1$ so  that $(\omega(\gamma_1),\omega(\eta_1))$
defines           the           same          orientation           as
$(\omega(\alpha_{2,1}),\omega(\beta_{2,1}))$. First, we set

\begin{itemize}
\item  $\tilde{\alpha}_{2,1}=\gamma_1, \tilde{\alpha}_{2,2}=-\rho(\gamma_1)$, 
$\tilde{\alpha}_2=\tilde{\alpha}_{2,1}+\tilde{\alpha}_{2,2}$, and

\item  $ \tilde{\beta}_{1,1}=\eta_1,  \tilde{\beta}_{2,2}=-\rho(\eta_1)$,   $\tilde{\beta}_2=\tilde{\beta}_{2,1}+\tilde{\beta}_{2,2}$.

\end{itemize}

Then $(\alpha_1,\beta_1,\tilde{\alpha}_2,\tilde{\beta}_2)$ is a symplectic basis of $H_1(X,\Z)^-$. Next, we set

\begin{itemize}

\item    $\alpha'_1=\tilde{\alpha}_2$,

\item    $\beta'_1 = \tilde{\beta}_2+2\alpha_1$,

\item    $\tilde{\alpha}'_2=\alpha_1$, 

\item    $\tilde{\beta}'_2=\beta_1+\tilde{\alpha}_2$.
\end{itemize}
Then  $(\alpha'_1,\beta'_1,\tilde{\alpha}'_2,\tilde{\beta}'_2)$ is another symplectic basis of
 $H_1(X,\Z)^-$. Observe that in this basis
the intersection form is written as $\displaystyle{\left(%
\begin{array}{cc}
  2J & 0 \\ 0 & J\\
\end{array}%
\right)}$.

\medskip

Recall  that we have associated to  $(w,h,t,e,+)$ a unique
generator   of    $\Ord_D$   which    is   written   in    the   basis
$(\alpha_1,\beta_1,\alpha_2,\beta_2)$ as $\DS{T=\left(%
\begin{smallmatrix}
  e & 0 & 2w & 2t \\ 0 & e & 0 & 2h \\ h & -t & 0 & 0 \\ 0 & w & 0 & 0 \\
\end{smallmatrix}%
\right)}$  such that $T^*\omega=\lambda\omega$,           with
$\lambda=\frac{e+\sqrt{D}}{2}>0$. We  now consider separately  the two
cases $q\in \N\setminus\{0\}$ first and then $q=\infty$. \medskip

\begin{enumerate}
\item         If          $q\in         \N,         q>0$,         then
  $\tilde{\alpha}_2=\alpha_2+q\beta_2$.  One  can  choose $\eta_1$  so
  that $\tilde{\beta}_2=\beta_2$. Thus, we have
$$ T=\left(%
\begin{array}{cccc}
  e & 0 & 2w+2qt &  2t \\ 0 & e & 2qh & 2h \\ h & -t  & 0 & 0 \\ -qh &
  w+qt & 0 & 0 \\
\end{array}%
\right)_{(\alpha_1,\beta_1,\tilde{\alpha}_2,\tilde{\beta}_2)}
$$
and
$$T = \left(%
\begin{array}{cccc}
  -2qh  & 0 &  h &  -e-t-2qh \\  0 &  -2qh &  -qh &  w+qt \\  2w+2qt &
  2e+2t+4qh & e+2qh & 0 \\ 2qh & 2h & 0 & e+2qh \\
\end{array}%
\right)_{(\alpha'_{1},\beta'_{1},\tilde{\alpha}'_2,\tilde{\beta}'_2)}.
$$

As  a matter of  fact, the matrix  of the basis  change from
$(\alpha_1,\beta_1,\tilde{\alpha}_2,\tilde{\beta}_2)$                to
$(\alpha'_1,\beta'_1,\tilde{\alpha}'_2,\tilde{\beta}'_2)$ is $\left(%
\begin{smallmatrix}
  0 & 2 & 1 & 0 \\ 0 & 0 & 0 & 1 \\ 1 & 0 & 0 & 1 \\ 0 & 1 & 0 & 0 \\
\end{smallmatrix}%
\right)$. Let $(\alpha'_1,\beta'_1,\alpha'_2,\beta'_2)$ be the  symplectic basis     of      $H_1(X,\Z)^-$ 
 associated to the cylinder decomposition in the direction of $\omega(\alpha'_1)$ (see Figure~\ref{fig:modelA-:basis} and 
Proposition~\ref{NormA2Prop}). Since $(\alpha'_2,\beta'_2)$ and $(\tilde{\alpha}'_2,\tilde{\beta}'_2)$ are related 
by an element of $\SL(2,\Z)$, the matrix of $T$ in the basis $(\alpha'_1,\beta'_1,\alpha'_2,\beta'_2)$ has
 the form $\DS{\left(\begin{smallmatrix}
-2qh\cdot\rm{id}_2 & B \\ 2B^* & (e+2qh)\cdot\rm{id}_2\\
\end{smallmatrix}\right)}$.\\
We set  $T'=T-(e+2qh)$, then $T^*(\omega)=(\lambda-(e+2qh))\omega$. Let us show  that
$\lambda-(e+2qh)>0 \Leftrightarrow  \lambda-e>2qh$. Since $\lambda$ is
an eigenvalue of  $T$, we have $\lambda^2=e\lambda+2wh \Leftrightarrow
\lambda(\lambda-e)=2wh$, therefore
$$    \displaystyle{\begin{array}{lccr}   &    \lambda-e    &   >    &
    2qh\\ \Leftrightarrow & 2wh/\lambda  & > & 2qh\\ \Leftrightarrow &
    w & > & q\lambda\\
\end{array}}
$$ Since the last inequality  is warranted by the admissibility of the
Butterfly move (Lemma \ref{BMCondLm}), we can conclude that $\lambda-(e+2qh)>0$. It follows that $T'$
 is the unique generator of $\Ord_D$ associated to the decomposition in direction $\omega(\alpha'_1)$. 
 Therefore, by Proposition~\ref{NormA2Prop}, up to some appropriate
Dehn twists    $\beta'_1\mapsto     \beta'_1+n\alpha'_1$,  and $\beta'_2\mapsto  \beta'_2+m\alpha'_2$, 
 the matrix of $T'$ in the basis $(\alpha'_1,\beta'_1,\alpha'_2,\beta'_2)$ has the form

$$ T'=T-(e+2qh)=\left(%
\begin{array}{cccc}
  e' & 0 &  w' & t' \\ 0 & e'  & 0 & h' \\ 2h' & -2t'  & 0 & 0
  \\ 0 & 2w' & 0 & 0 \\
\end{array}%
\right)_{(\alpha'_1,\beta'_1,\alpha'_2,\beta'_2)}
$$ 
where $e'=-e-4qh$, and $(w',h',t',e')$ satisfies the conditions in $(\Pcal)$. Note that we must have $h'=\gcd(qh,w+qt)$.

\medskip

\item  If $q=\infty$  then $\gamma_1=\beta_{2,1}$  and one  can choose
  $\eta_1= -\alpha_{2,1}$.  Using the same notations as above we have
$$ T=\left(%
\begin{array}{cccc}
  e & 0 & 2t & -2w \\ 0 & e & 2h &  0 \\ 0 & w & 0 & 0 \\ -h & t & 0 &
  0 \\
\end{array}%
\right)_{(\alpha_{1},\beta_1,\tilde{\alpha}_2,\tilde{\beta}_2)},
$$ and
$$ T=\left(%
\begin{array}{cccc}
  -2h & 0 & 0 & -e+w-2h \\ 0 &  -2h & -h & t \\ 2t & 2e-2w+4h & e+2h &
  0 \\ 2h & 0 & 0 & e+2h \\
\end{array}%
\right)_{(\alpha'_1,\beta'_1,\tilde{\alpha}'_2,\tilde{\beta}'_2)}.
$$   
Then   $T'=T-(e+2h)$ is the generator of $\Ord_D$ associated to the cylinder decomposition
 in direction $\omega(\alpha'_1)$. Let $(\alpha'_1,\beta'_1,\alpha'_2,\beta'_2)$ be the symplectic basis
  of $H_1(X,\Z)^-$ associated to the new decomposition. In this basis, we have

$$ T'=\left(%
\begin{array}{cccc}
  e' & 0 & w' & t' \\ 0 & e' & 0 & h' \\ 2h' & -2t' & 0 & 0 \\ 0 & 2w'
  & 0 & 0 \\
\end{array}%
\right)_{(\alpha'_1,\beta'_1,\alpha'_2,\beta'_2)}
$$ 
where  $e'=-e-4h, h'=\gcd(t,h)$, and $(w',h',t',e')\in\Pcal_D$.   To      see      that
$\lambda'=\lambda-(e+2h)>0$, if  suffices to follow the  same lines as
above,   and   recall   that    by   construction   we   always   have
$\lambda<w$. This completes the proof of the proposition.
\end{enumerate}
\end{proof} 

For Butterfly moves of second kind, we have the same result,

\begin{Proposition}
\label{BM2Prop}
Let $(w,h,t,e,-)\in\Qcal_{D}$ be a complete prototype. Suppose that
the  Butterfly  move  $B_q$  is  admissible for  this  prototype.  Let
$(w',h',t',e',+)$  be the  complete  prototype associated  to the  new
decomposition.

\begin{enumerate}

\item If $q\neq \infty$ then
$$
\begin{cases}
e'=-e-4qh,\\ h'=\gcd(qh,w+qt).
\end{cases}
$$

\item If $ q=\infty$ then
$$
\begin{cases}
e'=-e-4h,\\ h'=\gcd(t,h).
\end{cases}
$$
\end{enumerate}
In    both    cases   $w'$    is    determined    by   the    relation
$D=e^2+8wh={e'}^2+8w'h'$.
\end{Proposition}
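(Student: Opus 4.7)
The proof follows the same blueprint as Proposition~\ref{BM1Prop}, with the roles of Model $A+$ and Model $A-$ interchanged. Start from a complete prototype $(w,h,t,e,-)\in\Qcal_D$ and the associated symplectic basis $(\alpha_{1,1},\beta_{1,1},\alpha_{1,2},\beta_{1,2},\alpha_2,\beta_2)$ of $H_1(X,\Z)$ as in Proposition~\ref{NormA2Prop}, where the intersection form on $H_1(X,\Z)^{-}$ reads $\DS{\left(\begin{smallmatrix} 2J & 0 \\ 0 & J \end{smallmatrix}\right)}$ in the basis $(\alpha_1,\beta_1,\alpha_2,\beta_2)$. Set $\gamma=\alpha_2+q\beta_2$ for $q\in\N_{>0}$ (respectively $\gamma=\beta_2$ for $q=\infty$); by Lemma~\ref{BMCondLm}, admissibility guarantees that $\gamma$ misses the interior of the pair of twin segments $I$ obtained by splitting the invariant cylinder $C_0$, so the second-kind Butterfly move $B_q$ can be carried out.

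Next, I would build an intermediate symplectic basis $(\alpha_1,\beta_1,\tilde\alpha_2,\tilde\beta_2)$ with $\tilde\alpha_2=\gamma$ (and $\tilde\beta_2$ chosen, using the freedom available in selecting $\eta$ from the geometric construction, so as to remain in $H_1(X,\Z)^{-}$ with $\langle\tilde\alpha_2,\tilde\beta_2\rangle = \langle \alpha_2,\beta_2\rangle$). Writing the generator $T$ from Proposition~\ref{NormA2Prop} in this basis is routine by substitution. Then pass to the symplectic basis $(\alpha'_1,\beta'_1,\tilde\alpha'_2,\tilde\beta'_2)$ adapted to the new Model $A+$ decomposition, namely $\alpha'_1=\tilde\alpha_2$, $\beta'_1=\tilde\beta_2+2\alpha_1$, $\tilde\alpha'_2=\alpha_1$, $\tilde\beta'_2=\beta_1+\tilde\alpha_2$ (this is exactly the change-of-basis used in Proposition~\ref{BM1Prop}, which precisely converts the intersection form $\DS{\left(\begin{smallmatrix} 2J & 0 \\ 0 & J \end{smallmatrix}\right)}$ on the new basis into $\DS{\left(\begin{smallmatrix} J & 0 \\ 0 & 2J \end{smallmatrix}\right)}$, as required by Model $A+$).

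Compute the matrix of $T$ in the new basis. By Lemma~\ref{Lm2} (applied now with the Model $A+$ polarization) it must take the form $\DS{\left(\begin{smallmatrix} -2qh\cdot\rm{id}_2 & 2B \\ B^\ast & (e+2qh)\cdot\rm{id}_2\end{smallmatrix}\right)}$ when $q\neq\infty$, and the analogous form with $2h$ in place of $2qh$ when $q=\infty$. Set $T'=T-(e+2qh)\,\rm{Id}_4$ (respectively $T-(e+2h)\,\rm{Id}_4$); then $T'$ is still a generator of $\Ord_D$, and $T'^{\ast}\omega=\lambda'\omega$ with $\lambda'=\lambda-(e+2qh)$. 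The positivity $\lambda'>0$ follows exactly as in Proposition~\ref{BM1Prop}: the identity $\lambda(\lambda-e)=2wh$ shows $\lambda-e=2wh/\lambda$, and the admissibility condition $w>q\lambda$ of Lemma~\ref{BMCondLm} forces $\lambda-e>2qh$. The uniqueness clause of Proposition~\ref{NormA1Prop} then identifies $T'$ as the canonical generator of $\Ord_D$ attached to the new Model $A+$ prototype.

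Finally, after suitable Dehn twists $\beta'_1\mapsto\beta'_1+n\alpha'_1$ and $\beta'_{2,i}\mapsto\beta'_{2,i}+m\alpha'_{2,i}$ bringing the parameters into the normalized range of $(\mathcal{P})$, comparison with the canonical form $\DS{\left(\begin{smallmatrix} e' & 0 & 2w' & 2t' \\ 0 & e' & 0 & 2h' \\ h' & -t' & 0 & 0 \\ 0 & w' & 0 & 0\end{smallmatrix}\right)}$ of Proposition~\ref{NormA1Prop} yields $e'=-e-4qh$ and $h'=\gcd(qh,w+qt)$ for $q\neq\infty$, and $e'=-e-4h$, $h'=\gcd(t,h)$ for $q=\infty$; the value $w'$ is then fixed by the discriminant relation $D=e'^2+8w'h'$. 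The main piece of bookkeeping — and the only real obstacle — is verifying that the off-diagonal blocks $B$ and $2B^\ast$ produced by the basis change indeed satisfy the required integrality and adjointness relations of Lemma~\ref{Lm2}, so that after the Dehn twists one genuinely lands in $\Pcal_D$; this is a direct but slightly tedious matrix computation identical in spirit to the one carried out in Proposition~\ref{BM1Prop}.
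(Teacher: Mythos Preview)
Your overall strategy is right and matches the paper's: build an intermediate basis $(\alpha_1,\beta_1,\tilde\alpha_2,\tilde\beta_2)$ with $\tilde\alpha_2=\gamma$, then pass to a new symplectic basis adapted to Model $A+$, shift $T$ by the appropriate scalar, and read off $(e',h')$ from the normal form of Proposition~\ref{NormA1Prop}. The positivity argument for $\lambda'$ and the final identification of $e',h',w'$ are fine.

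There is, however, a concrete error in the change-of-basis step. You assert that the \emph{same} change of basis used in Proposition~\ref{BM1Prop}, namely
\[
\alpha'_1=\tilde\alpha_2,\quad \beta'_1=\tilde\beta_2+2\alpha_1,\quad \tilde\alpha'_2=\alpha_1,\quad \tilde\beta'_2=\beta_1+\tilde\alpha_2,
\]
converts the Model $A-$ intersection form $\left(\begin{smallmatrix}2J&0\\0&J\end{smallmatrix}\right)$ into the Model $A+$ form $\left(\begin{smallmatrix}J&0\\0&2J\end{smallmatrix}\right)$. It does not: with $\langle\alpha_1,\beta_1\rangle=2$ and $\langle\tilde\alpha_2,\tilde\beta_2\rangle=1$ one computes
\[
\langle\beta'_1,\tilde\beta'_2\rangle=\langle\tilde\beta_2+2\alpha_1,\beta_1+\tilde\alpha_2\rangle=-1+4=3\neq 0,
\]
so the new basis is not even symplectic. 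The reason the matrix in Proposition~\ref{BM1Prop} works there is precisely that the pairings are $(1,2)$, not $(2,1)$; the cancellation $-2+2\cdot 1=0$ fails when the two numbers are swapped. The correct change of basis (the one the paper uses) swaps the coefficients:
\[
\alpha'_1=\tilde\alpha_2,\quad \beta'_1=\tilde\beta_2+\alpha_1,\quad \tilde\alpha'_2=\alpha_1,\quad \tilde\beta'_2=\beta_1+2\tilde\alpha_2.
\]
With this choice one checks $\langle\alpha'_1,\beta'_1\rangle=1$, $\langle\tilde\alpha'_2,\tilde\beta'_2\rangle=2$, and $\langle\beta'_1,\tilde\beta'_2\rangle=-1+2=0$, as required. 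Once you use this corrected basis, the matrix of $T$ indeed has the shape $\left(\begin{smallmatrix}-2qh\cdot\mathrm{id}_2&2B\\ B^*&(e+2qh)\cdot\mathrm{id}_2\end{smallmatrix}\right)$ compatible with Proposition~\ref{NormA1Prop}, and the rest of your argument goes through verbatim. (A minor related point: the curve $\eta$ here is homotopic to $I_1*I_2*I_\gamma$, with \emph{two} boundary segments coming from the two simple cylinders of Model~$A-$, which is the geometric origin of the $+\alpha_1$ rather than $+2\alpha_1$ in $\beta'_1$.)
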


\begin{proof}
We briefly  sketch the  proof since it  is similar  to the one  of the
previous   proposition.  Let   $\gamma=\alpha_{2,1}+q\beta_{2,1}$  (or
$\gamma=\beta_{2,1}$ if $q=\infty$).  There exists a saddle connection
$I_\gamma$  which is contained  in the  cylinder $C_\gamma$  such that
$I_1*I_2*I_\gamma$  is  homotopic  to  a simple  closed  curve  $\eta$
satisfying  $\Z\gamma+\Z\eta=\Z\alpha_2+\Z\beta_2$.    We  choose  the
orientation for $\eta$ so that $(\omega(\gamma),\omega(\eta))$ defines
the  same  orientation  as  $(\omega(\alpha_2),\omega(\beta_2))$. We set
$\tilde{\alpha}_2=\gamma, \tilde{\beta}_2=\eta$, and
\begin{itemize}
\item  $\alpha'_1=\tilde{\alpha}_2,$
 
 \item $\beta'_1=\tilde{\beta}_2+\alpha_1$,

\item  $\tilde{\alpha}'_2= \alpha_1$,

\item  $\tilde{\beta}'_2=\beta_1+2\tilde{\alpha}_2$.
\end{itemize}
Then $(\alpha_1,\beta_1,\tilde{\alpha}_2,\tilde{\beta}_2)$ and $(\alpha'_1,\beta'_1,\tilde{\alpha}'_2,\tilde{\beta}'_2)$ 
are symplectic bases of $H_1(X,\Z)^-$. Recall  that  we have  associated  to  the  prototype $(w,h,t,e,-)$  a
generator    $T$    of    $\Ord_D$    represented   in    the    basis
$(\alpha_1,\beta_1,\alpha_2,\beta_2)$ by the matrix $\left(%
\begin{smallmatrix}
  e & 0 & w & t \\ 0 & e & 0 & h \\ 2h & -2t & 0 & 0 \\ 0 & 2w & 0 & 0
  \\
\end{smallmatrix}%
\right)$. Now, we have two cases \medskip

\begin{enumerate}
\item     {\bf    Case     $q\in \N$:}     in     this    case
  $\tilde{\alpha}_2=\gamma=\alpha_2+q\beta_2$,   so   we  can   choose
  $\tilde{\beta}_2=\eta=\beta_2$.         In         the         basis
  $(\alpha'_1,\beta'_1,\tilde{\alpha}'_2,\tilde{\beta}'_2)$,        the
  matrix of $T$ becomes $\left(%
\begin{smallmatrix}
  -2qh &  0 & 2h & -2e-2t-4qh  \\ 0 & -2qh  & -2qh & 2w+2qt  \\ w+qt &
  e+t+2qh & e+2qh & 0 \\ qh & h & 0 & e+2qh \\
\end{smallmatrix}%
\right)$.     Set $T'=T-(e+2qh)$, then $T'$ is the generator of $\Ord_D$ associated to the cylinder decomposition 
in direction $\omega(\alpha'_1)$. Let $(\alpha'_1,\beta'_1,\alpha'_2,\beta'_2)$ be the symplectic basis of 
$H_1(X,\Z)^-$ associated this decomposition (see Proposition~\ref{NormA1Prop}). Then up to some Dehn twists
 $\beta'_1\mapsto \beta'_1+m\alpha'_1$, and $\beta'_2\mapsto \beta'_2+n\alpha'_2$, the matrix of $T'$ in this
  basis has the form

$$
T'=T-(e+2qh)=\left(%
\begin{array}{cccc}
  e' & 0 & 2w' & 2t' \\ 0 & e' & 0 & 2h' \\ h' & -t' & 0 & 0 \\ 0 & w'
  & 0 & 0 \\
\end{array}%
\right)_{(\alpha'_1,\beta'_1,\alpha'_2,\beta'_2)},
$$
where  $e'=-e-4qh,  \;  h'=\gcd(qh,  w+qt)$,  and  $(w',h',t',e')$
satisfies $(\Pcal)$. 
\medskip

\item      {\bf      Case      $q=\infty$:}     in      this      case
  $\tilde{\alpha}_2=\gamma=\beta_2$,      and     we      can     take
  $\tilde{\beta}_2=\eta=-\alpha_2$. The rest  of the proof follows the
  same lines as in  Proposition~\ref{BM1Prop} Case (2). This completes
  the proof of the proposition.

%

\end{enumerate}

\end{proof}

\subsection{Butterfly moves on incomplete prototypes}

It turns out that the equivalence relation on $\Qcal_{D}$ generated by the Butterfly
moves descends to an equivalence  relation on $\Pcal_{D}$. Indeed, we can define $p\sim  p'$ if  there exist
  $\varepsilon,\varepsilon'\in\{\pm\}$ such that  $(p,\varepsilon)\sim (p',\varepsilon')$  in  $\Qcal_D$. In the next
   section we will be interested in the classification of the equivalence  classes of this equivalence relation. For 
   that we will only  consider the Butterfly  moves $B_q, \;
q\in \{1,2,\dots\}\cup\{\infty\}$. {\em A priori}, we  have much more
possible Butterfly  moves, namely  with parameters $(p,q),  p\not= 1$,
but as we  will see, the Butterfly moves $B_q$  are sufficient for our
purpose.      \\
From    Lemma~\ref{BMCondLm}, Propositions~\ref{BM1Prop} and~\ref{BM2Prop},    we     see 
   that    the
admissibility condition, and the transformation rules are the same for 
Butterfly moves  of both  kinds, therefore, we  can regard $B_q$  as a
transformation in $\Pcal_D$, which is defined only on the subset $\left\{ (w,h,t,e)\in \Pcal_D,\; e+4qh  < \sqrt{D} \right\}$.
 This simple observation allows us to work exclusively on $\Pcal_D$, and to use the McMullen's approach in order 
 to obtain an upper bound of the number of equivalence classes.

\section{Components of the space of prototypes}
\label{sec:connectedness:PD}

The main  goal of  this section  is to set  a bound  on the  number of
classes of  the equivalence relation generated by  Butterfly moves
in $\Pcal_D$. This  section is rather independent from  the others and
can  be read  separately.  For  the reader's  convenience,  we briefly
recall  here the  relevant  definitions.

\medskip
The set $\Pcal_D$ is the family of quadruples of integers $(w,h,t,e)$ satisfying
$$
(\Pcal) \left\{
\begin{array}{l}
D  =   e^2  +   8w  h,   \  0  \leq   t  <   \gcd(w,h),\  2h+e   <  w,
\\ 0<w,\ 0<h,\ \gcd(w,h,t,e) = 1.
\end{array}\right.
$$
The elements of $\Pcal_D$  are  called   prototypes.\medskip

The Butterfly  moves introduced in  Section~\ref{sec:butterfly} define a map $B_q$ 
that  can be regarded  as a transformation acting on a subset of $\Pcal_D$. 
We recall the properties of $B_q$.
 
\begin{enumerate}
\item The Butterfly move $B_q$ is defined (we will say {\em admissible}) for all
  prototypes $p\in \Pcal_D$ satisfying $ e+4qh  < \sqrt{D}$ (see Lemma~\ref{BMCondLm}). \medskip
  
\item If $B_q(w,h,t,e) = (w',h',t',e')$ then
\begin{enumerate}
\item $ \left\{
\begin{array}{lll}
 e'=-e-4qh  & \text{  and }h'=\gcd(qh,w+qt),  &  \text{ if  } q  \not=
 \infty \\  e'=-e-4h & \text{  and } h'=\gcd(h,t),  & \text{ if }  q =
 \infty.\\
\end{array}
\right.  $,\medskip

\item    $w'$    is    determined    by    the    relation
  $D=e^{2}+8wh=e'^{2}+8w'h'$.
\end{enumerate}
\end{enumerate}

See Figure~\ref{fig:action:P68} for examples of Butterfly move transformations.

\begin{Remark}
We do not have a formula for $t'$, but it is often possible to compute
$t'$  using  the  conditions  $\gcd(w',h',t',e')=1$ and  $0\leq  t'  <
\gcd(w',h')$. In particular, if $\gcd(w',h')=1$, then $t'=0$.
\end{Remark}

The maps $B_{q}$ generate an  equivalence   relation  $\sim$  on   $\Pcal_D$:  
two prototypes are  equivalent if one can  pass from one  prototype to the
other one by  a sequence of Butterfly  moves. We will  call an equivalence
class of $\sim$ a {\it component}  of $\Pcal_D$.

We can now state the main result of this section

\begin{Theorem}
\label{theo:main:PD}
Let $D>16$  be a  discriminant with  $D \equiv 0,1,4  \mod 8$.  Let us
assume that
$$ D \not \in \{41,68,100\}.
$$ The set $\mathcal P_{D}$ has only one component. The sets $\mathcal
P_{41}$,  $\mathcal P_{68}$  and $\mathcal  P_{100}$ have  exactly two
components.
\end{Theorem}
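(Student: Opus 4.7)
The plan is to follow McMullen's strategy from the genus two case (Theorem~\ref{theo:mc:connexe}), reducing the classification of components of $\mathcal P_D$ under $\sim$ to a one-parameter combinatorial problem on the set $\mathcal S_D$ of reduced prototypes. First I would show that any prototype $(w,h,t,e) \in \mathcal P_D$ is $\sim$-equivalent to a \emph{reduced} one with $h=1$. The update rule $h' = \gcd(qh, w+qt)$ for finite $q$, and $h' = \gcd(h,t)$ for $q = \infty$, together with the primitivity condition $\gcd(w,h,t,e)=1$, guarantees that one can strictly decrease $h$ as long as $h > 1$, by choosing $q$ sharing prime factors with $h$ but not with the relevant complement. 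The admissibility constraint $e + 4qh < \sqrt{D}$ is not restrictive because $B_1$ is always admissible, and for $D$ sufficiently large its repeated application suffices.

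Next, when $h=1$ the condition $0 \le t < \gcd(w,1) = 1$ forces $t = 0$ and $w = (D-e^2)/8$, so a reduced prototype is determined by $e$ alone. The inequalities $w > 0$ and $2h+e < w$ translate respectively to $e^2 < D$ and $(e+4)^2 < D$, while $w \in \N$ requires $e^2 \equiv D \pmod 8$; primitivity is automatic. A Butterfly move $B_q$ applied to the reduced prototype $[e]$ yields $e' = -e - 4q$ and $h' = \gcd(q, w)$, so the image is again reduced precisely when $\gcd(q,w) = 1$ and $-e-4q \in \mathcal S_D$. This recovers the equivalence relation on $\mathcal S_D$ described in the outline, and provides a surjection $\mathcal S_D/{\sim} \twoheadrightarrow \mathcal P_D/{\sim}$.

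The crux is then the connectedness analysis of $\mathcal S_D/{\sim}$ by number theory. For $D$ sufficiently large (the cutoff $D \ge 83^2$ should suffice), I would show by an elementary sieve/pigeonhole argument, controlling the number of prime factors of $w$, that given any $e \in \mathcal S_D$ one can find enough $q$'s coprime to $w$ with $-e-4q \in \mathcal S_D$ to reach any prescribed target; the only obstruction appears for $D \equiv 4 \pmod{16}$, where $e$ is forced to be $\equiv 2 \pmod 4$ and $w$ is even, so every admissible $q$ is odd and the residue $e \bmod 8$ is preserved, splitting $\mathcal S_D$ into two classes. For such $D$ I would then exhibit an explicit Butterfly move in $\mathcal P_D$ through a non-reduced prototype (typically with $h=2$) bridging the two halves of $\mathcal S_D$, proving that $\mathcal P_D$ remains connected. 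Finally, for discriminants below the cutoff I would enumerate $\mathcal P_D$ and the Butterfly moves by computer, identifying $D \in \{41, 68, 100\}$ as the only sporadic cases with two components. \emph{The main obstacle} is the uniform number-theoretic control required in the connectedness step: securing enough admissible $q$'s coprime to $w$ for every $e \in \mathcal S_D$ and every admissible residue of $D$ modulo $16$ is what forces both the explicit bound $D \ge 83^2$ and the delicate case-by-case analysis, and it is also what makes the bridging argument for $D \equiv 4 \pmod{16}$ genuinely necessary rather than a mere technical nuisance.
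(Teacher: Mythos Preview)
Your proposal follows the same overall strategy as the paper: reduce every prototype to a reduced one ($h=1$), analyze the connectedness of $\mathcal S_D$ number-theoretically with a cutoff around $D\geq 83^2$, bridge residual components of $\mathcal S_D$ through non-reduced prototypes in $\mathcal P_D$ when $D\equiv 4\pmod{16}$, and treat small $D$ by direct enumeration.

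Two places deserve more care. First, the reduction to $h=1$ is not achieved by ``strictly decreasing $h$ via a $q$ sharing prime factors with $h$''; that mechanism does not obviously work when $t=0$ and $h\mid w$. The paper instead picks a prototype minimizing $h$ in its component and argues: $B_1$ gives $h'=\gcd(h,w)\geq h$, so $h\mid w$; $B_\infty$ gives $h'=\gcd(h,t)\geq h$ with $t<h$, so $t=0$; then $B_1$ produces a new width $w'=w-e-2h$, and $h\mid w,w'$ forces $h\mid e$, hence $h\mid\gcd(w,h,t,e)=1$.

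Second, the connectedness of $\mathcal S_D$ is more delicate than an ``elementary sieve/pigeonhole''. The paper shows that for $e$ far from the ends of $\mathcal S_D$ one has $e\sim e+8$ via explicit \emph{strategies} using only the moves $F_{\pm q}$ with $q\in\{3,5,7\}$; this is where the modulus $105$ and the two obstructed residues $(D,e)\equiv(4,-10),(4,-2)\bmod 105$ enter. Reaching the interior from the endpoints requires McMullen's prime-gap lemma (Theorem~9.1 of \cite{Mc4}) and, for $D\equiv 4\bmod 105$, Kanold's bound on the Jacobsthal function. This analysis leaves \emph{fifteen} exceptional discriminants for $\mathcal S_D$ (Table~\ref{tab:exceptional:cases}), not three; each must be bridged explicitly in $\mathcal P_D$ (Table~\ref{table:exceptionnal:link}) before the final list $\{41,68,100\}$ emerges.
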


\begin{Remark}
It  is  straightforward  to   check  that  $\mathcal  P_{D}$  has  two
components       for       $D       \in      \{41,68,100\}$,       see
Section~\ref{sec:four:discri} for  more details. We  present in
Figure~\ref{fig:action:P68} the action of Butterfly Moves on $\mathcal
P_{68}$ and $\mathcal P_{100}$.
\begin{figure}[htbp]
\begin{center}
\begin{minipage}[t]{0.45\linewidth}
\begin{tikzpicture}[fill=blue!20,scale=0.8]
\fill[fill=yellow!80!black!20,even   odd  rule]   (-2.3,-2)  rectangle
(6.3,3.3);  \path (0,2) node(P1)  [rectangle,draw,fill] {$\scriptstyle
  \left(  2,2,1,-6  \right)$}  (0,-1)  node(P4)  [rectangle,draw,fill]
     {$\scriptstyle   \left(  8,1,0,-2   \right)$}   (3.5,2)  node(P2)
     [rectangle,draw,fill]  {$\scriptstyle  \left( 4,1,0,-6  \right)$}
     (2.3,-1)  node(P5)  [rectangle,draw,fill]  {$\scriptstyle  \left(
       8,1,0,2   \right)$}  (4.7,-1)   node(P3)  [rectangle,draw,fill]
     {$\scriptstyle  \left(  4,2,1,-2 \right)$};  \draw[thick,blue,->]
     (P1)   ..   controls   +(left:2cm)   and  (left:1cm)   ..   (P4);
     \draw[thick,blue,->]   (P1)    ..   controls   +(right:2cm)   and
     (right:1cm)  ..  (P4);   \draw[thick,blue,->]  (P4)  ..  controls
     +(up:2cm)  and  +(down:1cm)  .. (P1);  \draw[thick,blue,->]  (P4)
     .. controls +(1,-1)  and (down:2cm) .. (P4); \draw[thick,blue,->]
     (P4) .. controls +(left:3cm) and +(-2,-1) .. (P4);

\draw[thick,blue,->]   (node   cs:name=P2)   --   (node   cs:name=P5);
\draw[thick,blue,<-]  (P2)   ..  controls  +(-1.5,-1)   and  +(-1.5,1)
.. (P5);  \draw[thick,blue,<-] (P2)  .. controls +(-1,-1)  and +(-1,1)
.. (P5);

\draw[thick,blue,->]   (node   cs:name=P2)   --   (node   cs:name=P3);
\draw[thick,blue,<-] (P2) .. controls  +(1.5,-1) and +(1.5,1) .. (P3);
\draw[thick,blue,<-] (P2) .. controls +(1,-1) and +(1,1) .. (P3);

\draw[thick,blue,<-] (P2) .. controls  +(-1,1.5) and +(1,1.5) .. (P2);
\draw[thick,blue,->]  (P2) ..  controls +(right:3cm)  and +(right:2cm)
..  (P3); \draw  (1,0.9) node  {$\scriptstyle  B_{\infty}$} (-0.3,0.9)
node  {$\scriptstyle B_{2}$}  (-1.5,0.9)  node {$\scriptstyle  B_{1}$}
(-1.5,-1.3)    node    {$\scriptstyle    B_{1}$}   (-0.2,-1.7)    node
{$\scriptstyle B_{\infty}$};

\draw  (2.8,3) node {$\scriptstyle  B_{3}$} (3,0)  node {$\scriptstyle
  B_{1}$} (4,0) node  {$\scriptstyle B_{2}$} (5,0) node {$\scriptstyle
  B_{\infty}$}  (5,1.3)  node   {$\scriptstyle  B_{1}$}  (2.1,0)  node
       {$\scriptstyle B_{\infty}$} (2,1.3) node {$\scriptstyle B_{1}$}
       (6,1.6)   node   {$\scriptstyle   B_{\infty}$};  \draw   (2,-2)
       node[below] {$\mathcal P_{68}$};
\end{tikzpicture}
\end{minipage}
\begin{minipage}[t]{0.45\linewidth}
\begin{tikzpicture}[fill=blue!20,scale=0.8]
\fill[fill=yellow!80!black!20,even   odd  rule]   (-2.3,-2)  rectangle
(6.3,3.3);  \path (0,2) node(P1)  [rectangle,draw,fill] {$\scriptstyle
  \left(  4,2,1,-6  \right)$}  (0,-1)  node(P4)  [rectangle,draw,fill]
     {$\scriptstyle   \left(  12,1,0,-2  \right)$}   (3.5,2)  node(P2)
     [rectangle,draw,fill]  {$\scriptstyle  \left( 8,1,0,-6  \right)$}
     (2.3,-1)  node(P5)  [rectangle,draw,fill]  {$\scriptstyle  \left(
       6,2,1,-2  \right)$}   (4.7,-1)  node(P3)  [rectangle,draw,fill]
     {$\scriptstyle  \left(  12,1,0,2 \right)$};  \draw[thick,blue,->]
     (P1)   ..   controls   +(left:2cm)   and  (left:1cm)   ..   (P4);
     \draw[thick,blue,->]   (P1)    ..   controls   +(right:2cm)   and
     (right:1cm)  ..  (P4);   \draw[thick,blue,->]  (P4)  ..  controls
     +(up:2cm)  and  +(down:1cm)  .. (P1);  \draw[thick,blue,->]  (P4)
     .. controls +(1,-1)  and (down:2cm) .. (P4); \draw[thick,blue,->]
     (P4) .. controls +(left:3cm) and +(-2,-1) .. (P4);

\draw[thick,blue,->]   (node   cs:name=P2)   --   (node   cs:name=P5);
\draw[thick,blue,<-]  (P2)   ..  controls  +(-1.5,-1)   and  +(-1.5,1)
.. (P5);  \draw[thick,blue,<-] (P2)  .. controls +(-1,-1)  and +(-1,1)
.. (P5);

\draw[thick,blue,->]   (node   cs:name=P2)   --   (node   cs:name=P3);
\draw[thick,blue,<-] (P2) .. controls  +(1.5,-1) and +(1.5,1) .. (P3);
\draw[thick,blue,<-] (P2) .. controls +(1,-1) and +(1,1) .. (P3);

\draw[thick,blue,<-] (P2) .. controls  +(-1,1.5) and +(1,1.5) .. (P2);
\draw[thick,blue,->]  (P2) ..  controls +(right:3cm)  and +(right:2cm)
..  (P3); \draw  (1,0.9) node  {$\scriptstyle  B_{\infty}$} (-0.3,0.9)
node  {$\scriptstyle B_{2}$}  (-1.5,0.9)  node {$\scriptstyle  B_{1}$}
(-1.5,-1.3)    node    {$\scriptstyle    B_{1}$}   (-0.2,-1.7)    node
{$\scriptstyle B_{\infty}$};

\draw  (2.8,3) node {$\scriptstyle  B_{3}$} (3,0)  node {$\scriptstyle
  B_{2}$} (4,0) node  {$\scriptstyle B_{1}$} (5,0) node {$\scriptstyle
  B_{\infty}$}  (5,1.3)  node   {$\scriptstyle  B_{1}$}  (2.1,0)  node
       {$\scriptstyle B_{\infty}$} (2,1.3) node {$\scriptstyle B_{1}$}
       (6,1.6)   node   {$\scriptstyle   B_{\infty}$};  \draw   (2,-2)
       node[below] {$\mathcal P_{100}$};
\end{tikzpicture}
\end{minipage}
\caption{
\label{fig:action:P68}
Action of  Butterfly moves on  the set of prototypes  $\mathcal P_{D}$
for $D=68$ and $D=100$.  }
\end{center}
\end{figure}
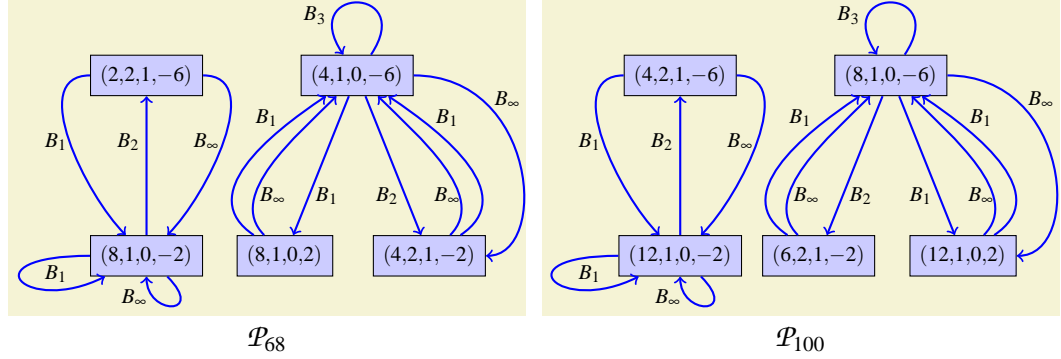
\end{Remark}

%
%


\subsection{Reduced prototypes}\label{sec:reduced:prot}
When $D$  is large,  the set $\Pcal_D$ is very big so that it is not easy to work 
directly with $\Pcal_D$. This problem is avoided by using reduced prototypes:
We say that $p=(w,h,t,e) \in \Pcal_D$ is {\em reduced} if
$h=1$ (in particular $t=0$). There is no loss of generality since we have

\begin{Proposition}
Every prototype is equivalent to a reduced prototype.
\end{Proposition}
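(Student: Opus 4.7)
The plan is to proceed by strong induction on the height $h$. The base case $h=1$ is immediate (the prototype is already reduced), and for the inductive step I would split into three cases according to whether $t$ vanishes and whether $h$ divides $w$, exhibiting in each case a short sequence of Butterfly moves that either lowers $h$ strictly or produces a reduced prototype directly.

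If $t>0$, then the prototype condition $0\leq t<\gcd(w,h)\leq h$ gives $\gcd(h,t)\leq t<h$, so applying the always-admissible move $B_\infty$ produces a new prototype of height $\gcd(h,t)<h$. If $t=0$ and $h\nmid w$, then the always-admissible move $B_1$ produces a prototype of height $\gcd(h,w+t)=\gcd(h,w)<h$. In both situations the induction hypothesis applies to the new prototype, and hence to the original one by transitivity of $\sim$.

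The main obstacle is the remaining case $t=0$ and $h\mid w$: here no single move $B_q$ (for $q\in\N\cup\{\infty\}$) can strictly decrease $h$, since $\gcd(qh,w)\geq h$ and $\gcd(h,0)=h$. The trick is to exploit the fact that the condition $\gcd(w,h,t,e)=\gcd(w,h,e)=1$ together with $h\mid w$ forces $\gcd(h,e)=1$. I would first apply $B_1$ to obtain
$$p_1=(w-e-2h,\,h,\,0,\,-e-4h),$$
and observe that $\gcd(h,\,w-e-2h)=\gcd(h,\,-e)=1$, so in the new prototype $h$ and the new first coordinate are coprime. A second application of $B_1$ then yields a prototype $p_2$ of height $\gcd(h,\,w-e-2h)=1$, which is already reduced. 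The verification that $p_1$ and $p_2$ satisfy the defining conditions of $\Pcal_D$ is a routine check using $D=e^2+8wh$ and the inequality $e+2h<w$ (for instance $2h+e'<w_1$ reduces to $0<w$, and the coprimality condition for $p_1$ collapses to $\gcd(h,e)=1$). Thus, in every case, finitely many Butterfly moves transform $p$ into a reduced prototype, completing the induction.
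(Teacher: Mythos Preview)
Your proof is correct and follows essentially the same strategy as the paper's: both rely on the moves $B_\infty$ and $B_1$ together with the key observation that when $t=0$ and $h\mid w$, the coprimality condition $\gcd(w,h,t,e)=1$ forces $\gcd(h,e)=1$, so after one application of $B_1$ the new width $w'=w-e-2h$ becomes coprime to $h$. The only difference is packaging --- the paper phrases it as a minimality argument (pick a prototype minimizing $h$ in its component and deduce $h\mid w$, $t=0$, $h\mid e$, hence $h=1$), while you run an explicit descent on $h$; these are two sides of the same coin.
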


\begin{proof}
The    proof     parallels    the    one     of    Theorem~$8.2$    in
McMullen~\cite{Mc4}. For the sake  of completeness we briefly give the
details here.  Let $p=(w,h,t,e)$ that minimize  the value of  $h$ in a
given component. We  claim that $p$ is reduced. For  that we will show
that $h$ divides  $w,t$ and $e$ so that  $h$ divides $\gcd(w,h,t,e)=1$
by definition of $\mathcal P_{D}$.
\begin{itemize}
\item  Since $B_{1}(p)=(w',h',t',e')$  where $h'=\gcd(w,h)\geq  h$ one
  has $h=h'$ divides $w$.
\item  Since $B_{\infty}(p)=(w',h',t',e')$ where  $h'=\gcd(t,h)\geq h$
  one has $t<h=h'$ and $t=0$.
\end{itemize}
Let $B_{1}(p)=(w',h,t',-e-4h)$. Now  from the relation $e'^{2}+8w'h' =
e^{2}+8wh$ one  can deduce $w' =  w-e-2h$.  But since  $h$ divides $w$
and $w'$, $h$ divides also $e$. The claim is then proven.
\end{proof}

It  will be useful  to parametrize  the set  of reduced  prototypes as
follows:
\begin{Definition}
Let  $\mathcal S_{D}  = \{  e\in \Z,\  e^{2} \equiv  D \mod  8,\ e^{2}
\textrm{ and } (e+4)^{2} < D \}$. Each element $e \in \SD$ gives rise
to a reduced prototype  $[e] := (w,1,0,e) \in \Pcal_D$, where $w=(D-e^2)/8$.
\end{Definition}

We equip $\SD$ with the relation $e\sim e'$ if $[e']= B_q([e])$, for some $q\in
\N \cup \{\infty\}$. Note that this condition implies that $e'=-e-4q$,
and  $\gcd(w,q)=1$,  where $w  =(D-e^{2})/8$,  when $q\in  \N\setminus
\{0\}$, and  $e'=-e-4$, when $q=\infty$.  An equivalence class  of the
equivalence  relation generated  by  this relation  is  called a  {\it
  component} of $\SD$. Clearly, if $e\sim e'$ in $\SD$, then $[e]$ and
$[e']$ are equivalent in $\Pcal_D$. Note that the converse is not necessarily true, it can happen that 
$[e]\sim [e']$ in $\Pcal_D$, but $e$ and $e'$ do not belong to the same equivalence class in $\SD$.
Theorem~\ref{theo:main:PD} follows mainly from the following

\begin{Theorem}
\label{theo:main:SD}
Let $D>16$  be a  discriminant with  $D \equiv 0,1,4  \mod 8$.  Let us
assume that
$$                    D                    \not                    \in
\{20,36,41,73,97,112,148,196,244,292,304,436,484,676,1684\}.
$$ Then the set $\mathcal S_{D}$ is non empty and has either
\begin{itemize}
\item two components: $\{e\in \SD,\;  e \equiv 2 \mod 8 \}$ and
  $\{e\in \SD,\; e\equiv -2 \mod 8\}$, if $D \equiv 4 \mod 16$, or
\item only one component.
\end{itemize}
\end{Theorem}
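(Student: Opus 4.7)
The proof will proceed in three stages, corresponding to non-emptiness, a lower bound on the number of components when $D \equiv 4 \mod 16$, and the matching upper bound. For non-emptiness, for each admissible residue $D \mod 8 \in \{0,1,4\}$ and every $D$ above a small threshold, I would exhibit an explicit element of $\SD$: $e=0$ when $D\equiv 0 \mod 8$, $e=-1$ when $D\equiv 1 \mod 8$, and $e=-2$ when $D\equiv 4 \mod 8$. In each case the conditions $e^2 < D$ and $(e+4)^2 < D$ are straightforward once $D$ is large enough; the small excluded discriminants are precisely those where this fails or where one of the two parity classes considered below happens to be empty.

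For the lower bound when $D \equiv 4 \mod 16$: every $e \in \SD$ then satisfies $e \equiv 2 \mod 4$, so $e^2 \equiv 4 \mod 16$ and $w = (D-e^2)/8$ is even. The admissibility $\gcd(w,q)=1$ thus forces $q$ odd, whence $-e-4q \equiv -e-4 \mod 8$. A direct check shows that the map $e \mapsto -e-4$ preserves each of the residue classes $\{e \equiv 2 \mod 8\}$ and $\{e \equiv 6 \mod 8\}$, so they are invariant under $\sim$ and yield at least two components.

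For the upper bound on the number of components in each parity class (or in $\SD$ as a whole when $D \not\equiv 4 \mod 16$), the key mechanism is that $B_1$ is always admissible and coincides with the involution $\iota : e \mapsto -e-4$, while $B_q$ with $q \geq 2$ coprime to $w$ and satisfying $(e+4q)^2 < D$ acts as $e \mapsto -e-4q$. The composition $\iota \circ B_q$ is then the translation $e \mapsto e + 4(q-1)$. I would argue by induction on $|e|$ that every $e \in \SD$ can be moved by such translations to the canonical representative of smallest $|e|$ in its class, namely $0$, $-1$, or $\pm 2$ according to $D \mod 8$ (the choice of sign in $\pm 2$ being determined by the parity class when $D \equiv 4 \mod 16$), thereby reducing the problem to a single equivalence class per parity sector.

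The main obstacle is guaranteeing the existence of an admissible $q$ coprime to $w$ at each reduction step. This reduces to a Jacobsthal-type gap estimate: the admissibility window for $q$ has length on the order of $(\sqrt{D}-|e|)/4$, while one needs at least one integer in that window coprime to $w$. For $D$ sufficiently large the window dominates the longest run of consecutive integers sharing a prime factor with $w$ (controlled by the bounded number of prime divisors of $w \leq D/8$); below the resulting threshold, only finitely many discriminants remain to be examined and are settled by direct computation of the action of the maps $B_q$ on $\SD$, producing exactly the exceptional list $\{20,36,41,73,97,112,148,196,244,292,304,436,484,676,1684\}$.
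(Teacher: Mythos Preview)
Your treatment of non-emptiness and of the lower bound when $D \equiv 4 \bmod 16$ is fine and matches the paper (Remark~\ref{rk:two:components}). The gap is in the upper bound.

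The ``induction on $|e|$'' does not work as stated. The available step is $e \mapsto e + 4(q-1)$ with $q \geq 2$ coprime to $w = (D-e^2)/8$ and $(e+4q)^2 < D$. Nothing forces the smallest such $q$ to land you closer to the target: if $w$ is divisible by all small primes (say $2,3,5,7,11,13$), the least admissible $q$ is $17$ and the step is $64$, which will typically overshoot any small canonical representative. After the overshoot the new $w'$ is different, but there is no monotone quantity you have identified that must decrease, so the ``induction'' has no well-founded order. Your Jacobsthal remark only guarantees \emph{some} admissible $q$ in the full window $[2,(\sqrt{D}-e)/4)$; it does not produce one in the much shorter interval $[2, |e|/2+1]$ needed to strictly reduce $|e|$. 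Relatedly, you do not explain how the residue classes of $e$ modulo $8$ get connected when $D \not\equiv 4 \bmod 16$ (for $D \equiv 1 \bmod 8$ there are four odd classes, and linking them requires finding a specific $e$ with $w$ odd so that $q=2$ is usable --- this depends on $D \bmod 16$).

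The paper's substitute for your induction is Proposition~\ref{prop:equiv:step8}: for every $e$ in the interior $\mathcal{T}_D = \{e : e-24,\, e+32 \in \mathcal{S}_D\}$ one has $e \sim e+8$, except for two residues $(D,e) \bmod 105$. This is proved by a finite case check modulo $3\cdot 5\cdot 7$ using short ``strategies'' built only from $F_{\pm 3}, F_{\pm 5}, F_{\pm 7}$; the point is that for these small primes the admissibility condition $D \not\equiv e^2 \bmod q$ is a congruence, so all $105^2$ cases can be tabulated. Once $e \sim e+8$ holds throughout $\mathcal{T}_D$, each residue class mod $8$ collapses to a single $\sim$-class, and the paper then connects those classes by one or two explicit moves depending on $D \bmod 16$. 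The Jacobsthal/Kanold estimate (and McMullen's $q < 3\log w/\log 2$) is used only for the separate, easier task of pushing an $e$ near the boundary of $\mathcal{S}_D$ into $\mathcal{T}_D$ (Proposition~\ref{prop:ends}). This mod-$105$ mechanism is the missing structural idea in your sketch.
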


\begin{Remark}
\label{rk:two:components}
There  is a simple  congruence condition  that explains  why $\mathcal
S_{D}$ has (at least) two  components when $D\equiv 4 \mod 16$. Indeed
in that case if $e\sim f$ then $e \equiv f \mod 8$.
\end{Remark}

\subsection{Exceptional cases}

Our number-theoretic analysis of  the connectedness of $\mathcal S_{D}$
only applies when $D$ is sufficiently large ({\it e.g.} $D \geq 83^{2}$). On
one  hand  it is  feasible  to compute  the  number  of components  of
$\mathcal S_{D}$ when  $D$ is reasonably small. This  reveals the $15$
exceptional    cases   of   Theorem~\ref{theo:main:SD},    listed   in
Table~\ref{tab:exceptional:cases}. On  the other hand, using computer assistance, one can easily
prove the following

\begin{Lemma}
\label{lm:exceptionnal:cases}
Theorem~\ref{theo:main:SD} is true for all $D \leq 83^{2} = 6889$.
\end{Lemma}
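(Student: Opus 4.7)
The plan is to verify Lemma~\ref{lm:exceptionnal:cases} by a direct finite computation, since for each bounded discriminant $D$ the set $\mathcal{S}_D$ is finite and its equivalence classes under $\sim$ can be enumerated algorithmically. The key observation is that, by the admissibility bound $(e+4qh)^2 < D$ applied to reduced prototypes (where $h=1$), for each $e \in \mathcal{S}_D$ there are only $O(\sqrt{D})$ candidates $q \in \mathbb{N}\cup\{\infty\}$ with $-e - 4q \in \mathcal{S}_D$, so the adjacency relation is effectively computable.

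First I would enumerate all integers $D$ with $17 \leq D \leq 6889$ and $D \equiv 0, 1, 4 \pmod 8$. For each such $D$, I would build the set
\[
\mathcal{S}_D = \{ e \in \mathbb{Z} : e^2 \equiv D \pmod 8,\ e^2 < D,\ (e+4)^2 < D \}
\]
by running $e$ over the interval $(-\sqrt{D},\, \sqrt{D}-4)$ and testing the congruence condition. Then, for each $e \in \mathcal{S}_D$, I would compute its neighbors under $\sim$: for $q = \infty$, check whether $-e-4 \in \mathcal{S}_D$; for each $q \in \{1,2,\dots\}$ with $(-e-4q)^2 < D$, check whether $-e - 4q \in \mathcal{S}_D$ and $\gcd(w, q) = 1$ where $w = (D-e^2)/8$.

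Next, I would compute the connected components of the resulting graph on $\mathcal{S}_D$ via breadth-first search (or a union-find data structure) and record the count. Comparing these counts with the prediction of Theorem~\ref{theo:main:SD}, namely one component when $D \not\equiv 4 \pmod{16}$ and two components (split by the residue of $e$ modulo $8$, see Remark~\ref{rk:two:components}) when $D \equiv 4 \pmod{16}$, I would tabulate every discriminant $D \leq 6889$ where the observed count disagrees with the prediction. The assertion of the lemma is precisely that this list of disagreements coincides with
\[
\{20,\, 36,\, 41,\, 73,\, 97,\, 112,\, 148,\, 196,\, 244,\, 292,\, 304,\, 436,\, 484,\, 676,\, 1684\}.
\]

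There is no genuine mathematical obstacle: both the size of $\mathcal{S}_D$ (bounded by $\sqrt{D}/2 \leq 42$) and the total number of discriminants to check (fewer than $3 \cdot 6889 / 8 \approx 2580$) are modest, well within the reach of a symbolic algebra system such as \cite{sage}. The only practical care required is correctly implementing the admissibility test $\gcd(w,q)=1$ and making sure the congruence condition $e^2 \equiv D \pmod 8$ is not confused with $e \equiv \pm\sqrt{D} \pmod 8$; both of these are routine. The output of this computation, once tabulated, constitutes the proof of Lemma~\ref{lm:exceptionnal:cases}, and is presented in Table~\ref{tab:exceptional:cases}.
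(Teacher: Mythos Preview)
Your proposal is correct and follows exactly the approach the paper itself takes: the paper states only that ``using computer assistance, one can easily prove'' this lemma, and your description spells out precisely the finite computation that constitutes such a verification. The only quibble is the size estimate $|\mathcal{S}_D| \leq 42$, which undercounts by roughly a factor of two when $D$ is odd (there $e$ ranges over odd integers in an interval of length about $2\sqrt{D}$), but this has no bearing on the feasibility or correctness of the computation.
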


\begin{table}[htbp]
$$
\begin{array}{|c|c||c|c||c|c|}
\hline   D  &   \textrm{Components  of   }  \mathcal   S_{D}  &   D  &
\textrm{Components of }  \mathcal S_{D} & D &  \textrm{Components of }
\mathcal S_{D} \\ \hline 20 & 1 & 112 &  2 & 304 & 2 \\ 36 & 1 & 148 &
3 & 436 & 3 \\ 41 & 2 & 196 & 3  & 484 & 3 \\ 73 & 2 & 244 & 3 & 676 &
3 \\ 97 & 2 & 292 & 3 & 1684 & 3\\ \hline
\end{array}
$$
\caption{
\label{tab:exceptional:cases}
Exceptional cases of Theorem~\ref{theo:main:SD}.}
\end{table}

\subsection{Proof of Theorem~\ref{theo:main:PD}}\hfill\\
We     first    show     how     Theorem~\ref{theo:main:SD}    implies
Theorem~\ref{theo:main:PD}.

\begin{proof}[Proof of Theorem~\ref{theo:main:PD}]
Obviously  when $\mathcal  S_{D}$  has only  one  component, there  is
nothing to prove. Thus we only need to consider the cases
$$ \left\{ \begin{array}{l} D > 16 \qquad \textrm{and} \qquad D \equiv
  4           \mod           16,           \\          D           \in
  \{73,97,112,148,196,244,292,304,436,484,676,1684\}.
\end{array}
\right.
$$  We  first examine  the  general  case,  and then  the  exceptional
cases.  Since  for $D  \leq  100$,  Theorem~\ref{theo:main:PD} can  be
checked by hand, let us assume $D > 100$ and $D \equiv 4 \mod 16$. The
idea is  to connect  the two components  of $\mathcal S_{D}$  by using
non-reduced elements of $\mathcal P_{D}$. To be more precise one needs
to  connect $e\in  \mathcal S_{D}$  to some  $e'\in  \mathcal S_{D}$
where  $e\not \equiv  e' \mod  8$, by  using butterfly  moves $B_{q}$,
$q\in       \N        \cup       \{\infty\}$       (compare       with
Remark~\ref{rk:two:components}).

\begin{enumerate}
\item[(1)] {\bf First case: $D = 4 + 16k$, $k$ odd.} \\ Since $D>100$,
  we   have  $k\geq   7$.  We   start  from   the   reduced  prototype
  $(2k-4,1,0,-6)\in \Pcal_{D}$ or equivalently $e=-6\in \SD$.  Observe
  that  $B_2$  is  admissible  since  $e+4\cdot 2  =  2  <  \sqrt{16}<
  \sqrt{D}$.  Applying   the  followings  Butterfly   moves:  $B_{2}$,
  $B_{\infty}$ and $B_{1}$ in this order gives:
$$                        (2k-4,1,0,-6)\overset{B_{2}}{\longrightarrow}
  (k,2,0,-2)\overset{B_{\infty}}{\longrightarrow}(k-2,2,0,-6)
  \overset{B_{1}}{\longrightarrow}(2k,1,0,-2).
$$ Thus  $[-6]$ and $[-2]$ are  equivalent in $\Pcal_D$,  and $-6 \not
  \equiv -2 \mod 8$ as desired. \medskip

\item[(2)] {\bf Second case: $D = 4 + 32k$, $k$ odd.}\\ We have $k\geq
  5$,  since  $D>100$.  This  time  we  will  start from  the  reduced
  prototype  $(4k,1,0,2)\in \Pcal_{D}$  or equivalently  $e=2\in \SD$.
  We  apply  the  followings  Butterfly Moves:  $B_{2}$,  $B_{2}$  and
  $B_{1}$ in this order.  For the first move $q=2$ is admissible since
  $e+4\cdot 2  = 10  < \sqrt{D}$.  For the second  move $q=2$  is also
  admissible: $e+4\cdot 2 = -2 < \sqrt{D}$.
$$                           (4k,1,0,2)\overset{B_{2}}{\longrightarrow}
  (2k-6,2,1,-10)\overset{B_{2}}{\longrightarrow}(2k-2,2,1,-6)
  \overset{B_{1}}{\longrightarrow}(4k,1,0,-2).
$$ Thus $[2]$ is connected to  $[-2]$ in $\Pcal_D$, and $2 \not \equiv
  -2 \mod 8$ as desired. \medskip

\item[(3)] {\bf Third case: $D = 4 + 32k$, $k$ even.}\\ We have $k\geq
  4$. In  this case,  since $D\not \in  \{68,100\}$, one  has $D>100$.
  This time we will start from the reduced prototype $(4k-4,1,0,-6)\in
  \Pcal_{D}$ or  equivalently $e=-6\in \SD$.  We  apply the followings
  Butterfly moves:  $B_{4}$, $B_{\infty}$  and $B_{1}$ in  this order.
  The first move corresponding  to $q=4$ is admissible since $e+4\cdot
  4 = 10 < \sqrt{D}$.
$$                        (4k-4,1,0,-6)\overset{B_{4}}{\longrightarrow}
  (k-3,4,0,-10)\overset{B_{\infty}}{\longrightarrow}(k-1,4,0,-6)
  \overset{B_{1}}{\longrightarrow}(4k-12,1,0,-10)
$$ Thus  $[-6]$ is connected to  $[-10]$ in $\Pcal_{D}$,  and $-6 \not
  \equiv -10 \mod 8$ as desired. \medskip

\item[(4)]       {\bf       Exceptional       cases:      $D       \in
  \{73,97,112,148,196,244,292,304,436,484,676,1684\}.$}     \\     The
  strategy is  the same  as above. We  have collected  the information
  into              Table~\ref{table:exceptionnal:link}             in
  Appendix~\ref{appendix:exceptional}
  page~\pageref{appendix:exceptional}. 
  \label{Table:explanation}
  We explain  here the first line
  of this table.
$$
\begin{array}{|c|c|c|}
\hline D & \textrm{Components of $\mathcal S_{D}$} & \textrm{Butterfly
  Moves} \\ \hline \hline 73  & \{1,-5\} \textrm{ and } \{-1,-3,3,-7\}
&                 [-5]                \overset{B_{3}}{\longrightarrow}
(1,3,0,-7)\overset{B_{\infty}}{\longrightarrow}(2,3,0,-5)\overset{B_{1}}{\longrightarrow}
       [-7] \\ \hline
\end{array}
$$ The  first two  columns represent the  discriminant $D=73$  and the
components  of  $\mathcal   S_{73}$:  a  representative  elements  are
{\it e.g.}  $e=-5$  and $e=-7$.  In  the last  column  we  encode the  moves
connecting    the   two    corresponding    reduced   prototypes    in
$\Pcal_D$.  Hence,  whereas  $\mathcal  S_{73}$  has  two  components,
$\mathcal P_{73}$ has only one.
\end{enumerate}
The proof of our theorem is now complete.
\end{proof}

We can now turn into the proof of Theorem~\ref{theo:main:SD}. To prove
this  theorem,  we  use  almost   the  same  ideas  as  the  proof  of
Theorem~$10.1$ in~\cite{Mc4}, and do not wish to claim any originality.

\subsection{Small values of $q$} Surprisingly it is possible to show
that Theorem~\ref{theo:main:SD}  holds for most values of  $D$ only by
using   Butterfly   moves  $B_q$   with   small   $q$,  namely   $q\in
\{1,2,3,5,7\}$. If $q$ is a prime  number, we will use the following two
operations
$$ \left\{
\begin{array}{lll}
F_{q}(e) &=& e + 4(q-1), \\ F_{-q}(e) &=& e - 4(q-1).
\end{array}
\right.
$$


These two maps are useful to us, since we have
\begin{Proposition}
Let $e\in \mathcal S_{D}$, and assume that $q$ is an odd prime.
\begin{enumerate}
\item If $F_{q}(e)  \in \mathcal S_{D}$ and $D  \not \equiv e^{2} \mod
  q$ then $e \sim F_{q}(e)$.
\item If $F_{-q}(e)  \in \mathcal S_{D}$ and $D  \not \equiv (e+4)^{2}
  \mod q$ then $e \sim F_{-q}(e)$.
\end{enumerate}
\end{Proposition}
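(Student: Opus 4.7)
The plan is to realize $F_q$ as the composition $B_\infty\circ B_q$ and $F_{-q}$ as $B_q\circ B_\infty$, applied to the reduced prototype $[e]=(w,1,0,e)$ with $w=(D-e^2)/8$, and then to verify that the intermediate step is an admissible Butterfly move whose output is again a \emph{reduced} prototype, so that the equivalence takes place within $\SD$.

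For part (1), I would first apply $B_q$ to $[e]$. Admissibility requires $(e+4q)^2<D$ by Lemma~\ref{BMCondLm}, and this is exactly the condition $(F_q(e)+4)^2<D$ guaranteed by $F_q(e)\in\SD$ (since $F_q(e)+4=e+4q$). The formula in Proposition~\ref{BM1Prop} then produces a prototype with $e_1=-e-4q$ and $h_1=\gcd(q,w)$. Since $q$ is an odd prime, $h_1\in\{1,q\}$, and $h_1=1$ is equivalent to $q\nmid w$; because $8w=D-e^2$ and $\gcd(8,q)=1$, this is in turn equivalent to $D\not\equiv e^2 \mod q$, which is our hypothesis. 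Hence the intermediate prototype is reduced, and $e_1\in\SD$ follows from $(e_1)^2=(e+4q)^2<D$ and $(e_1+4)^2=F_q(e)^2<D$. Applying $B_\infty$ (always admissible) then yields a reduced prototype with $e$-coordinate $-e_1-4=e+4(q-1)=F_q(e)$, so $e\sim F_q(e)$ in $\SD$.

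For part (2), I would reverse the order. Applying $B_\infty$ to $[e]$ first (always admissible) produces the reduced prototype $(w_1,1,0,-e-4)$ with $w_1=(D-(e+4)^2)/8$; this lies in $\SD$ because $(-e-4)^2=(e+4)^2<D$ and $(-e-4+4)^2=e^2<D$. Then $B_q$ is admissible if $(-e-4+4q)^2<D$, and since $-e-4+4q=-(e-4(q-1)+4-8)=\dots$ one sees directly that this equals $(F_{-q}(e))^2<D$, which holds by hypothesis. Proposition~\ref{BM2Prop} gives a new $h$-coordinate $\gcd(q,w_1)$, and the same reasoning as above shows this equals $1$ iff $q\nmid 8w_1=D-(e+4)^2$, i.e., $D\not\equiv(e+4)^2 \mod q$. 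The resulting $e$-coordinate is $-(-e-4)-4q=e-4(q-1)=F_{-q}(e)$, completing the argument.

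I do not foresee any serious obstacle: the proposition is essentially a bookkeeping consequence of the explicit Butterfly-move formulas in Propositions~\ref{BM1Prop} and~\ref{BM2Prop}. The only mildly delicate point is translating the divisibility of $w$ (respectively $w_1$) by $q$ into the stated congruence condition modulo $q$, which is where one uses that $q$ is odd so that $\gcd(q,8)=1$.
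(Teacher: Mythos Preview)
Your proposal is correct and follows exactly the same approach as the paper: realize $F_q$ as the sequence $(B_q,B_\infty)$ and $F_{-q}$ as $(B_\infty,B_q)$ on reduced prototypes, checking that the intermediate step stays reduced (i.e., $h'=1$) via the congruence hypothesis. The paper's proof is a one-line remark to this effect, while you have carefully spelled out the admissibility checks and the translation $q\nmid w \Leftrightarrow D\not\equiv e^2 \pmod q$ (using $q$ odd), which is exactly the ``mildly delicate point'' you identify.
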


\begin{proof}
It  suffices  to  remark  that  $[F_q(e)]$  (resp.  $[F_{-q}(e)]$)  is
obtained   from   $[e]$   by   the   sequence   of   Butterfly   moves
($B_q,B_\infty$)   (resp.   ($B_\infty,B_q$)),   and  the   respective
conditions ensure the admissibility of the corresponding sequence.
\end{proof}

The  next   proposition  guaranties  that,  under   some  rather  mild
assumptions, one has $e \sim F_3(e)=e+8$.

\begin{Proposition}
\label{prop:equiv:step8}
Let $e\in  \mathcal S_{D}$  and let us  assume that $e-24$  and $e+32$
also belong to $\mathcal S_{D}$. Then one of the following two holds:
\begin{enumerate}
\item $e \sim e+8$, or
\item  $(D,e)$ is  congruent  to $(4,-10)$  or  $(4,-2)$ when  reduced
  modulo $105 = 3\cdot5\cdot7$.
\end{enumerate}
\end{Proposition}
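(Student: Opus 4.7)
The plan is to prove that, unless $(D,e)$ lies in one of two specific residue classes modulo $105=3\cdot 5\cdot 7$, one of a short list of Butterfly-move sequences connects $[e]$ to $[e+8]$ in $\Pcal_D$. The hypothesis $e-24,\, e,\, e+32\in \SD$ will first be used to ensure that every intermediate prototype we consider is admissible: for any $e'$ with $|e'-e|\leq 28$ and $e'\equiv e\pmod 4$, the inequalities $(e')^{2}<D$ and $(e'+4)^{2}<D$ are sandwiched by the corresponding inequalities for $e-24$ and $e+32$, so $e'\in \SD$. This preliminary remark removes the range issue from all subsequent constructions.

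First, I attempt the one-step move $F_{3}(e)=e+8$, which is admissible iff $D\not\equiv e^{2}\pmod 3$. If this fails I try the four length-two compositions $F_{-3}\circ F_{5}$, $F_{5}\circ F_{-3}$, $F_{-5}\circ F_{7}$, and $F_{7}\circ F_{-5}$, each connecting $e$ to $e+8$ through a single intermediate shift $e+16,\ e-8,\ e+24,\ e-16$ respectively; by Lemma~\ref{BMCondLm} together with the definition of $F_{\pm q}$, the admissibility of each reduces to a pair of non-congruence conditions modulo two of the primes $3,5,7$. When all these length-$\leq 2$ sequences fail, I pass to length-three compositions such as $F_{-3}\circ F_{7}\circ F_{-3}$ and analogous patterns built from $F_{\pm 3}, F_{\pm 5}, F_{\pm 7}$ whose total shift equals $+8$; each introduces an additional non-congruence condition and exploits the fact that a single $F_{q}$ move may be inadmissible at $e$ yet admissible at a shifted position (the value $(e+k)^{2}\pmod q$ varies with $k$).

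The core of the argument is then to analyze the residues of $(D,e)$ modulo $105$ for which every sequence above is inadmissible. Each failure condition has the form $D\equiv (e+k)^{2}\pmod q$ for small $k$ and $q\in\{3,5,7\}$. Using the quadratic residues $\{0,1\}$, $\{0,1,4\}$, $\{0,1,2,4\}$ modulo $3, 5, 7$, together with the Chinese Remainder Theorem, the surviving joint-failure locus reduces to exactly the two classes $(D,e)\equiv(4,-10)$ and $(D,e)\equiv(4,-2)\pmod{105}$. The main obstacle will be the combinatorial bookkeeping: one must tabulate which subset of sequences is blocked by each residue pattern, and confirm that outside the two exceptional classes at least one sequence remains admissible. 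No individual congruence is difficult, but several must be combined in parallel; I would organize the enumeration in a small table (and optionally verify it by computer) to guarantee that no intermediate case is missed.
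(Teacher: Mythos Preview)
Your approach is essentially the same as the paper's: build short chains of moves $F_{\pm q}$ with $q\in\{3,5,7\}$ whose net shift is $+8$, and eliminate residue classes $(D,e)\pmod{105}$ by a finite case analysis. Two small corrections: first, the paper in fact needs strategies of length up to four (namely $(-5,3,7,-3)$ and $(-3,7,3,-5)$) to cover all classes, so your length-$3$ list will not quite suffice; second, the intermediate values actually range over $e+\{-24,-16,\dots,24,32\}$ (the strategy $(5,5,-7)$ passes through $e+32$), so your stated bound $|e'-e|\le 28$ is slightly off, though the interval argument you give---that $\SD$ is the intersection of an interval with a residue class, hence convex---is exactly what justifies all intermediate values lying in $\SD$.
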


\begin{proof}
We say that a sequence of integers $(q_1,q_2,\dots,q_n)$ is a strategy
for $(D,e)$ if for any $i=1,\dots,n-1$ the following holds:
$$   \left\{    \begin{array}{l}   e_{i+1}=F_{q_i}(e_i)   \in    e   +
  \{-24,-16,-8,0,8,16,24,32\} \textrm{ (where } e_1=e), \textrm{ and }
  \\ q_i \textrm{ is admissible for } (D,e_i), \\ e_{n} = e+8.
\end{array}\right.
$$ For instance,  if $(D,e) \equiv (0,3) \mod 105$  then $(5,-3)$ is a
strategy. Indeed  letting $e = 3$ we  see that $3 \sim  F_{5}(3) = 19$
since $5$  is admissible for $(D,3)$.  And $19 \sim F_{-3}(19)  = 11 =
3+8$   since  $-3$  is   admissible  for   $(D,19)$.  Hence   $3  \sim
3+8$.  \medskip

Thus in order to prove the proposition we only need to give a strategy
for every pair $(D,e) \mod 105$  with the two exceptions stated in the
theorem. In fact each of the  $105^2-2$ cases can be handled by one of
the following $12$ strategies.

\begin{enumerate}
\item There  are $7350$  pairs $(D,e)$ for  which $q=3$  is admissible
  ({\it i.e.}  $D\not \equiv  e^2 \mod 3$).  Since $F_3(e) =  e+8$ the
  sequence $(3)$ is a strategy for all of these cases.

\item Among the $105^2-2-7350=3673$  remaining pairs, there are $1960$
  pairs $(D,e)$ for which the sequence $(5,-3)$ is a common strategy.

\item We can continue in order to find strategies for all remaining
  pairs $(D,e)$ but two: $(4,-10)$ and $(4,-2)$. We find respectively the strategies:
$$
\begin{array}{l}
(7,-5),\  (-3,5),  \ (-5,7),  \\  (5,3,-5),  \  (-5,3,5), \  (5,5,-7),
  \ (-7,5,5), \ (-3,7,-3), \\ (-5,3,7,-3), \ (-3,7,3,-5).
\end{array}
$$
\end{enumerate}
Note  that the  conditions  that  $e-24$ and  $e+32$  belong to  $\SD$
guaranty the admissibility of the strategies. This completes the proof
of the proposition.
\end{proof}

\begin{Remark}
Since for  $(D,e) \equiv  (4,-2) \mod 105$  one has $D  \equiv (e+4)^2
\mod  105$, even  though one can enlarge the set  of primes to be
used in the strategies, there is no hope to get a similar conclusion to Proposition~\ref{prop:equiv:step8}
without the second case.
\end{Remark}

\begin{Remark}
\label{rk:crietrion:ends}
A simple criterion to be not  close to the ends of $\mathcal S_{D}$ is
the following.
\begin{center}
If $f\in \mathcal  S_{D}$ then for any $e>f,  \qquad (e+36 < \sqrt{D})
\implies (e+32\in \mathcal S_{D})$.
\end{center}
Indeed $e+32\in \mathcal S_{D}$ if and only if $(e+32)^{2} <
D$ and $(e+36)^{2} < D$. Thus the claim is obvious if $e+32\geq0$. Now
if $e < -32$ then since $e>f$ the inequalities
$$ 0> e + 32 > f + 32 > f \qquad \textrm{and} \qquad -(f+4)> 4> e + 36
> f + 36 > f + 4
$$ implies
$$ (e + 32)^{2} < f^{2} <  D \qquad \textrm{and} \qquad (e + 36)^{2} <
(f + 4)^{2} < D.
$$
\end{Remark}

\medskip

Let us define $\mathcal  T_{D} = \{  e \in \mathcal  S_{D}, \ e-24 \textrm{  and }
e+32\in \mathcal S_{D} \}$. 
The next proposition asserts that if $D$ is large then assumption of Proposition~\ref{prop:equiv:step8} 
actually holds.

\begin{Proposition}
\label{prop:ends}
If $D\geq 55^2$  then every element of $\mathcal  S_{D}$ is equivalent
to an element of $\mathcal T_{D}$.
\end{Proposition}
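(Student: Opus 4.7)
The plan is to show that any $e\in \mathcal{S}_D\setminus \mathcal{T}_D$ can be joined to some $e'\in \mathcal{T}_D$ by a short sequence of butterfly moves. First I would use the move $B_\infty$, always admissible, which induces the involution $e\mapsto -e-4$ of $\mathcal{S}_D$ and swaps its two ends. Up to one application of $B_\infty$, I may therefore assume $e$ lies in the right end, so that $\sqrt{D}-36\leq e<\sqrt{D}-4$; set $\delta:=e-(\sqrt{D}-36)\in [0,32)$.

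Next I would look for an odd prime $q$ such that the shift $F_{-q}(e):=e-4(q-1)$, realised as $B_q\circ B_\infty$, lies in $\mathcal{T}_D$. By the proposition immediately preceding Proposition~\ref{prop:equiv:step8}, $e\sim F_{-q}(e)$ in $\mathcal{S}_D$ provided $F_{-q}(e)\in \mathcal{S}_D$ and $D\not\equiv (e+4)^2 \mod q$. Requiring the stronger condition $F_{-q}(e)\in \mathcal{T}_D$ reduces, via the binding inequalities $-\sqrt{D}+24<e-4(q-1)<\sqrt{D}-36$, to choosing $q$ in the interval
\[
\frac{\delta}{4}\ <\ q-1\ <\ \frac{\sqrt{D}}{2}-15+\frac{\delta}{4}.
\]
For $D\geq 55^2=3025$ this interval has length $\geq 12.5$, and it always contains several small odd primes; notably at $\delta=0$ the primes $\{3,5,7,11,13\}$ are all in the interval.

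The crux is then to find such a $q$ also satisfying the congruence condition. I would argue by contradiction: if every odd prime $q$ in the interval satisfied $D\equiv (e+4)^2 \mod q$, then each would divide $8w:=D-(e+4)^2>0$ and, being odd, would in fact divide $w$; hence so would the product $P$ of all these primes. But
\[
8w = D-(\sqrt{D}-32+\delta)^{2} = (64-2\delta)\sqrt{D}-(1024-64\delta+\delta^{2}),
\]
which for $\delta\in[0,32)$ and $D=55^{2}$ is at most $64\sqrt{D}-1024=2496$, whereas the product of the primes actually in the interval always exceeds this bound (at $\delta=0$, $3\cdot 5\cdot 7\cdot 11\cdot 13=15015>2496$; at larger $\delta$, the small primes leaving the interval are replaced by larger ones entering it, with the product remaining comfortably above the quickly decreasing $8w$). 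This contradicts $P\mid 8w$ and yields an admissible prime $q$, whence $e\sim F_{-q}(e)\in \mathcal{T}_D$.

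The main obstacle is the delicate arithmetic of the last step: uniformly matching the narrow admissibility interval for $q$ with the congruence condition $D\not\equiv (e+4)^2 \mod q$. The threshold $D\geq 55^{2}$ is precisely the value at which the first five odd primes $\{3,5,7,11,13\}$ all fit in the interval at $\delta=0$ and their product $15015$ dominates the worst-case bound $2496$ on $8w$; a case-by-case check over $\delta\in[0,32)$, tracking which subset of small primes belongs to the interval, completes the argument. Below this threshold the comparison fails and separate treatment (as for the exceptional discriminants in Table~\ref{tab:exceptional:cases}) becomes necessary.
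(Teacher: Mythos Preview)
Your reduction to the ``right end'' via $B_\infty$ and your use of $F_{-q}$ to slide left are both sound, and the interval you derive for $q-1$ is correct. The genuine gap is in the final product-of-primes estimate: you compare $P$ only against the bound $8w\leq 64\sqrt{D}-1024=2496$ evaluated at $D=55^2$, and then propose a case check over $\delta\in[0,32)$ at that single $D$. But the proposition is for \emph{all} $D\geq 55^2$, and for larger $D$ the same formula gives $8w\leq 64\sqrt{D}-1024$, which grows with $D$. Since for fixed $\delta$ the left endpoint $1+\delta/4$ of your $q$-interval does not move, the minimal prime set you exhibit (e.g.\ $\{5,7,11,13\}$ with product $5005$) can fail to dominate $w\leq 8\sqrt{D}$ once $\sqrt{D}\gtrsim 625$. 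Of course the interval also extends to the right and picks up further primes, so the inequality $P>w$ does eventually hold again; but establishing this uniformly requires either a Chebyshev-type lower bound on $\prod_{p\leq x}p$ or a further case split in $D$, neither of which you supply. As written, the argument proves the proposition only for $D=55^2$.

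The paper sidesteps this difficulty by a structurally different, \emph{iterative} argument. Working at the left end ($f\leq -2$, $f-24\notin\mathcal S_D$), it does not try to land in $\mathcal T_D$ in one shot. It merely finds \emph{one} integer $q$ coprime to $w=(D-f^2)/8$ with $F_q(f)+36<\sqrt D$, applies $F_q$ to obtain $e>f$, and if $e-24\notin\mathcal S_D$ repeats the step from $e$; strict increase of the parameter forces termination. Two regimes are treated: if some prime $q\leq 13$ is coprime to $w$, then $F_q(f)+36\leq f+84\leq -\sqrt D+24+84<\sqrt D$ precisely for $D\geq 55^2$; otherwise $2\cdot3\cdot5\cdot7\cdot11\cdot13\mid w$, hence $D>10^5$, and McMullen's Theorem~9.1 supplies an integer $q<3\log w/\log 2<5\log D$ coprime to $w$, whence $F_q(f)+36<20\log D+36<\sqrt D$. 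This avoids any product-of-primes-in-an-interval estimate altogether. Your one-shot strategy is more elementary in that it bypasses McMullen's theorem, but it purchases this at the price of the uniform product bound you leave unverified; the paper's iteration plus the $3\log w/\log 2$ input is what makes the argument go through for all $D\geq 55^2$ without further case analysis.
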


The proof will  use the following theorem (the notations have been adapted to our
situation)

\begin{NoNumberTheorem}[McMullen~\cite{Mc4} Theorem~$9.1$]
For any integer $w > 1$  there is an integer $q>1$ relatively prime to
$w$ with
$$ 1 < q < \cfrac{3\log(w)}{\log(2)}.
$$
\end{NoNumberTheorem}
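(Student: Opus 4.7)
The plan is to take $q$ to be the smallest prime that does not divide $w$; such a $q$ is automatically coprime to $w$, so the only thing to verify is the upper bound $q < 3\log(w)/\log(2) = 3\log_{2}(w)$. Enumerate the primes as $p_{1}=2 < p_{2}=3 < p_{3}=5 < \cdots$. If $q = p_{k+1}$, then by minimality $p_{1},\ldots,p_{k}$ all divide $w$, hence
$$w \; \geq \; p_{1}p_{2}\cdots p_{k} \; = \; \exp\bigl(\theta(p_{k})\bigr),$$
where $\theta(x) = \sum_{p \leq x} \log p$ is Chebyshev's first function. The desired bound $q = p_{k+1} < 3\log_{2}(w)$ therefore reduces to showing
$$\theta(p_{k}) \; > \; \tfrac{\log 2}{3}\, p_{k+1}.$$

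The key step combines two classical inputs. First, Bertrand's postulate gives $p_{k+1} < 2 p_{k}$, so it suffices to establish the stronger inequality $\theta(p_{k}) > \tfrac{2\log 2}{3}\, p_{k}$. Since $\tfrac{2\log 2}{3} \approx 0.462$, this is a Chebyshev-type lower bound on $\theta$. I would verify it in two ranges: for small $k$ by direct numerical inspection (e.g.\ $\theta(3)/3 \approx 0.596$, $\theta(5)/5 \approx 0.680$, $\theta(7)/7 \approx 0.764$, etc.), and for $p_{k}$ beyond some explicit threshold by invoking Erd\H{o}s' elementary proof of Chebyshev's theorem, which gives $\theta(x) \geq (\log 2)\, x - O(\sqrt{x}\log x)$ and in particular $\theta(x) > 0.8\, x$ for $x$ moderately large. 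Together these cover all $p_{k} \geq 3$, which corresponds to $k \geq 2$.

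It remains to dispose of the cases $k = 0$ and $k = 1$ by hand. If $k = 0$, then $2 \nmid w$ and we take $q = 2$; the bound $2 < 3\log_{2}(w)$ holds as soon as $w > 2^{2/3}$, i.e.\ for $w \geq 2$. If $k = 1$, then $2 \mid w$ but $3 \nmid w$, so $w \geq 4$; take $q = 3$ and use $3 < 3\log_{2}(w) \iff w > 2$, which is fine for $w \geq 4$. (Strictly speaking $w = 2$ is a boundary case in which the open inequality just fails; this is harmless in the application, where the butterfly-move argument only invokes the lemma for widths $w$ in prototypes with $w > 2h + e$ and $\gcd$-coprimality constraints that force $w$ to be large.)

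The main obstacle is the Chebyshev-type inequality $\theta(n) > \tfrac{2\log 2}{3}\, n$ with an explicit transition point: one needs not merely the asymptotic $\theta(n)/n \to 1$ but a concrete threshold $n_{0}$ beyond which the bound holds, so that the range $n \leq n_{0}$ can be exhausted by direct computation. I would handle this either via the standard Erd\H{o}s binomial-coefficient argument ($\binom{2n}{n}$ is bounded above by a product involving primes $\leq 2n$, forcing $\theta(2n) - \theta(n) \geq n\log 2 - O(\sqrt{n}\log n)$, then iterate), or by quoting one of the effective Chebyshev inequalities of Rosser and Schoenfeld (e.g.\ $\theta(x) > 0.84\, x$ for $x \geq 101$), which trivially implies the desired bound for all $p_{k} \geq 101$; the remaining finitely many primes are checked by hand as above. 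The factor $3$ in the statement is tight only up to the constant $2\log(2) \approx 1.386 < 2$ coming from Bertrand, so there is comfortable slack in the argument.
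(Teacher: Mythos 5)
The paper does not prove this statement; it quotes it verbatim from McMullen~\cite{Mc4} (Theorem~9.1) as a black box, so there is no in-paper proof to compare yours against. That said, your reconstruction is essentially correct and follows the natural line of attack: take $q$ to be the smallest prime not dividing $w$, note that if $q = p_{k+1}$ then the primorial $p_1\cdots p_k$ divides $w$, so $\log w \geq \theta(p_k)$, and then reduce the desired bound to the Chebyshev-type inequality $\theta(p_k) > \tfrac{\log 2}{3}\,p_{k+1}$ via Bertrand's postulate. The numerics check out: $\tfrac{2\log 2}{3}\approx 0.462$, and $\theta(p_k)/p_k$ exceeds this already at $p_k=3$ (where it is $\approx 0.597$) and only grows thereafter, so the residual range is small and disposed of by direct computation plus any effective Chebyshev bound (Rosser--Schoenfeld, or Erd\H{o}s' elementary argument). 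This is almost certainly the same mechanism McMullen has in mind.

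One small inaccuracy worth flagging: in the $k=1$ case you write ``$2 \mid w$ but $3\nmid w$, so $w\geq 4$'', which overlooks $w=2$ itself. You catch it a sentence later --- indeed for $w=2$ the bound $3\log_2 w = 3$ is an integer and the only candidate $q=2$ is not coprime to $w$, so the strict inequality genuinely fails --- but the slip reads as if $w=2$ had been excluded a priori. As you note, this is immaterial for the paper's use of the lemma: in Proposition~\ref{prop:ends} it is invoked only after reducing to $w$ divisible by $30030$, and in Lemma~\ref{eq:claim} only for $w>10^{46}$, so the boundary case never arises. It would still be cleaner to state the inequality as non-strict, or to explicitly exclude $w=2$, rather than to first assert $w\geq 4$ and then walk it back.
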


\begin{proof}[Proof of Proposition~\ref{prop:ends}]
Let $f\in \mathcal S_{D}$. Since $f \sim -f - 4$ we can assume $f \leq
-2$.  If  $f>-6$ then  the proposition is  clearly true,  therefore we
only have to consider the case $f\leq -6$. Observe that if $ f\leq -6$
then  $(f+32)^{2} \leq (f-20)^{2}$  and $(f+36)^{2}  \leq (f-24)^{2}$,
hence $f  - 24 \in  \SD$ which implies $f  + 32 \in \mathcal  S_{D}$. Assume
that

\begin{equation}
\label{eq:f}
f ^{2} < D \leq (f-24)^{2}.
\end{equation}
We will  show that there always exists  $e>f$ with $e\sim f$
and  $e+32\in \mathcal S_{D}$,  or equivalently  $e+36 <  \sqrt{D}$ by
Remark~\ref{rk:crietrion:ends}. If $e-24 \not \in \mathcal S_{D}$ then
by definition, $e$ satisfies  the inequalities~(\ref{eq:f}) and thus we
can repeat the argument by replacing $f$ by $e$. \medskip

Since  $ D\geq 55^2$  we have $f\leq 24-55=-31$.  Now assume
that  there exists  some  prime $q  \leq  13$ such  that $\gcd(w,q)  =
1$. Then $f \sim F_{q}(f) > f$ and
\begin{equation*}
F_q(f)+36= f+4(q-1)+36 \leq -31 +48+36=53 < 55 \leq \sqrt{D}.
\end{equation*}
Hence $e = F_{q}(f)$ is convenient. \medskip

Thus assume that  $w $  is divisible  by all  primes $p\leq
13$.  Then $D  \geq  8 \cdot  w  \geq 10^{5}$.   Pick  an integer  $q$
relatively prime to $w$ such that
$$ 1 < q < \frac{3\log(w)}{\log(2)} \leq 5\log(D).
$$ Now $f\sim F_{q}(f)$ where
$$ f < F_{q}(f) = f + 4(q-1) < 20 \cdot \log(D).
$$ Since for $D \geq 10^{5}$, we have
$$ F_q(f)+36 < 20 \cdot \log(D) + 36 < \sqrt{D}.
$$ This completes the proof of Proposition~\ref{prop:ends}.
\end{proof}

\subsection{Case $D \equiv 4 \mod 105$}

From   Proposition~\ref{prop:equiv:step8},  we   know   that,  if   $D
\not\equiv   4  \mod  105$,   then  $e   \sim  e+8$,   whenever  $e\in
\mathcal{T}_D$,  but if  $D\equiv 4  \mod 105$,  we do  not  have this
property for  all $e\in \mathcal{T}_D$,  namely when $e  \equiv -10,-2
\mod 105$. Assume that $D \equiv 4 \mod 105$, we define
$$ \mathcal U_{D} =  \{ e \in \mathcal T_{D}, \ e  \not \equiv -2 \mod
105\},
$$

\begin{Lemma}
\label{lm:eq:UD}
For  $D  >  83^{2}  =  6889$  all elements  of  $\mathcal  S_{D}$  are
equivalent to an element of $\mathcal U_{D}$.
\end{Lemma}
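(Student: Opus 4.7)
The plan is to combine Proposition~\ref{prop:ends} with a targeted use of the $F_q$ moves for suitable primes $q$. First, since $D > 83^{2} > 55^{2}$, Proposition~\ref{prop:ends} already reduces the problem to showing that every $e \in \mathcal{T}_{D}$ with $e \equiv -2 \mod 105$ is equivalent to some $e' \in \mathcal{U}_{D}$. Applying the move $B_{1}$, which sends $e$ to $-e-4$ and preserves the residue $-2 \mod 105$, I may further assume $e \leq -2$.

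The key observation is the elementary congruence $F_q(e) \equiv -2 + 4(q-1) \mod 105$, so that $F_q(e) \not\equiv -2 \mod 105$ if and only if $q \not\equiv 1 \mod 105$. Thus the goal reduces to finding a prime $q \geq 11$ such that $q \not\equiv 1 \mod 105$, $q \nmid w$ (admissibility of the composition $B_{\infty} \circ B_q$ viewed as the move $F_q$), the geometric bound $e + 4q < \sqrt{D}$ holds (admissibility of $B_q$), and $F_q(e)$ itself lies in $\mathcal{T}_D$. Any such $q$ produces $e':=F_q(e) \in \mathcal{U}_{D}$, as desired.

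The existence of $q$ is handled in two regimes. If at least one of $q \in \{11, 13, 17, 19\}$ satisfies $q \nmid w$, I pick the smallest such $q$; none of these primes is $\equiv 1 \mod 105$, and the admissibility conditions together with $F_q(e) \in \mathcal{T}_{D}$ are verified directly (using Remark~\ref{rk:crietrion:ends} and the crude bound $|e| < \sqrt{D}$) for $D > 83^{2}$. If on the contrary every prime in $\{11,13,17,19\}$ divides $w$, then $w \geq 11\cdot 13 \cdot 17 \cdot 19 = 46189$ and so $D > 369\,000$; in this large-$D$ regime I apply the theorem of McMullen (\cite{Mc4}, Theorem~9.1) to $2\cdot 3\cdot 5\cdot 7\cdot 211\cdot w$ to produce a prime $q \nmid w$ with $q \notin \{2,3,5,7,211\}$ (in particular $q \not\equiv 1 \mod 105$) satisfying $q = O(\log D)$. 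Since the admissibility window $q < (\sqrt{D} - e)/4$ grows as $\sqrt{D}$ while $q$ is only logarithmic in $D$, admissibility is automatic in this regime.

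The main obstacle will be the delicate verification that $F_q(e)$ lies not merely in $\mathcal{S}_D$ but in the smaller set $\mathcal{T}_D$: this requires checking that both $F_q(e) - 24$ and $F_q(e) + 32$ lie in $\mathcal{S}_D$, which can fail when $e$ is close to $\pm \sqrt{D}$. In those borderline configurations, one first applies either $F_{-q}$ instead of $F_q$, or one of the moves from the proof of Proposition~\ref{prop:ends}, to push $e$ away from the boundary of $\mathcal{S}_D$, and then resumes the argument. This boundary analysis is precisely what pins down the threshold $D > 83^{2}$ (in contrast with $D \geq 55^{2}$ used in Proposition~\ref{prop:ends}) and accounts for the finite list of exceptional discriminants appearing in Theorem~\ref{theo:main:SD}.
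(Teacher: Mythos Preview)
Your overall structure matches the paper's: reduce via Proposition~\ref{prop:ends} and $e\sim -e-4$ to $e\in\mathcal{T}_D$ with $e\equiv -2\pmod{105}$ and $e\le -2$, then apply a single move $F_q$ with $q\not\equiv 1\pmod{105}$ and $\gcd(q,w)=1$. The small-prime step with $q\in\{11,13,17,19\}$ is fine (and skipping $3,5,7$ is correct, since the hypotheses $D\equiv 4$, $e\equiv -2\pmod{105}$ force $105\mid w$). The gap is in the large regime. McMullen's Theorem~9.1 applied to $N=2\cdot 3\cdot 5\cdot 7\cdot 211\cdot w$ produces some integer $q$ coprime to $N$, but gives no control on $q\pmod{105}$: your inference ``$q\notin\{2,3,5,7,211\}\Rightarrow q\not\equiv 1\pmod{105}$'' is false, the prime $q=421\equiv 1\pmod{105}$ being a counterexample. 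If $w$ happens to be divisible by every prime below $421$ other than $211$, the least integer coprime to $N$ can be $421$, and then $F_{421}(e)\equiv -2\pmod{105}$ again. Since the lemma is asserted for \emph{all} $D>83^2$, not just moderate $D$, this is a genuine hole.

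The paper closes it with a congruence-aware tool (packaged as Lemma~\ref{eq:claim}): McMullen's Theorem~9.4 together with Kanold's estimate on the Jacobsthal function yield $q\equiv 13\pmod{210}$ with $\gcd(q,w)=1$ and $q\le 210\,J(w//210)\le 210\,w^{1/3}=o(\sqrt D)$; the congruence $q\equiv 13\pmod{210}$ forces $q\not\equiv 1\pmod{105}$ automatically. Substituting this for your use of Theorem~9.1 repairs the argument. Your final paragraph is also unnecessary: once $e\le -2$ and $e\in\mathcal{T}_D$, the single inequality $4q+30<\sqrt D$ combined with Remark~\ref{rk:crietrion:ends} and $e-24\in\mathcal{S}_D$ already gives $F_q(e)\in\mathcal{T}_D$; no further boundary maneuvers are needed, and the threshold $83^2=(4\cdot 13+31)^2$ comes from the small-prime step in Lemma~\ref{eq:claim}, not from any edge analysis.
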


%

\begin{proof}

%
Let $e\in \mathcal S_{D}$. Since $D>83^2$, Proposition~\ref{prop:ends}
implies  that one  can assume  $e\in  \mathcal T_{D}$.  Let us  assume
$e\not \in  \mathcal U_{D}$, {\it i.e.}   $e \equiv -2  \mod 105$, one
can  assume $e\leq  -2$ since  $e\sim -e-4$.  To prove the lemma, we need the following

\begin{Lemma}
\label{eq:claim}
For $D > 83^{2}$ there exists $q\not \equiv 1 \mod 105$ such that
$$ \gcd(w,q) = 1, \textrm{ and } 4q + 31 < \sqrt{D}.
$$
\end{Lemma}

Let us first complete the proof of Lemma~\ref{lm:eq:UD}. According to Lemma~\ref{eq:claim}, we can pick some  $q$  such that
$$
\gcd(w,q)=1 \text{ and } F_{q}(e) + 36 = e+4(q-1) + 36 = e + 4q + 32 \leq 4q + 30 < \sqrt{D}
$$ 
Thanks to Remark~\ref{rk:crietrion:ends},  we know that $F_{q}(e) +
36< \sqrt{D}$ implies $F_{q}(e) + 32 \in \mathcal S_{D}$. Consequently
$F_{q}(e) \in \mathcal T_{D}$.  Since $F_{q}(e) - e \equiv 4(q-1) \not
\equiv  0  \mod  105$ we  have  $F_{q}(e)  \not  \equiv -2  \mod  105$
i.e.  $F_{q}(e)  \in  \mathcal  U_{D}$.  We conclude  by  noting  that
$\gcd(w,q) = 1$,  which implies $e\sim F_{q}(e)$. Lemma~\ref{lm:eq:UD}
is now proven.
\end{proof}

To complete the proof of our statement, it remains to show

\begin{proof}[Proof of Lemma~\ref{eq:claim}]
One has to show that
\begin{equation}
\label{eq:lemma}
\left\{ \begin{array}{l}  \gcd(w,q) = 1,  \\ q\not \equiv 1  \mod 105,
  \\ 4q + 31 < \sqrt{D}.
\end{array} \right.
\end{equation}
Since  $D > 83^{2}$  the last  two conditions  of~(\ref{eq:lemma}) are
automatic for $q =  2,\ 3,\ 5,\ 7,\ 11$ and $13$.  Thus one can assume
$w$  is divisible  by  $30030 =  2\cdot 3  \cdot  5 \cdot  7 \cdot  11
\cdot13$.  But in this  case $\sqrt  D=\sqrt{e^{2}+8\cdot w\cdot  h} >
490$. \medskip

Again, the last two conditions  are fulfilled for all primes less than
$114$; thus  the claim  is proven  unless $w$ is  divisible by  all of
these $30$ primes, in which case we have $w \geq 10^{46}$.\medskip

To find   a   good    $q$   satisfying   the   first   condition
of~(\ref{eq:lemma}),  we will  use the  Jacobsthal's  function $J(n)$,
that  is  defined  to  be  largest gap  between  consecutive  integers
relatively prime  to $n$ ({\it e.g.} $J(10)=7-3=4$).   A convenient estimate
for  $J(n)$ is provided  by Kanold~\cite{Kanold1967}:  If none  of the
first   $k$   primes   divide   $n$,   then   one   has   $J(n)   \leq
n^{\log(2)/\log(p_{k+1})}$   where    $p_{k+1}$   is   the   $(k+1)th$
prime. \medskip

We will also use  the following inequality that can be found
in~\cite{Mc4}   (Theorem~$9.4$):  \\  For   any  $a,w,n\geq   1$  with
$\gcd(a,n) =  1$ there is a  positive integer $q \leq  n J(w//n)$ such
that
$$
q \equiv a \mod n \textrm{ and } \gcd(q,w) = 1,
$$  
where $w//n$  is obtained  by removing  from $w$  all  primes that
divide $n$. \bigskip

Applying  the above inequality with $a=13$ and  $n=210$, one can
find a positive integer $q$ satisfying

$$
q \leq 210J(w//210),
$$
with $\gcd(w, q) = 1$ and  $q \equiv 13 \mod 210$. In particular $q
\not  \equiv   1  \mod  105$   and  thus  the  first   two  conditions
of~(\ref{eq:lemma})   are  satisfied.   Let  us   see  for   the  last
condition. \medskip
 
\noindent Since the  first prime $p_{k+1}$ that divide  $w//210$ is at
least $13$, Kanold's estimates gives
$$ J(w//210) \leq (w//210)^{\log(2)/\log(p_{k+1})} \leq (w//210)^{1/3}
\leq w^{1/3}.
$$ 
Hence
$$
4\cdot q + 31 \leq 4\cdot 210 \cdot w^{1/3} + 31.
$$
But since $w > 10^{46}$ we have:

$$
4\cdot 210 \cdot w^{1/3} + 31 \leq w^{1/2} \leq \sqrt{D}.
$$ 
The lemma is proven.
\end{proof}


\subsection{Proof of Theorem~\ref{theo:main:SD}}\hfill
One can  assume that $D\geq 83^2$,  since by Lemma~\ref{lm:exceptionnal:cases}
the theorem is true  for $D<83^2$.

\subsubsection{Case $D\not \equiv 4 \mod 105$}

\begin{proof}
Thanks  to Proposition~\ref{prop:ends},  every component  of $\mathcal
S_{D}$ meets $\mathcal T_{D}$. Since $D=e^2+8w$ the possible values of
$D$ modulo $8$ are
$$
D \equiv 0,1,4 \mod 8.
$$
We will examine each case separately.\medskip

\noindent  {\bf Case one:  $D\equiv 0  \mod 8$.}  Let us  consider the
partition  $\mathcal  T_{D}   =  \mathcal  T^{0}_{D}  \sqcup  \mathcal
T^{1}_{D}$ where
$$ \mathcal  T^{i}_{D} = \{  e \in \mathcal  T_{D},\ e \equiv  4i \mod
8\}.
$$  By  Proposition~\ref{prop:equiv:step8} we  have  $e  \sim  e +  8$
whenever $e$ and $e + 8$  are both in $\mathcal T_{D}$.  Therefore all
elements of  $\mathcal T^{0}_{D}$ are equivalent, as  are all elements
of  $\mathcal  T^{1}_{D}$.   Thus Proposition~\ref{prop:ends}  implies
$\mathcal  S_{D}$  has  at  most  two  components.  But  $B_1(0)  \sim
0-4\times  1 =  -4$ thus  $0\in  \mathcal T^{0}_{D}$  is connected  to
$-4\in \mathcal T^{1}_{D}$. \bigskip

\noindent  {\bf Case two:  $D\equiv 4  \mod 8$.}  Let us  consider the
partition  $\mathcal  T_{D}   =  \mathcal  T^{0}_{D}  \sqcup  \mathcal
T^{1}_{D}$ where
$$ \mathcal T^{i}_{D}  = \{ e \in \mathcal T_{D},\  e \equiv 6+4i \mod
8\}.
$$ Again Propositions~\ref{prop:equiv:step8} and~\ref{prop:ends} imply
$\mathcal S_{D}$ has at most  two components. There are two sub-cases:
$D \equiv 4  \textrm{ or } 12  \mod 16$.  In the first  case there are
actually  two   components  (see  Remark~\ref{rk:two:components}).  So
assume $D \equiv 12 \mod 16$. Then $2\in \mathcal T^{1}_{D}$ and since
$w  = (D-2^2)/8$  is odd,  one has  $B_{2}(2) \sim  -2-4\times 8=-10$.
Hence we have connected  $2\in \mathcal T^{1}_{D}$ to $-10\in \mathcal
T^{0}_{D}$. \bigskip

\noindent {\bf  Case three: $D\equiv 1  \mod 8$.} Let  us consider the
partition  $\mathcal  T_{D}   =  \mathcal  T^{0}_{D}  \sqcup  \mathcal
T^{1}_{D}\sqcup \mathcal T^{2}_{D}\sqcup \mathcal T^{3}_{D}$ where
$$ \mathcal T^{i}_{D} = \{ e \in \mathcal T_{D},\ e \equiv 1 + 2i \mod
8\}.
$$ Again Propositions~\ref{prop:equiv:step8} and~\ref{prop:ends} imply
$\mathcal S_{D}$ has at most  four components. We will connect each of
these sets  by specific butterfly  moves.  First observe  that $B_1(1)
\sim -1-4\times  1 = -5\in  \mathcal T^{1}_{D}$. This  shows that $\mathcal
T^{0}_{D}$ is connected to $\mathcal T^{1}_{D}$. \medskip

\noindent  The same argument  shows $B_1(5)\sim  -5-4\times 1  = -9\in
\mathcal  T^{3}_{D}$.   Thus  $\mathcal  T^{2}_{D}$  is  connected  to
$\mathcal T^{3}_{D}$. \medskip

\noindent We need now to connect $\mathcal{T}^0_D\cup \mathcal{T}^1_D$
with $\mathcal{T}^2_D\cup\mathcal{T}^3_D$. We have two cases

\begin{itemize}
\item If $D \equiv  9 \mod 16$ then for $e=1$, one  has $w = (D-1^2)/8$
  is odd. Thus $\gcd(w,q)=1$ for  $q=2$ and $B_{2}(1) = -1-4\times 2 =
  -9$.    This   connects    $\mathcal    T^{0}_{D}$   to    $\mathcal
  T^{3}_{D}$. \medskip

\item If $D \equiv  1 \mod 16$ then for $e=3$ one  has $w = (D-3^2)/8$
  is odd. Thus $\gcd(w,q)=1$ for  $q=2$ and $B_{2}(3) = -3-4\times 2 =
  -11$. This connects $\mathcal T^{1}_{D}$ to $\mathcal T^{2}_{D}$.
\end{itemize}
This finishes the proof of Theorem~\ref{theo:main:SD} in the
case $D\not \equiv 4 \mod 105$.

\end{proof}

\subsubsection{Case $D\equiv 4 \mod 105$}

\begin{proof}
Recall  that in  this  case we  have  defined $\mathcal{U}_D:=\{e  \in
\mathcal{T}_D,  \; e  \not\equiv -2  \mod 105\}$.  We define  the sets
$\mathcal{T}^i_D$ in the same way as the previous case, namely

$$
\begin{array}{lll}
\mathcal T^{i}_{D} = \{ e \in  \mathcal T_{D},\ e \equiv 4i \mod 8\} &
i=0,\ 1, & \textrm{ if } D \equiv 0 \mod 8, \\ \mathcal T^{i}_{D} = \{
e \in \mathcal T_{D},\ e \equiv  6+4i \mod 8\} &i=0,\ 1, & \textrm{ if
}  D \equiv  4  \mod 8,  \\ \mathcal  T^{i}_{D}  = \{  e \in  \mathcal
T_{D},\ e \equiv 1 + 2i \mod 8\}  &i= 0,\ 1,\ 2,\ 3, & \textrm{ if } D
\equiv 1 \mod 8.
\end{array}
$$
and  consider the partition of  $\mathcal{U}_D$ by $\mathcal
U^{i}_{D} = \mathcal U_{D} \cap \mathcal T^{i}_{D}$.

\begin{Lemma}\label{lm:connect:UDi}
All  elements  of $\mathcal  U^{i}_{D}$  are  equivalent in  $\mathcal
S_{D}$.
\end{Lemma}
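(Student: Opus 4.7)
The plan is to take arbitrary $e, e' \in \mathcal U^{i}_{D}$ with $e < e'$ and build a chain of $\sim$-moves joining them inside $\mathcal S_{D}$. Since both elements lie in $\mathcal T^{i}_{D}$, their difference is a positive multiple of $8$, so I consider the arithmetic progression $e = f_0 < f_1 < \cdots < f_n = e'$ with common difference $8$ inside $\mathcal T^{i}_{D}$.

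For each consecutive pair I would apply Proposition~\ref{prop:equiv:step8}, which yields $f_k \sim f_{k+1}$ unless $(D, f_k)$ falls in one of the two exceptional classes $(4, -10)$ or $(4, -2)$ modulo $105$. Under the hypothesis $D \equiv 4 \mod 105$, these exceptions reduce to $f_k \equiv -10$ or $f_k \equiv -2 \mod 105$. The second case never occurs when $f_k \in \mathcal U_{D}$, since $\mathcal U_{D}$ was designed precisely to exclude elements congruent to $-2$ modulo $105$. The only remaining obstruction is therefore $f_k \equiv -10 \mod 105$: in that case $f_{k+1} = f_k + 8 \equiv -2 \mod 105$ lies outside $\mathcal U_{D}$, and the actual next element of $\mathcal U^{i}_{D}$ in the chain is $f_{k+2} = f_k + 16 \equiv 6 \mod 105$.

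I bypass the bad pair $(f_k, f_{k+1})$ via the composite move $F_5$ introduced in the proof of Proposition~\ref{prop:equiv:step8}: it corresponds to the Butterfly sequence $(B_5, B_\infty)$ applied to the reduced prototype $[f_k]$, and produces directly $[f_k + 16]$. Its admissibility reduces to two checks: first, $\gcd(w, 5) = 1$ with $w = (D - f_k^{2})/8$, which holds because $f_k \equiv 0 \mod 5$ and $D \equiv 4 \mod 5$ give $D - f_k^{2} \equiv 4 \mod 5$; and second, the inequality $f_k + 20 < \sqrt{D}$, which follows from $f_k \in \mathcal T_{D}$ (whence $f_k + 36 < \sqrt{D}$). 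The subsequent $B_\infty$ on the reduced intermediate $[-f_k - 20]$ is then automatically admissible.

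The main obstacle is the identification of the single bad residue class $-10 \mod 105$ surviving the restriction from $\mathcal T_{D}$ to $\mathcal U_{D}$, together with the production of one concrete move crossing it. Once this is done, concatenating the generic $+8$ steps of Proposition~\ref{prop:equiv:step8} with the occasional $F_5$-detour yields the desired chain $e \sim e'$ inside $\mathcal S_{D}$, and the lemma follows.
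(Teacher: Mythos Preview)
Your argument is correct and follows the same line as the paper: you identify the unique obstructive residue $e\equiv -10\pmod{105}$ left after restricting to $\mathcal U_D$, and bridge it with the move $F_5$ (admissible since $e^2\equiv 0\not\equiv D\pmod 5$), exactly as the paper does. Your write-up is simply more explicit about the chaining along the arithmetic progression and about the admissibility checks.
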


\begin{proof}[Proof of the lemma]
We  will apply Proposition~\ref{prop:equiv:step8}.  Since $D  \equiv 4
\mod  105$ and $e  \not \equiv  -2 \mod  105$, if  we can  not conclude
directly that  $e \sim e+8$  then this means  that $e \equiv  -10 \mod
105$. But in this case, since
$$ e^{2} \equiv 0 \not \equiv D \equiv 1 \mod 5
$$  one can  apply the  move $F_{q}$  with $q=5$.  This gives  $e \sim
F_{5}(e) = e + 16$. This proves the lemma.
\end{proof}

By Lemma~\ref{lm:eq:UD}  and Lemma~\ref{lm:connect:UDi}, we  only need
to   connect    elements   in   $\mathcal{U}^i_D$,    with   different
$i$.  Actually,  we  can  use  the  same strategies  as  the  case  $D
\not\equiv 4  \mod 105$  since they do  not involve any  element $e\in
\mathcal{T}^i_D$ such that $e \equiv  -2 \mod 105$. This completes the
proof of Theorem~\ref{theo:main:SD}.
\end{proof}


\section{Components of the Prym eigenforms locus}
\label{sec:diagrams}

In this section, we  give the proof of our main result (Theorem~\ref{MainTh1}) announced
in  Section~\ref{sec:background}.   Since  the  fact   that  the  Prym
eigenform loci  of different  discriminants are disjoint  follows from
Theorem~\ref{UqeThm} (see Corollary~\ref{cor:disjoint}), it remains to
show that  $\Omega E_D(4)$ has one  component when $D  \equiv 0,4 \mod
8$, and two components when $D \equiv 1 \mod 8$. \medskip

\noindent By          Theorem~\ref{theo:onto:map}         and
Theorem~\ref{theo:main:PD}, when $D \not\in \{41,68,100\}$, we have
$$    \#\   \{\textrm{Components   of    }   \Omega    E_D(4)\}   \leq
\#\      \left(\mathcal     Q_{D}/\sim \right)      \leq     2\cdot
\#\ \left(\mathcal P_{D}/\sim \right) = 2.
$$
When $D$ is  odd, by  Theorem~\ref{theo:disconnect:odd}, we
know  that  $\Omega  E_D(4)$  has  at  least  two  $\GL^+(2,\R)$-orbits,
therefore Theorem~\ref{MainTh1} is proven  for $D\equiv 1 \mod 8$, and
$D\neq 41$.

\begin{Remark}
There exists a simple congruence  relation that explains why it is not
possible to  connect $(p,+)$ to $(p,-)$  by Butterfly moves  $B_q, \ q
\in \N\cup\{\infty\}$ when $D$ is odd. Indeed, if it is the case, then
we would  have a sequence  of Butterfly moves in  $\Pcal_D$ connecting
$p$ to itself by an odd  number of steps. But this is impossible since
$e  \equiv \pm 1  \mod 4$  (since $D=e^2+8wh$),  and a  Butterfly move
sends $e$ to $e'=-e-4qh \equiv -e \not \equiv e \mod 4$.
\end{Remark}

For the remaining cases, Theorem~\ref{MainTh1} follows from

\begin{Theorem}[Generic even discriminants]
\label{theo:connect:odd:steps}
Let $D>16$ be  an even discriminant with $D\equiv 0,4  \mod 8$.  If $D
\not\in \{48,68,100\}$ then $\Qcal_D$ has only one component.
\end{Theorem}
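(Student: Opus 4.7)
The plan is to combine the connectedness of $\Pcal_D$ (Theorem~\ref{theo:main:PD}) with a parity analysis on the sign $\varepsilon$. By Propositions~\ref{BM1Prop} and~\ref{BM2Prop}, each admissible Butterfly move in $\Qcal_D$ sends $(p,\varepsilon)$ to $(p',-\varepsilon)$, so a closed Butterfly-move walk of length $n$ in $\Pcal_D$ based at some $p$ lifts in $\Qcal_D$ to a walk from $(p,\varepsilon)$ to $(p,(-1)^n\varepsilon)$. Hence a single \emph{odd-length} closed walk at some $p\in\Pcal_D$ implies $(p,+)\sim(p,-)$ in $\Qcal_D$. Combined with $\#(\Pcal_D/\!\sim)=1$ (valid since $D\notin\{41,68,100\}$), this forces $\#(\Qcal_D/\!\sim)=1$. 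The task thus reduces to constructing an odd-length closed Butterfly-move walk in $\Pcal_D$ for every even $D>16$ with $D\notin\{48,68,100\}$.

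I will split the construction according to $D$ modulo $32$. When $D\equiv 4\pmod 8$, the reduced prototype $((D-4)/8,1,0,-2)$ is fixed by $B_1$ (a $1$-cycle). When $D\equiv 8\pmod{16}$, the integer $w=(D-16)/8$ is odd, so $B_2$ fixes the reduced prototype $(w,1,0,-4)$. When $D\equiv 0\pmod{32}$, the integer $w=(D-16)/16$ is odd, and the non-reduced prototype $(w,2,0,-4)$ is fixed by $B_\infty$, as $\gcd(0,2)=2=h$ and $-e-4h=-4=e$. In each of these three sub-cases a $1$-cycle suffices.

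The hardest situation is $D\equiv 16\pmod{32}$, where a direct inspection shows that no Butterfly move admits a fixed point in $\Pcal_D$. For such $D$ with $D\geq 80$ (so that $D=48$ is automatically excluded, matching the hypothesis), I propose the explicit $3$-cycle
\[
p_1=\!\left(\tfrac{D-64}{16},2,0,-8\right)\xrightarrow{B_\infty}p_2=\!\left(\tfrac{D}{16},2,0,0\right)\xrightarrow{B_1}p_3=\!\left(\tfrac{D-64}{8},1,0,-8\right)\xrightarrow{B_4}p_1.
\]
Each move is admissible once $D>64$, and the transformation rules of Propositions~\ref{BM1Prop} and~\ref{BM2Prop} close the cycle because $D\equiv 16\pmod{32}$ makes $D/16$ odd (so $\gcd(2,D/16)=1$ at the second step) and $(D-64)/8\equiv 2\pmod 4$ (so $\gcd(4,(D-64)/8)=2$ at the third step, matching $h_1=2$). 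The bound $D\geq 80$ is precisely what makes $w_1=(D-64)/16$ a positive integer, which is also the reason $D=48$ has to be excluded.

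Verifying that the listed prototypes satisfy $(\Pcal)$ and that all intermediate Butterfly moves are admissible is a mechanical check from the conditions of Section~\ref{sec:butterfly}. The substantive obstacle is the last case: the absence of Butterfly fixed points forces one to mix prototypes with $h=1$ and $h=2$, and identifying the explicit $3$-cycle above, together with the divisibility conditions that make it close up, is the key new input of the proof.
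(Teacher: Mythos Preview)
Your proof is correct and follows essentially the same strategy as the paper: reduce to finding an odd-length closed Butterfly-move walk in $\Pcal_D$, then split into the four residue classes $D\equiv 4\pmod 8$, $D\equiv 8\pmod{16}$, $D\equiv 0\pmod{32}$, $D\equiv 16\pmod{32}$. In the first two cases your fixed points coincide with the paper's; in the last case your $3$-cycle visits the very same three prototypes $(2k-3,2,0,-8)$, $(2k+1,2,0,0)$, $(4k-6,1,0,-8)$ as the paper (which traverses them via $B_2,B_\infty,B_2$ rather than your $B_\infty,B_1,B_4$). The one genuine difference is the case $D\equiv 0\pmod{32}$: the paper uses the $3$-cycle $(4k-2,1,0,-4)\xrightarrow{B_2}(2k-1,2,0,-4)\xrightarrow{B_\infty}(2k-1,2,0,-4)\xrightarrow{B_1}(4k-2,1,0,-4)$, whereas you notice directly that the middle prototype $(2k-1,2,0,-4)$ is a fixed point of $B_\infty$, which is a small but pleasant simplification.
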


and

\begin{Theorem}[Exceptional discriminants]
\label{theo:connect:4:cases}
\
\begin{enumerate}
\item $\Omega  E_{48}(4)$, $\Omega E_{68}(4)$  and $\Omega E_{100}(4)$
  consist of a single $\GL^+(2,\R)$-orbit;
\item $\Omega E_{41}(4)$ consists of two $\GL^+(2,\R)$-orbits.
\end{enumerate}
\end{Theorem}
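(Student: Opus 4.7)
The strategy is to leverage the bounds already established and then add explicit identifications (or, for $D=41$, exploit the precise form of the non-identifications coming from Theorem~\ref{theo:disconnect:odd}) to pin down the exact number of $\GL^+(2,\R)$-orbits. By Theorem~\ref{theo:onto:map}, the number of components of $\Omega E_D(4)$ is bounded above by $\#(\Qcal_D/\sim)$, where $\sim$ is generated by all admissible Butterfly moves. Up to now we have used only the restricted family $B_q$ with parameter $(1,q)$; when this restricted relation does not already identify all of $\Qcal_D$, we enlarge it by Butterfly moves of general parameter $(p,q)$ with $p>1$, realised as changes of periodic direction on a specific surface.

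For $D \in \{48,68,100\}$, Theorem~\ref{theo:main:PD} gives that $\Pcal_D$ has one component (for $D=48$) or two (for $D=68,100$), so under the restricted relation $\Qcal_D$ carries at most $2$ or $4$ equivalence classes respectively. To prove assertion (1) we select a prototype representative in each of these classes, realise the corresponding translation surface via the constructions of Propositions~\ref{NormA1Prop} and~\ref{NormA2Prop}, and exhibit a completely periodic direction on that surface whose induced three-cylinder decomposition yields parameters lying in the target class. This amounts to producing a Butterfly move with parameter $(p,q)$, $p>1$, satisfying the admissibility condition $0<\lambda|q|<w$ of Lemma~\ref{BMCondLm}, and verifying the induced transformation rule. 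The required identifications are summarised in Table~\ref{table:exceptionnal:link}; once carried out, they show that in each case all of $\Qcal_D$ collapses into a single $\GL^+(2,\R)$-orbit.

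For $D=41$, Theorem~\ref{theo:disconnect:odd} already guarantees at least two $\GL^+(2,\R)$-orbits. By Theorem~\ref{theo:main:PD}, $\Pcal_{41}$ has two components $[p_1]$ and $[p_2]$, and since $41$ is odd the parity remark ensures that each $\Pcal$-class splits into two $\Qcal$-classes, giving four {\em a priori} candidates $(p_1,\pm)$ and $(p_2,\pm)$. To cut this to exactly two orbits it suffices to identify $(p_1,\varepsilon)$ with $(p_2,\varepsilon')$ for some choice of signs, since the invertibility of Butterfly moves and the sign-reversal rule then automatically produce a second identification among the remaining two candidates, collapsing the four candidates into exactly two orbits. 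The explicit Butterfly move performing this identification is again recorded in Table~\ref{table:exceptionnal:link}.

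The principal obstacle is that the restricted family of Butterfly moves $B_q$ is too coarse to detect all orbit identifications for these four exceptional discriminants; one must therefore search for Butterfly moves of general parameter $(p,q)$ with $p>1$, verify their admissibility on the specific surfaces at hand, and check that the induced transformation on prototypes connects the designated representatives. Since only finitely many small cases arise, this reduces to a bounded verification, which is recorded in the table.
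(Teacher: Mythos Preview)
Your proposal has two genuine gaps.

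First, the reference to Table~\ref{table:exceptionnal:link} is simply wrong: that table records Butterfly-move paths in $\Pcal_D$ for the exceptional discriminants of Theorem~\ref{theo:main:SD} (namely $D\in\{73,97,112,148,\dots,1684\}$), and contains no data whatsoever for $D\in\{41,48,68,100\}$. Since your entire argument defers the actual identifications to this table, the proof as written is empty.

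Second, and more importantly, the proposed method is not the one the paper uses, and it is not clear it can succeed. Butterfly moves, even with a general parameter $(p,q)$, are by definition changes of periodic direction that switch between Models $A+$ and $A-$ via a simple cylinder in a sub-torus; in particular every Butterfly move reverses the sign $\varepsilon$. For $D=48$ the paper explicitly notes that no prototype in $\Pcal_{48}$ can be connected to itself by an \emph{odd} number of (restricted) Butterfly moves, which is exactly why $\Qcal_{48}$ has two components; you would have to exhibit an odd cycle using general $(p,q)$ moves, and you have not done so. The paper instead steps \emph{outside} the Butterfly-move framework: for $D=48,68,41$ it picks a surface admitting a Model~$B$ decomposition (via a solution of $(\Pcal')$), and finds on that single surface two periodic directions whose $A\pm$ decompositions land in distinct $\Qcal_D$-components. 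For $D=100$ it uses the square-tiled structure directly and exhibits an explicit word in $\SL(2,\Z)$ taking $\Sigma_{-2}$ to $\Sigma_2$. Note also that for $D=41$ the paper must carry out \emph{two} Model~$B$ computations (one connecting $\Qcal^1_{41}$ to $\Qcal^2_{41}$, another connecting $\Qcal^3_{41}$ to $\Qcal^4_{41}$); the sign-reversal symmetry you invoke would indeed give the second identification for free \emph{if} the first were achieved by a Butterfly move, but since the paper's identifications go through Model~$B$ rather than through the $A+\leftrightarrow A-$ mechanism, that symmetry is not available and both must be checked.
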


\subsection{Proof of Theorem~\ref{theo:connect:odd:steps}}

We  will show  that  there exists  $e\in  \mathcal S_D$  which can  be
connected  to   itself  by  a   sequence  of  $1$  or   $3$  Butterfly
moves. Consider four different cases.

\begin{itemize}
\item[{\bf (1)}] $D  \equiv 4 \mod 8$ and  $D\not\in \{68,100\}$. Then
  $-2 \in \mathcal  S_D$ and $B_1(-2) = -2$.  Since $\Pcal_D$ has only
  one component, so is $\Qcal_D$, and we are done. \medskip

\item[{\bf  (2)}] $D =  8 +  16k, \  k\geq 1$.  Then $-4  \in \mathcal
  S_D$.  Note  that  $[-4]=(2k-1,1,0,-4)$.   Since  $e+4\cdot  2  =  4
  <\sqrt{D}$, $q=2$ is admissible, and $B_2(-4) = -4$. \medskip

\item[{\bf  (3)}] $D  =  32k$. Then  $-4  \in \mathcal  S_D$, we  have
  $[-4]=(4k-2,1,0,-4)$.  Since $e+4\cdot  2 =  4 <\sqrt{D}$,  $q=2$ is
  admissible, and
$$                        (4k-2,1,0,-4)\overset{B_{2}}{\longrightarrow}
  (2k-1,2,0,-4)\overset{B_{\infty}}{\longrightarrow}(2k-1,2,0,-4)
  \overset{B_{1}}{\longrightarrow}(4k-2,1,0,-4)
$$  is  a a  sequence  of three  Butterfly  moves  connecting $-4$  to
  itself.  \medskip

\item[{\bf (4)}] $D = 16+32k$ and $k>1$. Since $k\geq 2$, $-8
  \in  \mathcal S_D$ and  $[-8]=(4k-6,1,0,-8)$. This  time we  use the
  sequence
$$            (4k-6,1,0,-8)\overset{B_{2}}{\longrightarrow}(2k+1,2,0,0)
  \overset{B_{\infty}}{\longrightarrow}(2k-3,2,0,-8)
  \overset{B_{2}}{\longrightarrow}(4k-6,1,0,-8)
$$ to connect  $-8$ to itself with three steps.  Observe that $q=2$ is
  admissible in both cases.

\hfill $\square$
\end{itemize}

\subsection{Proof of Theorem~\ref{theo:connect:4:cases}}
\label{sec:four:discri}


\subsubsection{$D=100$}

Since  $D=100  =  10^{2}$  the  surfaces in  $\Omega  E_{100}(4)$  are
arithmetic surfaces (square-tiled  surfaces). The set $\mathcal Q_{D}$
has  exactly two  components, represented  by the  complete prototypes
$(12,1,0,-2,+)$        and       $(12,1,0,2,+)$        (see       also
Figure~\ref{fig:action:P68}   page~\pageref{fig:action:P68}   for  the
action of  Butterfly Moves  on $\mathcal P_{100}$).  Let $\Sigma_{-2}$
and  $\Sigma_{2}$  be  the  surface constructed  from  the  prototypes
$(12,1,0,-2,+)$   and  $(12,1,0,2,+)$,  respectively.    Observe  that
normalizing by $\GL^+(2,\Q)$, $\Sigma_{-2}$ and $\Sigma_{2}$ are
square-tiled surfaces, made of $10$ squares. \medskip

It turns  out there are exactly  $135$ square-tiled  surfaces (made of
$10$ squares)  in $\Omega\mathfrak{M}(4)$ and  they all belong  to the
same  Teichm\"uller  curve.  To  be  more precise,  if  we  denote  by
$L=\left( \begin{smallmatrix} 1& 1 \\ 0 & 1 \end{smallmatrix} \right)$
and  $R=\left( \begin{smallmatrix}  1& 0  \\ 1  &  1 \end{smallmatrix}
\right)$ the standard generators of $\textrm{SL}(2,\Z)$, then
$$ R^{2}\cdot (R \cdot L)^{3} \cdot \Sigma_{-2} = \Sigma_{2}.
$$ This shows that $\Omega E_{100}(4)$ is connected.

\subsubsection{$D=48$}
\label{soussec:excep1}

In this case
$$ \Qcal_{48}=\{(2,2,1,-4,\pm), (4,1,0,-4,\pm),(6,1,0,0,\pm)\}.
$$ The Butterfly moves connect  all the incomplete prototypes, that is
$\Pcal_{48}$  has only one  component (see  Figure~\ref{fig:P48}), but
$\Qcal_{48}$   has  two   components  since   none  of   prototypes  in
$\Pcal_{48}$ can be connected to  itself by an odd number of Butterfly
moves.
 \begin{figure}[htbp]
 \begin{tikzpicture}[scale=0.3, inner sep=1mm]
 \node[rectangle, draw, thick] (a) at (-8,0) {$\scriptstyle (2,2,1,-4)$};
 \node[rectangle, draw, thick] (b) at (0,0) {$\scriptstyle (4,1,0,-4)$};
 \node[rectangle, draw, thick] (c) at (8,0) {$\scriptstyle (6,1,0,0)$};
 
 \draw[->,  >=stealth]   (a)  to  node[below]   {$\scriptstyle B_1,B_\infty$}  (b);
 \draw[->, >=stealth] (b)  to [out=135, in=45] node[above] {$\scriptstyle B_2$}(a);
 \draw[->,  >=stealth]   (b)  to  node[below]   {$\scriptstyle B_1,B_\infty$}  (c);
 \draw[->,    >=stealth]    (c)    to   [out=135,in=45]    node[above]
      {$\scriptstyle B_1,B_\infty$} (b);
 \end{tikzpicture}
 \caption{
 \label{fig:P48}
Action of the Butterfly moves on $\Pcal_{48}$.}
\end{figure}
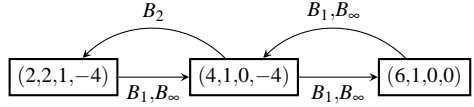

We label the components of $\Qcal_{48}$ as follows:
$$
\begin{array}{lll}
\Qcal^1_{48}  &  =   &  \{(2,2,1,-4,+),  (4,1,0,-4,-),  (6,1,0,0,+)\}, \\
\Qcal^2_{48} & = & \{(2,2,1,-4,-), (4,1,0,-4,+), (6,1,0,0,-)\} \\
\end{array}
$$ We will  show that there is actually  only one $\GL^+(2,\R)$-orbit in
$\Omega E_{48}(4)$.   To see this we  pick a prototype  for Model $B$,
that  is, a  quadruplet  $(w,h,t,e)$ of  integers satisfying  Property
$(\Pcal')$, and show that  the surface constructed from this prototype
admits  two   decompositions,  one  corresponds  to   a  prototype  in
$\Qcal^1_{48}$,  and   the  other   corresponds  to  a   prototype  in
$\Qcal^2_{48}$.   Note that,  for $D=48$  there are  $4$  solutions to
$(\Pcal')$, listed below
$$ \mathcal{S}'_{48}=\{(3,2,0,0), (4,1,0,4), (2,3,0,0), (1,4,0,-4)\}.
$$  Let  $\Sigma$  be  the  surface  constructed  from  the  prototype
$(3,2,0,0)$          following         Model          $B$         (see
Figure~\ref{fig:48:new:direction}).               We              have
$\displaystyle{\lambda=\frac{e+\sqrt{D}}{2}=\frac{\sqrt{48}}{2}=2\sqrt{3}}$.
The surface $\Sigma$ admits decompositions following Model $A-$ in the
directions              $v_1=(\lambda/2,h+\lambda/2)$,             and
$v_2=(w,-h-\lambda/2)$.    Direct   computations    show    that   the
decomposition  in the  direction  $v_1$ corresponds  to the  prototype
$(6,1,0,0,-) \in  \Qcal^2_{48}$, while the  decomposition in direction
$v_2$    corresponds    to    the    prototype    $(4,1,0,-4,-)    \in
\Qcal^1_{48}$.   Remark  that   in   this  case,   to  determine   the
corresponding  prototypes, it  suffices to  compute the  ratio  of the
heights of the cylinders in directions $v_1,v_2$.

\begin{figure}[htbp]
\begin{minipage}[t]{0.4\linewidth}
\centering
\begin{tikzpicture}[scale=0.6]

\filldraw[fill=gray!10]    (0,0)   --   (1.75,3.75)--    (1.75,2)   --
(intersection  of 1.75,2  --  0,-1.75  and 0,0  --  1.25,0) --  cycle;
\filldraw[fill=gray!10] (0,2) -- (intersection of 0,2 -- 1.75,5.75 and
0,3.75  -- 1.25,3.75)  -- (0,3.75)  --  cycle; \filldraw[fill=gray!10]
(1.25,0) --  (intersection of 1.25,0 --  3,3.75 and 1.75,2  -- 3,2) --
(3,2)  --  (1.25,-1.75)  --  cycle; \filldraw[fill=gray!10]  (3,0)  --
(intersection  of 3,0  --  1.25,-3.75 and  1.75,-1.75  -- 3,-1.75)  --
(3,-1.75) -- cycle;

\draw  (0,0) --  (1.25,0) --  (1.25,-1.75)  -- (3,-1.75)  -- (3,2)  --
(1.75,2) -- (1.75,3.75) -- (0,3.75)  -- cycle; \draw (1.25,0) -- (3,0)
(0,2) -- (1.75,2);

\filldraw[draw=black, fill=white]  (0,0) circle (2pt)  (1.25,0) circle
(2pt)  (1.25,-1.75) circle (2pt)  (1.75,-1.75) circle  (2pt) (3,-1.75)
circle  (2pt) (3,2) circle  (2pt) (1.75,2)  circle (2pt)  (1.75, 3.75)
circle  (2pt) (1.25,3.75)  circle  (2pt) (0,3.75)  circle (2pt)  (0,2)
circle (2pt) (3,0) circle (2pt) ;

\draw[thin,  <->,  >=angle  45]   (0,-2)  --  (3,-2);  \draw  (1.5,-2)
node[below] {$\scriptstyle w$};  \draw[thin, <->, >=angle 45] (-0.2,0)
-- (-0.2,2)   ;   \draw   (-0.2,1)  node[left]   {$\scriptstyle   h$};
\draw[thin, <->, >=angle 45]  (-0.2,2) -- (-0.2,3.75) ; \draw (-0.2,3)
node[left]  {$\scriptstyle \lambda/2$};  \draw[thin, <->,  >=angle 45]
(0,3.9)  --  (1.75,3.9);  \draw (0.9,3.9)  node[above]  {$\scriptstyle
  \lambda/2$};

\draw (1.5,-3) node {simple cylinders in direction $v_1$};

\end{tikzpicture}
\end{minipage} 
\begin{minipage}[t]{0.4\linewidth}
\centering
\begin{tikzpicture}[scale=0.6]
\filldraw[fill=gray!10] (0,3.75)  -- (3,0) --  (intersection of 1.75,2
-- 4.75,-1.75 and 3,0 -- 3,2) -- (intersection of 1.75,2 -- -1.25,5.75
and 0,3.75  -- 1.75,3.75)  -- cycle; \filldraw[fill=gray!10]  (0,2) --
(3,-1.75) --  (intersection of 1.25,0 -- 4.25,-3.75  and 1.25,-1.75 --
3,-1.75) -- (intersection  of 1.25,0 -- -1.75,3.75 and  0,0 -- 0,2) --
cycle;  \filldraw[fill=gray!10] (0,0) --  (intersection of  -1.25,2 --
1.75,-1.75  and  0,0  --   1.25,0)  --  (intersection  of  -1.25,2  --
1.75,-1.75 and 0,0 --  0,2) -- cycle; \filldraw[fill=gray!10] (3,2) --
(intersection of 1.25,3.75 -- 4.15,0  and 3,0 -- 3,2) -- (intersection
of 1.25,3.75 -- 4.15,0 and 3,2 -- 1.75,2) -- cycle;

\draw  (0,0) --  (1.25,0) --  (1.25,-1.75)  -- (3,-1.75)  -- (3,2)  --
(1.75,2) -- (1.75,3.75) -- (0,3.75)  -- cycle; \draw (1.25,0) -- (3,0)
(0,2) -- (1.75,2);

\filldraw[draw=black, fill=white]  (0,0) circle (2pt)  (1.25,0) circle
(2pt)  (1.25,-1.75) circle (2pt)  (1.75,-1.75) circle  (2pt) (3,-1.75)
circle  (2pt) (3,2) circle  (2pt) (1.75,2)  circle (2pt)  (1.75, 3.75)
circle  (2pt) (1.25,3.75)  circle  (2pt) (0,3.75)  circle (2pt)  (0,2)
circle (2pt) (3,0) circle (2pt);

\draw (1.5,-3) node {simple cylinders in direction $v_2$};

\end{tikzpicture}
\end{minipage}

\caption{
\label{fig:48:new:direction}
Two periodic directions  corresponding to two prototypes in $\Qcal^1_{48}$ and $\Qcal^2_{48}$ 
on the surface constructed from  the prototype
$(3,2,0,0)$.  }
\end{figure}
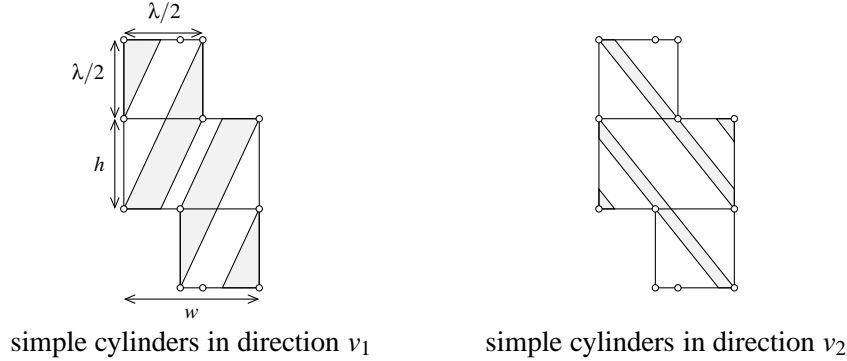

\subsubsection{$D=68$} $\Qcal_{68}$ has two components

$$\begin{array}{ccl}    

\Qcal^1_{68}   &    =    &   \{(2,1,2,-6,\pm), (8,0,1,-2,\pm\},\\
\Qcal^2_{68}    &     =    &\{(4,0,1,-6,\pm), (8,0,1,2,\pm), (4,1,2,-2,\pm)\}
\end{array}
$$
The strategy  is the same: we connect  the two components of
$\Qcal_{68}$  using two directions  on a  surface obtained  with Model
$B$. We have

$$\mathcal{S}'_{68}=\{(4,1,2,2),         (1,0,4,-6),        (4,0,1,6),
(2,1,4,-2)\}.
$$
Let  $\Sigma$ be the surface constructed  from the prototype
$(4,2,1,2)$    of     Model    $B$.     We     have    $\displaystyle{
  \lambda=1+\sqrt{17}}$.  This   surface  admits  decompositions  into
cylinders      in      directions      $v_1=(t,h+\lambda/2)$,      and
$v_2=(t+\lambda/2,h+\lambda/2)$.   Direct computations  show  that the
prototype  corresponding to  the decomposition  in direction  $v_1$ is
$(8,1,0,-2,-)\in \Qcal^1_{68}$, and the prototype corresponding to the
decomposition in direction $v_2$ is $(8,1,0,2,-)\in \Qcal^2_{68}$ (see
Figure~\ref{fig:D68:new:direction} for details).

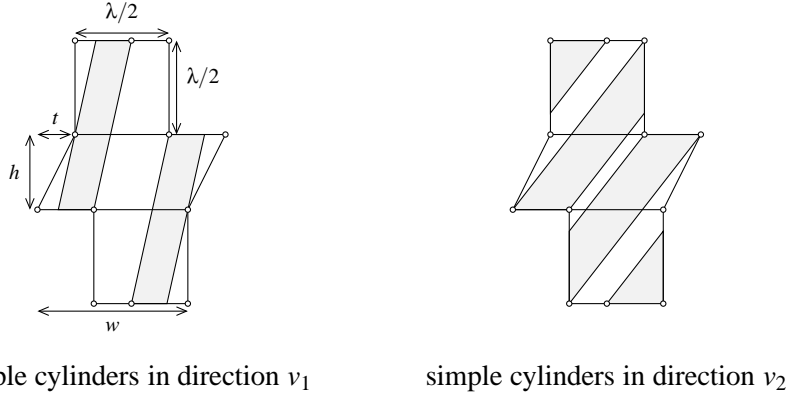
\begin{figure}[htbp]
\begin{minipage}[t]{0.4\linewidth}
\centering
\begin{tikzpicture}[scale=0.5]
\filldraw[fill=gray!10] (1.5,0)  -- (2.5,4.5) --  (intersection of 1,2
-- 2,6.5 and 1,4.5  -- 2.5,4.5) -- (intersection of  1,2 -- 0,-2.5 and
0,0 -- 1.5,0) -- cycle;

\filldraw[fill=gray!10] (3.5,2) --  (2.5,-2.5) -- (intersection of 4,0
-- 3,-4.5 and 2.5,-2.5 -- 4,-2.5) -- (intersection of 4,0 -- 5,4.5 and
3.5,2 -- 5,2) -- cycle;

\draw (0,0) -- (1.5,0) -- (1.5,-2.5)  -- (4,-2.5) -- (4,0) -- (5,2) --
(3.5,2)  -- (3.5,4.5) --  (1,4.5) --  (1,2) --  cycle; \draw  (1,2) --
(3.5,2) (1.5,0) -- (4,0);

\filldraw[draw=black,  fill=white] (0,0)  circle (2pt)  (1.5,0) circle
(2pt) (1.5,-2.5) circle (2pt)  (2.5,-2.5) circle (2pt) (4,-2.5) circle
(2pt) (4,0) circle (2pt) (5,2) circle (2pt) (3.5,2) circle (2pt) (3.5,
4.5) circle  (2pt) (2.5,4.5) circle  (2pt) (1,4.5) circle  (2pt) (1,2)
circle (2pt);

\draw[<->, >=angle  45] (3.7,2)  -- (3.7,4.5); \draw[<->,  >=angle 45]
(1,4.7)  --  (3.5, 4.7);  \draw[<->,  >=angle  45]  (0,2) --  (0.9,2);
\draw[<->,>=angle  45] (-0.2,0)  -- (-0.2,2);  \draw[<->,  >=angle 45]
(0,-2.7) -- (4,-2.7);

\draw  (3.7,3.5)  node[right]  {$\scriptstyle  \lambda/2$}  (2.25,4.7)
node[above]    {$\scriptstyle    \lambda/2$}    (0.5,2)    node[above]
{$\scriptstyle  t$} (-0.2,1)  node[left]  {$\scriptstyle h$}  (2,-2.7)
node[below] {$\scriptstyle w$};

\draw (2.5,-4.5) node {simple cylinders in direction $v_1$};
\end{tikzpicture}
\end{minipage}
\begin{minipage}[t]{0.4\linewidth}
\centering
\begin{tikzpicture}[scale=0.5]

\filldraw[fill=gray!10] (0,0)  -- (3.5,4.5) --  (intersection of 1.5,0
-- 5,4.5   and    3.5,4.5   --    3.5,2)   --   (1.5,0)    --   cycle;
\filldraw[fill=gray!10] (2.5,4.5) --  (intersection of 2.5,4.5 -- -1,0
and  1,4.5  --  1,2)  --  (1,4.5)  --  cycle;  \filldraw[fill=gray!10]
(1.5,-2.5) -- (5,2) -- (3.5,2) -- (intersection of 3.5,2 -- 0,-2.5 and
1.5,0  -- 1.5,-2.5)  -- cycle;  \filldraw[fill=gray!10]  (2.5,-2.5) --
(intersection of 2.5,-2.5 -- 6,2 and 4,-2.5 -- 4,0) -- (4,-2.5);

\draw (0,0) -- (1.5,0) -- (1.5,-2.5)  -- (4,-2.5) -- (4,0) -- (5,2) --
(3.5,2)  -- (3.5,4.5) --  (1,4.5) --  (1,2) --  cycle; \draw  (1,2) --
(3.5,2) (1.5,0) -- (4,0);

\filldraw[draw=black,  fill=white] (0,0)  circle (2pt)  (1.5,0) circle
(2pt) (1.5,-2.5) circle (2pt)  (2.5,-2.5) circle (2pt) (4,-2.5) circle
(2pt) (4,0) circle (2pt) (5,2) circle (2pt) (3.5,2) circle (2pt) (3.5,
4.5) circle  (2pt) (2.5,4.5) circle  (2pt) (1,4.5) circle  (2pt) (1,2)
circle (2pt);

\draw (2.5,-4.5) node { simple cylinders in direction $v_2$};
\end{tikzpicture}
\end{minipage}
\caption{
\label{fig:D68:new:direction}
Surface constructed from the prototype $(4,2,1,2)$: two periodic directions corresponding to
 prototypes in $\Qcal^1_{68}$ and $\Qcal^2_{68}$. }
\end{figure}

\subsubsection{$D=41$}

In this  case, the Butterfly Moves  do not connect  all the incomplete
prototypes  in   $\Pcal_{41}$,  therefore  $\Qcal_{41}(4)$   has  four
components:
$$
\begin{array}{lll}
\Qcal_{41}^1  &   =  &  \{(2,2,0,-3,+),   (1,2,0,-5,-),  (4,1,0,-3,+),
(5,1,0,-1,-)\},\\  \Qcal_{41}^2  &  = &  \{(2,1,0,-5,-),  (5,1,0,1,+),
(2,2,1,-3,+)\},\\  \Qcal_{41}^3 & =  & \{  (2,2,0,-3,-), (1,2,0,-5,+),
(4,1,0,-3,-),  (5,1,0,-1,+)\},\\ \Qcal_{41}^4  &  = &  \{(2,1,0,-5,+),
(5,1,0,1,-), (2,2,1,-3,-)\}.
\end{array}
$$
%

\begin{figure}[htbp]
\begin{minipage}[t]{0.4\linewidth}
\centering
\begin{tikzpicture}[scale=0.3]
\filldraw[fill=gray!10]  (0,6.5) -- (intersection  of 0,6.5  -- 9,-4.5
and 8,-4.5 --  8,0) -- (8,-4.5) -- ( intersection  of 8,-4.5 -- -1,6.5
and 0,2 -- 0,6) -- cycle;

\filldraw[fill=gray!10]  (3.5,6.5)  --  (intersection  of  3.5,6.5  --
12.5,-4.5 and 4.5,2 -- 4.5,6.5) -- (4.5,6.5) -- cycle;

\filldraw[fill=gray!10]  (4.5,-4.5) --  (intersection  of 4.5,-4.5  --
-4.5,6.5 and 3.5,0 -- 3.5,-4.5) -- (3.5,-4.5) -- cycle;

\draw (0,0) --  (3.5,0) -- (3.5,-4.5) -- (8,-4.5)  -- (8,2) -- (4.5,2)
-- (4.5,6.5) --  (0,6.5) -- cycle;  \draw (0,2) -- (4.5,2)  (3.5,0) --
(8,0);

\filldraw[draw=black,  fill=white] (0,0)  circle (3pt)  (3.5,0) circle
(3pt) (3.5,-4.5) circle (3pt)  (4.5,-4.5) circle (3pt) (8,-4.5) circle
(3pt) (8,0) circle (3pt) (8,2) circle (3pt) (4.5,2) circle (3pt) (4.5,
6.5) circle  (3pt) (3.5,6.5) circle  (3pt) (0,6.5) circle  (3pt) (0,2)
circle (3pt);

\draw (4,-5) node[below] {simple cylinder in direction $v_1$};
\end{tikzpicture}
\end{minipage} 
\begin{minipage}[t]{0.4\linewidth}
\centering
\begin{tikzpicture}[scale=0.3]

\filldraw[fill=gray!10]  (0,6.5) -- (intersection  of 0,6.5  -- 12.5,0
and 4.5,2 --  4.5,6.5) -- ( intersection of -3.5,8.5  -- 9,2 and 4.5,2
-- 4.5,6.5) -- ( intersection of -3.5,8.5 -- 9,2 and 0,6.5 -- 3.5,6.5)
-- cycle;

\filldraw[fill=gray!10] (8,0) -- (intersection  of 8,0 -- -4.5,6.5 and
0,2 -- 0,6.5) -- (intersection of 0,2 -- 0,6.5 and 4.5,2 -- -8,8.5) --
(intersection of 4.5,2 -- 17,-4.5 and 8,0 -- 8,2) -- cycle;

\filldraw[fill=gray!10] (0,0) -- (intersection of -3.5,2 -- 9,-4.5 and
0,0 -- 0,2) -- (intersection of  -3.5,2 -- 9,-4.5 and 0,0 -- 3.5,0) --
cycle;

\filldraw[fill=gray!10] (8,-4.5) --  (intersection of 8,-4.5 -- -4.5,2
and 3.5,0 -- 3.5,-4.5) -- (intersection of 11.5,-6.5 -- -1,0 and 3.5,0
-- 3.5,  -4.5) --  (intersection of  11.5,-6.5 --  -1,0 and  8,-4.5 --
4.5,-4.5) -- cycle;

\filldraw[fill=gray!10] (0,2) -- (intersection of 0,2 -- 12.5,-4.5 and
8,0 -- 8,-4.5) -- (intersection of 8,0 -- 8,-4.5 and 3.5,0 -- 16,-6.5)
-- (intersection of 3.5,0 -- -9,6.5 and 0,2 -- 0,0) -- cycle;

\filldraw[fill=gray!10] (8,2) -- (intersection of 11.5,0 -- -1,6.5 and
8,2 -- 8,0) -- (intersection of  11.5,0 -- -1,6.5 and 8,2 -- 4.5,2) --
cycle;

\draw (0,0) --  (3.5,0) -- (3.5,-4.5) -- (8,-4.5)  -- (8,2) -- (4.5,2)
-- (4.5,6.5) --  (0,6.5) -- cycle;  \draw (0,2) -- (4.5,2)  (3.5,0) --
(8,0);

\filldraw[draw=black,  fill=white] (0,0)  circle (3pt)  (3.5,0) circle
(3pt) (3.5,-4.5) circle (3pt)  (4.5,-4.5) circle (3pt) (8,-4.5) circle
(3pt) (8,0) circle (3pt) (8,2) circle (3pt) (4.5,2) circle (3pt) (4.5,
6.5) circle  (3pt) (3.5,6.5) circle  (3pt) (0,6.5) circle  (3pt) (0,2)
circle (3pt);

\draw (4,-5) node[below] {simple cylinders in direction $v_2$};
\end{tikzpicture}
\end{minipage}
\caption{
\label{fig:D41:1:new:direction}
Surface constructed from  the prototype $(4,1,0,3)$: two periodic directions  corresponding to prototypes 
in $\Qcal^1_{41}$ and $\Qcal^2_{41}$. }
\end{figure}
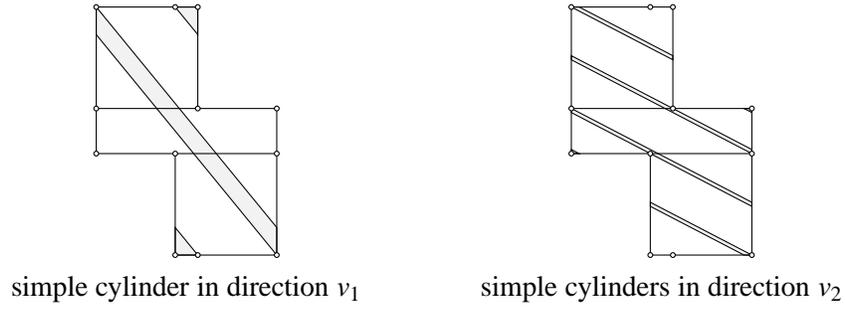

We  have  shown  that  when  $D$  is  odd,  there  are  at  least  two
$\GL^+(2,\R)$-orbits in  $\Omega E_D(4)$.  We  will show that  there are
exactly two  $\GL^+(2,\R)$-orbits in $\Omega  E_{41}(4)$. Let $\Sigma_1$
be  the surface constructed  from the  prototype $(4,1,0,3)$  of Model
$B$.  We have  $\displaystyle{ \lambda=\frac{3+\sqrt{41}}{2}}$.  This
surface   admits   decompositions   into   cylinders   in   directions
$\displaystyle{v_1=(\lambda,             -\lambda-h)}$,            and
$\displaystyle{v_2=(w+\lambda/2,-\lambda/2-h)}$.  The decomposition in
direction  $v_1$  corresponds   to  the  prototype  $(4,1,0,-3,+)  \in
\Qcal_{41}^1$, and the decomposition in direction $v_2$ corresponds to
the   prototype  $(2,1,0,-5,-)   \in  \Qcal_{41}^2$.   Therefore,  the
prototypes in $\Qcal^1_{41}$ and  $\Qcal_{41}^2$ give rise to the same
$\GL^+(2,\R)$-orbit    (see   Figure~\ref{fig:D41:1:new:direction}   for
details).

\begin{figure}[htbp]
\begin{minipage}{0.4\linewidth}
\centering
\begin{tikzpicture}[scale=0.7]
\filldraw[fill=gray!10] (0,2.8) --  (1,-2) -- (intersection of 0.2,2.8
-- 1.2,-2 and 1,-2 -- 1,2) -- (0.2,2.8) -- cycle;

\filldraw[fill=gray!10]  (0.2,-2)   --  (intersection  of   0.2,-2  --
-0.8,2.8 and 0,-2 -- 0,0) -- (0,-2) -- cycle;

\filldraw[fill=gray!10] (1,-2.8) -- (0,2) -- (intersection of 0.8,-2.8
-- -0.2,2 and 0,2 -- 0,-2) -- (0.8,-2.8) -- cycle ;

\filldraw[fill=gray!10] (0.8,2) --  (intersection of 0.8,2 -- 1.8,-2.8
and 1,2 -- 1,0) -- (1,2) -- cycle;

\draw (0,-2) -- (0.2,-2) -- (0.2,-2.8) -- (1,-2.8) -- (1,2) -- (0.8,2)
-- (0.8,2.8) --  (0,2.8) -- cycle;  \draw (0.2,-2) -- (1,-2)  (0,2) --
(0.8,2);

\filldraw[draw=black, fill=white] (0,-2)  circle (1pt) (0.2,-2) circle
(1pt) (0.2,-2.8) circle (1pt)  (0.8,-2.8) circle (1pt) (1,-2.8) circle
(1pt)  (1,-2) circle  (1pt) (1,2)  circle (1pt)  (0.8,2)  circle (1pt)
(0.8, 2.8)  circle (1pt) (0.2,2.8)  circle (1pt) (0,2.8)  circle (1pt)
(0,2) circle (1pt);

\draw (0.5,-3) node[below] {simple cylinders in direction $v_1$};

\end{tikzpicture}
\end{minipage} 
\begin{minipage}{0.4\linewidth}
\centering
\begin{tikzpicture}[scale=0.7]
\filldraw[fill=gray!10] (0,-2)  -- (intersection of 0,-2  -- 3,3.6 and
1,-2 --  1,2) -- (intersection of 2,2  -- -1,-3.6 and 1,-2  -- 1,2) --
(intersection of 2,2 -- -1,-3.6 and 0,-2 -- 0,2) -- cycle;

\filldraw[fill=gray!10]  (0,2) --  (intersection of  0,2 --  3,7.6 and
0,2.8 --  0.8, 2.8) --  (intersection of -2,-2  -- 1,3.6 and  0,2.8 --
0.8,2.8) -- (intersection of -2,-2 -- 1,3.6 and 0,0 -- 0,2) -- cycle;

\filldraw[fill=gray!10] (1,2)  -- (intersection of 1,2  -- -2,-3.6 and
0,2 --  0,-2) -- (intersection of -1,-2  -- 2,3.6 and 0,2  -- 0,-2) --
(intersection of -1,-2 -- 2,3.6 and 1,2 -- 1,-2) -- cycle;

\filldraw[fill=gray!10] (1,-2) -- (intersection of 1,-2 -- -2,-7.6 and
1,-2.8 --  0.2,-2.8) -- (intersection of  3,2 -- 0,-3.6  and 1,-2.8 --
0.2,-2.8) -- (intersection of 3,2 -- 0,-3.6 and 1,0 -- 1,-2) -- cycle;

%


\draw (0,-2) -- (0.2,-2) -- (0.2,-2.8) -- (1,-2.8) -- (1,2) -- (0.8,2)
-- (0.8,2.8) --  (0,2.8) -- cycle;  \draw (0.2,-2) -- (1,-2)  (0,2) --
(0.8,2);

\filldraw[draw=black, fill=white] (0,-2)  circle (1pt) (0.2,-2) circle
(1pt) (0.2,-2.8) circle (1pt)  (0.8,-2.8) circle (1pt) (1,-2.8) circle
(1pt)  (1,-2) circle  (1pt) (1,2)  circle (1pt)  (0.8,2)  circle (1pt)
(0.8, 2.8)  circle (1pt) (0.2,2.8)  circle (1pt) (0,2.8)  circle (1pt)
(0,2) circle (1pt);

\draw (0.5,-3) node[below] {simple cylinder in direction $v_2$};
\end{tikzpicture}

\end{minipage}
\caption{
\label{fig:D41:2:new:direction}
Surface constructed from the prototype $(1,4,0,-3)$: two periodic directions corresponding to prototypes in
 $\Qcal^3_{41}$ and $\Qcal^4_{41}$. }
\end{figure}
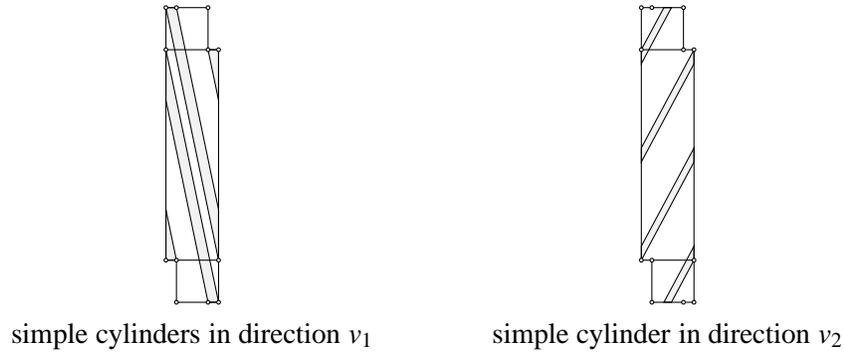

Let  $\Sigma_2$   be  the  surface  constructed   from  the  prototype
$(1,4,0,-3)$        of         Model        $B$.        We        have
$\displaystyle{\lambda=\frac{-3+\sqrt{41}}{2}}$.  This  surface admits
decompositions        into        cylinders       in        directions
$\displaystyle{v_1=(w,-h-\lambda/2)}$,                              and
$\displaystyle{v_2=(3w,h+\lambda)}$.  The  decomposition in  direction
$v_1$  corresponds to the  prototype $(4,1,0,-3,-)\in  \Qcal_{41}^3 $,
and the  decomposition indirection $v_2$ corresponds  to the prototype
$(2,1,0,-5,+)   \in  \Qcal_{41}^4$.   Therefore   the  prototypes   in
$\Qcal_{41}^3$   and   $\Qcal_{41}^4$    give   rise   to   the   same
$\GL^+(2,\R)$-orbit.  We  can  then  conclude  that  $\Omega  E_{41}(4)$
consists         of         two        $\GL^+(2,\R)$-orbits         (see
Figure~\ref{fig:D41:2:new:direction} for details).
The proof of Theorem~\ref{theo:connect:4:cases} is now complete. Theorem~\ref{MainTh1} is then 
proven \hfill $\square$

\newpage

\appendix

\section{Exceptional cases in Theorem~\ref{theo:main:PD}}
\label{appendix:exceptional}

The table below encodes the strategies that connect the 
different orbits in the proof of Theorem~\ref{theo:main:PD} for 
exceptional cases: $D       \in  \{73,97,112,148,196,244,292,304,436,484,676,1684\}$.
See page~\pageref{Table:explanation} for an explain of this table.

\begin{table}[htbp]
$$
\begin{array}{|c|c|c|}
\hline
D & \textrm{Components of $\mathcal S_{D}$} & \textrm{Butterfly Moves} \\
\hline 
\hline
73 & {\scriptstyle \{1,-5\} \textrm{ and } \{-1,-3,3,-7\}} & {\scriptstyle [-5] \overset{B_{3}}{\longrightarrow} (1,3,0,-7)\overset{B_{\infty}}{\longrightarrow}(2,3,0,-5)\overset{B_{1}}{\longrightarrow} [-7] }\\
\hline
97 & {\scriptstyle \{-7,3\} \textrm{ and } \{-9,-5,-3,-1,1,5\}} & {\scriptstyle [-7] \overset{B_{4}}{\longrightarrow} (1,2,0,-9)\overset{B_{1}}{\longrightarrow} [1]} \\
\hline
112 & {\scriptstyle \{-8,4\} \textrm{ and } \{-4,0\}} & {\scriptstyle [0] \overset{B_{2}}{\longrightarrow} (3,2,0,-8)\overset{B_{2}}{\longrightarrow} [-8] }\\
\hline
148 & {\scriptstyle \{-2\},\ \{-6,2\} \textrm{ and } \{-10,6\}} & 
\begin{array}{l}
{\scriptstyle [-2] \overset{B_{2}}{\longrightarrow} (7,2,0,-6) \overset{B_{2}}{\longrightarrow} [-10]}\\
{\scriptstyle [-6] \overset{B_{4}}{\longrightarrow} (3,2,0,-10)\overset{B_{\infty}}{\longrightarrow} (9,2,0,2) \overset{B_{1}}{\longrightarrow} [-10]}
\end{array} \\
\hline
196 & {\scriptstyle \{-2\},\ \{-6,2\} \textrm{ and } \{-10,6\} }& 
\begin{array}{l}
{\scriptstyle [-2] \overset{B_{3}}{\longrightarrow} (4,3,0,-10)\overset{B_{\infty}}{\longrightarrow} (8,3,0,-2) \overset{B_{1}}{\longrightarrow} [-10]}\\
{\scriptstyle [-6] \overset{B_{4}}{\longrightarrow} (3,4,0,-10)\overset{B_{\infty}}{\longrightarrow} (5,4,0,-6) \overset{B_{1}}{\longrightarrow} [-10]}
\end{array} \\
\hline
244 & {\scriptstyle \{-2\},\ \{-14,-6,2,10\} \textrm{ and } \{-10,6\} }& 
\begin{array}{l}
{\scriptstyle [-2] \overset{B_{\infty}}{\longrightarrow} (13,2,0,-6) \overset{B_{1}}{\longrightarrow} [-14]}\\
{\scriptstyle [6] \overset{B_{3}}{\longrightarrow} (3,2,0,-14)\overset{B_{2}}{\longrightarrow} [-2]}
\end{array} \\
\hline
292 & {\scriptstyle \{-2\},\ \{-14,-6,2,10\} \textrm{ and } \{-10,6\}} & 
\begin{array}{l}
{\scriptstyle [2] \overset{B_{2}}{\longrightarrow} (12,2,1,-10) \overset{B_{2}}{\longrightarrow} (16,2,1,-6)\overset{B_{1}}{\longrightarrow} [-2]}\\
{\scriptstyle [-6] \overset{B_{2}}{\longrightarrow} (6,2,1,-14)\overset{B_{2}}{\longrightarrow} (9,4,0,-2)\overset{B_{1}}{\longrightarrow} [-14]}
\end{array} \\
\hline
304 & {\scriptstyle \{-5,4\} \textrm{ and } \{-16,-12,-4,0,8,12\} }& {\scriptstyle 4 \overset{B_{3}}{\longrightarrow} (2,3,0,-16)\overset{B_{2}}{\longrightarrow}(15,2,0,-8)\overset{B_{1}}{\longrightarrow} 0 }\\
\hline
436 & %
\begin{array}{l}
{\scriptstyle \{-18,-10,-2,6,14\},\ \{-6,2\}, \textrm{ and} }\\ 
{\scriptstyle \{-14,10\}}
\end{array} &
\begin{array}{l}
{\scriptstyle [-6] \overset{B_{4}}{\longrightarrow} (21,2,0,-10) \overset{B_{\infty}}{\longrightarrow} (27,2,0,2)\overset{B_{1}}{\longrightarrow} [-10]}\\
{\scriptstyle [-6] \overset{B_{2}}{\longrightarrow} (27,2,0,-2)\overset{B_{2}}{\longrightarrow} [-14]}
\end{array} \\
\hline
484 & %
\begin{array}{l}
{\scriptstyle \{-2\},\ \{-14,-6,2,10\} , \textrm{ and} }\\ 
{\scriptstyle \{-18,-10,6,14\}}
\end{array} &
\begin{array}{l}
{\scriptstyle [2] \overset{B_{2}}{\longrightarrow} (24,2,1,-10) \overset{B_{2}}{\longrightarrow} (28,2,1,-6)\overset{B_{1}}{\longrightarrow} [-2]}\\
{\scriptstyle [-6] \overset{B_{2}}{\longrightarrow} (30,2,1,-2)\overset{B_{2}}{\longrightarrow} (9,4,0,-14)\overset{B_{1}}{\longrightarrow} [-2]}
\end{array} \\
\hline
676 & %
\begin{array}{l}
{\scriptstyle \{-18,-10,-2,6,14\},\ \{-14,10\}, \textrm{ and} }\\ 
{\scriptstyle \{-22,-6,2,18\}}
\end{array} &
\begin{array}{l}
{\scriptstyle [2] \overset{B_{2}}{\longrightarrow} (36,2,1,-10) \overset{B_{2}}{\longrightarrow} (40,2,1,-6)\overset{B_{1}}{\longrightarrow} [-2]}\\
{\scriptstyle [-6] \overset{B_{2}}{\longrightarrow} (42,2,1,-2)\overset{B_{2}}{\longrightarrow} (15,4,0,-14)\overset{B_{1}}{\longrightarrow} [-2]}
\end{array} \\
\hline
1684 & %
\begin{array}{l}
{\scriptstyle \{-2\},\ \{-34,-26,-18,-10,6,14,22,30\},\textrm{ and} } \\
\scriptstyle {\{-38,-34,-30,-22,-14,-6,2,10,18,26,34\}}
\end{array} & 
\begin{array}{l}
{\scriptstyle [-6] \overset{B_{2}}{\longrightarrow} (105,2,0,-2)\overset{B_{\infty}}{\longrightarrow} (103,2,0,-6)\overset{B_{1}}{\longrightarrow} [-2] }\\
{\scriptstyle [-6] \overset{B_{2}}{\longrightarrow} (105,2,0,-2)\overset{B_{\infty}}{\longrightarrow} (103,2,0,-6)\overset{B_{2}}{\longrightarrow} [-10]}
\end{array} \\
\hline
\end{array}
$$
\caption{\label{table:exceptionnal:link}
Connecting  components of  $\mathcal S_D$  thought $\mathcal  P_D$ for
exceptional cases of Theorem~\ref{theo:main:SD}. Recall that for $e\in
\mathcal S_D$ we define an incomplete prototype $[e]=(w,1,0,e)\in \mathcal P_D$, 
where $w=(D^{2}-e^{2})/8$.
}
\end{table}

\section{Square-tiled surfaces}

A square-tiled surface is a form $(X,\omega)$ such that $\omega(\gamma) \in \Z^{2}$ 
for any $\gamma \in H_{1}(X,\Z)$. For such a surface, integration of the form $\omega$ 
gives a holomorphic map $X \longrightarrow \C /\Z^{2}$ which can be normalized so it is branched only 
the origin. The $n$ preimages of the square $[0,1]^{2}$ provide a tiling of the surface $X$. We say 
that $(X,\omega)$ is primitive if $\left\{ \omega(\gamma),\  \gamma \in H_{1}(X,\Z) \right\}= \Z^{2}$. 
Observe that a surface $(X,\omega)\in \Omega E_{D}(4)$ is square-tiled if and only if $D=d^{2}$ is a square. 
The following elementary proposition relates $d$ and $n$. 

\begin{Proposition}
\label{prop:square:tilde}
Let $(X,\omega)\in \Omega E_{d^{2}}(4)$ be a Prym eigenform.  Assume that $(X,\omega)$ is a 
primitive square-tiled surface made of $n$ squares, then
\begin{enumerate}
\item $n=d$, if $d$ is even; \medskip
\item $n=d$ or $n=2d$ depending on the $\GL^+(2,\R)$-orbit of  $(X,\omega)$, if $d$ is odd.


\end{enumerate}
\end{Proposition}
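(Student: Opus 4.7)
The plan is to compute $n$ directly from a three-cylinder decomposition of type $A+$. Since $D=d^{2}\geq 25\neq 8$, Corollary~\ref{cor:decomp:3cyl} and Proposition~\ref{prop:D8:topo} provide such a decomposition of $(X,\omega)$ in some direction (which I take to be horizontal). Normalizing via Proposition~\ref{NormA1Prop} yields an incomplete prototype $(w,h,t,e)\in \Pcal_{d^{2}}$, and the condition $e\equiv d\pmod 2$ forces $\lambda:=(e+d)/2\in\Z$ together with the integer period lattice
\[
\Lambda \ =\ \Z(\lambda,0)+\Z(0,\lambda)+\Z(w,0)+\Z(t,h).
\]
The total area of $\Sigma$ is $\lambda^{2}+2wh=d\lambda$ (using $\lambda^{2}=e\lambda+2wh$), and since $n$ equals the $\GL^+(2,\R)$-invariant quotient $\mathrm{area}(\Sigma)/\mathrm{covol}(\Lambda)$, it suffices to compute $\mathrm{covol}(\Lambda)$.

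Setting $g:=\gcd(\lambda,w,h,t)$, the formula ``covolume equals the gcd of the $2\times 2$ minors'' gives $\mathrm{covol}(\Lambda)=\gcd(\lambda g,wh)$. A preliminary lemma asserts $g\in\{1,2\}$: if an odd prime $p$ divided $g$, then $p^{2}\mid \lambda^{2}-2wh=e\lambda$ would force $p\mid e$, contradicting $\gcd(w,h,t,e)=1$; a $2$-adic refinement (bootstrapping $\lambda^{2}=e\lambda+2wh$ modulo increasing powers of $2$) rules out $4\mid g$ and further shows that $g=2$ requires $e$ odd. Substituting $wh=\lambda(\lambda-e)/2$ and analysing the two cases ``$\lambda-e$ even'' versus ``$\lambda-e$ odd'' then gives
\[
n \ =\ \begin{cases} d & \text{if } \lambda-e \text{ is even}, \\ 2d & \text{if } \lambda-e \text{ is odd}.\end{cases}
\]
Since $\lambda-e=(d-e)/2$, the condition $\lambda-e$ even is equivalent to $d\equiv e\pmod 4$.

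For part~(1), when $d$ is even the constraint $8wh=d^{2}-e^{2}$ forces $d^{2}\equiv e^{2}\pmod 8$, which combined with $d,e$ both even yields $d\equiv e\pmod 4$; hence $n=d$ in every case. For part~(2), when $d$ is odd both residue classes $d\equiv e\pmod 4$ and $d\not\equiv e\pmod 4$ can occur. The key point for orbit invariance is that a single Butterfly move sends $e\mapsto -e-4qh\equiv -e\pmod 4$, so $e\pmod 4$ changes sign; after two consecutive moves (when the sign $\varepsilon$ has returned to $+$) the residue $e\pmod 4$ comes back to its original value. Therefore $e\pmod 4$ is well-defined on the $+$-prototypes of any given orbit of $\Qcal_{d^{2}}$, and with it the value of $n$.

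To conclude part~(2) it remains to show that both values $d$ and $2d$ are attained. For any incomplete prototype $p=(w,h,t,e)\in\Pcal_{d^{2}}$, Theorem~\ref{theo:disconnect:odd} places $(p,+)$ and $(p,-)$ into distinct $\GL^+(2,\R)$-orbits, and a parallel computation in model $A-$ using Proposition~\ref{NormA2Prop}---where the lattice involves $\lambda/2$ and the area equals $d\lambda/2$---produces the complementary formula $n=d$ if $\lambda$ is even, $n=2d$ if $\lambda$ is odd. In particular the two orbits coming from $(p,+)$ and $(p,-)$ realize the numbers of squares $d$ and $2d$ in some order. The main technical obstacle is the covolume case analysis, especially the $g=2$ subcase that requires $2$-adic bookkeeping, together with verifying that the $A+$ and $A-$ formulas are consistent along Butterfly moves (which reduces, via Propositions~\ref{BM1Prop} and~\ref{BM2Prop}, to the identity: the parity of $\lambda-e$ for $(p,+)$ equals the parity of $\lambda'$ for $B_{q}(p,+)=(p',-)$).
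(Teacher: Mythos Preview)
The paper labels this proposition ``elementary'' and gives no proof, so there is no argument to compare against directly. Your strategy---computing $n$ as the $\GL^{+}(2,\R)$-invariant ratio $\mathrm{area}(\Sigma)/\mathrm{covol}\bigl(\omega(H_1(X,\Z))\bigr)$ from a normalized $A+$ decomposition---is sound, and the formula you arrive at,
\[
n=\begin{cases}d & \text{if }\lambda-e\text{ is even,}\\ 2d & \text{if }\lambda-e\text{ is odd,}\end{cases}
\]
is correct. But there is a real gap in the middle.

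Your ``preliminary lemma'' asserting $g:=\gcd(\lambda,w,h,t)\in\{1,2\}$ is \emph{false}. Take for instance $(w,h,t,e)=(12,3,0,1)\in\Pcal_{17^{2}}$: here $\lambda=9$ and $g=\gcd(9,12,3,0)=3$. Or $(w,h,t,e)=(36,4,0,23)\in\Pcal_{41^{2}}$, where $\lambda=32$ and $g=4$. The flaw is the inference ``$p^{2}\mid e\lambda$ forces $p\mid e$'': this requires $p^{2}\nmid\lambda$, which you have not established and which fails in both examples above. The $2$-adic refinement suffers from the same defect.

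The fix is to bypass $g$ altogether. Set $\mu:=\lambda-e=d-\lambda$, so that $\lambda\mu=2wh$. The key fact is $\gcd(g,\mu)=1$: any common divisor of $g$ and $\mu$ divides $w,h,t$ (because $g$ does) and also $e=\lambda-\mu$ (because $g\mid\lambda$), contradicting $\gcd(w,h,t,e)=1$. Now split on the parity of $\mu$:
\begin{itemize}
\item If $\mu$ is even then $wh=\lambda\cdot(\mu/2)$, so $\mathrm{covol}(\Lambda)=\gcd\bigl(\lambda g,\,\lambda\cdot\mu/2\bigr)=\lambda\cdot\gcd(g,\mu/2)=\lambda$, since any common divisor of $g$ and $\mu/2$ also divides $\mu$. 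Hence $n=d$.
\item If $\mu$ is odd then $\lambda$ must be even (else $wh\notin\Z$), $wh=(\lambda/2)\mu$, and $\mathrm{covol}(\Lambda)=(\lambda/2)\cdot\gcd(2g,\mu)=\lambda/2$ because $\mu$ is odd and $\gcd(g,\mu)=1$. Hence $n=2d$.
\end{itemize}
With this correction your arguments for parts (1) and (2) go through unchanged. One small simplification: the ``orbit invariance'' paragraph via Butterfly moves is superfluous, since $n=\mathrm{area}/\mathrm{covol}$ is by construction a $\GL^{+}(2,\R)$-invariant of the surface itself; there is nothing to check.
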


Theorem~\ref{theo:main:intro} allows us to get properties for the topology of the branched 
covers:

\begin{Corollary}
Fix $n\geq 5$. If $n\equiv 2 \mod 4$ then there are exactly two $\GL^+(2,\R)$-orbits of degree $n$, primitive 
square-tiled surfaces which are Prym eigenforms in $\Omega\mathfrak{M}(4)$, otherwise there is only one $\GL^+(2,\R)$-orbit.
\end{Corollary}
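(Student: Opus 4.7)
The plan is to count the orbits by combining Proposition~\ref{prop:square:tilde}, Theorem~\ref{theo:main:intro} and Corollary~\ref{cor:disjoint}, via a case analysis on $n \mod 4$. First I would observe that any square-tiled Prym eigenform in $\Omega\mathfrak{M}(4)$ must have discriminant $D=d^2$ for some $d\in\N$: otherwise the eigenvalue $\lambda=(e+\sqrt{D})/2$ of the associated generator of $\Ord_D$ would be irrational, contradicting that $T^{\ast}\omega=\lambda\omega$ forces $\lambda$ to preserve the integer period lattice of the square-tiling. Consequently every such surface lies in $\Omega E_{d^2}(4)$ for some $d\in\N$, and distinct values of $d$ give disjoint $\GL^+(2,\R)$-orbits by Corollary~\ref{cor:disjoint}.

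Next, I would enumerate, for each $n$, the admissible values of $d$ together with the corresponding orbit count. Proposition~\ref{prop:square:tilde} tells us that a primitive square-tiled surface in $\Omega E_{d^2}(4)$ has degree $n=d$ when $d$ is even, and degree $n\in\{d,2d\}$ when $d$ is odd (the precise value being constant on each of the two $\GL^+(2,\R)$-orbits in $\Omega E_{d^2}(4)$, and both values being realized). Together with Theorem~\ref{theo:main:intro} (which gives one orbit when $d$ is even and two when $d$ is odd, provided $d^2\geq 17$), the case analysis proceeds as follows. For $n$ odd, only $d=n$ is admissible and exactly one of the two orbits in $\Omega E_{n^2}(4)$ contributes, yielding $1$ orbit. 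For $n\equiv 0\mod 4$, only $d=n$ is admissible and the unique orbit of $\Omega E_{n^2}(4)$ consists of degree-$n$ surfaces, again $1$ orbit. For $n\equiv 2\mod 4$, both $d=n$ (contributing the unique orbit in $\Omega E_{n^2}(4)$) and $d=n/2$ (contributing the ``$n=2d$'' orbit in $\Omega E_{(n/2)^{2}}(4)$) are admissible, and by Corollary~\ref{cor:disjoint} these orbits sit in disjoint loci and are therefore distinct, giving $2$ orbits in total.

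The main obstacle is bookkeeping at the small-$d$ boundary. Theorem~\ref{theo:main:intro} applies only for $d^2\geq 17$; when $n\geq 5$ this is automatic for $d=n$, but in the case $n\equiv 2\mod 4$ the value $d=n/2$ becomes $d=3$ exactly when $n=6$, and Theorem~\ref{MainTh2} then says $\Omega E_9(4)=\ety$. Hence $n=6$ would contribute only $1$ orbit rather than $2$; this issue has to be resolved either by excluding $n=6$ in the statement or by implicitly requiring $(n/2)^{2}\geq 17$ in the case $n\equiv 2\mod 4$. With this small refinement, the three cases of the analysis cover all $n$ and the count of $\GL^+(2,\R)$-orbits follows directly from the orbit counts of Theorem~\ref{theo:main:intro}.
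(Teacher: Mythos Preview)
Your approach is exactly the paper's: combine Proposition~\ref{prop:square:tilde} with Theorem~\ref{theo:main:intro} and Corollary~\ref{cor:disjoint} and sort by residues of $n$ mod $4$. The paper's proof is in fact just a one-sentence sketch of the same case split.

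Your observation about $n=6$ is sharp and is not addressed in the paper. For $n=6$ the only candidate discriminants are $d=6$ (contributing the single orbit in $\Omega E_{36}(4)$) and $d=3$, but $\Omega E_{9}(4)=\ety$ by Theorem~\ref{MainTh2}; so the count at $n=6$ is one orbit, not two. The paper's proof only argues that $n\equiv 2\mod 4$ is ``the only possibility to get two orbits'' and does not check that two orbits are actually realized for every such $n$, so the boundary case $n=6$ slips through. Your proposed fix (excluding $n=6$, equivalently requiring $(n/2)^2\geq 17$ in that case) is the correct remedy; with it the argument is complete and matches the paper's intent.
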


\begin{proof}
From              Proposition~\ref{prop:square:tilde}              and
Theorem~\ref{theo:main:intro}, 
the only possibility to get two $\GL^+(2,\R)$-orbits of square-tiled
surfaces made of $n$ squares is given when $n$ is even, and $n/2$ is odd, {\it i.e.} $n\equiv 2 \mod 4$.
\end{proof}

\section{Cusps of the Teichm\"uller curves in genus $3$}
\label{appendix:cusps}

The projection of the $\GL^+(2,\R)$-orbit of a Veech surface $(X,\omega)$ into the moduli space $\mathfrak{M}_g$ 
of Riemann surfaces  is a Teichm\"uller curve. Let $\SL(X,\omega)$ denote the Veech group of $(X,\omega)$ which 
is a lattice of $\SL(2,\R)$. A Teichm\"uller curve can never be compact, since any periodic direction of $(X,\omega)$ 
gives rise to a cusp, each cusps corresponds to the $\SL(X,\omega)$-orbits of a periodic direction of $(X,\omega)$.\medskip

Let $W_D(4)$ denote the projection of $\Omega E_D(4)$ into $\mathfrak{M}_3$. By Theorem~\ref{MainTh1}, we 
know that $W_D(4)$ is either a single Teichm\"uller curve, or the union of two Teichm\"uller curves. In both cases, we 
denote by $C(W_D(4))$ the total number of cusps in $W_D$, and by $C^{(k)}(W_D(4)), \; k=1,2,3,$ the number of cusps corresponding to decompositions into $k$ cylinders. Recall that each decomposition into three cylinders is characterized 
by a prototype in $\Qcal_D$, or in $\Pcal'_D$ up to the action of $\GL^+(2,\R)$, and clearly, if two cylinder 
decompositions correspond to  the same prototype then they are related by an element of $\SL(X,\omega)$. It follows 
that we have a bijection from the set of prototypes ($\Qcal_D\cup \Pcal'_D)$ and the set of cusps corresponding to 
decompositions into three cylinders. \medskip

If $D$ is not a square,  since $(X,\omega)$ does not admit any decomposition into one or two cylinders, we have
$$
C(W_D(4))=C^{(3)}(W_D(4))=|\Qcal_D| +|\Pcal'_D|=2|\Pcal_D|+|\Pcal'_D|.
$$
When $D=d^2, d\in \N$, the curve(s) in $W_D(4)$ has cusps corresponding to decompositions into one or two 
cylinders. It turns out that one can characterize the decompositions into one or two cylinders in a similar manner 
to  the decompositions  into  three cylinders,  and  therefore we  can
associate to each of such decompositions a prototype.

\begin{Theorem}
\label{theo:12cyl:prototype}
Let us define 
$$
\Pcal^s_D:=\left\{ (p,q)\in \N^2;\ 0<q<p < d/2 \qquad \textrm{and} \qquad \gcd(p,q,d)=1 \right\}.
$$
Then $C^{(1)}(W_D(4))=C^{(2)}(W_D(4))=|\Pcal^s_D|$. In particular:
$$
C(W_D(4))=2|\Pcal_D|+|\Pcal'_D|+2|\Pcal^s_D|.
$$
\end{Theorem}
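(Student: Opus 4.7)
The plan is to develop prototype parametrizations for one-cylinder (Model $D$) and two-cylinder (Model $C$) decompositions in the spirit of Propositions~\ref{NormA1Prop}--\ref{NormBProp}, and to show that both parametrizations reduce to the single set $\Pcal^s_D$. Since $D=d^2$, Proposition~\ref{prop:square:tilde} implies that every surface $(X,\omega) \in \Omega E_D(4)$ is square-tiled after normalization by $\GL^+(2,\R)$, so its Teichm\"uller curve is generated by an arithmetic Veech surface. Each cusp corresponds to an $\SL(X,\omega)$-orbit of periodic directions, and by Proposition~\ref{prop:decomp:cylinders} the number of cylinders in the decomposition is constant on each such orbit, giving the partition $C(W_D(4)) = C^{(1)}(W_D(4)) + C^{(2)}(W_D(4)) + C^{(3)}(W_D(4))$. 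The three-cylinder cusps are already enumerated by $\Qcal_D$ (for models $A\pm$) and by $\Pcal'_D$ (for model $B$), giving $C^{(3)}(W_D(4)) = 2|\Pcal_D| + |\Pcal'_D|$.

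First I would treat Model $D$. Following the template of Proposition~\ref{NormBProp}, I would choose a symplectic basis of $H_1(X,\Z)^-$ adapted to the configuration of Figure~\ref{fig:2cyl:model} (right): the core curve of the unique cylinder, the $\rho$-fixed horizontal saddle connection, and representatives for the two pairs of swapped saddle connections. Selecting the unique generator $T \in \Ord_{d^2}$ with $T^{\ast}\omega = \lambda(T)\omega$ and $\lambda(T) > 0$, writing out its matrix in this basis, and then normalizing by $\GL^+(2,\R)$ and by Dehn twists along the core curve, will produce a pair $(p, q)$ of non-negative integers. The bound $0 < q < p$ comes from choosing a fundamental domain for the Dehn-twist action; the bound $p < d/2$ comes from the residual $\Z/2\Z$-symmetry induced by $\rho$ exchanging the two pairs of swapped boundary saddle connections; and the coprimality condition $\gcd(p,q,d) = 1$ is forced, exactly as in the three-cylinder case, by the properness of the embedding $\Ord_{d^2} \hookrightarrow \mathrm{End}(\Prym(X,\rho))$. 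This will give a bijection between Model $D$ cusps and $\Pcal^s_D$. The analogous analysis for Model $C$, using the configuration of Figure~\ref{fig:2cyl:model} (left) with the two $\rho$-exchanged cylinders, should yield after a parallel computation the very same set $\Pcal^s_D$.

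The main technical point is to justify that the two parameter sets really coincide: a priori the prototype data for Models $C$ and $D$ are of different combinatorial nature, so the equality $C^{(1)}(W_D(4)) = C^{(2)}(W_D(4)) = |\Pcal^s_D|$ is not merely formal. I expect to establish it via a geometric correspondence in the spirit of the Butterfly moves of Section~\ref{sec:butterfly}: cutting a Model $D$ surface along its unique $\rho$-fixed horizontal saddle connection and regluing along a suitable transverse cycle produces a Model $C$ surface with the same $(p,q)$-parameter, and vice versa. Once this bijection is in place, summing the three contributions will yield $C(W_D(4)) = 2|\Pcal_D| + |\Pcal'_D| + 2|\Pcal^s_D|$, completing the proof.
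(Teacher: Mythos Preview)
Your overall strategy matches the paper's: choose a symplectic basis of $H_1(X,\Z)^-$ adapted to the decomposition, single out the unique generator $T$ of $\Ord_D$ with positive eigenvalue, and read off integer parameters from its matrix. However, two points deserve correction.

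First, the origins of the inequalities are misattributed. In the paper's computation for Model~$D$, after normalizing by $\GL^+(2,\R)$ one has $\omega(\alpha_2)=(x+y,0)$ and $\omega(\beta_2)=(x,2)$ with $x,y>0$ the lengths of boundary pieces; the eigenvector equations then force $2p=\lambda(x+y)$ and $2q=\lambda x$, so $0<q<p$ is a direct geometric constraint, not a Dehn-twist normalization. The bound $p<d/2$ comes from the relation $\lambda=s=e+2p$ combined with $e+4p=\sqrt{D}=d$, giving $s=d-2p>0$; no $\Z/2\Z$-symmetry is invoked.

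Second, and more importantly, the Butterfly-move-style correspondence you propose between Models~$C$ and~$D$ is unnecessary. The paper simply carries out the parallel computation for Model~$C$ (with the intersection form now $\left(\begin{smallmatrix}2J&0\\0&J\end{smallmatrix}\right)$ instead of $\left(\begin{smallmatrix}J&0\\0&2J\end{smallmatrix}\right)$, exactly as in the $A+$/$A-$ dichotomy) and finds that the resulting constraints on $(e,p,q,s)$ are \emph{identical} to those for Model~$D$: same equation $e+4p=d$, same positivity $s=e+2p>0$, same $0<q<p$, same $\gcd$ condition. The equality $C^{(1)}(W_D(4))=C^{(2)}(W_D(4))=|\Pcal^s_D|$ is therefore immediate from the two independent parametrizations landing in the same set; no geometric bijection between the two models is needed or constructed.
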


To prove Theorem~\ref{theo:12cyl:prototype} we introduce the prototype
for cylinder decompositions into $1$ and $2$ cylinders:
\begin{Proposition}
\label{prop:12cyl:prototype}
Let $(X,\omega)$ be a surface in $\Omega E_{d^2}(4)$ for which the horizontal direction is completely periodic.
\begin{enumerate}

\item Suppose that $(X,\omega)$ has only one cylinder in the horizontal direction.\\
Let $\alpha_1, \beta_1, \alpha_{2,1}, \beta_{2,1}, \alpha_{2,2}, \beta_{2,2}$ be as in Figure~\ref{fig:1cyl:prototype}. 
Set $\alpha_2=\alpha_{2,1}+\alpha_{2,2},\beta_2=\beta_{2,1}+\beta_{2,2}$. Observe that 
$(\alpha_1,\beta_1,\alpha_2,\beta_2)$ is a symplectic basis for $H_1(X,\Z)^-$. Then there exists a unique generator
$T$ of $\Ord_D$ such that $T^*(\omega)=\lambda(T)\cdot\omega$, with $\lambda(T)>0$, and $T$ is written in the basis 
$(\alpha_1,\beta_1,\alpha_2,\beta_2)$ by the matrix $\displaystyle{\left(\begin{smallmatrix}
e & 0 & 2p & 2q\\ 
0 & e & 0 & 2s \\
s & -q & 0 & 0\\
0 & p & 0 & 0\\
\end{smallmatrix}\right)}$, where $(e,p,q,s)\in \Z^4$ satisfies
$$(\Pcal^s_D) \left\{\begin{array}{l}
e+4p =\sqrt{D},\\
\lambda(T)=s=e+2p>0,\\
0<q<p, \gcd(e,p,q,s)=1.\\
\end{array}\right. $$
Up to the action of $\GL^+(2,\R)$, we have
$$\left\{ \begin{array}{l}
\omega(\alpha_1)=(1,0), \; \omega(\beta_1)=(0,1)\\
\omega(\alpha_2)=(2p/s,0), \; \omega(\beta_2)=(2q/s,2).\\
\end{array}\right.$$

\item Suppose that $(X,\omega)$ is decomposed into two cylinders in the horizontal direction.\\
Let $\alpha_{1,1}, \beta_{1,1}, \alpha_{1,2}, \beta_{1,2}, \alpha_2, \beta_2$ be as in Figure \ref{fig:2cyl:prototype}. Set $\alpha_1=\alpha_{1,1}+\alpha_{1,2}, \beta_1=\beta_{1,1}+\beta_{1,2}$. Observe that $(\alpha_1,\beta_1,\alpha_2,\beta_2)$ is a symplectic basis for $H_1(X,\Z)^-$. Then there exists a unique generator $T$ of $\Ord_D$ such that $T^*(\omega)=\lambda(T)\cdot \omega$, with $\lambda(T)>0$, and $T$ is written in the basis $(\alpha_1,\beta_1,\alpha_2,\beta_2)$ by the following matrix $\displaystyle{\left(\begin{smallmatrix}
e & 0 & p & q\\ 
0 & e & 0 & s \\
2s & -2q & 0 & 0\\
0 & 2p & 0 & 0\\
\end{smallmatrix}\right)}$, where $(e,p,q,s)\in \Z^4$ also satisfies $(\Pcal^s_D)$. Up to the action of $\GL^+(2,\R)$, we have
$$\left\{ \begin{array}{l}
\omega(\alpha_1)=(1,0), \ \omega(\beta_1)=(0,1)\\
\omega(\alpha_2)=(p/s,0), \ \omega(\beta_2)=(q/s,1).\\
\end{array}\right.$$

\end{enumerate}
Conversely,  let $(X,\omega)$ be an  Abelian differential in
$\Omega\mathfrak{M}(4)$ having a cylinder decomposition into Models 
presented  above.  Suppose that  there  exists $(p,q,s,e)$  satisfying
$(\Pcal^s_D)$ such  that,  after  normalizing by  $\GL^+(2,\R)$,  all  the
conditions are satisfied, then $(X,\omega)$ belongs to $\Omega E_D(4)$.
\end{Proposition}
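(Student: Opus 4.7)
\bigskip
\noindent\textbf{Proof proposal for Proposition~\ref{prop:12cyl:prototype}.}

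The plan is to mirror, line by line, the proofs of Propositions~\ref{NormA1Prop}, \ref{NormA2Prop} and~\ref{NormBProp}, using the same three-step scheme: identify the symplectic structure on $H_1(X,\Z)^-$, apply Lemma~\ref{Lm2} to a generator of $\Ord_D$, then use the eigenform condition together with a $\GL^+(2,\R)$-normalization to pin down the matrix entries. Only the geometric input in Step~3 differs from the earlier cases.

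First I would check that the cycles in Figures~\ref{fig:1cyl:prototype} and~\ref{fig:2cyl:prototype} form a symplectic basis of $H_1(X,\Z)^-$, with the intersection form given by $\bigl(\begin{smallmatrix} J & 0 \\ 0 & 2J\end{smallmatrix}\bigr)$ in case (1) and by $\bigl(\begin{smallmatrix} 2J & 0 \\ 0 & J\end{smallmatrix}\bigr)$ in case (2). The verification is identical to the one already carried out for Models $A+$ and $A-/B$: in case (1), $\rho$ exchanges $\alpha_{2,1}\leftrightarrow -\alpha_{2,2}$ and $\beta_{2,1}\leftrightarrow -\beta_{2,2}$, so that $\alpha_2,\beta_2\in H_1(X,\Z)^-$ span an $L_2$ summand whose pairing is doubled; in case (2) the r\^oles of $L_1$ and $L_2$ are swapped. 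Lemma~\ref{Lm2} then forces any generator $T$ of $\Ord_D$ to have the claimed block-matrix shape, with $\bigl(\begin{smallmatrix} e\cdot\rm{id}_2 & 2B \\ B^* & 0\end{smallmatrix}\bigr)$ in case (1) and $\bigl(\begin{smallmatrix} e\cdot\rm{id}_2 & B \\ 2B^* & 0\end{smallmatrix}\bigr)$ in case (2), and uniqueness of $T$ with $\lambda(T)>0$ follows, as before, from the fact that any other generator differs by an element of $\Z\cdot\rm{id}_4$ or by a sign.

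The key new input is that the horizontal direction is a one-cylinder (resp. two-cylinder) decomposition, and this forces a specific relation between $\omega(\beta_1)$ and $\omega(\beta_2)$. Concretely, in case (1) the single horizontal cylinder has a well-defined height $H$, and both $\beta_1$ and $\beta_2/2$ cross this cylinder exactly once vertically; in case (2) the two cylinders exchanged by $\rho$ have a common height $H$, and $\beta_1/2, \beta_2$ each cross once. Normalizing by $\GL^+(2,\R)$ so that $\omega(\alpha_1)=(1,0)$ and $\omega(\beta_1)=(0,1)$, this yields $\mathrm{Im}(\omega(\beta_2))=2$ in case (1) and $\mathrm{Im}(\omega(\beta_2))=1$ in case (2). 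Writing $\omega(\alpha_2)=(x,0)$ and $\omega(\beta_2)=(y,H)$ and imposing $T^*\omega=\lambda\omega$ using the block form of $T$ yields, after straightforward linear algebra, the matrix entries $(p,q,s)$ in the statement and the crucial identity $\lambda=s=e+2p$. Combined with the characteristic relation $\lambda^2=e\lambda+2ps$ coming from $T^2=eT+2ps\cdot\rm{Id}$ (hence $D=e^2+8ps$), this forces $\sqrt{D}=e+4p$, so $D=d^2$ is automatically a square and $d=e+4p$.

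Finally, I would normalize the twist $q$ modulo $p$ by the Dehn twists $\beta_{2,i}\mapsto \beta_{2,i}+m\alpha_{2,i}$ (case (1)) or $\beta_2\mapsto \beta_2+m\alpha_2$ (case (2)), properness of the inclusion $\Ord_D\hookrightarrow\rm{End}(\Prym(X,\rho))$ gives $\gcd(e,p,q,s)=1$, equivalently $\gcd(p,q,d)=1$; the bound $p<d/2$ is equivalent to $s=d-2p>0$, which is precisely the positivity of $\lambda$; and $q>0$ rules out the degenerate situation in which the vertical direction would already give an $A\pm$-decomposition. The converse is obtained exactly as in Propositions~\ref{NormA1Prop}--\ref{NormBProp}: build $(X,\omega)$ by the prescribed gluings, observe that the resulting $T$ is self-adjoint with respect to the intersection form, satisfies $T^2=eT+2ps\cdot\rm{Id}$, and is $\C$-linear on $\Omega(X,\rho)^-$ by the same $S\oplus S^\perp$ argument.

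\textbf{The main obstacle} is the geometric step isolating $\lambda=s$: one must argue carefully that in the one/two-cylinder models the $\rho$-invariant height of the single (resp. common) cylinder is rigidly linked to the normalization $\omega(\beta_1)=(0,1)$, so that $\mathrm{Im}(\omega(\beta_2))$ is a specific integer (namely $2$ or $1$) and not just rational. Once this rigidity is established, everything else reduces to the same computations already performed for the three-cylinder models, and the classification of cusps in Theorem~\ref{theo:12cyl:prototype} follows by counting $\SL(X,\omega)$-orbits of prototypes as in the three-cylinder case.
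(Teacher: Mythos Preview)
Your proposal is correct and follows essentially the same approach as the paper's proof: identify the intersection form as $\bigl(\begin{smallmatrix} J & 0 \\ 0 & 2J\end{smallmatrix}\bigr)$ (resp.\ $\bigl(\begin{smallmatrix} 2J & 0 \\ 0 & J\end{smallmatrix}\bigr)$), invoke Lemma~\ref{Lm2} to get the block shape of $T$, normalize by $\GL^+(2,\R)$ so that $\omega(\alpha_1)=(1,0),\ \omega(\beta_1)=(0,1)$, and then read off the entries from $T^*\omega=\lambda\omega$; the crucial geometric input that $\mathrm{Im}(\omega(\beta_2))=2$ (resp.\ $1$) is exactly what the paper uses (writing $\omega(\alpha_2)=(x+y,0),\ \omega(\beta_2)=(x,2)$ in case~(1)). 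The only minor deviation is the inequality $0<q<p$: the paper obtains it directly from $2p=\lambda(x+y)$, $2q=\lambda x$ with $x,y>0$, rather than via a Dehn-twist normalization plus a separate argument for $q>0$, but this is a cosmetic difference.
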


\begin{proof}[Proof of Proposition~\ref{prop:12cyl:prototype}]
We distinguish the two cases separately. \medskip

\noindent {\bf Case 1: decomposition into one cylinder}.

In the basis $(\alpha_1,\beta_1,\alpha_2,\beta_2)$ of $H_1(X,\Z)^-$, the intersection form is given by $\displaystyle{\left(\begin{smallmatrix} J & 0 \\ 0 & 2J \\ \end{smallmatrix}\right)}$. There exists a unique generator $T$ of $\Ord_D$ such that $T^*(\omega)=\lambda\cdot\omega$, with $\lambda>0$, which is written in this basis by a matrix of the form $\displaystyle{\left(\begin{smallmatrix} e & 0 & 2p & 2q \\ 0 & e & 2r & 2s \\ s & -q & 0 & 0 \\ -r & p & 0 & 0\\ \end{smallmatrix}\right)}$ (see Proposition~\ref{NormA1Prop}). Using $\GL^+(2,\R)$ we can assume that 
$$\left\{ \begin{array}{l}
\omega(\alpha_1)=(1,0),\ \omega(\beta_1)= (0,1), \\ 
\omega(\alpha_2)=(x+y,0),\ \omega(\beta_2)=(x,2), \text{ with } x>0, y>0
\end{array} \right.
$$
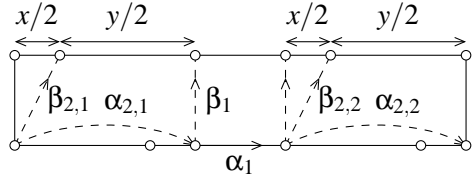
\begin{figure}[htbp]

\begin{tikzpicture}[scale=0.6]
\draw (0,0) -- (10,0) -- (10,2) -- (0,2) -- cycle;

\draw[->, >=angle 45] (4,0) -- (5.5,0); 
\draw[dashed, ->, >=angle 45] (4,0) -- (4,1.5); \draw[dashed] (4,1.5) -- (4,2);
\draw[dashed, ->, >=angle 45] (6,0) -- (6,1.5); \draw[dashed] (6,1.5) -- (6,2);

\draw[dashed, ->, >=angle 45] (0,0) -- (0.75,1.5); \draw[dashed] (0.75,1.5) -- (1,2); 
\draw[dashed, ->, >=angle 45] (6,0) -- (6.75,1.5); \draw[dashed] (6.75,1.5) -- (7,2);
\draw[dashed, ->,  >=angle 45](0,0)  .. controls (1,0.6)  and (3, 0.6) ..   (4,0);
\draw[dashed, ->,  >=angle 45](6,0)  .. controls (7,0.6)  and (9, 0.6) ..   (10,0);

\foreach \x in {(0,0), (3,0), (4,0), (6,0), (9,0), (10,0), (10,2), (7,2), (6,2), (4,2), (1,2), (0,2)} \filldraw[fill=white] \x circle (3pt);

\draw[<->, >=angle 45] (0,2.3) -- (1,2.3) ; \draw[<->, >=angle 45] (1,2.3) -- (4,2.3); \draw[<->, >=angle 45] (6,2.3) -- (7,2.3); \draw[<->, >=angle 45] (7,2.3) -- (10,2.3);

\draw (0.5,1) node[right] {$\beta_{2,1}$} (2.5,0.4) node[above] {$\alpha_{2,1}$}; \draw (4,1) node[right] {$\beta_1$} (5,0) node[below] {$\alpha_1$}; \draw (6.5,1) node[right] {$\beta_{2,2}$} (8.5,0.4) node[above] {$\alpha_{2,2}$};

\draw (0.5,2.3) node[above] {$x/2$} (2.5,2.3) node[above] {$y/2$} (6.5,2.3) node[above] {$x/2$} (8.5,2.3) node[above] {$y/2$};

\end{tikzpicture}
\caption{Decomposition into one cylinder: $(\alpha_1,\beta_1,\alpha_2,\beta_2)$, where $\alpha_2=\alpha_{2,1}+\alpha_{2,2}, \beta_2=\beta_{2,1}+\beta_{2,2}$, is a symplectic basis for $H_1(X,\Z)^-$.}
\label{fig:1cyl:prototype}
\end{figure}

In other words, $\mathrm{Re}(\omega)=(1,0,x+y,x)$ and $\mathrm{Im}(\omega)=(0,1,0,2)$ in the basis dual to $(\alpha_1,\beta_1,\alpha_2,\beta_2)$. We must have
\begin{equation}\label{1cyl:prototype:eq:Re}
(1,0,x+y,x)\cdot T  =  \lambda(1,0,x+y,x) 
\end{equation} 
and 
\begin{equation}
\label{1cyl:prototype:eq:Im}
(0,1,0,2)\cdot T  =  \lambda(0,1,0,2) 
\end{equation} 
It follows immediately from (\ref{1cyl:prototype:eq:Im}) that $r=0$ and $e+2p=s=\lambda$. Note that $\lambda$ is the positif root of the characteristic polynomial of $T$, therefore $\lambda^2=e\lambda+2ps$. The condition (\ref{1cyl:prototype:eq:Re}) then implies that $2p=\lambda(x+y)$ and $2q=\lambda x$ from which we deduce in particular that $0<q<p$. Since $T$ is a generator of $\Ord_D$, we have $D=e^2+8ps=(e+4p)^2$, and by the properness of $\Ord_D$ in $\mathrm{End}(\Prym(X,\rho))$, we have $\gcd(e,p,q,s)=1$. All the conditions in $(\Pcal^s_D)$ are now fulfilled. \medskip

\noindent {\bf Case 2: decomposition into two cylinders.} 

In the basis $(\alpha_1,\beta_1,\alpha_2,\beta_2)$ of $H_1(X,\Z)^-$, the intersection form is given by $\displaystyle{\left(\begin{smallmatrix} 2J & 0 \\ 0 & J \\ \end{smallmatrix}\right)}$. There exists a unique generator $T$ of $\Ord_D$ such that $T^*(\omega)=\lambda\cdot\omega$, with $\lambda>0$, which is written in this basis by a matrix of the form $\displaystyle{\left(\begin{smallmatrix} e & 0 & p & q \\ 0 & e & r & s \\ 2s & -2q & 0 & 0 \\ -2r & 2p & 0 & 0\\ \end{smallmatrix}\right)}$ (see Proposition~\ref{NormA2Prop}).

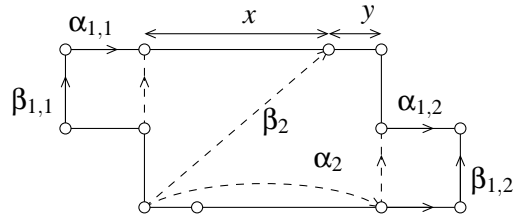
\begin{figure}[htbp]
\begin{tikzpicture}[scale=0.7]
\draw (0,0) -- (6,0) -- (6,1.5) -- (4.5,1.5) -- (4.5,3) -- (-1.5,3) -- (-1.5,1.5) -- (0,1.5) -- cycle;
\draw[->, >=angle 45] (-1.5,3) -- (-0.5,3); \draw[->, >=angle 45] (-1.5, 1.5) -- (-1.5,2.5); 
\draw[dashed, ->,>=angle 45] (0,1.5) -- (0,2.5); \draw[dashed] (0,2.5) -- (0,3);
\draw[->, >=angle 45] (4.5,0) -- (5.5,0); \draw[->, >=angle 45] (4.5,1.5) -- (5.5,1.5); \draw[->, >=angle 45] (6,0) -- (6,1); \draw[dashed, ->,>=angle 45] (4.5,0) -- (4.5,1); \draw[dashed] (4.5,1) -- (4.5, 1.5);

\draw[dashed, ->,>=angle 45] (0,0) -- (3.5,3); \draw[dashed, ->,  >=angle 45](0,0)  .. controls (1,0.6)  and (3.5, 0.6) ..   (4.5,0);

\foreach \x in {(0,0), (1,0), (4.5,0), (6,0), (6,1.5), (4.5,1.5), (4.5,3), (3.5,3), (0,3), (-1.5,3), (-1.5,1.5), (0,1.5)} \filldraw[fill=white] \x circle (3pt);

\draw (-1,3) node[above] {$\alpha_{1,1}$} (-1.5, 2) node[left] {$\beta_{1,1}$} (5.25,1.5) node[above] {$\alpha_{1,2}$} (6,0.5) node[right] {$\beta_{1,2}$};
\draw (3.5,0.5) node[above] {$\alpha_2$} (2.5,2.1) node[below] {$\beta_2$};

\draw[<->, >=angle 45] (0,3.3) -- (3.5,3.3); \draw[<->, >=angle 45] (3.5,3.3) -- (4.5,3.3);
\draw (2,3.3) node[above] {$x$} (4.25,3.3) node[above] {$y$};
\end{tikzpicture}

\caption{Decomposition into two cylinders: $(\alpha_1,\beta_1,\alpha_2,\beta_2)$, where $\alpha_1=\alpha_{1,1}+\alpha_{1,2}, \beta=\beta_{1,1}+\beta_{1,2}$, is a symplectic basis for $H_1(X,\Z)^-$.}
\label{fig:2cyl:prototype}
\end{figure}
Using $\GL^+(2,\R)$ we can assume that 
$$\left\{ \begin{array}{l}
\omega(\alpha_1)=(1,0), \ \omega(\beta_1)= (0,1), \\ 
\omega(\alpha_2)=(x+y,0), \ \omega(\beta_2)=(x,1), \text{ with } x>0, y>0
\end{array} \right.
$$
The remainder of the proof for this case follows the same lines as the previous case. 
\end{proof}

We are now ready to prove Theorem~\ref{theo:12cyl:prototype}:

\begin{proof}[Proof of Theorem~\ref{theo:12cyl:prototype}]
Let $s=\sqrt{D}-2p>0$ and $e=\sqrt{D}-4p>0$. 
It is easy to check that the tuple $(e,p,q,s)\in \Z^4$ satisfies the conditions in $(\Pcal^s_D)$ if and
only if $(p,q)\in \Pcal^s_D$. 
From Proposition~\ref{prop:12cyl:prototype}, we know that each decomposition into one or two cylinders of 
the surfaces in $\Omega E_D(4)$ gives rise to an element of $\Pcal^s_D$.  If two decompositions (with the same 
number of cylinders) give the same element in $\Pcal^s_D$ then there exists an element of the Veech group which
maps one decomposition to other. Conversely, given a pair $(p,q)$ in $\Pcal^s_D$, we can construct a surface in
$\Omega E_D(4)$ which admits a decomposition into one or two cylinders in the horizontal direction. Therefore, 
we have a bijection from $\Pcal^s_D$ to the set of cusps corresponding to decomposition into one cylinder, 
and a bijection from $\Pcal^s_D$ to the set of cusps corresponding to decompositions into two cylinders of $W_D(4)$. 
\end{proof}

\section{Components of the Prym eigenforms locus in genus $4$}
\label{appendix:other:strata}

The approach we use in this paper, namely prototypes and Butterfly moves, can also be employed to investigate the 
connectedness of the locus $\Omega E_D(6)$. Recall that $\Omega E_D(6)$ is the intersection of the Prym eigenform 
locus and the stratum $\Omega \mathfrak{M}(6)$. Following~\cite{Mc7} 
$\Omega E_D(6)$ is the union of finitely many $\GL^+(2,\R)$-orbits of Veech surfaces. We provide 
the following classification:

\begin{Theorem}
\label{theo:H6:appendix}
For any $D\in \N$, $D\equiv 0,1 \mod 4$, and $D \not\in \{4,9\}$, the loci $\Omega E_D(6)$ are 
non empty and pairwise disjoints. Moreover if $D \not \in \{8,12,16,36,41,52,68,84,100\}$ then $\Omega E_D(6)$
\begin{enumerate}
\item is connected if $D$ is odd,
\item has at most two components if $D$ is even.
\end{enumerate}
For the exceptional cases $D=41,52,68,84$ the locus $\Omega E_D(6)$ has at most three components. \\
For the exceptional cases $D=8,12,16,6^{2},10^{2}$ the locus $\Omega E_D(6)$ is connected.
\end{Theorem}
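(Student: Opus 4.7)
The plan is to adapt the strategy of Theorem~\ref{theo:main:intro} to the genus four stratum $\Omega\mathfrak{M}(6)$, following the same four-step outline.

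Step 1: Topological classification. Let $(X,\omega) \in \Omega E_D(6)$. By Riemann-Hurwitz applied to the double cover $X \to X/\rho$ (with quotient of genus two), $\rho$ has exactly two fixed points: the unique zero of $\omega$ and one other. The cone angle $14\pi$ at the zero yields seven horizontal saddle connections for any completely periodic direction. A $\rho$-invariant cylinder would require two non-singular fixed points in its interior; since only one such point is available, no horizontal cylinder can be $\rho$-invariant, so cylinders come in $\rho$-exchanged pairs and the remaining fixed point lies at the midpoint of the unique $\rho$-fixed horizontal saddle connection. A case analysis analogous to the proof of Proposition~\ref{prop:decomp:cylinders} produces a finite list of models, among which those containing a simple cylinder (the analogues of $A\pm$) will drive the prototype analysis, as in Corollary~\ref{cor:decomp:3cyl}.

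Step 2: Prototypes, uniqueness, and nonemptiness. For each principal model I would derive a canonical $\GL^+(2,\R)$-representative parametrized by an integer tuple $(w,h,t,e)$ and a sign $\varepsilon \in \{\pm\}$, with $D = e^2 + c\cdot w h$ for an appropriate model-dependent constant $c$, in direct analogy with Propositions~\ref{NormA1Prop}--\ref{NormBProp}. Nonemptiness of $\Omega E_D(6)$ for $D \equiv 0,1 \mod 4$, $D \notin \{4,9\}$, is then obtained by explicit prototype constructions (parallel to Corollary~\ref{cor:admis:disc}), and the uniqueness of the Prym involution $\rho$ and of the quadratic order $\Ord_D$ (analogous to Theorem~\ref{UqeThm}) yields pairwise disjointness of the loci $\Omega E_D(6)$.

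Step 3: Butterfly moves and number-theoretic connectedness. The butterfly moves of Section~\ref{sec:butterfly} extend to the new setting by switching between decompositions sharing a simple cylinder, with admissibility criterion and transformation rules of Propositions~\ref{BM1Prop}--\ref{BM2Prop} taking an essentially identical form. Descending to the incomplete prototypes $\Pcal_D$ and further to the reduced prototypes $\SD$, the number-theoretic machinery of Section~\ref{sec:connectedness:PD} (short strategies with small primes, auxiliary operations $F_{\pm q}$, and Jacobsthal-type bounds for large $D$) applies essentially verbatim, bounding the number of butterfly-move components of $\Pcal_D$ by a small constant for all but finitely many $D$. The exceptional discriminants $\{8,12,16,36,41,52,68,84,100\}$ are then handled directly as in Section~\ref{sec:diagrams}, with explicit connecting paths built in the extra models or via computer verification.

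Step 4: Odd discriminants, and the main obstacle. The key structural difference with the $\Omega E_D(4)$ case is that in genus four the symplectic form on $H_1(X,\Z)^-$ acts uniformly on the two sublattices modulo $2$, so the parity obstruction of Lemma~\ref{lm:OddDisc:Generator} disappears: the two sign choices $\varepsilon = \pm 1$ lie in the same $\GL^+(2,\R)$-orbit, and each component of $\Pcal_D$ lifts to a single component of $\Qcal_D$. For odd $D$ this gives full connectedness of $\Omega E_D(6)$. The main obstacle is the even case: without such a parity invariant, we have no mechanism to identify the two potential components when they occur, so the theorem only furnishes an upper bound of two components in the generic range (and three for the four listed exceptional discriminants where the prototype set has three butterfly-classes). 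Numerical evidence cited in the remark following Theorem~\ref{theo:H6} strongly suggests that $\Omega E_D(6)$ is always connected, but proving this would require new ideas beyond the prototype--butterfly framework developed here.
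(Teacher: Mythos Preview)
Your Step~1 correctly observes that no horizontal cylinder can be $\rho$-invariant in genus four, but you then draw the wrong consequence. Precisely because every cylinder is exchanged with another by $\rho$, there is only \emph{one} four-cylinder model with simple cylinders (the analogue of $A-$: two simple cylinders swapped by $\rho$, two non-simple cylinders swapped by $\rho$), not two sign-indexed models $A\pm$. Consequently there is no $\varepsilon\in\{\pm\}$ in the prototype data: the space of prototypes is $\tilde{\Pcal}_D$ itself, and Butterfly moves send Model~$A$ to Model~$A$. Your Step~4, which argues that the signs $\varepsilon=\pm$ collapse because the parity obstruction of Lemma~\ref{lm:OddDisc:Generator} disappears, is therefore aimed at a non-existent target; moreover, even if there were two signs, the disappearance of an obstruction to distinguishing them would not by itself prove they coincide.

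The actual mechanism behind the odd/even dichotomy is different and lives entirely inside $\tilde{\Pcal}_D$. With the genus-four normalization one finds $D=e^2+4wh$ (so $e\equiv D\bmod 2$), and the Butterfly-move rule $e'=-e-4qh$ shows that $e\bmod 4$ is an invariant when $e$ is even (since $-e\equiv e\bmod 4$), but not when $e$ is odd (since $-e\not\equiv e\bmod 4$). Thus for odd $D$ the number-theoretic argument of Section~\ref{sec:connectedness:PD} (together with the usual reduction to reduced prototypes and the handling of finitely many exceptions) yields a single component of $\tilde{\Pcal}_D$, hence connectedness of $\Omega E_D(6)$; for even $D$ the two residue classes $e\equiv 0,2\bmod 4$ give at most two components, and this is the bound that cannot be improved by the present method. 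The exceptional lists arise exactly as in the genus-three analysis, with $\tilde{\Pcal}_D$ having three components for $D\in\{41,52,68,84\}$ and the square-tiled cases $D=36,100$ (and the small cases $8,12,16$) settled directly.
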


\subsection{Strategy of a proof}

We briefly sketch a proof of Theorem~\ref{theo:H6:appendix}.
Surfaces in $\Omega E_D(6)$ admit two types of decomposition into four cylinders, which will be called 
Model $A$, and Model $B$. The Model $A$ is characterized by the existence of simple cylinders 
(see Figure~\ref{fig:H6:ModA}) while the Model $B$ is characterized by the absence of such cylinders 
(see Figure~\ref{fig:H6:ModB}).

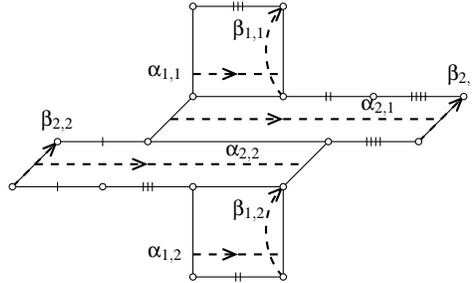
\begin{figure}[htbp]
\centering \subfloat{%
\begin{tikzpicture}[scale=0.6]

\draw (0,0) -- (-2,0) -- (-3,-1) -- (1,-1) -- (1,-3) -- (3,-3) -- (3,-1) -- (4,0) -- (6,0)
-- (7,1) -- (3,1) -- (3,3) --  (1,3) -- (1,1) -- cycle; 

\draw (1,1) -- (3,1) (0,0) -- (4,0) (1,-1) -- (3,-1);

\draw[thick, dashed, ->, >= angle 45] (1,1.5) -- (2,1.5); \draw[thick,
  dashed] (2,1.5)  -- (3,1.5); \draw[thick,  dashed, ->, >=  angle 45]
(3,1) .. controls (2.5,1.5) and (2.5,2.5) .. (3,3);

\draw[thick,  dashed,   ->,  >=   angle  45]  (0.5,0.5)   --  (3,0.5);
\draw[thick, dashed] (3,0.5) -- (6.5,0.5); 
\draw[thick, dashed, ->, >=  angle 45] (6,0) -- (7,1);

\draw[thick,  dashed,   ->,  >=   angle  45]  (-2.5,-0.5)   --  (0,-0.5);
\draw[thick, dashed] (0,-0.5) -- (3.5,-0.5); 
\draw[thick, dashed, ->, >=  angle 45] (-3,-1) -- (-2,0);

\draw[thick,  dashed,   ->,  >=   angle  45]  (1,-2.5)   --  (2,-2.5);
\draw[thick, dashed] (2,-2.5) -- (3,-2.5); 

\draw[thick, dashed, ->, >= angle 45] (3,-3) .. controls (2.5,-2.5) and (2.5,-1.5) .. (3,-1);

\filldraw[fill=white,draw=black] (0,0) circle (2pt)  (4,0)  circle (2pt)  (6,0)
circle (2pt) (7,1) circle (2pt)  (5,1) circle (2pt) (3,1) circle (2pt)
(3,3) circle (2pt) (1,3) circle (2pt) (1,1) circle (2pt) (-2,0) circle (2pt)
(-3,-1) circle (2pt) (-1,-1) circle (2pt) (1,-1) circle (2pt) (1,-3) circle (2pt)
(3,-3) circle (2pt) (3,-1) circle (2pt);

\draw   (2,-3)   +(-0.05,-0.1)   --  +(-0.05,0.1)   +(0.05,-0.1)   -- +(0.05,0.1);
\draw   (4,1)   +(-0.05,-0.1)   --  +(-0.05,0.1)   +(0.05,-0.1)   -- +(0.05,0.1);

\draw   (2,3)   +(0,-0.1)  -- +(0,0.1)  +(-0.1,-0.1) -- +(-0.1,0.1)  +(0.1,-0.1) --  +(0.1,0.1);
\draw   (0,-1)   +(0,-0.1)  -- +(0,0.1)  +(-0.1,-0.1) -- +(-0.1,0.1)  +(0.1,-0.1) --  +(0.1,0.1);

\draw  (-2,-1) +(0,-0.1)  -- +(0,0.1) ;
\draw  (-1,0) +(0,-0.1)  -- +(0,0.1) ;

\draw  (5,0) +(-0.15,-0.1)   --  +(-0.15,0.1)   +(-0.05,-0.1)   -- +(-0.05,0.1) 
+(0.05,-0.1)   --  +(0.05,0.1)   +(0.15,-0.1)   -- +(0.15,0.1);
\draw  (6,1) +(-0.15,-0.1)   --  +(-0.15,0.1)   +(-0.05,-0.1)   -- +(-0.05,0.1) 
+(0.05,-0.1)   --  +(0.05,0.1)   +(0.15,-0.1)   -- +(0.15,0.1);

\draw   (1,1.5)  node[left]  {$\scriptstyle   \alpha_{1,1}$}  (2.25,2)
node[above]    {$\scriptstyle   \beta_{1,1}$}    (1,-2.5)   node[left]
{$\scriptstyle  \alpha_{1,2}$}  (2.25,-2)  node[above]  {$\scriptstyle
  \beta_{1,2}$} (4.5,0.75)  node[right] {$\scriptstyle \alpha_{2,1}$} (7,1)
node[above] {$\scriptstyle \beta_{2,1}$}
(1.5,-0.25)  node[right] {$\scriptstyle \alpha_{2,2}$} (-2,0)
node[above] {$\scriptstyle \beta_{2,2}$};
\end{tikzpicture}
}
\caption{ Cylinder decomposition in $\Omega E_D(6)$:  Model  $A$. 
For $i=1,2$, setting $\alpha_{i}:=\alpha_{i,1}+\alpha_{i,2}$ and
$\beta_i:=\beta_{i,1}+\beta_{i,2}$ observe that $\{\alpha_1,\beta_1,\alpha_2,\beta_2\}$     is     a    basis     of
  $H_{1}(X,\Z)^{-}$.  }
\label{fig:H6:ModA}
\end{figure}

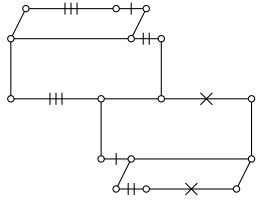
\begin{figure}[htbp]
\begin{center}
\begin{tikzpicture}[scale=0.4]

\draw (0,0) -- (3,0) -- (3,-2) -- (4,-2) -- (3.5,-3) -- (7.5,-3) -- (8,-2) -- (8,0) -- (5,0) -- (5,2) -- (4,2) -- (4.5,3) --(0.5,3) -- (0,2) -- cycle;

\draw (0,2) -- (4,2) (3,0) -- (5,0) (4,-2) -- (8,-2);

\draw (4,3) +(0,0.2) -- +(0,-0.2); \draw (3.5,-2) +(0,0.2) -- +(0,-0.2);
\draw (4.5,2) +(-0.1,0.2) -- +(-0.1,-0.2) +(0.1,0.2) -- +(0.1,-0.2); \draw (4,-3) +(-0.1,0.2) -- +(-0.1,-0.2) +(0.1,0.2) -- +(0.1,-0.2);
\draw (2,3) +(-0.2,0.2) -- +(-0.2,-0.2) +(0,0.2) -- +(0,-0.2) +(0.2,0.2) -- +(0.2,-0.2); \draw (1.5,0) +(-0.2,0.2) -- +(-0.2,-0.2) +(0,0.2) -- +(0,-0.2) +(0.2,0.2) -- +(0.2,-0.2);
\draw (6.5,0) +(-0.2,0.2) -- +(0.2,-0.2) +(-0.2,-0.2) -- +(0.2,0.2); \draw (6,-3) +(-0.2,0.2) -- +(0.2,-0.2) +(-0.2,-0.2) -- +(0.2,0.2);

\foreach \x in {(0,0), (3,0), (3,-2), (4,-2), (3.5,-3), (4.5,-3), (7.5,-3), (8,-2), (8,0), (5,0), (5,2), (4,2), (4.5,3), (3.5,3), (0.5,3), (0,2)} \filldraw[fill=white] \x circle (3pt);

\end{tikzpicture}
\end{center}
\caption{Cylinder decomposition in $\Omega E_D(6)$: Model $B$.}
\label{fig:H6:ModB}
\end{figure}

We first need to normalize the decompositions in model $A$. 

\begin{Proposition}
\label{NormAProp}
Let $(X,\omega) \in \Omega E_D(6)$  be a Prym eigenform which admits a
cylinders       decomposition      of       Model       $A$, equipped with 
the   symplectic   basis    presented   in Figure~\ref{fig:H6:ModA}.

\begin{itemize}
\item[(i)] There exists a unique generator $T$ of $\Ord_D$ such that the matrix of $T$ in the basis $(\alpha_1,\beta_1, \alpha_2,\beta_2)$ has the form $\left(\begin{smallmatrix}
e\cdot\id_2 & B \\ B^* & 0\\
\end{smallmatrix}\right)$, and $T^*(\omega)=\lambda(T)\omega$ with $\lambda(T)>0$.

\item[(ii)] Up    to    the   action $\GL^+(2,\R)$ and Dehn twists,
there exist $w,h,t \in \N$ such that  the          tuple         $(w,h,t,e)$         satisfies

$$(\tilde{\Pcal})\left\{\begin{array}{l}        w>0,h>0,\;        0\leq
  t<\gcd(w,h),\\    \gcd   (w,h,t,e)    =1,\\    D=e^2+4w   h,\\    0<
  \lambda:=\frac{e+\sqrt{D}}{2}<\frac{w}{2} \  \textrm{ (or, equivalently, } w>2(e+2h)) \\
\end{array}
\right.,
$$
and the matrix of $T$ in  the  basis  $\{\alpha_1,\beta_1,\alpha_2,\beta_2\}$  is
  $\left(%
\begin{smallmatrix}
  e & 0 & w & t \\ 0 & e & 0 & h \\ h & -t & 0 & 0 \\ 0 & w & 0 & 0
  \\
\end{smallmatrix}%
\right)$. Moreover,  in these coordinates we  have
$$
\left\{                                              \begin{array}{l}
  \omega(\Z\alpha_{2,1}+\Z\beta_{2,1})=\omega(\Z\alpha_{2,2}+\Z\beta_{2,2})=\Z(\frac{w}{2},0)+\Z(\frac{t}{2},\frac{h}{2})
  \\ \omega(\Z\alpha_{1,1}+\Z\beta_{1,1})=\omega(\Z\alpha_{1,2}+\Z\beta_{1,2})=\frac{\lambda}{2}\cdot
  \Z^{2}
\end{array}
\right.
$$
\end{itemize}

Conversely,  let $(X,\omega) \in \Omega\mathfrak{M}(6)$ having a four-cylinder decomposition. 
Assume there exists $(w,h,t,e)  \in \Z^4$ satisfying 
$(\tilde{\Pcal})$,  such  that  after  normalizing  by  $\GL^+(2,\R)$,  all  the above
conditions are fulfilled. Then $(X,\omega) \in \Omega E_D(6)$.
\end{Proposition}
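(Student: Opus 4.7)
The proof will closely follow the template developed for Propositions~\ref{NormA1Prop}, \ref{NormA2Prop}, and~\ref{NormBProp}, with the main differences stemming from the different intersection form on $H_1(X,\Z)^-$ and the different geometric constraint on $\lambda$.

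First I would verify that $\{\alpha_1, \beta_1, \alpha_2, \beta_2\}$ is a basis of $H_1(X,\Z)^-$. Since $\rho$ exchanges $\alpha_{i,1}$ with $-\alpha_{i,2}$ and $\beta_{i,1}$ with $-\beta_{i,2}$ for $i=1,2$, the cycles $\alpha_i$, $\beta_i$ are anti-invariant. For the intersection form: within each pair the cycles $(\alpha_{i,1},\beta_{i,1})$ and $(\alpha_{i,2},\beta_{i,2})$ lie in distinct $\rho$-exchanged cylinders, hence are disjoint, which gives $\langle\alpha_i,\beta_i\rangle=\langle\alpha_{i,1},\beta_{i,1}\rangle+\langle\alpha_{i,2},\beta_{i,2}\rangle=2$; similarly one checks $\langle\alpha_1,\beta_2\rangle=\langle\alpha_2,\beta_1\rangle=0$. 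Thus the restricted intersection form has matrix $\left(\begin{smallmatrix} 2J & 0 \\ 0 & 2J \end{smallmatrix}\right)$. Then Lemma~\ref{Lm2} applied with $\mu_1=\mu_2=2$ says that every self-adjoint element of $\mathrm{End}(\Prym(X,\rho))$ has matrix of the form $\left(\begin{smallmatrix} e\cdot\id_2 & B \\ B^* & f\cdot\id_2 \end{smallmatrix}\right)$. Replacing a chosen generator $T$ of $\Ord_D$ by $T-f\cdot\id_4$, we may assume $f=0$; arguing as in Proposition~\ref{NormA1Prop}, $\lambda(T)\neq 0$ and by flipping the sign of $T$ we arrange $\lambda(T)>0$. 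The uniqueness of $T$ follows from the fact that any other generator of $\Ord_D$ has the form $aT+b\cdot\id_4$.

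Next, using $\GL^+(2,\R)$ to normalize $\omega(\alpha_1)=(\lambda,0)$, $\omega(\beta_1)=(0,\lambda)$ and writing the eigenvector equations for $\mathrm{Re}(\omega)$ and $\mathrm{Im}(\omega)$ in the dual basis, $B$ takes the displayed form $\left(\begin{smallmatrix} w & t \\ 0 & h \end{smallmatrix}\right)$ with $w,h>0$. A direct computation shows $BB^*=B^*B=wh\cdot\id_2$, so $T^2=eT+wh\cdot\Id$, and therefore $\lambda^2=e\lambda+wh$ and $D=e^2+4wh$ (note the factor $4$, instead of $8$ as in genus $3$, reflects that both $\mu_i$ equal $2$). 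Dehn twists $\beta_1\mapsto\beta_1+n\alpha_1$ and $\beta_{2,i}\mapsto\beta_{2,i}+m\alpha_{2,i}$ allow us to impose $0\leq t<\gcd(w,h)$; the properness of $\mathfrak{i}(\Ord_D)$ in $\mathrm{End}(\Prym(X,\rho))$ forces $\gcd(w,h,t,e)=1$. The explicit period data displayed in the proposition then fall out of the eigenvector equation combined with the normalization.

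The main geometric point, which I expect to be the most delicate, is the inequality $\lambda<w/2$ (equivalently $w>2(e+2h)$). This condition encodes that the four cylinders actually assemble into the combinatorial pattern of Model $A$: the two simple cylinders have width $\lambda/2$, while the two larger cylinders have width $w/2$, and both simple cylinders must be glued in a $\rho$-symmetric way along a proper sub-segment of the top (respectively, bottom) of each larger cylinder. Translating this constraint into the normalized coordinates gives exactly $\lambda<w/2$, which by a direct algebraic manipulation of $\lambda=(e+\sqrt{D})/2$ and $D=e^2+4wh$ is equivalent to $w>2(e+2h)$. Once this is established, the converse direction proceeds exactly as in the proof of Proposition~\ref{NormA1Prop}: starting from $(w,h,t,e)$ satisfying $(\tilde{\Pcal})$, the Model $A$ construction yields $(X,\omega)\in\Omega\mathfrak{M}(6)$ together with an involution $\rho$ satisfying $\dim_\C\Omega(X,\rho)^-=2$ and $\rho^*\omega=-\omega$; the endomorphism $T$ defined by the displayed matrix is self-adjoint, and since $T_{|\C\omega}=\lambda\cdot\id$ while $T$ satisfies the quadratic relation $T^2=eT+wh\cdot\Id$, it also acts by the other (real) root on the orthogonal complement of $\C\omega$ in $\Omega(X,\rho)^-$, and is therefore $\C$-linear. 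The subring $\Z[T]$ is isomorphic to $\Ord_D$, hence $(X,\omega)\in\Omega E_D(6)$.
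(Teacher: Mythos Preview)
Your proposal is correct and follows essentially the same route as the paper, which simply states that the argument is analogous to that of Proposition~\ref{NormA2Prop} with the sole change that the intersection form on $H_1(X,\Z)^-$ is $\left(\begin{smallmatrix} 2J & 0 \\ 0 & 2J \end{smallmatrix}\right)$. Your identification of this form, the application of Lemma~\ref{Lm2} with $\mu_1=\mu_2=2$, and the derivation of $D=e^2+4wh$ are all on target; the only place where a bit more precision would help is the geometric explanation of $\lambda<w/2$: in Model~$A$ each large cylinder (of width $w/2$) carries on the \emph{same} boundary component the footprints of \emph{both} simple cylinders (each of width $\lambda/2$), so the needed inequality is $2\cdot(\lambda/2)<w/2$, not merely $\lambda/2<w/2$.
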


The proof of Proposition~\ref{NormAProp} is analogous to the proof of  Proposition~\ref{NormA2Prop}, 
the only difference is that the intersection form in $H_1(X,\Z)^-$ is now given by $\DS{\left( \begin{smallmatrix}
2J & 0 \\ 0 & 2J \\
\end{smallmatrix} \right)}$.

\begin{Remark}
One can also provide prototypes for Model B as
$$(\tilde{\Pcal'})\left\{\begin{array}{l} w>0,h>0,\; 0\leq
  t<\gcd(w,h),\\ \gcd (w,h,t,e) =1,\\ D=e^2+4w h,\\ \frac{w}{2} < \lambda:=\frac{e+\sqrt{D}}{2}<w \\
\end{array}
\right.,
$$
\end{Remark}

Putting  prototypes  for  Model A  and  Model  B  together, we  get  a
parametrisation  for  cusps associated  to  Models  A  and B  (compare
with~\cite{Mc4}):

\begin{Proposition}
\label{prop:cusps:egalite}
For any  surface of  $\Omega E_D(6)$, the  set of  periodic directions
associated to Models A and B is parametrised by
$$\left\{ (w,h,t,e) \in \Z^4,
\begin{array}{l}
w>0,  \ h>0,\  0\leq  t<\gcd(w,h),  \ h+e  <  w, \\  \gcd(w,h,t,e)=1,
\textrm{ and } D=e^2+4hw.\\
\end{array}
\right\}
$$
\end{Proposition}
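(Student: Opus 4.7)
The plan is to combine Proposition~\ref{NormAProp} with its Model~$B$ analog (alluded to in the remark immediately following Proposition~\ref{NormAProp}, and proved by a computation directly parallel to Proposition~\ref{NormBProp} in the genus three case). Together these assign, to every periodic direction of type $A$ or $B$ on a surface in $\Omega E_D(6)$, a unique quadruple $(w,h,t,e) \in \Z^4$ in $(\tilde{\Pcal})$ or $(\tilde{\Pcal'})$ respectively, modulo the action of $\GL^+(2,\R)$ and Dehn twists. The uniqueness of the generator $T$ of $\Ord_D$ and of the normalized symplectic basis guarantees that the prototype is an invariant of the cusp, and Model~$A$ and Model~$B$ decompositions are topologically distinct (four simple cylinders versus none), so the two prototype sets parametrize disjoint families of cusps.

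The second step is a purely algebraic simplification of the union of the defining inequalities. Set $\lambda = (e+\sqrt{D})/2$, so that $\lambda(\lambda-e)=wh$ since $D=e^2+4wh$; note $\lambda>0$ as $\sqrt{D}>|e|$. The condition $\lambda<w$ is equivalent to $h+e<w$: both inequalities force $2w-e>0$, after which $\lambda<w \Leftrightarrow D<(2w-e)^2 \Leftrightarrow 4wh<4w(w-e) \Leftrightarrow h+e<w$. Similarly $\lambda<w/2$ translates to $w>2(e+2h)$, matching the inequality in $(\tilde{\Pcal})$. Hence $(\tilde{\Pcal})$ and $(\tilde{\Pcal'})$ are precisely the subsets of $\{(w,h,t,e):h+e<w\}$ cut out by $w>2(e+2h)$ and $w<2(e+2h)$ respectively, and their union is the set displayed in the statement. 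The boundary case $w=2(e+2h)$ forces $D=(e+4h)^2$ and corresponds to $\lambda=w/2$; one checks that no genuine Model~$A$ or $B$ decomposition arises from such a quadruple (the construction degenerates), so it is automatically excluded.

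The main delicate point is establishing the Model~$B$ analog of Proposition~\ref{NormAProp} and verifying that its normalization is constrained by $w/2<\lambda<w$ (with no further numerical restriction). This is a direct adaptation of the argument for Proposition~\ref{NormBProp}, the only change being that the intersection form on $H_1(X,\Z)^-$ now has matrix $\left(\begin{smallmatrix} 2J & 0 \\ 0 & 2J \end{smallmatrix}\right)$ in the appropriate basis, which affects only the integer shape of $T$ and not the inequality on $\lambda$. Once this is in place, taking the union of the two prototype sets and simplifying as above gives the parametrization claimed.
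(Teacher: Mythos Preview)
Your approach is exactly the one the paper intends: the paper's entire argument is the sentence ``Putting prototypes for Model~$A$ and Model~$B$ together, we get a parametrisation for cusps associated to Models~$A$ and~$B$ (compare with~\cite{Mc4})'', and you have supplied the details of that union, including the algebraic equivalence $\lambda<w \Leftrightarrow h+e<w$ and the treatment of the boundary $\lambda=w/2$ (which forces $D$ to be a perfect square and so is irrelevant precisely in the non-square case where this proposition is invoked). One small slip: Model~$A$ has \emph{two} simple cylinders (a pair exchanged by the Prym involution), not four; the other two cylinders are non-simple. This does not affect your argument, since all you need is that Models~$A$ and~$B$ are topologically distinguished by the presence or absence of simple cylinders.
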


The next proposition tells us that, except the case $D=5$, the surfaces in $\Omega E_D(6)$ always admit a decomposition in Model $A$, its proof is similar to Proposition~\ref{prop:D8:topo}.

\begin{Proposition}\label{prop:H6:noModA}
Let $(X,\omega)$ be an eigenform in $\Omega E_D(6)$. Then the flat surface associated to $(X,\omega)$ has no decompositions in model $A$ if and only if $D=5$.
\end{Proposition}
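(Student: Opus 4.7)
The proof will follow the pattern of Proposition~\ref{prop:D8:topo}, with Model~$A$ playing the role previously played by Models $A\pm$, and Model~$B$ the role of the "exceptional" diagram that obstructs the presence of simple cylinders. The non-trivial direction is the ``only if'' statement: assuming $(X,\omega)\in\Omega E_D(6)$ admits no Model~$A$ decomposition in any direction, we will deduce $D=5$. The reverse implication will be checked directly by exhibiting the unique (up to $\GL^+(2,\R)$) surface in $\Omega E_5(6)$ and verifying, with the help of the prototype enumeration, that no cylinder decomposition in its $\SL(X,\omega)$-orbit contains a simple cylinder.

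So suppose $(X,\omega)$ admits only Model~$B$ decompositions. Since $(X,\omega)$ is a Veech surface (by~\cite{Mc7}), it is completely periodic in infinitely many directions, each of which then falls into Model~$B$. Fix one such direction to be horizontal and normalize the decomposition by the $\GL^+(2,\R)$-action and by Dehn twists. Using the Model~$B$ prototype parametrization $(\tilde{\Pcal'})$ one obtains real parameters $x,y,z,t$ encoding the remaining widths, heights, and the horizontal twist of the four cylinders after one has been set to unit area. The cylinder structure together with the Prym involution constrains $(x,y,z,t)$ to a bounded semialgebraic subset of $\R^4_{>0}$.

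The core of the argument is then a direction-by-direction search, analogous to Steps~1--3 in the proof of Proposition~\ref{prop:D8:topo}. For each candidate direction $\theta_k$, obtained as the slope of a segment joining a pair of cone points on the boundaries of different cylinders (or using the Prym symmetry, pairs thereof), I will write down the condition on $(x,y,z,t)$ for the flow in direction $\theta_k$ to contain a simple cylinder; this condition always takes the form of a strict inequality. The contrapositive assumption forces all such inequalities to be saturated simultaneously. Each failing inequality pins down one of the parameters, just as in the $\Omega E_D(4)$ case the successive directions $\theta_1,\theta_3,\theta_4$ forced $t=0$, $y=x^2/(x+1)$, and $x=1/\sqrt{2}$ respectively. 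After exhausting a short list of directions, the parameters are uniquely determined up to the $\GL^+(2,\R)$-normalization, and the surface so obtained, read as a Model~$B$ prototype, will have $(w,h,t,e)=(1,1,0,-1)$. The relation $D = e^{2}+4wh$ of $(\tilde{\Pcal'})$ then yields $D = 1 + 4 = 5$, with $\lambda=(-1+\sqrt{5})/2$ lying in the admissible interval $(w/2,w)$.

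The main technical obstacle is the combinatorial explosion: Model~$B$ in $\Omega E_D(6)$ has four cylinders and a substantially larger set of boundary vertices than the three-cylinder Model~$B$ of $\Omega E_D(4)$, so many more candidate slopes must be examined. The strategy to keep the case analysis tractable will be (i) to use the Prym involution to halve the number of independent directions, pairing each candidate $\theta$ with $-\theta$ via the involution, and (ii) to proceed in order of ``increasing slope complexity'', so that each step eliminates one degree of freedom; the forced values obtained at each stage then rule out large portions of the parameter space and leave only directions of bounded complexity to check in the subsequent step. Verifying that the few remaining directions genuinely cover all obstructions — rather than missing a stray configuration with no simple cylinder — will require a careful bookkeeping argument, and is the step that is most likely to demand computer assistance to conclude rigorously.
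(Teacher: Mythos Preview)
Your approach is exactly what the paper does: it gives no detailed proof for this proposition, stating only that the argument is similar to that of Proposition~\ref{prop:D8:topo}. Your outline correctly adapts that direction-by-direction search to Model~$B$ in $\Omega E_D(6)$, and your identification of the limiting Model~$B$ prototype $(w,h,t,e)=(1,1,0,-1)$, with $D=e^2+4wh=5$ and $\lambda=\frac{-1+\sqrt{5}}{2}\in(w/2,w)$, is precisely the expected endpoint of the computation.
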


For a fixed $D$, we denote by $\tilde{\Pcal}_D$ the set of $(w,h,t,e)$ satisfying $(\tilde{\Pcal})$, the elements of 
$\tilde{\Pcal}_D$ are called {\em prototypes}. We can define the {\em Butterfly moves} $B_q, q\in \N\cup\{\infty\}$, 
for decompositions of type $A$ in the same way as in Section~\ref{sec:butterfly}. Note that in this case the 
Butterfly moves preserve the type of the decomposition.  The admissibility condition now becomes

\begin{equation*}
0 < \lambda q < \frac{w}{2} \Leftrightarrow (e+4qh)^2<D 
\end{equation*}

The actions of the Butterfly moves on $(\tilde{\Pcal})$ are the same as the case $\Omega E_D(4)$ (see 
Propositions~\ref{BM1Prop} and~\ref{BM2Prop}),  namely

\begin{enumerate}
\item If $q\in \N$ then

$$\left\{\begin{array}{ccl}
e' & = & -e-4qh,\\
h' & = & \gcd(qh,w+qt)\\ 
\end{array}\right. $$

\item If $q=\infty$ then 

$$\left\{\begin{array}{ccl}
e' & = & -e-4h,\\
h' & = & \gcd(t,h)\\ 
\end{array}\right. $$
\end{enumerate}

We can parametrize the set of {\em reduced  prototypes} (see Section~\ref{sec:reduced:prot}), by the set
$$
\tilde{\mathcal{S}}_{D} = \left\{ e\in \mathbb Z : e \equiv D \mod 2 \textrm{ and } e^{2},\ (e+4)^{2} < D   \right\}.
$$

We call an equivalence class of the equivalence relation generated by the Butterfly moves on $\tilde{\Pcal}_D$
(respectively, $\tilde{\mathcal{S}}_D$) a component of $\tilde{\Pcal}_D$ (respectively, $\tilde{\mathcal{S}}_D$). 
We then have

\begin{Theorem}
\label{theo:H6:connect:SD}
Let $D\geq 12$  be a  discriminant.  Let us assume that
$$ D   \not \in
\left\{
\begin{array}{l}
12,16,17,20,25,28,36,73,88,97,105,112,121,124,136,145,148, \\
169,172,184,193,196,201,217,220,241,244,265,268,292,304, \\
316, 364,385,436,484,556,604,676,796,844,1684
\end{array}
\right\}.
$$ 

Then the set $\tilde{\mathcal S}_{D}$ is non empty and has either
\begin{itemize}
\item three components, $\{e\in \tilde{\mathcal S}_{D},\ e\equiv 0 \textrm{ or
} 4 \mod 8\}$, $\{e\in \tilde{\mathcal S}_{D},\ e\equiv 2 \mod 8\}$ and \\
$\{e\in \tilde{\mathcal S}_{D},\ e\equiv -2 \mod 8\}$, if $D \equiv 4 \mod 8$,
\item two components,
\begin{itemize}
\item $\{e\in \tilde{\mathcal S}_{D},\ e\equiv 1 \textrm{ or } 3 \mod 8\}$ and $\{e\in \tilde{\mathcal S}_{D},\ e\equiv 
-1 \textrm{ or } -3 \mod 8\}$ if $D \equiv 1 \mod 8$,
\item $\{e\in \tilde{\mathcal S}_{D},\ e\equiv 0 \textrm{ or } 4 \mod 8\}$ and $\{e\in \tilde{\mathcal S}_{D},\ e\equiv 
+ 2 \textrm{ or } -2 \mod 8\}$ if $D \equiv 0 \mod 8$,
\end{itemize}
\item only one component, otherwise.
\end{itemize}
\end{Theorem}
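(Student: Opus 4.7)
The plan is to adapt the strategy used for Theorem~\ref{theo:main:SD} to the present setting, with the key modification arising from the defining equation $D = e^2 + 4wh$ (instead of $e^2 + 8wh$) and the accompanying admissibility condition $(e+4qh)^2 < D$ for Butterfly moves. First I would pin down the modular obstructions which force the stated splitting. A Butterfly move between reduced prototypes sends $e \mapsto -e-4q$, where $q$ satisfies $\gcd(w,q)=1$ with $w = (D-e^2)/4$. A short computation of $w \pmod 2$ case by case shows: when $D \equiv 1 \pmod 8$, $w$ is forced even so $q$ must be odd, and $e+e' \equiv 4 \pmod 8$, separating $\{1,3\}$ from $\{-1,-3\}$ modulo $8$; when $D \equiv 4 \pmod 8$, $w$ is odd exactly when $e \equiv 0,4 \pmod 8$ while $e \equiv \pm 2 \pmod 8$ each forces $q$ odd, producing three classes; and the cases $D \equiv 0,5 \pmod 8$ yield two and one classes respectively by the same mechanism.

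Second, following Section~\ref{sec:connectedness:PD}, I would introduce the auxiliary operation $F_q(e) := e + 4(q-1)$ realized by the composite $(B_q, B_\infty)$, admissible whenever $\gcd(w,q)=1$ and $(e+4q)^2 < D$. The direct analog of Proposition~\ref{prop:equiv:step8} would assert that, with finitely many exceptions modulo $105 = 3 \cdot 5 \cdot 7$, one has $e \sim F_q(e)$ for some $q \in \{3,5,7\}$ that \emph{preserves} the obstruction class modulo $8$ identified above. Repeatedly applying such moves, every interior element of a fixed class can be brought to a canonical representative. To handle values of $e$ near the boundary of $\tilde{\mathcal{S}}_D$, the analog of Proposition~\ref{prop:ends} would use Kanold's estimate $J(n) \leq n^{\log 2/\log p_{k+1}}$ to produce a $q$ coprime to $w$, with $q$ in the correct parity class, satisfying $e + 4q$ well inside $\sqrt{D}$.

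Third, I would explicitly bridge the modular classes that coalesce into a single component. For example, in the case $D \equiv 4 \pmod 8$, the class $\{e \equiv 0,4 \pmod 8\}$ is connected by Butterfly moves with $q$ of either parity (since $w$ is odd there); for $D \equiv 0 \pmod 8$, the same mechanism connects the class $\{e \equiv \pm 2 \pmod 8\}$; and for $D \equiv 5 \pmod 8$, a single odd-parity move from any representative of $\{1,3\}$ reaches $\{-1,-3\}$, producing a single component. Combined with the obstruction analysis, this matches the statement exactly in each modular regime.

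The principal obstacle will be calibrating the transition between the asymptotic regime and the exceptional list. Because the factor $4$ (instead of $8$) in $D = e^2 + 4wh$ makes $\tilde{\mathcal{S}}_D$ sparser for a given $D$, the threshold beyond which the congruence analysis suffices will be larger than the $83^2$ in~\cite{Mc4}, and the number of "$D \pmod{105}$ exceptions" to Proposition~\ref{prop:equiv:step8}-analog must be enumerated carefully, since each surviving pair must be bridged either through a tailored strategy of length $2$ or $3$, or else declared genuinely exceptional. The stated list of $42$ exceptional discriminants would be certified by computer: one implements the Butterfly-move graph on $\tilde{\mathcal{P}}_D$ (not just $\tilde{\mathcal{S}}_D$, since some components of the latter only merge through non-reduced intermediates, as happens in Theorem~\ref{theo:main:PD}) for every $D$ below the threshold, then records the $D$ for which the modular upper bound fails to be attained. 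I expect this verification, rather than the number-theoretic estimates, to be the delicate step.
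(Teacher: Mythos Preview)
Your approach is correct and essentially the same as the paper's, which explicitly states that the proof uses ``similar ideas to the proofs of Theorem~\ref{theo:main:SD} and Theorem~\ref{theo:main:PD}'' and calls it straightforward. Two small corrections: first, $\tilde{\mathcal{S}}_D$ is actually \emph{denser} than $\mathcal{S}_D$ (the constraint $e\equiv D\pmod 2$ is weaker than $e^2\equiv D\pmod 8$), not sparser, and indeed the largest exception $1684$ is the same in both settings; second, the computer verification for \emph{this} theorem is purely on $\tilde{\mathcal{S}}_D$ --- passing through non-reduced prototypes in $\tilde{\mathcal{P}}_D$ is the mechanism for the \emph{next} theorem (Theorem~\ref{theo:H6:connect:PD}), not for certifying the exceptional list here.
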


 \begin{Remark}
 There is a simple congruence condition that explains why 
$\tilde{\mathcal S}_D$ is not connected for some values of $D$.
\end{Remark}

As a corollary we draw: 

\begin{Theorem}\label{theo:H6:connect:PD}
Let $D\geq 12$  be a  discriminant. If $D \not \in \{36,41,52,68,84,100\}$ then $\tilde{\mathcal P}_{D}$ is non empty and has either
\begin{itemize}
\item only one component if $D=12$ or $D=16$,
\item two components, $\{p\in \mathcal P_{D},\ e\equiv 0 \mod 4\}$ and $\{p\in \tilde{\mathcal P}_{D},\ e\equiv 2 \mod 4\}$, 
if $D$ is even, or
\item only one component otherwise.
\end{itemize}
For the exceptional cases mentioned above, $\tilde{\mathcal P}_{D}$ has three components.
\end{Theorem}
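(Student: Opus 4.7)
The strategy closely parallels the proof of Theorem~\ref{theo:main:PD} for the genus-three case, with Theorem~\ref{theo:H6:connect:SD} as the crucial input.

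First I would establish a congruence lower bound. From $D=e^2+4wh$, the parity of $e$ equals that of $D$; when $D$ is even, $e$ is even and the transformation $e\mapsto -e-4qh$ (respectively $-e-4h$ when $q=\infty$) shows that the residue of $e$ modulo $4$ is preserved by every Butterfly move. Hence for $D\equiv 0,4 \mod 8$ with $D\geq 17$, $\tilde{\Pcal}_D$ splits into at least two parts according as $e\equiv 0$ or $e\equiv 2\mod 4$. For $D\in\{12,16\}$ a direct enumeration shows $\tilde{\Pcal}_D$ has a single element, so the theorem holds in those two cases trivially.

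Second, I would prove the reduction statement. Taking $p=(w,h,t,e)$ with $h$ minimal in its equivalence class, the moves $B_1$ and $B_\infty$ yield $h\mid w$ and $t=0$ exactly as in Section~\ref{sec:reduced:prot}. The relation $D={e'}^2+4w'h'$ with $e'=-e-4h$ and $h'=h$ gives $w'=w-2e-4h$, so the condition $h\mid w'$ forces $h\mid 2e$. Here the genus-four case diverges from the genus-three one: $h$ only needs to divide $2e$ and not $e$, so together with $\gcd(w,h,t,e)=1$ the minimum is reached at $h=1$ when $e$ is even but may be stuck at $h=2$ when $e$ is odd. Consequently, for $D$ even every class contains a reduced prototype, whereas for $D$ odd every class contains a prototype with $h\in\{1,2\}$.

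Third, I would match the upper bound coming from Theorem~\ref{theo:H6:connect:SD}. The natural surjection from $\tilde{\mathcal S}_D$ onto the set of level-one classes in $\tilde{\Pcal}_D/\!\sim$ bounds the number of such classes by the component count of $\tilde{\mathcal S}_D$. For $D$ even this matches the lower bound of Step~1 and gives exactly two components in the generic case: for $D\equiv 4\mod 8$ one has to merge the two components $\{e\equiv 2\mod 8\}$ and $\{e\equiv -2\mod 8\}$ into the single class $\{e\equiv 2\mod 4\}$, which is achieved by a short explicit Butterfly-move sequence of the form $B_q\circ B_\infty\circ B_1$ with $q$ small, entirely analogous to the paths built in Section~\ref{sec:diagrams}; for $D\equiv 0\mod 8$ no merging is needed. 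For $D\equiv 1\mod 8$ generic, the two components of $\tilde{\mathcal S}_D$ are merged via a similar non-reduced path, and the level-two prototypes are shown to connect to reduced ones by admissible moves, yielding a single component. The residual large exceptional values of $D$ appearing in the list of Theorem~\ref{theo:H6:connect:SD} are handled by the same ad hoc paths, tabulated as in Table~\ref{table:exceptionnal:link}.

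The main obstacle lies in the small exceptional discriminants $D\in\{36,41,52,68,84,100\}$. Here the admissibility condition $(e+4qh)^2<D$ is so restrictive that the merging paths constructed in the generic case simply do not exist inside $\tilde{\Pcal}_D$, and additional equivalence classes survive (in particular, for $D=41$ the two level-two classes remain disconnected from the level-one classes). For each of these six discriminants, the verification reduces to a finite enumeration of $\tilde{\Pcal}_D$ together with all admissible Butterfly moves, yielding exactly three connected components; this computation should be recorded in an appendix table, in the spirit of those already used throughout the paper.
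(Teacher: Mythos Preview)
Your plan is correct and is exactly what the paper has in mind; the paper itself gives no details beyond ``we use similar ideas to the proofs of Theorem~\ref{theo:main:SD} and Theorem~\ref{theo:main:PD}'', and your outline spells out precisely that adaptation, including the correct computation $w'=w-2e-4h$ and the resulting dichotomy $h_{\min}\in\{1,2\}$, which is the one genuinely new feature of the genus-four case. Two small points worth tightening: for $D=41$ it is a single stuck level-two class (the orbit $\{(4,2,0,-3),(2,2,0,-5)\}$) rather than two, and your clause ``the level-two prototypes are shown to connect to reduced ones'' for generic $D\equiv 1\bmod 8$ is the step that actually requires an argument, since from $(w,2,0,e)$ with $w$ even every $B_q$ keeps $h'\geq 2$ and the escape goes through controlling the twist $t'$ after a non-trivial move.
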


\begin{Remark}
Again there is a simple congruence relation that explain why 
there is (at least) two components when $D$ is even. Indeed since $e'=-e-4qh$ and $e$ is even, the value 
of $e$ modulo $4$ is constant.
\end{Remark}

To prove the previous theorems, we use similar ideas to the proofs of  Theorem~\ref{theo:main:SD} and 
Theorem~\ref{theo:main:PD}. This is straightforward. Theorem~\ref{theo:H6:appendix} is then a 
direct consequence of these results.


\bibliographystyle{amsalpha}
\bibliography{bib/journals_abbrev,bib/articles}


\end{document}